\theoremstyle{plain}
    \newtheorem{theorem}{Theorem}[section]
    \newtheorem{lem}[theorem]{Lemma}
    \newtheorem{prop}[theorem]{Proposition}
    \newtheorem{cor}[theorem]{Corollary}
    \newtheorem{rem}[theorem]{Remark}
\theoremstyle{definition}
    \newtheorem{defn}[theorem]{Definition}
\renewcommand{\thepage}{\roman{page}}
\title{Characterizations of reflexive Banach spaces}
\author{Tianyi Zhou}
\date{}
\begin{document}

\clearpage
\thispagestyle{empty}

\begin{abstract}
In this paper we survey known results of characterizations of reflexive Banach spaces, which are based on convergence of usual and generalized arithmetic mean (or Cesàro sum), weakly compact subsets, affine sets in a Banach space or its dual and an unbounded bi-orthogonal system generalized from the one in a finite-dimensional Banach space. We also include results that describe precisely when a subspace is linearly isomorphic to $\ell^1$ or $c_0$ in a Banach space that has a Schauder basis, which can imply non-reflexivity of a Banach space in general and is proven to be equivalent to non-reflexivity when the given Schauder basis is unconditional. After reflexivity, we will also study other geometric properties that are strictly stronger, implications among them and their characterizations.
\end{abstract}

\maketitle

\vspace{0.5cm}
\begin{center}
\large\bfseries{Introduction}
\end{center}
\vspace{0.5cm}

The characterization of a reflexive Banach space is a classic topic that can date back to around a hundred years ago. The basic ideas of such characterization includes fining the precise descriptions of the closed unit ball or an affine set, and the behaviors of a Schauder basis (if exists) or the dual space. Relevant results that follow ideas above will be included in this paper. Much of the progress that describes a reflexive Banach space by weakly compact sets was completed and summarized by R. James in \cite{7} and \cite{8}. Results that describe reflexivity by the dual space are mainly from R. James in \cite{9} and P. Vlastimil in \cite{15}. Since it is clear that a reflexive Banach space contains no non-reflexive subspaces (see \cite{3}), one can check if a Banach space is reflexive or not by checking the existence of a subspace that linearly isomorphic to a non-reflexive Banach space, such as $\ell^1$ or $c_0$, and can refer to \cite{6}, \cite{16} and \cite{18}. In the case when a Schauder basis exists, reflexivity can be proven when such a basis has certain properties that are related to the existence of $\ell^1$ and $c_0$ (see \cite{6} and \cite{16}).\\

Results on characterizing reflexivity can naturally be applied to the study of a super-reflexive Banach space, which is a Banach space on which no reflexive Banach spaces can be finitely (with respect to dimension) representable. As a result, the study of properties between reflexivity and super-reflexivity and properties that are weaker or stronger then emerged. The result of such study can be summarized by the following diagram from \cite{12}: 

\begin{figure}[!htb]
    \centering
    \includegraphics[scale = 0.7]{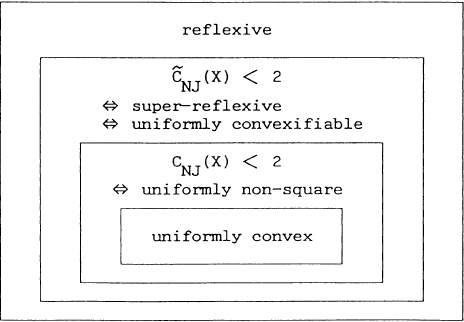}
    \caption{}\label{diagram}
    \label{}
\end{figure}
\noindent
where $C_{NJ}(X)$ is the \textbf{von Neumann-Jordan constant} of $(X, \|\cdot\|)$, which is first introduced in \cite{20} and plays an important role in characterizing super-reflexive Banach spaces; $\tilde{C}_{NJ}(X)$ is the infimum among all $C_{NJ}(X)$ when $X$ is equipped with a norm that is equivalent to $\|\cdot\|$; The definition of a \textbf{uniformly non-square} space can be found in \cite{26} and the one of a \textbf{uniformly convex} can be found in \cite{11}.\\

In this paper, we mean to give an overview of known results that arise from or are related to definitions in the diagram above. However, as the study of reflexivity is rather complete, there will definitely be topics we cannot cover due to space limitations. One can refer to \cite{28} for more implications between reflexivity and stronger properties, such as \textbf{rotundity} and \textbf{smooth norm}, and, for instance, \cite{27}, for another precise geometric description of reflexivity. Precise proofs of main results will be included. However, we do not mean to copy and paste the original proofs as several of them, from the author's perspective, lack details or explanations between statements, and those proofs will be recreated based on the original ideas.\\

This paper is organized as follows. The \textbf{Section 1} discusses how arithmetic means (of vectors) in a Banach spaces can be related to reflexivity and a few other stronger properties that will appear in later sections. \textbf{Section 2} deals with the equivalence between non-reflexivity and a pointwise vanishing sequence of continuous linear functionals. \textbf{Section 3} is devoted to the characterizations of reflexivity by affine sets in a Banach space or in its dual. In \textbf{Section 4} we find the relation among reflexivity, the existence of $\ell^1$ or $c_0$, and a Schauder basis with certain properties. In \textbf{Section 5} we generalized the bi-orthogonal system that always exists in a finite-dimensional normed linear space, to an infinitely-dimensional Banach space and show how such a system describes reflexivity. \textbf{Section 6} deals with the characterizations of reflexivity by weakly compact sets, and gives an summary that includes previous characterizations. The last section gives an overview of properties that are stronger than reflexivity and their characterizations.

\setcounter{page}{1}
\renewcommand{\thepage}{\arabic{page}}
\fancyhf[rh]{\arabic{page}}

\section{Arithmetic means in Banach spaces}

In this section, we start with the definition of uniform convexity, and will later see how it is stronger than all other geometric properties we will describe. We will discuss the relation between reflexivity, uniform convexity, the convergence of Cesàro sums (the usual arithmetic mean) of a bounded sequence and the convergence of a generalized arithmetic means of a bounded sequence, which is defined by an $\mathbb{N}$-by-$\mathbb{N}$ matrix, each of whose rows up to $1$.

\begin{defn}\label{Definition 1.1}

A normed linear space is \textbf{uniformly convex} if given any two vectors $x, y$ in the closed unit ball of the space, for any $\epsilon\in (0, 1)$ there exists a $\delta\in (0, 1)$ such that:

$$
\|x-y\|>\epsilon \hspace{0.3cm} \Longrightarrow \hspace{0.3cm} 1 < \|x+y\| < 2-\delta
$$

\end{defn}

\begin{defn}\label{Definition 1.2}

Given a normed linear space $X$ and a sequence $(x_n)_{n\in \mathbb{N}} \subseteq X$, a \textbf{summability method} $T$ is a real matrix $\big[c_{m, n} \big]_{m, n \in \mathbb{N}}$ such that $\{c_{m, n}\}_{n \in \mathbb{N}}$ is summable for each $m \in \mathbb{N}$. The $T$-\textbf{means} of the sequence is another sequence $\{t_m\}$ where each $t_m = \lim_{N \rightarrow \infty}\sum_{n \leq N}c_{m, n}x_n$. We say $T$ is \textbf{regular} if $x_n \rightarrow x$ implies $t_m \rightarrow x$. 

\end{defn}

\begin{theorem}[{\cite[Silverman-Toeplitz Theorem]{2}}]
In the set-up of \textbf{Definition \ref{Definition 1.2}}, the following statements are equivalent:

\begin{enumerate}[label = (\arabic*)]

    \item $T$ is regular.
    \item $\sup_m \sum_{n\in \mathbb{N}} \big\vert\, c_{m, n} \,\big\vert < \infty$.
    \item $\lim_m c_{m, n}=0$ for all $n\in \mathbb{N}$.
    \item $\lim_m \sum_{n\in \mathbb{N}} c_{m, n} = 1$.
    
\end{enumerate}

\end{theorem}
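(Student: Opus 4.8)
The plan is to read the stated equivalence in the standard way in which it is meant, namely that $T$ is regular if and only if conditions (2), (3) and (4) hold \emph{simultaneously} (note that none of (2), (3), (4) alone implies regularity: the zero matrix satisfies (2) and (3) but is not regular). I would therefore prove the two implications (1) $\Rightarrow$ (2)$\wedge$(3)$\wedge$(4) and (2)$\wedge$(3)$\wedge$(4) $\Rightarrow$ (1). The natural arena is the Banach space $c_0$ of null scalar sequences, together with the space $c$ of convergent scalar sequences, each under the supremum norm, with the identification $c_0^* = \ell^1$.

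First I would record a well-definedness observation implicit in Definition \ref{Definition 1.2}: if the partial sums $\sum_{n \le N} c_{m, n} x_n$ converge for every $x \in c_0$, then the $m$-th row $(c_{m, n})_n$ must lie in $\ell^1$ (otherwise a standard choice of signs and slowly decaying magnitudes produces an $x \in c_0$ for which the series diverges). Consequently each map $f_m(x) := \sum_n c_{m, n} x_n$ is a well-defined bounded linear functional on $c_0$ with $\|f_m\| = \sum_n |c_{m, n}|$, via $c_0^* = \ell^1$.

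For (1) $\Rightarrow$ (3) and (1) $\Rightarrow$ (4) I would feed specific test sequences into regularity: taking $x$ to be the $k$-th unit sequence $(\delta_{n, k})_n$, which tends to $0$, gives $t_m = c_{m, k} \to 0$, which is (3); taking the constant sequence $x \equiv 1$, which tends to $1$, gives $t_m = \sum_n c_{m, n} \to 1$, which is (4). The implication (1) $\Rightarrow$ (2) is the crux and the step I expect to be the main obstacle: regularity forces $\big(f_m(x)\big)_m$ to converge, hence to be bounded, for every $x \in c_0$, and the Uniform Boundedness Principle then upgrades this pointwise boundedness to $\sup_m \|f_m\| = \sup_m \sum_n |c_{m, n}| < \infty$, which is exactly (2). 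The point is that this bound is invisible to any single test sequence, so one genuinely needs Banach–Steinhaus (and the preliminary $c_0^* = \ell^1$ identification behind it) rather than an explicit construction.

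For the converse I would fix $x_n \to x$, set $y_n = x_n - x$, and use the identity $t_m - x = \big(\sum_n c_{m, n} - 1\big)x + \sum_n c_{m, n} y_n$. The first term tends to $0$ by (4). For the second, given $\epsilon > 0$ I would choose $N$ with $|y_n| < \epsilon$ for all $n > N$ and split the sum at $N$: the finite head $\sum_{n \le N} c_{m, n} y_n$ tends to $0$ as $m \to \infty$ by (3) (finitely many columns, each vanishing), while the tail is bounded by $\epsilon \sup_m \sum_n |c_{m, n}|$, which is finite by (2). Letting $m \to \infty$ and then $\epsilon \to 0$ gives $t_m \to x$, which is regularity. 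The only care needed here is the routine $\limsup$ bookkeeping that lets the two limits be taken in the correct order.
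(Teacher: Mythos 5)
Your proof is correct. Note that the paper itself offers no proof of this statement: it is quoted as the classical Silverman--Toeplitz theorem from \cite{2} and used as a black box, so there is nothing internal to compare against. Your reading of the statement as ``(1) holds if and only if (2), (3) and (4) hold simultaneously'' is the right one (the paper's phrasing ``the following are equivalent'' is literally inaccurate, as your zero-matrix example shows), and your argument is the standard one: the necessity of (3) and (4) from the unit and constant test sequences, the necessity of (2) from the Banach--Steinhaus theorem applied to the functionals $f_m(x)=\sum_n c_{m,n}x_n$ on $c_0$ under the identification $c_0^*=\ell^1$ (together with your preliminary check that each row must lie in $\ell^1$ for the $T$-means to be defined on all of $c_0$, which is needed if ``summable'' in \textbf{Definition \ref{Definition 1.2}} is read as mere convergence of the row series), and the sufficiency via the decomposition $t_m-x=\bigl(\sum_n c_{m,n}-1\bigr)x+\sum_n c_{m,n}(x_n-x)$ with the sum split at a tail index where $\|x_n-x\|<\epsilon$. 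The same decomposition works verbatim for sequences in a general normed linear space, which is the setting of \textbf{Definition \ref{Definition 1.2}}, so no additional work is needed there.
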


\begin{defn}

In the set-up of \textbf{Definition \ref{Definition 1.2}}, for a regular $T = \big[ c_{m, n} \big]_{m, n\in \mathbb{N}}$, we call $T$ essentially positive if:

$$
\lim_m \sum_{n\in \mathbb{N}} \big\vert\, c_{m, n} \,\big\vert = 1
$$
    
\end{defn}

\begin{defn}

Given a sequence $(x_n)_{n\in \mathbb{N}}$ in a Banach space $X$, for each $N\in\mathbb{N}$, define:

$$
s_N = \frac{1}{N}\sum_{i\leq N}x_i
$$
Then $X$ is said to have \textbf{Banach-Saks property} if whenever $(x_n)_{n\in \mathbb{N}}$ is weakly convergent, $(s_N)_{N\in \mathbb{N}}$ converges in norm to the same limit.
    
\end{defn}

\subsection{Uniformly convexity\texorpdfstring{$\,\implies\,$}{ implies }Banach-Saks property}

\begin{prop}[\cite{11}]\label{Proposition 1.3}
A normed linear space $X$ is \textbf{unifomrly convex} if and only if for any two vectors $x, y\in X$=, for any $\epsilon\in (0, 1)$ there exists $\delta\in (0, 1)$:
 
\begin{equation}\label{e1}
\|x-y\| > \epsilon\max\big( \|x\|, \|y\|\big) \hspace{0.3cm}\implies\hspace{0.3cm} \frac{1}{2}\|x+y\| \leq (1-\delta)\max\big( \|x\|, \|y\| \big)
\end{equation}

\end{prop}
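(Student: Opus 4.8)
The plan is to prove both implications purely by exploiting the positive homogeneity of the norm; the quantity $\max(\|x\|,\|y\|)$ serves as the common scale that lets one pass between the ``closed unit ball'' formulation of Definition \ref{Definition 1.1} and the scale-invariant formulation \eqref{e1}. The geometric content in both statements is the upper bound on $\|x+y\|$, so I would concentrate on showing that $\|x+y\| < 2-\delta$ (for unit-ball vectors) and $\tfrac12\|x+y\| \le (1-\delta)\max(\|x\|,\|y\|)$ (for arbitrary vectors) are equivalent after an appropriate adjustment of the constant $\delta$.

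For the forward direction, assume $X$ is uniformly convex and fix $\epsilon \in (0,1)$ together with arbitrary $x,y \in X$ satisfying $\|x-y\| > \epsilon\max(\|x\|,\|y\|)$. The hypothesis forces $x,y$ not both zero, so $M := \max(\|x\|,\|y\|) > 0$, and I would set $u = x/M$ and $v = y/M$. Then $\|u\|,\|v\| \le 1$, so $u,v$ lie in the closed unit ball, while $\|u-v\| = \|x-y\|/M > \epsilon$. Applying Definition \ref{Definition 1.1} yields a $\delta \in (0,1)$, depending only on $\epsilon$, with $\|u+v\| < 2-\delta$. Multiplying back by $M$ and dividing by $2$ gives $\tfrac12\|x+y\| < M\bigl(1 - \tfrac{\delta}{2}\bigr)$, so \eqref{e1} holds with the constant $\delta/2 \in (0,1)$ in place of $\delta$.

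For the converse, assume \eqref{e1} and take $x,y$ in the closed unit ball with $\|x-y\| > \epsilon$. Since $\max(\|x\|,\|y\|) \le 1$, we have $\epsilon\max(\|x\|,\|y\|) \le \epsilon < \|x-y\|$, so the hypothesis of \eqref{e1} is met and we obtain $\tfrac12\|x+y\| \le (1-\delta)\max(\|x\|,\|y\|) \le 1-\delta$. Hence $\|x+y\| \le 2-2\delta < 2-\delta$, which is the upper bound required by Definition \ref{Definition 1.1} (with the same $\delta$).

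The argument is essentially bookkeeping, so I do not expect a serious obstacle; the only points demanding care are that Definition \ref{Definition 1.1} be read as a statement about the closed unit ball rather than the unit sphere (so that normalized vectors $u,v$ of unequal norm are admissible) and that the constants be tracked correctly under the rescaling $\delta \mapsto \delta/2$. I would also flag that the lower estimate $1 < \|x+y\|$ appearing in Definition \ref{Definition 1.1} is not recoverable from \eqref{e1} in general (it fails, for instance, for $x=-y$ on the unit sphere, where $\|x+y\|=0$), so I would treat the upper bound as the operative content of uniform convexity throughout.
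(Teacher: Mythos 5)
Your proposal is correct and is essentially the paper's own argument: the paper's proof consists precisely of the substitution $x \mapsto x/\max(\|x\|,\|y\|)$, $y \mapsto y/\max(\|x\|,\|y\|)$ followed by an appeal to Definition \ref{Definition 1.1}, which is the rescaling you carry out with the constants tracked explicitly. Your remark that the lower bound $1 < \|x+y\|$ in Definition \ref{Definition 1.1} cannot be recovered from \eqref{e1} is a fair observation about that definition as stated, and your treatment of the upper bound as the operative content matches what the paper implicitly does.
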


\begin{proof}

By replacing $x, y$ in (\ref{e1}) by $\dfrac{x}{\max(\|x\|, \|y\|)}$ and $\dfrac{y}{\max(\|x\|, \|y\|)}$, the conclusion follows immediately by \textbf{Definition \ref{Definition 1.1}}.

\end{proof}

\begin{theorem}[\cite{11}]\label{Theorem 1.4}

A uniformly convex Banach space has the \textbf{Banach-Saks Property}.

\end{theorem}

\begin{proof}

Suppose $\|x_n\|\leq M\,\forall\,n \in \mathbb{N}$ and hence we can assume the sequence is in the closed unit ball. By definition, for any two elements $x, y$ in the closed unit ball, if $\|x-y\|>\dfrac{1}{2}$ then we can find $\delta_M$ such that $\dfrac{1}{2}\|x+y\| \leq 1 - \delta_M$. Then we need to find a subsequence $\{x_{k_i}\} \subseteq \{x_i\}$ such that $\dfrac{1}{2}\|x_{k_i}+x_{k_{i+1}}\| \leq \theta\,\forall\,i \in \mathbb{N}$ where $\theta = \max \left( \dfrac{3}{4}, 1-\delta_M \right) < 1$.

\begin{enumerate}[label = Case \alph*)]
    \item Fix $k_1 = 2$. If $\|x_2\| \leq \dfrac{1}{2}$ then set $k_2 = 3$ and then we have $\dfrac{1}{2} \|x_{k_1}+x_{k_2} \|\leq \dfrac{1}{2} \left( \dfrac{1}{2}+1 \right) \leq \theta$
    
    \item Fix $k_1 = 2$. If $\|x_2\| > \dfrac{1}{2}$. Suppose for all other integers $m, \|x_2-x_m\| \leq \dfrac{1}{2}$. Pick $f_n \in X^*$ such that $f_n(x_2) > \|x_2\| - \dfrac{1}{n}$. WLOG we can assume $x_n$ converges to zero weakly so that:
    
    $$
    \sup_i\|x_2-x_i\| \geq \lim_{i\rightarrow\infty}f_n(x_2 - x_i) = f_n(x_2) > \|x_2\|-\frac{1}{n}\,\implies\,\|x_2\|\leq\sup_i\|x_2-x_i\| \leq \frac{1}{2}
    $$
    which is absurd.
    
\end{enumerate}

\noindent
Hence when $\|x_2\| > \dfrac{1}{2}$ we can find $k_2$ such that $\|x_{k_1}-x_{k_2}\| > \dfrac{1}{2}$. Therefore, $\dfrac{1}{2} \|x_{k_1}+x_{k_2}\| \leq 1 - \delta_0 \leq \theta$.\\

\noindent    
Now after $k_2$ is determined, set $k_3 = k_2+1$ and set $k_4$ using the method above. Then set $k_5 = k_4+1$ and so on. After we have $\{x_{k_i}\}$ define $y_i^{(1)} = \dfrac{1}{2} \left( x_{k_{2i-1}} + x_{k_{2i}} \right)$. Then the sequence $\left\{ y_i^{(1)} \right\}_{i \in \mathbb{N}}$ also converges to zero and bounded by $\theta$ in norm. In the new sequence $\left\{ y_i^{(1)} \right\}$ find the subsequence $\left\{ y_{k_i}^{(1)} \right\}$ using the same method and then define $y_i^{(2)} = \dfrac{1}{2}\left( y_{k_{2i-1}}^{(1)} + y_{k_{2i}}^{(1)} \right)$ and obtain the new sequence $\left\{ y_i^{(2)} \right\}$. Notice this new sequence will be bounded by $\theta^2$. Therefore, by induction for any $k \in\mathbb{N}$ we will have a subsequence $\left\{ y_i^{(k)} \right\}$ of $\{x_i\}$ such that $\left\| y_i^{(k)} \right\| \leq \theta^k$ and $y_i^{(k)}$ converges to zero weakly. \\

\noindent    
Fix an integer $i$ and then consider $\left\{ y_i^{(n)} \right\}_{n \in \mathbb{N}}$. For $n > 0, y_i^{(n)} = \dfrac{1}{2} \left( y_{k_{2i-1}}^{(n-1)} + y_{k_{2i}}^{(n-1)} \right)$. Also each $y_{k_{2i-1}}^{(n-1)}$ and $y_{k_{2i}}^{(n-1)}$ is the mean of another two elements from $\left\{ y_i^{(n-2)} \right\}$. For a fixed $p \in \mathbb{N}$ we can find a finite subset $F_i^{(p)} = \left\{ l_{1,i}^{(p)}, l_{2,i}^{(p)}, \cdots, l_{2^p, i}^{(p)} \right\} \subseteq \mathbb{N}$ (ordered strictly increasing) such that $y_i^{(p)} = \dfrac{1}{2^p}\left( x_{l_{1,i}^{(p)}} + x_{l_{2,i}^{(p)}} + \,\cdots\, + x_{l_{2^p,i}^{(p)}} \right)$. Now define:

$$
\begin{aligned}
&n_1 = 1, \hspace{0.3cm} n_2 = l_{1,1}^{(1)}, \hspace{0.3cm} n_3 = l_{2,1}^{(1)},\hspace{0.3cm} n_4 = l_{1,1}^{(2)}\\
&n_5 = l_{2,1}^{(2)}, \hspace{0.3cm} n_6 = l_{3,1}^{(2)}, \hspace{0.3cm} n_7 = l_{4,1}^{(2)}, \hspace{0.3cm} n_8 = l_{1,1}^{(3)}\\
&\cdots\\
& n_{2^p-1} = l_{2^p, 1}^{(p-1)}, \hspace{0.3cm}n_{2^p} = l_{1,1}^{(p)}, \cdots
\end{aligned}
$$
For the new subsequence $\{x_{n_i}\}$ we want to show that:

$$
\left\| \frac{1}{k}(x_{n_1} + x_{n_2}+ \,\cdots\, x_{n_k}) \right\| \overset{k \rightarrow \infty}{\rightarrow} 0
$$
and it suffices to show that:

$$
\left\|\frac{1}{2^m-1}\left( x_{n_1}+x_{n_2}+\,\cdots\,x_{n_{2^m-1}} \right) \right\|\overset{m \longrightarrow \infty}{\rightarrow} 0
$$
Fix $q, m \in \mathbb{N}$ and $q < m$. Define $k = 2^m-1$.
$$
\begin{aligned}
&\hspace{0.47cm} \|x_{n_1}+x_{n_2}+\,\cdots\,x_{n_k}\|\\ 
&\leq \left\| x_{n_1}+x_{n_2}+\,\cdots\,x_{n_{2^q-1}} \right\| \\
& + \sum_{2 \leq j \leq 2} \left\| x_{n_{2^q(j-1)}}+x_{n_{2^q(j-1)+1}} + \,\cdots\,x_{(n_{j2^q})-1} \right\| \\
& + \sum_{3 \leq j \leq 4} \left\| x_{n_{2^q(j-1)}} + x_{n_{2^q(j-1)+1}} + \,\cdots\, x_{(n_{j2^q})-1} \right\| \\
& + \sum_{5 \leq j \leq 8} \left\| x_{n_{2^q(j-1)}}+x_{n_{2^q(j-1)+1}} + \,\cdots\,x_{(n_{j2^q})-1} \right\| \\
& + \cdots \\
& + \sum_{2^{m-1}+1 \leq j \leq 2^m} \left\| x_{n_{2^q(j-1)}} + x_{n_{2^q(j-1)+1}} + \,\cdots\, x_{n_k} \right\|
\end{aligned}
$$
Recall that for any $i\in\mathbb{N}$, $\|x_i\|\leq 1$. Then when $j=2$ we have:

$$
j = 2\,\implies\,x_{n_{(j-1)2^q}}+x_{n_{(j-1)2^q+1}}+\,\cdots\,x_{n_{j2^q-1}} = x_{l_1^{(q)}}+x_{l_2^{(q)}}+\,\cdots\,+x_{l_{2^q-1}^{(q)}} = 2^q y_1^{(q)}
$$
and similarly when $j=3$ or $4$:

$$
\begin{aligned}
&j = 3\,\implies\,x_{n_{(j-1)2^q}} + x_{n_{(j-1)2^q+1}}+\,\cdots\,x_{n_{j2^q-1}} = x_{n_{2^{q+1}}} + x_{n_{2^{q+1}+1}} + \,\cdots\,x_{n_{3\,\cdot\,2^q-1}} = \\
&\hspace{1.88cm}x_{l_{1,1}^{(q+1)}} + x_{l_{2,1}^{(q+1)}}\,\cdots\,x_{l_{2^q,1}^{(q+1)}}\\
&j = 4\,\implies\,x_{n_{(j-1)2^q}} + x_{n_{(j-1)2^q+1}}+\,\cdots\,x_{n_{j2^q-1}} = x_{n_{3\,\cdot\,2^q}}+x_{n_{3\,\cdot\,2^q+1}}+\,\cdots\,x_{n_{2^{q+2}-1}} = \\
&\hspace{1.88cm}x_{l_{2^q+1,1}^{(q+1)}}+x_{l_{2^q+2,1}^{(q+1)}}\,\cdots\,x_{l_{2^{q+1},1}^{(q+1)}}\\
\end{aligned}
$$
Therefore $y_1^{(q+1)}$ can be rewritten as the following form:

$$
y_1^{(q+1)} = \frac{1}{2^{q+1}}\left[ x_{l_{1,1}^{(q+1)}} + x_{l_{2,1}^{(q+1)}} \,\cdots\, x_{l_{2^q,1}^{(q+1)}} + x_{l_{2^q+1,1}^{(q+1)}} + x_{l_{2^q+2,1}^{(q+1)}} \,\cdots\, x_{l_{2^{q+1},1}^{(q+1)}} \right]
$$
Meanwhile, we know that:

$$
y_1^{(q+1)} = \frac{1}{2}\left( y_{k_1}^{(q)}+y_{k_2}^{(q)} \right) = \frac{1}{2^{q+1}} \left[ \sum_{j \in F_{k_1}^{(q)}}x_j + \sum_{i \in F_{k_2}^{(q)}}x_i \right]
$$
Since $F_{k_1}^{(q)}$ is unique to $y_{k_1}^{(q)}$, $F_{k_2}^{(q)}$ is unique to $y_{k_2}^{(q)}$, $F_{k_1}^{(q)}$ is disjoint from $F_{k_2}^{(q)}$ and the max number in $F_{k_1}^{(q)}$ will be smaller than the min number in $F_{k_2}^{(q)}$, then we will have $F_{k_1}^{(q)} \cup F_{k_2}^{(q)} = \left\{ l_{1,1}^{(q+1)}, l_{2,1}^{(q+1)}, \cdots\,l_{2^{q+1},1}^{(q+1)} \right\}$. Then:

$$
\sum_{3 \leq j \leq 4} \left\| x_{n_{2^q(j-1)}} + x_{n_{2^q(j-1)+1}} + \,\cdots\, x_{(n_{j2^q})-1} \right\| \leq 2^{q+1}\theta^{q+1}
$$
When $5 \leq j \leq 8$:

$$
\begin{aligned}
&\hspace{1cm} y_1^{(q+2)} = \frac{1}{2^{q+2}} \left[ x_{l_{1, 1}^{(q+2)}} + x_{l_{2, 1}^{(q+2)}} + \,\cdots\, + x_{l_{2^{q+2}, 1}^{(q+2)}} \right] = \frac{1}{2}\left[ y_{k_1}^{(q+1)} + y_{k_2}^{(q+1)} \right]\\ 
&\implies\, \left\| x_{l_{1, 1}^{(q+2)}} + x_{l_{2, 1}^{(q+2)}} + \,\cdots\, + x_{l_{2^{q+2}, 1}^{(q+2)}} \right\| \leq 2^{q+2}\theta^{q+2}
\end{aligned}
$$
By induction we have::

$$
\begin{aligned}
&\hspace{1.48cm} \|x_{n_1}+x_{n_2}+\,\cdots\,x_{n_k}\|\\
&\hspace{1cm} \leq \|x_{n_1}+x_{n_2}+\,\cdots\,x_{n_{2^m-1}}\| + \\
&\hspace{1.44cm} \sum_{2 \leq j \leq 2}\|x_{n_{2^q(j-1)}}+x_{n_{2^q(j-1)+1}}+\,\cdots\,x_{(n_{j2^q})-1}\| + \\
&\hspace{1.44cm} \sum_{3 \leq j \leq 4}\|x_{n_{2^q(j-1)}}+x_{n_{2^q(j-1)+1}}+\,\cdots\,x_{(n_{j2^q})-1}\| + \\
&\hspace{1.44cm} \sum_{5 \leq j \leq 8}\|x_{n_{2^q(j-1)}}+x_{n_{2^q(j-1)+1}}+\,\cdots\,x_{(n_{j2^q})-1}\| + \\
&\hspace{1.44cm} \cdots\\
&\hspace{1.44cm} \sum_{2^{m-1}+1 \leq j \leq 2^m}\|x_{n_{2^q(j-1)}}+x_{n_{2^q(j-1)+1}}+\,\cdots\,x_{n_k}\|  \\
&\hspace{1cm} \leq \sum_{q \leq i \leq m-1}2^i\theta^i\\
&\implies\, \frac{1}{k}\|x_{n_1}+x_{n_2}+\,\cdots\,x_{n_k}\| = \frac{1}{2^m-1}\|x_{n_1}+x_{n_2}+\,\cdots\,x_{n_{2^m-1}}\|\\
& \hspace{0.44cm} < \frac{2^q}{2^m-1} + \sum_{q \leq i \leq m-1}\theta^i = \frac{2^q}{2^m-1} + \theta^q\frac{1-\theta^{m-q}}{1-\theta}
\end{aligned}
$$
Now for any $\epsilon > 0$, find $q_0, m\in\mathbb{N}$ with $(q_0 < m)$ such that $\dfrac{\theta^{q_0} (1-\theta^{m-q_0})}{1-\theta} < \epsilon$. Then find $m_0 > m$ such that $\dfrac{2^{q_0}}{2^{m_0}-1} < \epsilon$. Hence now we have:

$$
\frac{1}{2^{m_0}-1} \left\| \sum_{1\leq i \leq 2^{m_0}-1}x_{n_i} \right\| < 2\epsilon
$$
and then we can conclude that:

$$
\lim_k \left\| \frac{1}{k} \sum_{1\leq i \leq k} x_{n_i} \right\| = 0
$$

\end{proof}

\subsection{Banach-Saks property \texorpdfstring{$\,\Longrightarrow\,$}{ implies } reflexivity}

\begin{defn}
A Banach space is said to have the \textbf{property} $\mathcal{P} (w\mathcal{P}\, resp.)$ if for every bounded sequence $\{x_n\}$ there is a regular summability method $T$ such that there exists a subsequence of $\{x_n\}$ whose $T$-means converge strongly (weakly $resp.$).
\end{defn}

\begin{theorem}[\cite{14}]\label{Theorem 1.6}

In a Banach space, given a infinite real matrix $T = 
[c_{m, n}]_{m, n\in\mathbb{N}}$ such that each line of sequence is summable, the following statements are equivalent:

\begin{enumerate}[label = \alph*)]

    \item $B$ is reflexive
    \item $B$ has property $\mathcal{P}$ and, for each bounded sequence $\{x_n\}$, the regular summability method $T$ that is given by the property $\mathcal{P}$ and associated with $\{x_n\}$ is essentially positive.
    \item $B$ has property $w\mathcal{P}$ and, for each bounded sequence $\{x_n\}$, the regular summability method $T$ that is given by the property $\mathcal{P}$ and associated with $\{x_n\}$ is essentially positive.
    
\end{enumerate}

\end{theorem}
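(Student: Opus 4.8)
The plan is to prove the equivalence cyclically, via (a) $\implies$ (b) $\implies$ (c) $\implies$ (a). The implication (b) $\implies$ (c) is immediate, since strong convergence of the $T$-means forces weak convergence of the \emph{same} means, and the summability method $T$ together with its essential positivity is carried over unchanged. Thus the work lies in the two remaining implications, of which (c) $\implies$ (a) is the substantial one.

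For (a) $\implies$ (b), I would start from a bounded sequence $\{x_n\}$, which we may rescale into the closed unit ball. Since $B$ is reflexive its unit ball is weakly compact, so by the Eberlein--\v{S}mulian theorem there is a subsequence $\{x_{p_k}\}$ converging weakly to some $x$. Every tail $\{x_{p_k} : k \geq K\}$ also converges weakly to $x$, so by Mazur's lemma $x$ lies in the norm-closure of the convex hull of each tail. Hence for every $m$ I can choose a finite convex combination $t_m = \sum_k c_{m,k} x_{p_k}$ whose indices are drawn from a block $\{k : k \geq K_m\}$, with $K_m \uparrow \infty$ chosen so the blocks are disjoint, satisfying $\|t_m - x\| < 1/m$. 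The resulting matrix $[c_{m,k}]$ has nonnegative entries, rows summing to $1$, and disjoint increasing supports; by the Silverman--Toeplitz Theorem it is therefore regular (rows bounded by $1$ in $\ell^1$-norm, row sums tending to $1$, each column eventually $0$), and it is essentially positive because $\sum_k |c_{m,k}| = \sum_k c_{m,k} = 1$. Its $T$-means converge in norm to $x$, giving property $\mathcal{P}$ with an essentially positive method.

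For (c) $\implies$ (a) I would argue by contraposition. If $B$ is not reflexive, James' characterization of non-reflexivity supplies a $\theta \in (0,1)$, a sequence $\{x_n\}$ in the unit ball, and functionals $\{f_i\}$ in the dual unit ball with $f_i(x_n) = \theta$ for $i \leq n$ and $f_i(x_n) = 0$ for $i > n$; moreover this sequence may be taken to be basic, so that $Z := \overline{\operatorname{span}}\{x_n\}$ has $\{x_n\}$ as a Schauder basis whose coordinate functionals are the restrictions of $g_n := \theta^{-1}(f_n - f_{n+1})$, since $g_n(x_m) = \delta_{nm}$. Applying property $w\mathcal{P}$ to $\{x_n\}$ yields a regular, essentially positive $T$ and a subsequence whose $T$-means $t_m = \sum_n c_{m,n}x_n$ converge weakly to some $y$. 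Fixing $i$ and letting $m \to \infty$, regularity gives $f_i(t_m) = \theta \sum_{n \geq i} c_{m,n} \to \theta$, because the finitely many columns with $n < i$ vanish in the limit while the row sums tend to $1$; hence $f_i(y) = \theta$ for every $i$. Consequently $g_n(y) = 0$ for all $n$, while $y$ lies in the weakly closed subspace $Z$, so totality of the coordinate functionals forces $y = 0$, contradicting $f_1(y) = \theta > 0$.

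The main obstacle is precisely this last step of (c) $\implies$ (a): converting weak convergence of the means into a genuine contradiction. The identity $f_i(y) = \theta$ is cheap and uses only regularity, but it is harmless unless one knows that no actual element of $B$ can take the value $\theta$ against every $f_i$; this is exactly what the basic-sequence reduction buys, by making the biorthogonal system $\{g_n\}$ total on $Z$. Establishing (or carefully invoking) that the James sequence may be chosen basic, and checking that the weak limit $y$ stays inside $Z$, is the delicate point. Essential positivity is not strictly required to produce this contradiction, but it is exactly the feature that reflexive spaces \emph{do} supply, through the Mazur convex combinations of (a) $\implies$ (b), and that must therefore be verified there and carried through the cycle rather than exploited in (c) $\implies$ (a).
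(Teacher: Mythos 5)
Your implications $a)\Rightarrow b)$ and $b)\Rightarrow c)$ match the paper: reflexivity gives a weak cluster point, Mazur gives norm-approximating convex combinations of tails, and the resulting matrix is regular and essentially positive because its rows are genuine convex coefficients. For $c)\Rightarrow a)$, however, you take a genuinely different route from the paper. The paper argues directly: from the weakly convergent $T$-means it extracts, for every $f\in X^*_{\leq 1}$, the inequalities $\liminf_n f(x_n)\leq f(x)\leq\limsup_n f(x_n)$ (this is where essential positivity is actually exploited) and then invokes the cluster-point criterion for weak compactness of the closed unit ball. You instead argue by contraposition through the James biorthogonal sequence of \textbf{Theorem \ref{Theorem 3.2}}, which is a legitimate and arguably more transparent strategy, and your computation $f_i(t_m)=\theta\sum_{n\in A_i}c_{m,n}\rightarrow\theta$, hence $f_i(y)=\theta$ for all $i$, is correct and uses only regularity.

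The gap is the assertion that the James sequence ``may be taken to be basic.'' A bounded biorthogonal system $\big(x_n, \theta^{-1}(f_n-f_{n+1})\big)$ does not make $\{x_n\}$ a Schauder basis of $Z=\overline{\operatorname{Span}}\{x_n\}$: one needs uniform boundedness of the partial-sum projections, which does not follow from biorthogonality alone. Extracting a basic subsequence here requires the Bessaga--Pe{\l}czy\'nski selection principle (applicable because the separation condition $d[\operatorname{conv}\{x_i\}_{i\leq n},\operatorname{conv}\{x_i\}_{i>n}]\geq\theta$ rules out weakly convergent subsequences), a result neither proved nor cited in the paper; as written, the ``totality of the coordinate functionals'' step therefore rests on an unestablished claim. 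Fortunately the entire detour is unnecessary: since $\|f_i\|<1$ for all $i$ and $f_i(x_n)=0$ for $i>n$, the sequence $\{f_i\}$ tends to zero pointwise on $\operatorname{Span}\{x_n\}$, and equiboundedness promotes this to pointwise convergence to zero on all of $Z$. Your weak limit $y$ lies in $Z$ (a norm-closed subspace is weakly closed, and each $t_m\in Z$), so $\lim_i f_i(y)=0$, which contradicts $f_i(y)=\theta>0$ directly. With that substitution your $c)\Rightarrow a)$ closes cleanly and, as you observe, does not need essential positivity at all.
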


\begin{proof}$\hspace{0.44cm}\\$
\begin{enumerate}[label = \arabic*)]

    \item $a)\implies b)$: WLOG assume the bounded sequence $\{x_n\}$ is inside the closed unit ball. By reflexivity there exists $x$ with $\|x\|\leq 1$ such that $x$ is a clustered point of $\{x_n\}$ and hence also a clustered point of $\operatorname{conv}\{x_n\}$. Therefore, there exists a sequence $\{s_m\}\subseteq \operatorname{conv}\{x_n\}$ such that $\lim_m \|s_m - x\| = 0$. For each $s_m$ let $s_m = \sum_{1 \leq j \leq a_m}c_{m,j}x_j$ where each $c_{m, j}\in(0, 1)$ and $\sum_{1\leq j \leq a_m}c_{m, j} = 1$. For each $m, k \in \mathbb{N}$, define:
    
    $$
    d_{m,k} =
    \begin{cases}c_{m, j}, \hspace{1cm} k\in \big\{m+j-1\,\vert\, j\in \{1, \cdots, a_m\}\big\} \\
    0, \hspace{1.44cm} \text{otherwise}\\
    \end{cases}
    $$
    Then define a matrix $T = [d_{m, k}]_{m, k \in \mathbb{N}}$. Since each column of $T$ only has one non-zero value, the second condition in \textbf{Definition \ref{Definition 1.2}} is satisfied and obviously for each $m\in\mathbb{N}$, $\sum_{k\in\mathbb{N}}d_{m, k} = 1$, which implies that $T$ is essentially positive.

    \item $b)\implies c)$: Immediate.

    \item $c)\implies a)$: Suppose $T$ is the given regular and essentially positive matrix and the $T$-means $\{t_m\}$ converges to $x$ weakly. WLOG we assume $x \neq 0$. By \textbf{Hahn-Banach Theorem} because there exists $g_x \in X^*$ such that $g_x(x) = \|x\|$ we must have $\liminf\|x_n\| > 0$. Otherwise, we can find $\delta > 0$ such that because $\inf_{n \geq k}\|x_n\| < \delta$, we have: 
    
    $$
    2\delta < \|x\| = g_x(x) \leq \inf_{n \geq k}g_x(x_n) + \delta \leq 2\delta
    $$
    which leas to a contradiction.\\

    \noindent
    Because $\sup_m\sum_{n \in \mathbb{N}}\vert\,c_{m, n}\,\vert < \infty$, for each $\epsilon > 0$ we can find $N_1 \in\mathbb{N}$ such that $\sup_m\sum_{n > N_1}\vert\,c_{m, n}\,\vert < \epsilon$. Meanwhile, because $\forall\,n\in\mathbb{N}, c_{m, n} \overset{m \rightarrow \infty}{\longrightarrow} 0$, we can find $N_2\in\mathbb{N}$ such that $\|\sum_{n \leq N_1}c_{m,n}x_n\| < \epsilon$ for all $n\geq N_2$ (for $\{x_n\}$ being bounded). Now pick $f\in X^*_{\leq 1}$ such that $\|x\|\leq f(x)+\epsilon$. Then find $L \in \mathbb{N}$ bigger than $N_2$ such that:
    
    $$
    \begin{aligned}
    &\hspace{0.44cm} \|x\|\leq f(x)+\epsilon \leq f(t_L)+2\epsilon = f\left[ \sum_{n \in \mathbb{N}}c_{L, n}x_n \right] +2\epsilon \\
    & = f\left[ \sum_{n \leq N_1}c_{L, n}x_n \right]+f\left[ \sum_{n > N_1}c_{L, n}x_n \right] + 2\epsilon \leq f\left[ \sum_{n > N_1}c_{L, n}x_n \right] + 3\epsilon
    \end{aligned}
    $$
    Next we claim that the following statement is false:
    
    $$
    \forall\,n\in\mathbb{N}\, \exists\,n_1 \geq n \hspace{0.2cm}\text{such that}\hspace{0.2cm}\exists\,k \geq n_1\,\forall\,j \in \{n_1, n_1+1, n_1+2, \cdots, k\},\hspace{0.5cm} f \left( \sum_{n_1\leq i\leq k}c_{L,n}x_n \right) > \|x_j\|
    $$
    Assume by contradiction that it is true. Find $n_1 \geq N_1, k_1 \geq n_1$ such that: 
    
    $$
    f\left( \sum_{n_1\leq i\leq k_1}c_{L,n}x_n \right) > \|x_{n_1}\|+\epsilon
    $$
    Then find $n_2 \geq k_1+1, k_2 \geq n_2$ such that:
    
    $$
    f\left(\sum_{n_2\leq i\leq k_2}c_{L,n}x_n \right) > \|x_{n_1}\|+\epsilon
    $$B
    ecause $\liminf\|x_n\| > 0$ we can assume $\inf_{n \geq K}\|x_n\| = \delta > 0$ and by repeating the steps above for infinitely many times we will have:
    
    $$
    f \left( \sum_{i \geq n_1}c_{L,n}x_n \right) > \sum_{i \geq 1}\|x_{n_i}\| > \sum_{i \geq K}\inf_{n\geq K}\|x_n\| = \infty
    $$
    which is absurd. Hence we have the following statement:
    
    \begin{equation}\label{e2}
    \exists\,n_1\in\mathbb{N} \,\forall\,n \geq n_1 \hspace{0.2cm} \text{such that} \hspace{0.2cm} \forall\,k \geq n\,\exists\,j \in \{n, n+1, n+2, \cdots, k\}, \hspace{0.2cm} f\left( \sum_{n_1\leq i\leq k}c_{L,n}x_n \right) \leq \|x_j\|
    \end{equation}
    Now since $\sum_{n < N_1}c_{L, n}x_n$ converges we can find $M_1$ such that $\|\sum_{n > M_1}c_{L, n}x_n\| < \epsilon$. WLOG we can assume $N_1 \geq n_1$ where $n_1$ is given in (\ref{e2}) (since (\ref{e2}) is independent to the choice of $L$). Now find $j_1 \in \{N_1, N_1+1, \cdots, M_1\}$ such that: 
    
    $$
    f\left(\sum_{N_1\leq i\leq M_1}c_{L,n}x_n \right) \leq \|x_j\|
    $$
    and hence:
    
    \begin{equation}\label{e3}
    f \left( \sum_{n > N_1}c_{L,n}x_n \right) + 3\epsilon \leq \|x_{j_1}\| + f\left( \sum_{n > M_1}c_{L,n}x_n \right) +3\epsilon \leq \|x_{j_1}\|+4\epsilon
    \end{equation}
    Since $\epsilon > 0$ is arbitrarily small, for a smaller $\epsilon$ we can find another $N_1$ and the corresponding $j_1$ such that (\ref{e3}) holds. Hence there is a subsequence $\{x_{j_k}\}_{k \in \mathbb{N}}$ such that $\|x\|\leq \limsup_k\|x_{j_k}\| \leq \limsup_i\|x_i\|$ and hence we have $f(x) \leq \limsup_n f(x_n)\,\forall\,f \in X^*$ \\

    \noindent
    Again fix $\epsilon > 0. f \in X^*_{\leq 1}$ and find large $m_1$ such that $\sum_{n \in \mathbb{N}}c_{m_1, n} \in (1-\epsilon, 1+\epsilon)$ and $f(t_{m_1}) \in (f(x)-\epsilon, f(x)+\epsilon)$. Fix $N\in\mathbb{N}$ and find $m_2\in\mathbb{N}$ such that $\|\sum_{n \leq N}c_{m_2, n}x_n\| < \epsilon$ (because $\lim_m c_{m, n}=0$ for all $n\in\mathbb{N}$ and $\{x_n\}$ bounded) and then we have: 
    
    $$
    f\left(\sum_{n > N}c_{m_2,n}x_n \right) \in \big( f(x) - 2\epsilon, f(x) + 2\epsilon \big)
    $$
    Therefore for $m > \max(m_1, m_2)$ we have:
    
    $$
    f(x)+2\epsilon > f \left( \sum_{n > N}c_{m,n}x_n \right) \geq \inf_{n > N}f(x_n)\sum_{n > N}c_{m,n} > \inf_{n > N}f(x_n)(1-\epsilon)
    $$
    When $\epsilon$ is getting smaller, we need a bigger $N$ such that $\|\sum_{n \leq N}c_{m, n}x_n\| < \epsilon$. Hence we can conclude $f(x) \geq \liminf_n f(x_n)$. Therefore, we have $\liminf_n f(x_n) \leq f(x) \leq \limsup_n f(x_n)$ for all $f \in X^*_{\leq 1}$. We can now conclude the closed unit ball in $X$ is weakly compact and hence $X$ is reflexive.
 
\end{enumerate}
\end{proof}

\begin{cor}[\cite{14}]\label{Corollary 1.7}
A Banach space that possesses Banach-Saks Property is reflexive.
\end{cor}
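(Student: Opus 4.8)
The plan is to deduce the corollary directly from \textbf{Theorem \ref{Theorem 1.6}} by exhibiting a single universal, essentially positive, regular summability method that witnesses property $\mathcal{P}$ --- namely the arithmetic-mean (Cesàro) method. Concretely, I would let $T_0 = [c_{m,n}]_{m,n\in\mathbb N}$ be the matrix with $c_{m,n} = 1/m$ for $n \le m$ and $c_{m,n} = 0$ otherwise, so that the $T_0$-means of any sequence $(x_n)$ are exactly the averages $s_m = \tfrac1m\sum_{i\le m} x_i$ from the definition of the Banach--Saks property.

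First I would verify that $T_0$ is regular and essentially positive. Since every entry of $T_0$ is nonnegative and each row sums to $1$, one has $\sum_{n} |c_{m,n}| = \sum_n c_{m,n} = 1$ for every $m$; hence $\sup_m \sum_n |c_{m,n}| = 1 < \infty$, $\lim_m \sum_n c_{m,n} = 1$, and $\lim_m c_{m,n} = \lim_m \tfrac1m \mathbf 1_{\{n\le m\}} = 0$ for each fixed $n$. By the Silverman--Toeplitz Theorem these three conditions make $T_0$ regular, and the identity $\lim_m \sum_n|c_{m,n}| = 1$ is precisely essential positivity.

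Next I would argue that the Banach--Saks property forces $B$ to have property $\mathcal{P}$ with this same method $T_0$. Given any bounded sequence $(x_n)$, I pass to a subsequence $(x_{n_k})$ along which the arithmetic means converge in norm; the Banach--Saks hypothesis guarantees such a subsequence, since for a weakly convergent sequence the averages converge in norm to the weak limit. The averages of $(x_{n_k})$ are exactly its $T_0$-means, so $(x_{n_k})$ has strongly convergent $T_0$-means, establishing property $\mathcal{P}$ with the essentially positive regular method $T_0$. Condition (b) of \textbf{Theorem \ref{Theorem 1.6}} is then satisfied, and the theorem yields that $B$ is reflexive.

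The step I expect to carry the real weight is the extraction of the norm-averaging subsequence for an \emph{arbitrary} bounded sequence. The Banach--Saks property is phrased for weakly convergent sequences, so to apply it to a general bounded sequence one must first produce a weakly convergent subsequence, and the availability of such subsequences for \emph{all} bounded sequences is itself tantamount to weak sequential compactness of the unit ball, hence to reflexivity. The clean way around this apparent circularity is to read the Banach--Saks property in its usual subsequential form --- every bounded sequence \emph{admits} a subsequence whose arithmetic means converge in norm --- under which the preceding step is immediate and no prior weak compactness is invoked. The entire content of the corollary is then the observation that the arithmetic mean is a regular, essentially positive summability method, so that property $\mathcal{P}$ holds with it and \textbf{Theorem \ref{Theorem 1.6}} applies verbatim.
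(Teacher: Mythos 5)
Your proposal follows the paper's own route exactly: the paper's proof also takes the Ces\`aro matrix (entries $1/m$ on the first $m$ columns of row $m$), checks via Silverman--Toeplitz that it is regular and essentially positive, and invokes \textbf{Theorem \ref{Theorem 1.6}}. The circularity you flag in your last paragraph is real, and the paper's proof does not address it either: it asserts only that ``each weakly convergent (therefore bounded) sequence will have its $T$-means converging in norm,'' which does not establish property $\mathcal{P}$ (or $w\mathcal{P}$) for an \emph{arbitrary} bounded sequence. In fact, with the paper's literal definition of the Banach--Saks property (stated only for weakly convergent sequences) the corollary is false: $\ell^1$ has the Schur property, so every weakly convergent sequence in $\ell^1$ converges in norm and its arithmetic means converge in norm to the same limit, yet $\ell^1$ is not reflexive. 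So your decision to read the hypothesis in its subsequential form --- every bounded sequence admits a subsequence whose arithmetic means converge in norm --- is not merely a convenient reformulation but a necessary correction of the definition (and it is the form actually delivered by the proof of \textbf{Theorem \ref{Theorem 1.4}}, which extracts such a subsequence from a weakly null sequence). With that reading, your argument is complete and coincides with the intended one.
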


\begin{proof}
Define $T = [t_{i,j}]_{i,j\in\mathbb{N}}$ where $t_{i,j} = \dfrac{1}{i}$ when $i \leq j$ and zero otherwise. Then $T$ is a regular summable and essentially positive matrix and each weakly convergent (therefore bounded) sequence will have its $T$-means converging in norm. By \textbf{Theorem \ref{Theorem 1.6}} we have the space is reflexive.
\end{proof}

\begin{rem}
A counter-example to the converse of \textbf{Corollary \ref{Corollary 1.7}} can be found in \cite{1}.
\end{rem}

\begin{defn}
In a Banach space $X$ with a Schauder basis $\{x_n\}$, we call $\{x_n\}$ \textbf{boundedly complete} if $\|\sum_{i \leq n}a_ix_i\| < M\,\forall\, n \in \mathbb{N}$ for some fixed $M \in \mathbb{N}$ implies $\sum_{i \geq 1}a_ix_i$ converges.
\end{defn}

\begin{theorem}[\cite{14}]
A Schauder basis in a Banach space with property $w\mathcal{P}$ will be boundedly complete.
\end{theorem}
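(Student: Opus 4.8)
The plan is to use the coordinate functionals of the Schauder basis to identify the weak limit supplied by property $w\mathcal{P}$ with the putative sum of the series, thereby forcing norm convergence of the partial sums. Throughout, let $X$ be the given Banach space with property $w\mathcal{P}$ and Schauder basis $\{x_n\}$, and let $\{f_j\}$ denote the associated coordinate (biorthogonal) functionals, which are continuous and satisfy $f_j(x_i) = \delta_{ij}$ together with $z = \sum_j f_j(z)\,x_j$ for every $z \in X$.

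First I would set up the hypothesis of bounded completeness directly. Suppose $(a_i)$ are scalars with $\big\|\sum_{i \leq n} a_i x_i\big\| < M$ for all $n$, and write $y_n = \sum_{i \leq n} a_i x_i$ for the partial sums, so that $(y_n)$ is a bounded sequence in $X$. Applying property $w\mathcal{P}$ to $(y_n)$ yields a regular summability method $T = [c_{m,k}]$ and a subsequence $(y_{n_k})$ whose $T$-means $t_m = \sum_k c_{m,k}\,y_{n_k}$ converge weakly to some $y \in X$. These means are well defined in norm because regularity gives $\sup_m \sum_k |c_{m,k}| < \infty$ (condition (2) of the Silverman-Toeplitz Theorem) while $(y_{n_k})$ is bounded, so each series converges absolutely.

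The heart of the argument is the evaluation of each fixed coordinate functional on the $T$-means. Since $n_k \to \infty$, for every fixed $j$ there is an index $K_j$ with $n_k \geq j$ for all $k \geq K_j$; consequently $f_j(y_{n_k}) = a_j$ when $k \geq K_j$ and $f_j(y_{n_k}) = 0$ when $k < K_j$. Hence $f_j(t_m) = a_j \sum_{k \geq K_j} c_{m,k}$. Invoking the Silverman-Toeplitz characterization once more, condition (4) gives $\sum_k c_{m,k} \to 1$ and condition (3) gives $c_{m,k} \to 0$ for each $k$ as $m \to \infty$; since $\sum_{k < K_j} c_{m,k}$ is a finite sum of null terms, it vanishes in the limit, so $\sum_{k \geq K_j} c_{m,k} \to 1$ and therefore $f_j(t_m) \to a_j$. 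On the other hand, weak convergence $t_m \to y$ forces $f_j(t_m) \to f_j(y)$, so $f_j(y) = a_j$ for every $j$. The Schauder expansion of $y$ then reads $y = \sum_j f_j(y)\,x_j = \sum_j a_j x_j$, which is precisely the statement that the series $\sum_i a_i x_i$ converges. Thus $\{x_n\}$ is boundedly complete.

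The step requiring the most care is the passage from $f_j(t_m) = a_j \sum_{k \geq K_j} c_{m,k}$ to the limit $f_j(t_m) \to a_j$: one must be sure that the tail of the row-sums tends to $1$, and this is exactly what the combination of regularity conditions (3) and (4) provides, with the truncation error controlled because only finitely many indices $k < K_j$ are affected. By contrast, the remaining ingredients are automatic: the limit $y$ lies in $X$ because weak convergence takes place within $X$, and the continuity of the coordinate functionals lets us pass them through the weak limit without further hypotheses. I do not anticipate any genuine obstacle beyond correctly bookkeeping the index $K_j$ and the two limits in $m$.
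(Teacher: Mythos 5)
Your proof is correct and follows essentially the same route as the paper: both arguments compute the $j$-th coordinate of the $T$-means, observe that it equals $a_j$ times a tail row-sum $\sum_{k\geq K_j}c_{m,k}$ which tends to $1$ by the Silverman--Toeplitz conditions (3) and (4), and then identify the coefficients of the limit with the $a_j$ so that the Schauder expansion forces convergence of $\sum_i a_ix_i$. Your version is in fact slightly cleaner on two points the paper glosses over: you apply the coordinate functionals directly to the norm-convergent series defining $t_m$ instead of rearranging the double sum, and you correctly get by with only the weak convergence that property $w\mathcal{P}$ actually supplies.
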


\begin{proof}
Let $\{x_n\}$ be the Schauder basis of $X$ and $T = [c_{m, n}]_{m, n\in\mathbb{N}}$ be a regular summarble method. Suppose $\{v_n\}_n$ is a uniformly bounded by $M\in\mathbb{N}$ and that $v_n = \sum_{i\leq n} a_i x_i$ for each $n\in\mathbb{N}$. Then by \textbf{Theorem 1.9}, the $T$-means of a subsequence $\{v_n\}$, say $\{t_m\}$, converges to some $x$ in norm. Assume:

$$
x = \sum_{i\in\mathbb{N}}b_i x_i, \hspace{0.5cm} 
$$
and, by restricting to the proper subsequence given by \textbf{Theorem \ref{Theorem 1.6}}, we can write each $t_m$ in the following form:

$$
\begin{aligned}
& \hspace{1.02cm} t_m = \sum_{n\in\mathbb{N}}c_{m, n}v_n = \sum_{n\in\mathbb{N}} c_{m, n}\sum_{i\leq n}a_i x_i\\
&\implies\, t_m = \lim_n \sum_{k\leq n}c_{m, k}\sum_{i\leq k}a_i x_i = \lim_n \sum_{i\leq n} \left[ a_i \big( \sum_{i\leq k\leq n}c_{m, k}\big)\right] x_i\\
\end{aligned}
$$
Meanwhile, since the sequence $\Big\{ \sum_{n\in\mathbb{N}}c_{m, n}\Big\}_m$ is bounded and converges to $1$, then for each $m, n\in\mathbb{N}$, we have:

$$
\begin{aligned}
&\hspace{1.44cm} \left\| \sum_{i\leq n}\left[ a_i\left( \sum_{i\leq k \leq n}c_{m, k}\right) \right] x_i - \sum_{i\leq n}\left[ a_i \left( \sum_{k\geq i}c_{m, k}\right) \right] x_i \right\| = \left\| \sum_{i\leq n}\left[ a_i \left( \sum_{k>n}c_{m, k}\right) \right] x_i \right\|\\
&\hspace{1cm} = \left\vert\, \sum_{k>n}c_{m, k} \,\right\vert \left\| \sum_{i\leq n}a_ix_i\right\| \leq M \left\vert\, \sum_{k>n}c_{m, k}\,\right\vert \overset{n\rightarrow\infty}{\longrightarrow} 0\\
& \implies\, t_m = \lim_n \sum_{i\leq n}\left[ a_i\left( \sum_{i\leq k \leq n}c_{m, k}\right) \right]x_i = \lim_n \sum_{i\leq n}\left[ a_i\left( \sum_{k\geq i}c_{m, k}\right) \right]x_i = \sum_{i\in\mathbb{N}}\left[ a_i\left( \sum_{k\geq i}c_{m, k}\right) \right]x_i \overset{m\rightarrow\infty}{\longrightarrow} x\\
&\implies\, \forall\,i\in\mathbb{N}, \hspace{0.3cm} a_i\lim_m \sum_{k\geq i}c_{m, k} = b_i
\end{aligned}
$$
By \textbf{Definition \ref{Definition 1.2}}, we have that for each $n\in\mathbb{N}$, $\lim_m c_{m, n} = 0$. Therefore, for each $i\in\mathbb{N}$:

$$
\begin{aligned}
& \hspace{1cm} \lim_m \sum_{k<i}c_{m, k} = \sum_{k<i}\lim_m c_{m, k} = 0\\
&\implies\, a_i \lim_m\sum_{k\geq i}c_{m, k} =  a_i \lim_m\sum_{k\geq i}c_{m, k} + \lim_m a_i \sum_{k<i}c_{m, k} = a_i \lim_m \sum_{k\geq 1}c_{m, k} = b_i\\
&\implies\, a_i = b_i
\end{aligned}
$$
Hence $\sum_{i\in\mathbb{N}}a_ix_i$ converges.

\end{proof}

\section{Characterizations by dual space}

In this section, we will discover the equivalence between non-reflexivity of a Banach space and behaviors of bounded linear functionals and all main results are from \cite{9}. As an immediate consequence of {\cite[Goldstine's Theorem]{22}} (or one can refer to {\cite[Theorem $V.4.7$]{3}}), a Banach space is reflexive if and only if its closed unit ball is weakly compact, if and only if every bounded linear functional attains supremum on its closed unit ball. Besides, in a non-reflexive Banach space, we can see how much the weak topology differs from the norm topology in the dual space by showing there exists a weakly vanishing sequence of bounded linear functionals, whose convex hull's closure is positively distant away from zero.

\subsection{Main Theorems}

\begin{theorem}\label{Theorem 2.1}
Given $B$ a separable Banach space, the following statements are equivalent:

\begin{enumerate}[label = \alph*)]

    \item $B$ is not reflexive.
    \item If $\theta \in (0, 1)$ then there is a sequence $\{f_n\}\subseteq B^*_{\leq 1}$ such that $\|f\|\geq\theta$ for all $f \in \operatorname{conv}\{f_n\}$ and $(f_n)_{n\in \mathbb{N}}$ converges to zero pointwise.
    
    \item If $\theta\in (0, 1)$ and $\{\lambda_n\}\subseteq (0, 1)$ such that $\sum_{n \geq 1}\lambda_n = 1$ then there is $\alpha \in [\theta, 1]$ and $\{g_n\}\subseteq B^*_{\leq 1}$ converging pointwise to zero where $\|\sum_{n \geq 1}\lambda_n g_n\| = \alpha$ and for each $n$:
    
    $$
    \left\|\sum_{i \leq n}\lambda_i g_i \right\|\leq \alpha\left(1-\theta\sum_{i > n}\lambda_i\right).
    $$
    
    \item There is a $f \in B^*$ such that $f$ cannot obtain its maximum on the closed unit ball of $B$.
    
\end{enumerate}
\end{theorem}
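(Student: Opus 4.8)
The plan is to run the four equivalences as a cycle in which the two ``reverse'' implications $(d)\Rightarrow(a)$ and $(b)\Rightarrow(a)$ are soft, while the genuinely hard content is the forward construction $(a)\Rightarrow(c)$; statements $(d)$ and $(b)$ are then obtained as readouts of the calibrated system produced in $(c)$ (or, for $(b)$, of the same construction). For $(d)\Rightarrow(a)$ I would argue by contraposition: if $B$ is reflexive then, by Kakutani's theorem (equivalently the Goldstine/weak-compactness characterization quoted just before this theorem), $B_{\leq 1}$ is weakly compact, and every $f\in B^*$ is weakly continuous, so $f$ attains its supremum on $B_{\leq 1}$; hence a non-attaining $f$ forces non-reflexivity. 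For $(b)\Rightarrow(a)$ I would again contrapose and use Mazur's theorem: in a reflexive space the weak$^*$ and weak topologies on $B^*$ agree, so a pointwise-null (hence weak$^*$-null, hence weakly null) sequence $\{f_n\}$ would satisfy $0\in\overline{\operatorname{conv}}^{\,\|\cdot\|}\{f_n\}$, contradicting $\|f\|\geq\theta$ for every $f\in\operatorname{conv}\{f_n\}$.

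The heart of the argument is $(a)\Rightarrow(c)$. Fix $\theta\in(0,1)$ and weights $\{\lambda_n\}\subseteq(0,1)$ with $\sum_n\lambda_n=1$. Since $B$ is non-reflexive, $J(B)$ is a proper closed subspace of $B^{**}$, so Riesz's lemma produces $F\in B^{**}$ with $\|F\|=1$ and $\operatorname{dist}(F,J(B))$ as close to $1$ as we wish, in particular exceeding $\theta$. The engine is then an inductive construction of functionals $g_n\in B^*_{\leq 1}$: at each stage I would use the weak$^*$-density of $J(B_{\leq 1})$ in $B^{**}_{\leq 1}$ (Goldstine) together with a Helly-type simultaneous-solvability argument to choose the next $g_n$ so that $F$ still evaluates large against the whole weighted series while every initial segment $\sum_{i\leq n}\lambda_i g_i$ is held down to the prescribed bound $\alpha(1-\theta\sum_{i>n}\lambda_i)$. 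The scalar $\alpha=\|\sum_n\lambda_n g_n\|$ is then pinned down in the limit and lands in $[\theta,1]$, being bounded below by $F(\sum_n\lambda_n g_n)\geq\theta$ and above by $\sum_n\lambda_n\|g_n\|\leq 1$. Finally, pointwise convergence $g_n\to 0$ is forced by a diagonal/gliding-hump selection against a countable dense subset of $B$; this is precisely where separability of $B$ enters.

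With the construction in hand the remaining links are short. Read as producing a pointwise-null $\{f_n\}\subseteq B^*_{\leq 1}$ all of whose convex combinations have norm $\geq\theta$, it is exactly statement $(b)$. For $(c)\Rightarrow(d)$ I would set $f=\sum_n\lambda_n g_n$, so $\|f\|=\alpha$, and show $f$ is not attained: given $x\in B_{\leq 1}$, choose $N$ with $|g_i(x)|<\alpha\theta/2$ for all $i>N$ (possible since $g_i(x)\to 0$), split $f(x)$ at $N$, bound the head by $\|\sum_{i\leq N}\lambda_i g_i\|\leq\alpha(1-\theta\sum_{i>N}\lambda_i)$ and the tail by $(\alpha\theta/2)\sum_{i>N}\lambda_i$, and combine to obtain $f(x)\leq\alpha-(\alpha\theta/2)\sum_{i>N}\lambda_i<\alpha=\|f\|$. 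This closes the cycle $(a)\Rightarrow(c)\Rightarrow(d)\Rightarrow(a)$, while $(a)\Rightarrow(b)\Rightarrow(a)$ accounts for $(b)$.

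The main obstacle is unquestionably the inductive step in $(a)\Rightarrow(c)$: keeping the total mass $F(\sum_n\lambda_n g_n)$ large while simultaneously driving every partial sum below the sharp threshold $\alpha(1-\theta\sum_{i>n}\lambda_i)$ is a delicate balancing act, and it is the point at which non-reflexivity (encoded as $F\notin J(B)$ with distance bounded below) is actually consumed. The subtle issue is quantitative: one must ensure the constant $\theta$ survives the passage to the limit rather than degrading toward $0$, which I expect to require James's ``sup'' lemma, or an $\ell^1$-flavoured finite-dimensional perturbation estimate, to make the bookkeeping of the successive choices go through.
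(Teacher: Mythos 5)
Your cycle architecture and the soft implications are fine: $(d)\Rightarrow(a)$ by weak compactness of the ball, $(b)\Rightarrow(a)$ by Mazur (a legitimate shortcut the paper does not use, since it runs the single cycle $a\to b\to c\to d\to a$), and your $(c)\Rightarrow(d)$ computation is essentially identical to the paper's. The problem is that the entire load-bearing step, $(a)\Rightarrow(c)$, is not actually proved: you describe an inductive balancing act, correctly identify that the difficulty is keeping $\theta$ from degrading in the limit, and then defer to an unspecified ``James's sup lemma or an $\ell^1$-flavoured perturbation estimate.'' That is precisely the content that has to be supplied, so as written the proposal has a genuine gap at its center.

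The paper closes this gap by factoring $(a)\Rightarrow(c)$ through $(b)$, and the two halves are each much more tractable than the monolithic induction you sketch. First, $(a)\Rightarrow(b)$: take $F\in B^{\ast\ast}$, $\|F\|<1$, with $d[F,Q(B)]>\theta$, fix a dense sequence $\{x_n\}$ (this is where separability enters), and use Helly's condition (Theorem \ref{Theorem 1.16}) to produce, for each $n$, an $f_n\in B^*_{<1}$ with $F(f_n)=\theta$ and $f_n(x_i)=0$ for $i\leq n$; the solvability constant is exactly $\theta/d[F,Q(B)]<1$, every convex combination $f$ satisfies $\|f\|\geq|F(f)|=\theta$, and $f_n\to 0$ on the dense set hence everywhere. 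Second, $(b)\Rightarrow(c)$ is Lemma \ref{Lemma 2.5}, which is where your ``quantitative bookkeeping'' actually happens: setting $V_n=\operatorname{conv}\{f_i\}_{i\geq n}$ and
\[
\alpha_n=\inf\Bigl\{\Bigl\|\sum_{i<n}\lambda_i g_i+\bigl(\textstyle\sum_{i\geq n}\lambda_i\bigr)g\Bigr\| : g\in V_n\Bigr\},
\]
one chooses $g_n\in V_n$ nearly attaining the infimum (within a factor $1+\epsilon_n$, with $\sum_k\epsilon_k/\lambda_{k+1}<1-\theta$). The nesting $V_{n+1}\subseteq V_n$ makes $\alpha_n$ nondecreasing, so $\alpha=\sup_n\alpha_n\in[\theta,1]$ exists and equals $\|\sum\lambda_i g_i\|$, and a telescoping estimate on the partial sums yields exactly the bound $\|\sum_{i\leq n}\lambda_i g_i\|\leq\alpha(1-\theta\sum_{i>n}\lambda_i)$. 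It is this monotone-infimum mechanism, not a direct simultaneous Helly solve at each stage, that makes $\theta$ survive the limit; without it (or an equivalent substitute) your $(a)\Rightarrow(c)$ does not go through.
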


\begin{theorem}\label{Theorem 2.2}
\textup{\textbf{Theorem \ref{Theorem 2.1}}} will also be true when $B$ is not separable.
\end{theorem}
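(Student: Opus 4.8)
The plan is to deduce the non-separable case from \textbf{Theorem \ref{Theorem 2.1}} by localizing non-reflexivity to a separable subspace, while isolating the single feature of the separable proof---weak-$*$ sequential compactness of the dual ball---that genuinely fails without separability. First I would record that the implications of \textbf{Theorem \ref{Theorem 2.1}} which do \emph{not} construct a countable family are purely topological and transfer verbatim. Indeed, $(b)\implies(a)$ follows from Mazur's theorem: if $B$ were reflexive then $B^{*}$ is reflexive, so weak-$*$ and weak topologies agree on $B^{*}$, the weak-$*$ null sequence $\{f_n\}$ converges weakly to $0$, and hence $0\in\overline{\operatorname{conv}}^{\,\|\cdot\|}\{f_n\}$, contradicting $\|f\|\geq\theta$ on $\operatorname{conv}\{f_n\}$ and its norm closure. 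Likewise $(d)\implies(a)$ is the easy half of the norm-attainment criterion (a weakly continuous functional attains its supremum on a weakly compact ball), and the links among $(b),(c),(d)$ that are proved by direct norm estimates use no separability. Thus it remains to produce, for non-separable non-reflexive $B$, the objects of $(b)$, $(c)$ and $(d)$.

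For the localization I would invoke the Eberlein--\v{S}mulian theorem. Non-reflexivity means $B_{\leq 1}$ is not weakly compact, hence not weakly sequentially compact, so there is $\{x_n\}\subseteq B_{\leq 1}$ with no weakly convergent subsequence. Put $Y=\overline{\operatorname{span}}\{x_n\}$, which is separable; since every $g\in Y^{*}$ extends to $B^{*}$ and every $f\in B^{*}$ restricts to $Y^{*}$, weak convergence of a sequence inside $Y$ is the same computed in $Y$ or in $B$, so $\{x_n\}$ still has no weakly convergent subsequence in $Y$, and $Y$ is a separable \emph{non-reflexive} subspace. Applying \textbf{Theorem \ref{Theorem 2.1}} to $Y$ furnishes the required families in $Y^{*}$, including a functional $h\in Y^{*}$ of norm one that does not attain its norm on $Y_{\leq 1}$. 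Each such $h$ extends by Hahn--Banach to $f\in B^{*}$ with $\|f\|=\|h\|$, and because $\|f|_{Y}\|\leq\|f\|$, every convex-hull lower bound and the value $\alpha$ of statement $(c)$ are preserved when we pass from $Y$ to $B$.

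The main obstacle is the clause in $(b)$ and $(c)$ that the functionals converge to zero \emph{pointwise on all of} $B$. The extensions above are controlled only on the separable subspace $Y$, which is not dense, and a bounded sequence that is weak-$*$ null on $Y$ need not be weak-$*$ null on $B$; this is exactly where the separable argument used metrizability of $B^{*}_{\leq 1}$ in the weak-$*$ topology to extract a weak-$*$ null subsequence, a tool unavailable here since for non-separable $B$ the dual ball is weak-$*$ compact but not weak-$*$ sequentially compact. I would therefore not extend a fixed family, but build the sequence inside $B^{*}$ itself: first secure $(d)$ for $B$---a functional failing to attain its norm on the \emph{entire} ball $B_{\leq 1}$---and then run the construction of $(d)\implies(c)\implies(b)$ from the proof of \textbf{Theorem \ref{Theorem 2.1}}, which references only the non-attaining functional together with a norming sequence of unit vectors of $B$, never a countable dense set. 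Because that functional fails to attain on all of $B_{\leq 1}$, I expect the resulting increments $g_n$ to be genuinely weak-$*$ null on $B$, with the convex-hull and partial-sum estimates emerging from the construction as in the separable case.

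This shifts the entire difficulty onto $(a)\implies(d)$ for non-separable $B$: that a non-reflexive Banach space carries a functional not attaining its supremum on the unit ball, which is James's theorem in full generality. Here the naive reduction genuinely fails, since a non-attaining $h\in Y^{*}$ may admit only extensions that attain their norm somewhere in $B_{\leq 1}\setminus Y_{\leq 1}$. I would therefore either invoke James's theorem directly in the non-separable setting, where it remains valid (cf.\ \cite{9}), or, to keep the treatment self-contained, reprove the non-attainment step by James's recursive perturbation method, which constructs the norming sequence and the supporting functionals by a direct optimization in $B$ and $B^{*}$ and does not rely on weak-$*$ sequential compactness. I expect this James-type construction to be the hardest and most technical part of the whole argument; once $(d)$ holds for $B$, the remaining implications close the cycle precisely as in the separable proof.
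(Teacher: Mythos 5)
Your reduction to a separable non-reflexive subspace via Eberlein--\v{S}mulian is exactly the paper's first move in $1)\implies 2)$, and your observation that the remaining implications $2)\implies 3)\implies 4)\implies 1)$ involve no essential use of separability also matches the paper (it reuses \textbf{Lemma \ref{Lemma 2.7}} for $2)\implies 3)$, since pointwise convergence to zero gives $L(\{f_n\})=\{0\}$, and gives a direct estimate for $3)\implies 4)$). You have also correctly isolated the real difficulty: a norm-preserving extension of the family $\{f_n\}\subseteq X^{*}$ keeps the lower bound $\|f\|\geq\theta$ on $\operatorname{conv}\{f_n\}$ but gives no control of $f_n(b)$ for $b\notin X$, and weak-$*$ sequential compactness of $B^{*}_{\leq 1}$ is unavailable. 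The paper resolves this by choosing the extensions to vanish off the separable subspace (it extends each $f_n$ by setting $f_n=0$ on $B\backslash X$), so that pointwise convergence on all of $B$ is immediate; this step is itself delicate, since prescribing an extension to vanish off a closed subspace amounts to extending by zero along a complement, which is not available for an arbitrary closed subspace, so if anything needs repair it is this extension, not the reduction itself.

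Where your proposal genuinely breaks down is the alternative route through $4)$. You propose to first secure, for non-separable $B$, a functional that does not attain its norm on $B_{\leq 1}$, and then ``run $(d)\implies(c)\implies(b)$.'' First, the paper's cycle runs $1)\implies 2)\implies 3)\implies 4)\implies 1)$; there is no construction in the paper (nor an obvious one) that manufactures, from a single non-attaining functional, a pointwise-null sequence in $B^{*}_{\leq 1}$ whose convex hull is uniformly bounded away from zero, so the implications you intend to ``run'' are not available. Second, $1)\implies 4)$ for non-separable $B$ is exactly the hard half of James's theorem in full generality, i.e.\ it is part of the very content of \textbf{Theorem \ref{Theorem 2.2}}; invoking it from \cite{9} --- the source whose proof is being reproduced --- is circular, and the offer to ``reprove the non-attainment step by James's recursive perturbation method'' defers rather than supplies the argument. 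The workable path is the one you started on: keep the separable reduction for $1)\implies 2)$ and address the extension problem directly, then let \textbf{Lemma \ref{Lemma 2.7}} and the $3)\implies 4)$ estimate close the cycle.
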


\begin{theorem}\label{Theorem 2.3}
If $X$ is a separable bounded weakly closed subset of a complete LCTVS, then the following are equivalent:

\begin{enumerate}
    \item $X$ is not weakly compact.
    
    \item There is a $\theta > 0$ and $\{f_n\}\subseteq B^{\ast}_{\leq 1}$ such that $\|f\|\geq\theta$ for all $f \in \operatorname{conv}\{f_n\}$ and $\{f_n\}$ converge to zero pointwise on $X$.
    
    \item There exists $\theta > 0$ such that, if $\{\lambda_n\}\subseteq (0, 1)$ is a sequence with $\sum_{n \geq 1}\lambda_n = 1$, then there is $\alpha \geq \theta$ and $\{g_n\}\subseteq B^{\ast}_{\leq 1}$ converging to zero pointwise on $X$ such that:

    $$
    \left\| \sum_{n\geq 1}\lambda_n g_n\right\| = \alpha
    $$
    and, for each $x\in X$ and for each $n\in\mathbb{N}$:
    
    $$
    \left\| \sum_{i \leq n}\lambda_i g_i(x)\right\| < \alpha \left( 1 - \theta\sum_{i > n}\lambda \right).
    $$
    \item There exists $f\in X^*$ such that $f$ cannot attain $\|f\|$ on the closed unit ball of $X$.
    
\end{enumerate}
\end{theorem}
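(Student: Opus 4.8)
The plan is to prove the four statements equivalent by establishing the cycle $(4)\Rightarrow(1)\Rightarrow(2)\Rightarrow(3)\Rightarrow(4)$, following the proof of \textbf{Theorem \ref{Theorem 2.1}} with the closed unit ball of $B$ replaced by the set $X$, ``reflexive'' replaced by ``weakly compact'', and the dual norm replaced by the sup-seminorm $\|f\|:=\sup_{x\in X}|f(x)|$ on the continuous dual of the ambient LCTVS (finite because $X$ is bounded). I read the ``unit ball of $X$'' in $(4)$ as $X$ itself, and ``$f$ attains $\|f\|$'' as the attainment of $\sup_{x\in X}f(x)$ at some point of $X$.

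First, $(4)\Rightarrow(1)$ is immediate: every continuous linear functional is weakly continuous, so if $X$ were weakly compact then each $f$ would attain $\sup_{x\in X}f(x)$ on $X$ by compactness; the failure of attainment in $(4)$ therefore forces $X$ to be non-weakly-compact. The heart of the argument is $(1)\Rightarrow(2)$, the James-type construction. Here I would use the separability of $X$ together with the failure of weak compactness to extract a sequence in $X$ with no weak cluster point lying in $X$, and then run the peaking/perturbation scheme exactly as in the proof of $(a)\Rightarrow(b)$ of \textbf{Theorem \ref{Theorem 2.1}}. Completeness of the ambient space is used precisely where infinite convex series of functionals must be summed and their sup-seminorms controlled, while separability is what permits the construction to proceed with sequences rather than nets. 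The output is a sequence $\{f_n\}$ in the sup-unit-ball, converging to $0$ pointwise on $X$, with $\inf_{f\in\operatorname{conv}\{f_n\}}\|f\|\ge\theta$ for some $\theta>0$.

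Then $(2)\Rightarrow(3)$ and $(3)\Rightarrow(4)$ are the quantitative refinements, transcribed from the corresponding steps of \textbf{Theorem \ref{Theorem 2.1}}. For $(2)\Rightarrow(3)$, given $\{\lambda_n\}$ I would replace the $f_n$ by suitable tail-averages $g_n$ and calibrate a constant $\alpha\in[\theta,1]$ so that $\|\sum_n\lambda_n g_n\|=\alpha$ while each partial sum obeys the strict bound $\|\sum_{i\le n}\lambda_i g_i\|<\alpha(1-\theta\sum_{i>n}\lambda_i)$; convexity together with the lower bound furnished by $(2)$ drives these estimates. For $(3)\Rightarrow(4)$, set $f=\sum_n\lambda_n g_n$, so that $\|f\|=\alpha$; for a fixed $x\in X$ I would split $f(x)$ into a head and a tail, bound the head by the strict partial-sum estimate and the tail using $g_i(x)\to 0$, and conclude $f(x)<\alpha$ for every $x\in X$, i.e. $f$ does not attain its supremum on $X$. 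Composing with $(4)\Rightarrow(1)$ then closes the cycle.

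The step I expect to be the main obstacle is $(1)\Rightarrow(2)$: making James' extraction and peaking construction go through for a bounded weakly closed set in a complete LCTVS rather than for the unit ball of a separable Banach space. In particular, the role played in \textbf{Theorem \ref{Theorem 2.1}} by a bidual element that witnesses non-reflexivity must now be replaced cleanly by the absence of a weak cluster point, inside $X$, of a chosen sequence, and one must check that every summation and limiting step remains valid under mere local convexity plus completeness, with all norms interpreted as the sup-seminorm over $X$.
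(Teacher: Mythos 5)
Your cycle $(4)\Rightarrow(1)\Rightarrow(2)\Rightarrow(3)\Rightarrow(4)$ matches the paper's, and your treatment of $(2)\Rightarrow(3)$ (via the analogue of \textbf{Lemma \ref{Lemma 2.5}}/\textbf{Lemma \ref{Lemma 2.7}}, checking that the $\alpha_n$ remain bounded because $X$ is bounded), of $(3)\Rightarrow(4)$ (head/tail splitting using $g_i(x)\to 0$), and of $(4)\Rightarrow(1)$ (weak continuity of $f$ plus compactness) is essentially what the paper does. The problem is the step you yourself flag as the main obstacle, $(1)\Rightarrow(2)$: you leave it unresolved, and the mechanism you sketch for it does not connect to anything proved earlier. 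The proof of $1)\Rightarrow 2)$ in \textbf{Theorem \ref{Theorem 2.1}} is not a ``peaking/perturbation scheme'' driven by a sequence without weak cluster points; it is driven by a bidual element $F$ with $d[F,Q(B)]>\theta$ together with \textbf{Helly's condition} (\textbf{Theorem \ref{Theorem 1.16}}), which produces $f_n$ with $F(f_n)$ bounded below and $f_n(x_i)=0$ for $i\le n$. Saying you will ``run the scheme exactly as in Theorem 2.1'' after replacing the bidual witness by the absence of a weak cluster point is precisely the part that needs an argument, and none is given.

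The paper closes this gap in two moves you omit. First, \textbf{Remark \ref{Remark 2.8}} reduces the complete-LCTVS hypothesis to the case of a separable bounded weakly closed subset of a genuine Banach space, so that dual norms, Helly's condition, and the distance $d[F,Q(\cdot)]$ all make literal sense; your alternative of working intrinsically with the sup-seminorm $\sup_{x\in X}|f(x)|$ would force you to re-derive \textbf{Theorem \ref{Theorem 1.16}} and the Hahn--Banach extractions in that seminormed setting, which you only assert ``remains valid.'' Second, inside the Banach space one sets $Y=\overline{\operatorname{Span}X}$, observes that $Y$ cannot be reflexive (else $X$, being bounded and weakly closed, would be weakly compact), picks $F\in Y^{\ast\ast}$ with $d[F,Q(Y)]>\delta>0$, and applies Helly's condition to a countable dense subset $\{x_n\}$ \emph{of $X$} (not of the unit ball of $Y$) to get $f_n$ with $F(f_n)=\delta$, $\|f_n\|<1$, and $f_n(x_i)=0$ for $i\le n$; density in $X$ plus boundedness of $X$ then gives pointwise convergence to zero on $X$, and evaluating $F$ on a convex combination gives the uniform lower bound $\theta=\delta/\|F\|$. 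Without these two steps your argument for $(1)\Rightarrow(2)$ does not go through, so the proposal as written has a genuine gap at its central step.
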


\begin{theorem}\label{Theorem 2.4}
\textup{\textbf{Theorem \ref{Theorem 2.3}}} will also be true when $X$ is not separable.
\end{theorem}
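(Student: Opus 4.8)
The plan is to deduce Theorem \ref{Theorem 2.4} from Theorem \ref{Theorem 2.3} by exhibiting, inside any non-weakly-compact $X$, a \emph{separable} bounded weakly closed subset that is itself not weakly compact, and then transporting the conclusions back to $X$. First I would observe that three of the links are insensitive to separability and so survive unchanged. The implication $(2)\Rightarrow(1)$ is the double-limit (Grothendieck-type) argument: a uniformly bounded family of functionals that vanishes pointwise on $X$ yet stays bounded away from $0$ on $\operatorname{conv}\{f_n\}$ obstructs weak compactness; $(3)$ is only a quantitative refinement of $(2)$, so $(3)\Rightarrow(1)$ the same way; and $(4)\Rightarrow(1)$ is the elementary remark that a (weakly continuous) linear functional attains its supremum on a weakly compact set. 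None of these uses separability, so the entire content of Theorem \ref{Theorem 2.4} is confined to the forward implications $(1)\Rightarrow(2)$, $(1)\Rightarrow(3)$ and $(1)\Rightarrow(4)$, all of which I would handle by the same reduction.

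For the forward direction, suppose $X$ is not weakly compact. By the Eberlein--\v{S}mulian theorem there is a sequence $(x_n)\subseteq X$ with no weak cluster point in $X$. Let $Y$ be the weak closure of $\{x_n\}$. Since $X$ is weakly closed, $Y\subseteq X$; since $Y$ is contained in the norm-closed convex hull $\overline{\operatorname{conv}}^{\,\|\cdot\|}\{x_n\}$, it is norm-separable; and $Y$ is bounded, weakly closed, and not weakly compact, for a weak cluster point of $(x_n)$ in $Y$ would be one in $X$. Thus $Y$ satisfies the hypotheses of Theorem \ref{Theorem 2.3}, which yields $\theta>0$ and a sequence $\{f_n\}\subseteq B^{\ast}_{\leq 1}$ that vanishes pointwise on $Y$ with $\|f\|\geq\theta$ for every $f\in\operatorname{conv}\{f_n\}$ (together with the sharpened partial-sum estimates needed for $(3)$, and a non-supporting functional for $(4)$). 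The norm inequality lives in $B^{\ast}$ and is untouched by the reduction, so it already holds as required for $X$.

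The main obstacle is the final transfer: the functionals are only known to vanish pointwise on $Y$, whereas $(2)$ for $X$ requires them to vanish pointwise on all of $X$ (and $(4)$ requires non-attainment over all of $X$). Here the uniform bound $\|f_n\|\leq 1$ is exactly what propagates convergence from a norm-dense countable subset of $Y$ to its closure, since $f_n(d)\to 0$ and $\|x-d\|$ small force $\limsup_n|f_n(x)|\leq\|x-d\|$; this secures the conclusion throughout the separable piece but says nothing about points of $X\setminus Y$, which may be norm-far from $Y$. I expect this to be the crux, and I would attack it by not fixing $Y$ in advance: running James' selection of the supporting functionals from $(x_n)$ and, whenever the functional constructed so far has its supremum over $X$ attained outside the current separable span, absorbing that maximizer into the sequence and continuing, so that the separable subspace one works in is closed under the optimization the construction performs. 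If this enlargement stabilizes after countably many steps — which the staircase estimates of $(3)$ should force, as they pin the relevant supremum to the tail of $(x_n)$ — then the resulting $\{f_n\}$ lie in the dual of a fixed separable subspace yet witness $(2)$, $(3)$ and $(4)$ for the whole of $X$, and the separability-free reverse implications close the cycle.
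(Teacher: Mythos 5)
Your overall architecture coincides with the paper's: the reverse implications $(2)\Rightarrow(1)$, $(3)\Rightarrow(1)$ and $(4)\Rightarrow(1)$ are indeed separability-free, and for the forward direction the paper likewise extracts, via Eberlein--\v{S}mulian, a sequence $(x_n)\subseteq X$ with no weak cluster point and works inside a separable bounded weakly closed piece (after first invoking \textbf{Remark \ref{Remark 2.8}} to reduce the complete-LCTVS setting to a Banach space, a reduction your proposal omits but which is routine). Your verification that the weak closure $Y$ of $\{x_n\}$ is norm-separable, bounded, weakly closed and not weakly compact is correct.

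The genuine gap is the one you yourself flag and do not close: the transfer from $Y$ to $X$. Condition $(2)$ demands that the $f_n$ vanish at \emph{every} point of the non-separable set $X$, and condition $(4)$ demands non-attainment of the supremum over all of $X$; the functionals produced by \textbf{Theorem \ref{Theorem 2.3}} applied to $Y$ are controlled only on $\overline{\operatorname{Span}(Y)}$, and an arbitrary Hahn--Banach extension gives no information at points of $X$ far from that subspace. Your proposed repair --- re-running the selection and ``absorbing'' into the separable piece any point of $X$ at which the functional built so far misbehaves --- is not a proof: the functionals in question need not attain their suprema (that is precisely the content of $(4)$), so there is no maximizer to absorb; one would instead have to adjoin countably many near-maximizers for each of countably many convex combinations at each stage, and nothing in the partial-sum estimates of $(3)$ is shown to force this enlargement to stabilize, nor would stabilization by itself make the limiting functionals vanish off the final separable subspace. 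The paper's own mechanism is different: in the proof of \textbf{Theorem \ref{Theorem 2.2}} (which \textbf{Theorem \ref{Theorem 2.4}} cites) each $f_n$ is extended so as to vanish off the closed span of the separable piece, and --- more to the point --- \textbf{Lemma \ref{Lemma 2.7}} exists precisely to replace ``pointwise vanishing'' by the weaker hypothesis $\|f-\omega\|\geq\theta$ for all $\omega\in L(\{f_n\})$, which is what actually survives the passage to the non-separable case and drives $(2)\Rightarrow(3)\Rightarrow(4)$ there. Your plan never engages with the $L(\{f_n\})$ machinery, and without it the forward implications are not established.
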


\subsection{Important Lemmas}

\begin{lem}\label{Lemma 2.5}

Given a Banach space $B, \theta \in (0, 1)$ and $\{f_n\}\subseteq B^*_{\leq 1}$ with $\|f\|\geq\theta$ for all $f \in \operatorname{conv}\{f_n\}$, if $\{\lambda_n\}\subseteq (0, 1)$ satisfy $\sum_{n \geq 1}\lambda_n = 1$ then we can find $\alpha \in [\theta, 1]$ and $\{g_n\}\subseteq B^*$ where, for each $k \in \mathbb{N}, g_k \in V_k(\{f_n\}) = \operatorname{conv}\{f_i\}_{i \geq k}$ such that: 

$$
\left\| \sum_{k \geq 1}\lambda_k g_k \right\| = \alpha
$$
and for each $n\in \mathbb{N}$: 

$$
\left\| \sum_{i \leq n}\lambda_i g_i \right\| < \alpha\left( 1-\theta\sum_{i > n}\lambda \right)
$$

\end{lem}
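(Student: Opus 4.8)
The plan is to produce a single norm-one functional that simultaneously controls the tail and norms every partial sum, and then to read both displayed inequalities off from it.

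First I would fix the constant. Set $\mu_n=\sum_{i>n}\lambda_i$ (so $\mu_0=1$ and $\mu_n\downarrow 0$), and once the $g_k$ are chosen put $\alpha=\big\|\sum_{k\ge 1}\lambda_k g_k\big\|$. Since each $g_k$ will lie in $V_k(\{f_n\})\subseteq B^*_{\le 1}$ and $\sum_k\lambda_k=1$, the series converges with $\alpha\le 1$, while the infinite convex combination $\sum_k\lambda_k g_k$ lies in $\overline{\operatorname{conv}}\{f_n\}$, on which $\|\cdot\|\ge\theta$ by continuity; hence $\alpha\in[\theta,1]$ and the first identity is automatic. I also record the tail estimate that $\tfrac1{\mu_n}\sum_{k>n}\lambda_k g_k$ is a convex combination of elements of $V_{n+1}\subseteq\operatorname{conv}\{f_n\}$, so it has norm at least $\theta$.

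The main device is a uniform separating functional. Because every element of $\operatorname{conv}\{f_n\}$ has norm at least $\theta$, the origin is not in the (weak-$*$) closed convex hull of $\{f_n\}$, so Hahn--Banach separation (the supporting hyperplane at the point nearest $0$) gives $\psi\in B^{**}$ with $\|\psi\|=1$ and $\psi(f)\ge\theta$ for all $f\in\operatorname{conv}\{f_n\}$; in particular $\psi(h)\ge\theta$ for every $h\in V_k$ and every $k$. Writing $S=\sum_k\lambda_k g_k$, $s_n=\sum_{i\le n}\lambda_i g_i$ and $R_n=S-s_n$, this already yields $\psi(R_n)=\sum_{k>n}\lambda_k\psi(g_k)\ge\theta\mu_n$, and therefore $\psi(s_n)=\psi(S)-\psi(R_n)\le\|S\|-\theta\mu_n=\alpha-\theta\mu_n\le\alpha(1-\theta\mu_n)$, the last step because $\alpha\le 1$. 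Thus the second displayed inequality will follow the moment $\psi$ norms the partial sums, i.e. $\|s_n\|=\psi(s_n)$ for all $n$.

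To secure $\|s_n\|=\psi(s_n)$ I would choose each $g_k$ in the face $\{h:\psi(h)=\|h\|\}$ of $V_k$: if $\psi(g_k)=\|g_k\|$ for every $k$, then for each $n$ the triangle inequality and $\|\psi\|=1$ give $\psi(s_n)\le\|s_n\|\le\sum_{i\le n}\lambda_i\|g_i\|=\sum_{i\le n}\lambda_i\psi(g_i)=\psi(s_n)$, forcing equality throughout and hence $\|s_n\|=\psi(s_n)\le\alpha(1-\theta\mu_n)$ for all $n$ at once. The hard part, which I expect to be the real obstacle, is precisely that this exposed face need not meet the tail sets $V_k$. I would get around it by taking $g_k$ to be (an approximation of) the point of $\overline{V_k}$ nearest the origin, which is normed by a supporting functional, and by arranging $\psi$ to be compatible with these nested nearest points through a weak-$*$ cluster point, choosing the approximations with a summable family of errors. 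Finally, strictness is recovered from the nonnegative slack $\alpha(1-\theta\mu_n)-(\alpha-\theta\mu_n)=\theta\mu_n(1-\alpha)$ together with the fact that $\|h\|>\theta$ can be enforced along the construction, so that no partial sum meets the bound with equality.
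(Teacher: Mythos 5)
Your reduction is partially sound: separating $0$ from the \emph{norm}-closed convex hull of $\{f_n\}$ (not the weak-$*$ closed hull, which in the intended applications does contain $0$, since there the $f_n$ vanish pointwise) does give $\psi\in B^{**}$ with $\|\psi\|=1$ and $\psi\geq\theta$ on $\operatorname{conv}\{f_n\}$, and it is true that if in addition $\psi(g_k)=\|g_k\|$ for every $k$, then with $\mu_n=\sum_{i>n}\lambda_i$ one gets $\|s_n\|=\psi(s_n)\leq\alpha-\theta\mu_n\leq\alpha(1-\theta\mu_n)$. The gap is that the existence of a single $\psi$ exactly norming some $g_k\in V_k$ for \emph{every} $k$ is where the entire difficulty of the lemma lives, and it can fail outright. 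Take $B=B^*=\mathbb{R}^2$ with the Euclidean norm and $f_n=(\cos\phi_n,\sin\phi_n)$ with $\phi_n\downarrow 0$: a norm-one $\psi$ satisfies $\psi(h)=\|h\|$ only when $h$ is a nonnegative multiple of $\psi$, so a common norming functional forces all $g_k$ to share one direction; but every element of $V_k=\operatorname{conv}\{f_i\}_{i\geq k}$ has argument in $(0,\phi_k]$, so that common direction would need argument in $\bigcap_k(0,\phi_k]=\emptyset$. The proposed repairs do not close this: nearest points of $\overline{V_k}$ need not exist in a non-reflexive space; when they exist they typically lie in $\overline{V_k}\setminus V_k$, whereas the lemma requires $g_k\in\operatorname{conv}\{f_i\}_{i\geq k}$ itself; their supporting functionals vary with $k$, and a weak-$*$ cluster point of them has no reason to norm the earlier $g_j$; and if you relax to $\psi(g_k)\geq\|g_k\|-\delta_k$ you only obtain $\|s_n\|\leq\alpha-\theta\mu_n+\sum_{i\leq n}\lambda_i\delta_i$, which beats $\alpha(1-\theta\mu_n)$ only when $\sum_{i\leq n}\lambda_i\delta_i<\theta(1-\alpha)\mu_n$ --- impossible if $\alpha=1$ and circular otherwise, since $\alpha$ is determined only after the $g_k$ are fixed. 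For the same reason your strictness argument has no slack left when $\alpha=1$ unless the separation constant strictly exceeds $\theta$, which the hypothesis does not guarantee.

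The paper avoids norming functionals entirely and argues variationally: having chosen $g_1,\dots,g_{n-1}$, it sets $\alpha_n=\inf\big\{\big\|\sum_{i<n}\lambda_i g_i+(\sum_{i\geq n}\lambda_i)g\big\|:g\in V_n\big\}$ and picks $g_n\in V_n$ attaining this within a factor $1+\epsilon_n$, the $\epsilon_n$ being fixed in advance so that $\sum_k \lambda_k\epsilon_k\big/\big(\sum_{i>k}\lambda_i\sum_{i\geq k}\lambda_i\big)<1-\theta$. Nestedness of the $V_n$ makes $\alpha_n$ nondecreasing with limit $\alpha=\|\sum_k\lambda_k g_k\|\in[\theta,1]$, and writing $\sum_{i\leq n}\lambda_i g_i$ as a convex combination of $\sum_{i<n}\lambda_i g_i+(\sum_{i\geq n}\lambda_i)g_n$ and $\sum_{i<n}\lambda_i g_i$ sets up a recursion whose main term telescopes to $(\sum_{i>n}\lambda_i)^{-1}-1$ and whose accumulated error is absorbed by the $(1-\theta)$ budget, producing exactly the bound $\alpha(1-\theta\sum_{i>n}\lambda_i)$. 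To salvage your idea you would have to replace exact norming by this kind of quantitative near-minimality; as written, the central existence claim is unsupported and false in general.
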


\begin{proof}

For each $\lambda_k$ find $\epsilon_k\in(0, 1)$ such that:

$$
\sum_{k\geq 1}\frac{\epsilon_k}{\lambda_{k+1}} < 1-\theta
$$
so that:

\begin{equation}\label{e5}
\sum_{k \geq 1}\frac{\lambda_k\epsilon_k}{\sum_{i > k}\lambda_i\sum_{i \geq k}\lambda_i} < \sum_{k \geq 1} \frac{\lambda_k\epsilon_k}{\lambda_k \lambda_{k+1}} = \sum_{k\geq 1}\frac{\epsilon_k}{\lambda_{k+1}} < 1-\theta
\end{equation}
Define:

$$
\alpha_n = \inf \left\{ \left\|\sum_{i < n}\lambda_i g_i + \left( \sum_{i \geq n}\lambda_i \right)g \right\| \,\vert\, g \in V_n(\{f_i\}) \right\}
$$
and then find $g_n \in V_n(\{f_i\})$ such that:

$$
\left\| \sum_{i < n}\lambda_i g_i+\left( \sum_{i \geq n}\lambda_i \right)g_n \right\| < \alpha_n(1+\epsilon_n)
$$
Since $V_{n+1}(\{f_i\}) \subseteq V_n(\{f_i\})$, for each $g\in V_{n+1}(\{f_i\})$, we have:

$$
\begin{aligned}
&\hspace{1cm}  \dfrac{\lambda_n}{\sum_{i\geq n}\lambda_i}g_n + \dfrac{\sum_{i>n}\lambda_i}{\sum_{i\geq n}\lambda_i}g \in V_n\\
&\implies\, \left\{ \lambda_n g_n + \sum_{i > n}\lambda_i g: g \in V_{n+1}(\{f_i\}) \right\} \subseteq \left\{ \sum_{i \geq n}\lambda_i g: g \in V_n(\{f_i\}) \right \}
\end{aligned}
$$
Hence we have $\alpha_n$ is non-decreasing. Since $\{f_i\}\subseteq B^*_{\leq 1}$ we have $\alpha_n \leq 1$ for all $n \in \mathbb{N}$ and hence $\alpha_n \uparrow \alpha = \sup_n\alpha_n$ exists such that $\alpha = \|\sum_{i \geq 1}\lambda_i g_i\|$ (because $\sum_{i > n}\lambda_i \overset{n \rightarrow \infty}{\rightarrow} 0$). Therefore for each $n\in\mathbb{N}$, by (\ref{e5}):

$$
\begin{aligned}
&\hspace{1.44cm} \left\| \sum_{i \leq n}\lambda_i g_i \right\| \\
&\hspace{1cm} = \left\| \left( \frac{\lambda_n+\sum_{i > n}\lambda_i}{\sum_{i \geq n}\lambda_i} \right)\sum_{i < n}\lambda_i g_i + \frac{\lambda_n}{\sum_{i \geq n}\lambda_i}(\sum_{i \geq n}\lambda_i) g_n \right\|\\
&\hspace{1cm} \leq \frac{\lambda_n}{\sum_{i \geq n}\lambda_i} \left\| \sum_{i < n}\lambda_i g_i \right\| + \frac{\sum_{i > n}\lambda_i}{\lambda_{i \geq n}\lambda_i} \left\| \sum_{i < n}\lambda_i g_i \right\|\\
&\hspace{1cm} < \left( \sum_{i > n}\lambda_i \right) \left[\frac{\lambda_n\alpha_n(1+\epsilon_n)}{\sum_{i > n}\lambda_i\sum_{i \geq n}\lambda_i} + \frac{1}{\sum_{i \geq n}\lambda_i} \left\| \sum_{i < n}\lambda_i g_i \right\| \right]\\
&\implies\, \dfrac{1}{\sum_{i>n}\lambda_i} \left\|\sum_{i\leq n}\lambda_i g_i \right\| < \dfrac{\lambda_n\alpha_n(1+\epsilon)} {\sum_{i>n}\lambda_i \sum_{i\geq n}\lambda_i} + \dfrac{1}{\sum_{i\geq n}\lambda_i} \left\|\sum_{i<n}\lambda_i g_i \right\| 
\end{aligned}
$$
By induction we will have:

\begin{equation}\label{e6}
\begin{aligned}
&\hspace{0.44cm} \left\|\sum_{i \leq n}\lambda_i g_i \right\|\\
& < \left(\sum_{i > n}\lambda_i \right)\left[ \frac{\lambda_n\alpha_n(1+\epsilon_n)}{\sum_{i > n}\lambda_i\sum_{i \geq n}\lambda_i} + \frac{1}{\sum_{i \geq n}\lambda_i} \left\|\sum_{i < n}\lambda_i g_i \right\| \right]\\
& < \left(\sum_{i > n}\lambda_i \right) \left[ \frac{\lambda_n\alpha_n(1+\epsilon_n)}{\sum_{i > n}\lambda_i\sum_{i \geq n}\lambda_i} + \frac{\lambda_{n-1}\alpha_{n-1}(1+\epsilon_{n-1})}{\sum_{i > n-1}\lambda_i\sum_{i \geq n-1}\lambda_i} + \frac{1}{\sum_{i \geq n-1}\lambda_i} \left\| \sum_{i < n-1}\lambda_i g_i \right\| \right]\\
& < \left( \sum_{i > n}\lambda_i \right) \left[ \sum_{2 \leq k \leq n}\frac{\lambda_k\alpha_k(1+\epsilon_k)}{\sum_{j \geq k}\lambda_j\sum_{j > k}\lambda_j} + \frac{1}{\sum_{j \geq 2}\lambda_j}\|\lambda_1 g_1\| \right]\\
& =  \left(\sum_{i > n}\lambda_i \right) \left[ \sum_{1\leq k \leq n}\frac{\lambda_k\alpha_k(1+\epsilon_k)}{\sum_{j \geq k}\lambda_j\sum_{j > k}\lambda_j} \right]
\end{aligned}
\end{equation}
Since $\alpha=\sup_n\alpha_n$, by (\ref{e6}) we have:

$$
\begin{aligned}
&\hspace{0.44cm} \left\| \sum_{i \leq n}\lambda_i g_i \right\| < \alpha \left( \sum_{i>n}\lambda_i \right) \left[ \sum_{1\leq k \leq n} \dfrac{\lambda_k (1+\epsilon_k)}{\sum_{j\geq k}\lambda_j \sum_{j>k}\lambda_j} \right]\\
&= \alpha \left( \sum_{i > n}\lambda_i\right) \left[ \sum_{1\leq k \leq n}\dfrac{\lambda_k}{\sum_{j \geq k}\lambda_j \sum_{j>k}\lambda_j} + \sum_{1\leq k \leq n}\dfrac{\lambda_k\epsilon_k}{\sum_{j \geq k}\lambda_j \sum_{j>k}\lambda_j} \right]\\
&< \alpha \left(\sum_{i > n}\lambda_i \right) \left[ \sum_{1\leq k \leq n} \left( \frac{1}{\sum_{j > k}\lambda_j} - \frac{1}{\sum_{j \geq k}\lambda_j} \right) + (1-\theta) \right]\\
& = \alpha\left( \sum_{i>n}\lambda_i \right) \left[ \dfrac{1}{\sum_{i>n}\lambda_i} - 1 + 1 - \theta\right] = \alpha\left( 1 - \theta\sum_{i>n}\lambda_i \right)
\end{aligned}
$$

\end{proof}

\begin{defn}
Given a normed linear space $X$ and $\{\phi_n\}_n \subseteq X^*_{\leq 1}$ we define:

$$
L(\{\phi_n\}_n) = \{\omega\in X^*\,\vert\,\,\forall\,x \in X, \liminf_n \phi_n(x)\leq \omega(x)\leq \limsup_n \phi_n(x)\}
$$
\end{defn}

\begin{lem}\label{Lemma 2.7}
Given a Banach space $B, \theta \in (0, 1), \{f_n\}_n\subseteq B^*_{\leq 1}$ such that:

$$
\forall\,f\in \operatorname{conv}\{f_n\}_{n\in \mathbb{N}}\,\forall\,\omega \in L\big( \{f_n\}_{n\in\mathbb{N}} \big), \hspace{0.3cm} \|f-\omega\|\geq\theta
$$
If $\{\lambda_i\}\subseteq (0,\infty)$ satisfies $\sum_{i \geq 1}\lambda_i = 1$, then we can find $\alpha \in [\theta, 2]$ and $\{g_i\}\subseteq B^*_{\leq 1}$ such that for any $n\in\mathbb{N}$ and $\omega \in L(\{\phi_n\})$:

$$
\left\| \sum_{k \geq 1}\lambda_k(g_k-\omega) \right\| = \alpha, \hspace{1cm}
\left\| \sum_{k \leq n}\lambda_k(g_k-\omega) \right\| < \alpha\left( 1-\theta\sum_{i > n}\lambda \right)
$$
\end{lem}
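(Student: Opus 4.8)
The plan is to carry out the construction of \textbf{Lemma \ref{Lemma 2.5}} one level higher, producing a \emph{single} sequence of convex blocks that serves every $\omega\in L(\{f_n\})$ at once. The key observation is that $V_k(\{f_n-\omega\})=\operatorname{conv}\{f_i-\omega\}_{i\geq k}=V_k(\{f_n\})-\omega$, so a block for the shifted sequence $\{f_n-\omega\}$ always has the form $g_k-\omega$ for a common $g_k\in V_k(\{f_n\})$ (automatically with $\|g_k\|\leq1$), and since $\sum_k\lambda_k=1$ we have $\sum_k\lambda_k(g_k-\omega)=(\sum_k\lambda_k g_k)-\omega=:G-\omega$. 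Writing $R_n=\sum_{i>n}\lambda_i$, I first fix $\epsilon_k\in(0,1)$ with $\sum_k\epsilon_k/\lambda_{k+1}<1-\theta$ exactly as for (\ref{e5}), and then, instead of a plain infimum, define
$$\alpha_n=\inf_{g\in V_n(\{f_i\})}\ \sup_{\omega\in L(\{f_n\})}\ \Big\|\sum_{i<n}\lambda_i g_i+\Big(\sum_{i\geq n}\lambda_i\Big)g-\omega\Big\|,$$
choosing $g_n\in V_n(\{f_i\})$ so that the inner supremum at $g=g_n$ is below $\alpha_n(1+\epsilon_n)$. Here $L(\{f_n\})$ is a nonempty convex subset of $B^*_{\leq1}$ (it contains every weak-$*$ cluster point of $(f_n)$), so the inner supremum is finite, indeed at most $2$.

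Three facts transfer from \textbf{Lemma \ref{Lemma 2.5}}. First, $\alpha_n$ is non-decreasing: for $g\in V_{n+1}$ the element $\tfrac{\lambda_n}{\sum_{i\geq n}\lambda_i}g_n+\tfrac{\sum_{i>n}\lambda_i}{\sum_{i\geq n}\lambda_i}g$ lies in $V_n$, so after rewriting $\sum_{i\leq n}\lambda_i g_i+R_n g$ as $\sum_{i<n}\lambda_i g_i+(\sum_{i\geq n}\lambda_i)g'$ with $g'\in V_n$, the stage-$(n+1)$ inf-sup is taken over a sub-family of the stage-$n$ one. Second, $\theta\leq\alpha_n\leq2$: the lower bound is precisely the hypothesis, since $\alpha_1=\inf_{g\in\operatorname{conv}\{f_i\}}\sup_{\omega}\|g-\omega\|\geq\theta$, while $\|g_i-\omega\|\leq2$ gives the upper bound; hence $\alpha_n\uparrow\alpha\in[\theta,2]$. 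Third — the payoff of using the supremum — the choice of $g_n$ gives $\|\sum_{i<n}\lambda_i g_i+(\sum_{i\geq n}\lambda_i)g_n-\omega\|<\alpha_n(1+\epsilon_n)$ for \emph{all} $\omega$ simultaneously. Setting $T_n=\sum_{i\leq n}\lambda_i(g_i-\omega)$ and $C_n=\sum_{i<n}\lambda_i g_i+(\sum_{i\geq n}\lambda_i)g_n-\omega$, the identity $T_n=\tfrac{R_n}{R_{n-1}}T_{n-1}+\tfrac{\lambda_n}{R_{n-1}}C_n$ (with $R_{n-1}=\sum_{i\geq n}\lambda_i$) reduces the recursion to exactly the one solved in (\ref{e6}); the same telescoping, using the $\epsilon_k$, yields
$$\Big\|\sum_{i\leq n}\lambda_i(g_i-\omega)\Big\|<\alpha\Big(1-\theta\sum_{i>n}\lambda_i\Big)\qquad\text{for every }n\text{ and every }\omega,$$
which is the second assertion. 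Letting $n\to\infty$, so $T_n\to G-\omega$ and $R_n\to0$, gives $\|G-\omega\|\leq\alpha$ for all $\omega$; and taking in $\alpha_n$ the tail average $g=R_{n-1}^{-1}\sum_{i\geq n}\lambda_i g_i\in\overline{V_n}$ turns the bracket into $G$, so by continuity $\sup_{\omega}\|G-\omega\|\geq\alpha_n$ for every $n$, hence $\sup_{\omega}\|G-\omega\|=\alpha$.

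The main obstacle is the first assertion \emph{at full strength}. At this point I have $\|G-\omega\|\leq\alpha$ for every $\omega$ together with $\sup_{\omega\in L}\|G-\omega\|=\alpha$, but the statement demands that \emph{every} $\omega$ attain $\alpha$, i.e. that the convex map $\omega\mapsto\|G-\omega\|$ be constant on $L(\{f_n\})$ — which is not a formal consequence of being bounded by its own supremum. The route I would take is a flat-face argument keyed to the directions along which $\{f_n\}$ converges: if $x\in B$, $\|x\|\leq1$, is such that $\lim_n f_n(x)$ exists, then \emph{every} $\omega\in L$ satisfies $\omega(x)=\lim_n f_n(x)$, so $(G-\omega)(x)=G(x)-\lim_n f_n(x)$ does not depend on $\omega$. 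Consequently it suffices to prove (replacing $x$ by $-x$ if needed) that $\alpha$ is approached by such vectors, i.e. that
$$\sup\Big\{\,G(x)-\lim_n f_n(x)\ :\ \|x\|\leq1,\ \lim_n f_n(x)\text{ exists}\,\Big\}=\alpha;$$
granting this, a witnessing sequence $x_m$ gives $(G-\omega)(x_m)\to\alpha$ simultaneously for all $\omega$, hence $\|G-\omega\|\geq\alpha$, and with the upper bound the equality follows for every $\omega$.

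I expect this reduction to the ``pinned'' coordinates to be the genuine work. On an oscillating direction some $\omega\in L$ can sit close to $G$ and annihilate the difference, so the value $\alpha$ cannot be recovered there; this is exactly why the near-minimizers $g_n$ of the inf-sup must force the extremal behaviour onto convergence directions. I would establish the displayed identity by a finite-dimensional (Helly-type) separation applied to the blocks $g_k$, combined with a diagonal extraction over a countable dense subset of $B$ in the separable case, reducing the general case to the separable one by the same Goldstine/Banach--Alaoglu passage used to pass from \textbf{Theorem \ref{Theorem 2.1}} to \textbf{Theorem \ref{Theorem 2.2}}. This constancy step, and not the block construction, is where the difficulty concentrates.
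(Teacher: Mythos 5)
Your block construction and telescoping estimate are sound as far as they go: they deliver the second assertion and the one-sided bounds $\|G-\omega\|\leq\alpha$ for every $\omega$ together with $\sup_{\omega}\|G-\omega\|=\alpha$. But you have correctly identified, and then left open, the actual content of the lemma: the equality $\bigl\|\sum_k\lambda_k(g_k-\omega)\bigr\|=\alpha$ for \emph{every} $\omega$. Your reduction to ``pinned'' directions is a plan, not a proof — the displayed supremum identity is asserted, and the closing appeal to a Helly-type separation plus a separable reduction is a sketch; note moreover that the lemma is invoked precisely in the non-separable \textbf{Theorem \ref{Theorem 2.2}} and carries no separability hypothesis, so the Goldstine-type passage you cite is not available in the same form. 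There is also a mismatch of targets: you aim at constancy of $\omega\mapsto\|G-\omega\|$ on all of $L(\{f_n\})$, whereas the paper proves (and its applications use) the equality only for $\omega\in L(\{g_i\})\subseteq L(\{f_n\})$; the stronger statement is exactly what your fixed inf-sup cannot reach, because some $\omega\in L(\{f_n\})$ may indeed sit close to $G$ along every oscillating direction.

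The idea you are missing is that the sequence itself must be refined at every stage, and the infimum defining $\alpha_n$ must range over those refinements. The paper lets $V(\{\phi_i\})$ denote the family of all sequences $\{\psi_j\}$ with $\psi_j\in V_j(\{\phi_i\})$, and defines $\alpha_n$ as an infimum over pairs $\bigl(g,\{\phi_i\}\bigr)$ with $g\in V_n(\{\psi_j^{n-1}\})$ and $\{\phi_i\}\in V(\{\psi_j^{n-1}\})$, the supremum being taken over $\omega\in L(\{\phi_i\})$. Having chosen near-optimal $g_n$ and $\{\phi_i^n\}$, it picks a near-extremal $\omega_n\in L(\{\phi_i^n\})$ and a witness $x_n\in B_{\leq 1}$, and then extracts a subsequence $\{\psi_j^n\}\subseteq\{\phi_i^n\}$ along which $\psi_j^n(x_n)$ converges to $\liminf_i\phi_i^n(x_n)$. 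This makes $x_n$ a convergence direction for the refined sequence — your ``pinning,'' but enforced constructively at the witness — so every $\omega\in L(\{\psi_j^n\})$ satisfies $\omega(x_n)\leq\omega_n(x_n)$, and the witness yields the \emph{lower} bound $\alpha_n(1-\epsilon_n)<\bigl\|\sum_{i<n}\lambda_ig_i+\bigl(\sum_{i\geq n}\lambda_i\bigr)g_n-\omega\bigr\|$ uniformly on that limit set. Since all later $g_i$ are drawn from $V(\{\psi_j^n\})$, one has $L(\{g_i\})\subseteq L(\{\psi_j^n\})$, so the two-sided pinch between $\alpha_n(1-\epsilon_n)$ and $\alpha_n(1+\epsilon_n)$ holds for all $\omega\in L(\{g_i\})$ simultaneously, and letting $n\rightarrow\infty$ gives the equality. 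Your simpler inf-sup cannot be repaired by refining afterwards: pinning $x_n$ shrinks the limit set, hence shrinks the supremum, and the monotonicity of $\alpha_n$ that your telescoping needs (the analogue of (\ref{e6}) in \textbf{Lemma \ref{Lemma 2.5}}) would be destroyed unless refinements are built into the definition of $\alpha_n$ from the outset. The constancy step is thus resolved not by a separation argument after the construction, but by restructuring the optimization at every stage of it.
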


\begin{proof}
Similar to the proof of \textbf{Lemma \ref{Lemma 2.5}}, find $\{\epsilon_k\}_k \subseteq (0, 1)$ such that:

$$
\sum_{k \geq 1}\frac{\lambda_k\epsilon_k}{\sum_{j > k}\lambda_j\sum_{j \geq k}\lambda_j} < 1-\theta
$$
For each $n$ and each sequence of bounded linear functionals $\{\phi_i\} \subseteq X^{\ast}$, define $V_n(\{\phi_i\}) = \operatorname{conv}\{\phi_i\}_{i \geq n}$ and define $V(\{\phi_i\}_i)$ be the set of all sequences of linear functionals $\{\psi_j\}_j$ such that for each $j, \psi_j\in V_j(\{\phi_i\}_i)$. In this case for every $\{\psi_j\}_j \in V(\{\phi_i\}_i)$ we will have $V(\{\psi_j\}_j) \subseteq V(\{\phi_i\})$. To complete the proof, for each $n\in\mathbb{N}$, we will find a real number $\alpha_n$, a bounded linear functional $g_n\in X^{\ast}_{\leq 1}$ and sequences of bounded linear functionals $\{\phi_i^n\}, \{\psi_j^n\} \subseteq X^{\ast}_{\leq 1}$.\\

\noindent
First let $\{\psi_j^0\}_j = \{f_j\}_j$, the sequence given by the lemma. Starting from $n = 1$, define:

$$
\alpha_n = \inf \left\{ \sup_{\omega \in L(\{\phi_i\})} \left\| \sum_{i < n}\lambda_i g_i + \left(\sum_{i \geq n}\lambda_i \right)g - \omega \right\|: g \in V_n\left (\{\psi_j^{n-1}\}_j \right), \{\phi_i\}_i \in V\left( \{\psi_j^{n-1}\}_j \right) \right\}
$$
Then pick $g_n \in V_n(\{\psi_j^{n-1}\}_j), \{\phi_i^n\}_i \in V(\{\psi_j^{n-1}\}_j)$ such that:

\begin{equation}\label{e7}
\alpha_n  \leq \sup_{\omega \in L(\{\phi_i^n\}_{i \geq 1})} \left\|\sum_{i < n}\lambda_i g_i + \left(\sum_{i \geq n}\lambda_i \right) g_n - \omega \right\| < \alpha_n(1+\epsilon_n)
\end{equation}
Then pick $\omega_n \in L(\{\phi_i^n\}_{i \geq 1})$ such that:

$$
\alpha_n(1-\epsilon_n) < \left\|\sum_{i < n}\lambda_i g_i + \left( \sum_{i \geq n}\lambda_i \right)g_n - \omega_n \right\| < \alpha_n(1+\epsilon_n)
$$
and $x_n \in B_{\leq 1}$ such that:

$$
\alpha_n(1-\epsilon_n) < \sum_{i < n}\lambda_i g_i(x_n) + \left(\sum_{i \geq n}\lambda_i \right)g_n(x_n) - \omega_n(x_n)
$$
Find $\{\psi_j^n\}_j \subseteq\{\phi_i^n\}_i$ such that $\lim_j\psi_j(x_n) = \liminf_i\phi_i(x_n)$. Since $\{\psi_j^n(x_n)\}_j$ is convergent, then for each $\omega \in L(\{\psi_j^n\}_{j \geq 1}), \omega(x_n) = \lim_j\psi_j(x_n) = \liminf_i\phi_i(x_n) \leq \omega_n(x_n)$. Therefore for each $\omega \in L(\{\psi_j^n\}_j)$:

\begin{equation}\label{e8}
\alpha_n(1-\epsilon_n) < \sum_{i < n}\lambda_i g_i(x_n) + \left(\sum_{i \geq n}\lambda_i \right)g_n(x_n) - \omega(x_n)
\end{equation}
After finding each $g_n$, because $g_n \in V_n(\{\psi_i^{n-1}\})_i$, we will have $\{g_i\}_{i > n} \subseteq V(\{\psi_j^n\}_j)$ and hence $L(\{g_i\}_{i\geq n}) \subseteq L(\{\psi_j^n\}_j) \subseteq L(\{\phi_i^n\})$. Notice that for each $n\in\mathbb{N}$, $L(\{g_i\}_{i\geq 1}) = L(\{g_i\}_{i\geq n})$. Then by (\ref{e7}) and (\ref{e8}), for each $n\in\mathbb{N}$ and $\omega \in L(\{g_i\}_{i\geq n})$, we have:

\begin{equation}\label{e9}
\alpha_n(1-\epsilon_n) < \left\| \sum_{i< n}\lambda_i g_i + \left( \sum_{i\geq n}\lambda_i \right) g_n - \omega \right\| \leq \sup_{\omega \in L(\{\phi_i^n\}_{i \geq 1})} \left\| \sum_{i < n}\lambda_i g_i + \left( \sum_{i \geq n}\lambda_i \right) g_n - \omega \right\| < \alpha_n(1+\epsilon_n)
\end{equation}
Since, for each $n\in\mathbb{N}$, $V(\{\psi_j^{n-1}\}_j)\subseteq V(\{\psi_j^n\}_j)$, similar to the proof of \textbf{Lemma \ref{Lemma 2.5}}, for each $n\in\mathbb{N}$, we have:

$$
\begin{aligned}
&\hspace{0.42cm} \left\{ \sup_{\omega \in L(\{\phi_i\})}\left\| \sum_{i < n+1}\lambda_i g_i + \left(\sum_{i \geq n+1}\lambda_i \right)g - \omega \right\|: g \in V_{n+1}\left( \{\psi_j^n\}_j \right), \{\phi_i\}_i\in V\left(\{\psi_j^n\}_j \right) \right\} \\
& \subseteq \left\{ \sup_{\omega \in L(\{\phi_i\})} \left\| \sum_{i < n}\lambda_i g_i + \left( \sum_{i \geq n}\lambda_i \right) g - \omega \right\|: g \in V_n\left( \{\psi_j^{n-1}\}_j \right), \{\phi_i\}_i\in V\left( \{\psi_j^{n-1}\}_j \right)\right\} \\
\end{aligned}
$$
Hence we have $\alpha_n$ is non-decreasing. Since for each $\omega \in V(\{f_n\}_n)$, we have $\omega\leq 1$, we have $\alpha_n\leq 2$ for each $n\in\mathbb{N}$, which implies that $\alpha = \lim_n\alpha_n = \sup_n\alpha_n$ exists. Since $\alpha_1\geq \theta$, we then have $\theta\leq\alpha \leq 2$. For each $\omega \in L(\{g_i\}_i)$, by (\ref{e9}) we have:

$$
\begin{aligned}
&\hspace{1cm} \alpha = \lim_n \alpha_n(1-\epsilon_n) \leq \lim_n \left\| \sum_{i<n}\lambda_ig_i + \left( \sum_{i\geq n}\lambda_i g_i \right) g_n - \omega \right\| \leq \lim_n\alpha_n(1+\epsilon_n) = \alpha\\
&\implies\, \left\| \sum_{i\in\mathbb{N}} \lambda_ig_i - \omega \right\| = \alpha
\end{aligned}
$$
Fix $n\in\mathbb{N}$ and $\omega \in L(\{g_i\}_i)$. Then by (\ref{e9}):

$$
\begin{aligned}
&\hspace{1.48cm} \left\| \sum_{i \leq n}\lambda_i (g_i - \omega) \right\| = \left\| \frac{\lambda_n+\sum_{i > n}\lambda_i}{\sum_{i \geq n}\lambda_i} \left[ \sum_{i < n}\lambda_i (g_i-\omega) \right] + \lambda_n (g_n-\omega) \right\| \\
&\hspace{1cm} \leq \frac{\lambda_n}{\sum_{i \geq n}\lambda_i} \left\| \sum_{i < n}\lambda_i (g_i-\omega) + \left( \sum_{i \geq n}\lambda_i \right)(g_n-\omega) \right\| + \frac{\sum_{i > n}\lambda_i}{\sum_{i \geq n}\lambda_i} \left\| \sum_{i < n}\lambda_i(g_i - \omega) \right\|\\
&\hspace{1cm} \leq \frac{\lambda_n}{\sum_{i \geq n}\lambda_i} \left\| \sum_{i<n}\lambda_ig_i - \left(\sum_{i\geq n}\lambda_i \right)g_n - \omega \right\| + \frac{\sum_{i > n}\lambda_i}{\sum_{i \geq n}\lambda_i} \left\| \sum_{i < n}\lambda_i(g_i - \omega) \right\|\\
&\hspace{1cm} < \left( \sum_{i > n}\lambda_i \right) \left[ \frac{\lambda_n\alpha_n(1+\epsilon_n)}{\sum_{i > n}\lambda_i\sum_{i \geq n}\lambda_i} + \frac{1}{\sum_{i \geq n}\lambda_i} \left\| \sum_{i < n}\lambda_i(g_i - \omega) \right\| \right]\\
&\implies\, \dfrac{1}{\sum_{i>n}\lambda_i}\left\| \sum_{i\leq n}\lambda_i (g_i-\omega) \right\| < \dfrac{\lambda_n\alpha_n(1+\epsilon_n)}{\sum_{i>n}\lambda_i \sum_{i\geq n}\lambda_i} + \dfrac{1}{\sum_{i\geq n}\lambda_i} \left\| \sum_{i<n}\lambda_i(g_i-\omega) \right\|
\end{aligned}
$$
Then by induction on $n$ we have:

\begin{equation}\label{e10}
\begin{aligned}
&\hspace{0.44cm} \left\| \sum_{i \geq n}\lambda_i(g_i - \omega) \right\|\\
&< \left( \sum_{i > n}\lambda_i \right) \left[ \frac{\lambda_n\alpha_n(1+\epsilon_n)}{\sum_{i > n}\lambda_i\sum_{i \geq n}\lambda_i} + \frac{\lambda_{n-1}\alpha_{n-1}(1+\epsilon_{n-1})}{\sum_{i > n-1}\lambda_i\sum_{i \geq n-1}\lambda_i} + \frac{1}{\sum_{i \geq n-1}\lambda_i} \left\| \sum_{i < n-1}\lambda_i(g_i-\omega) \right\| \right]\\
&< \left( \sum_{i > n}\lambda_i \right) \left[ \sum_{2 \leq k \leq n}\frac{\lambda_k\alpha_k(1+\epsilon_k)}{\sum_{i > k}\lambda_i\sum_{i \geq k}\lambda_k} + \frac{1}{\sum_{i \geq 2}\lambda_i}\|\lambda_1(g_1-\omega)\| \right]\\
&< \left( \sum_{i>n}\lambda_i \right) \sum_{1\leq k \leq n} \frac{\lambda_k\alpha_k(1+\epsilon_k)}{\sum_{i>k}\lambda_i \sum_{i\geq k}\lambda_i}
\end{aligned}
\end{equation}
Since $\alpha = \sup_n\alpha_n$, from (\ref{e10}) we have:

$$
\begin{aligned}
&\hspace{0.48cm}\left\| \sum_{i \leq n}\lambda_i(g_i - \omega) \right\|\\
&< \alpha \left( \sum_{i>n}\lambda_i \right) \left[ \sum_{1\leq k \leq n}\frac{\lambda_k}{\sum_{i>k}\lambda_i \sum_{i\geq k}\lambda_i} + \sum_{1\leq k \leq n}\frac{\lambda_k\epsilon_k}{\sum_{i>k}\lambda_i \sum_{i\geq k}\lambda_i} \right]\\
&< \alpha \left( \sum_{i > n}\lambda_i \right) \left[ \sum_{k \leq n}\left( \frac{1}{\sum_{i > k}\lambda_i}-\frac{1}{\sum_{i \geq k}\lambda_i} \right) + (1-\theta) \right]\\
&= \alpha \left( 1-\theta\sum_{i > n}\lambda_i \right)
\end{aligned}
$$

\end{proof}

\subsection{Proof of Main Theorems}

\begin{proof}[Proof of Theorem 2.1:]
$\hspace{0.44cm}\\$
\begin{itemize}
    \item $1)\implies 2)$:
    Suppose $Q$ is the canonical mapping from $X$ to $X^{\ast\ast}$ and $B$ is not reflexive. Now pick $F\in X^{\ast\ast}$ such that $d[F, Q(B)] > \theta$ where $\|F\| < 1$ and $\theta \in (0, 1)$. Suppose $\{x_n\}$ is dense in $B$ and for each $n\in\mathbb{N}$ pick $f_n \in B^*$ such that:
    
    \begin{enumerate}[label = \alph*)]
        \item $\|f_n\| < 1$
        \item $F(f_n) = 0$
        \item $f_n(x_i) = 0\,\forall\,i \leq n$
    \end{enumerate}

    \noindent
    Indeed, according to \textbf{Theorem \ref{Theorem 1.16}}, this will be true if there is $M > 0$ such that $\theta \leq M\|t_0 F + \sum_{i \leq n}t_if_i\|$ for any $\{t_i\}_{i\leq n}\subseteq \mathbb{C}$. Here if we let $M = \dfrac{\theta}{d[F, Q(X)]} < 1$, then given $F, Q(x_1) \in X^{\ast\ast}$, we can find $f_1 \in X^{\ast}_{< 1}$ such that $F(f_1) = \theta$ and $Q(x_1)(f_1) = f_1(x_1) = 0$. Next, with $F, Q(x_1), Q(x_2)\in X^{\ast\ast}$, we then can find $f_2\in X^{\ast}_{< 1}$ such that $F(f_2) = \theta$, $f_2(x_1) = f_2(x_2) = 0$. By induction, we can find $\{f_n\}\subseteq X^{\ast}_{< 1}$ that satisfies conditions $a), b)$ and $c)$. Notice that, for each $n\in\mathbb{N}$:

    $$
    \|f_n\| = \sup_{\phi\in X^{\ast\ast}} \vert\, \phi(f_n)\,\vert
    $$
    Hence $\|f_n\|\geq\theta$ for each $n$ and hence for each $f\in \operatorname{conv}\{f_n\}$, $\|f\|\geq\theta$. Meanwhile, we have:

    $$
    \forall\,i\in\mathbb{N}, \hspace{0.3cm} \lim_n f_n(x_i) = 0 \hspace{0.3cm}\implies\hspace{0.3cm} \forall\,x\in X, \hspace{0.3cm}\lim_n f_n(x) = 0
    $$
    
    \item $2)\implies 3)$:
    Given $\{f_n\} \subseteq X^{\ast}_{\leq 1}$, the set $\{g_n\} \subseteq \operatorname{conv}\{f_n\}$ and the $\alpha\in [\theta, 1]$ obtained by \textbf{Lemma \ref{Lemma 2.5}} completes the proof 
    
    \item $3)\implies 4)$:
    We will show $\sum_{i \geq 1}\lambda_i g_i$ does not attain its $\sup$ in the closed unit ball of $X$. Fix $x \in B$ and find $n\in\mathbb{N}$ such that $g_i(x) < \alpha\theta\,\forall\,i > n$. Then:
    
    $$
    \begin{aligned}
    &\hspace{0.44cm} \sum_{i \geq 1}\lambda_i g_i(x) \\
    &< \sum_{i \leq n}\lambda_i g_i(x) + \alpha\theta\sum_{i > n}\lambda_i \\
    &< \left\| \sum_{i \geq n}\lambda_i g_i \right\| + \alpha\theta\sum_{i < > n}\lambda_i \\
    & < \alpha \left( 1-\theta\sum_{i > n}\lambda_i \right) + \alpha\sum_{i > n}\lambda_i = \alpha
    \end{aligned}
    $$
    Because $\|\sum_{i \geq 1}\lambda_i g_i\| = \alpha$, such function never obtains its norm in the closed unit ball.
    
    \item $4)\implies 1)$:
    If $B$ is reflexive, then foe each $f \in X^*_{\leq 1}$ we can find $F \in X^{\ast\ast}_{\leq 1}$ such that $F(f) = \|f\|$. Then we can find $x \in X_{\leq 1}$ such that $Q(x) = F$ and hence $f(x) = \|f\|$
    
\end{itemize}
\end{proof}

\begin{proof}[Proof of Theorem 2.2:]
$\hspace{0.44cm}\\$
\begin{itemize}

    \item $1)\implies 2)$:
    According to the \textbf{Eberlein–Šmulian's Theorem}, if $B$ is not reflexive, there exists a (linearly) independent sequence $\{x_i\}_{i\in\mathbb{N}}\subseteq B_{\leq 1}$ such that $\{x_i\}$ has no clustered points in weak topology. Therefore, let $X$ denote the closure of the span of $\{x_i\}$ and then $X$ will not be reflexive. By \textbf{Theorem \ref{Theorem 2.1}}, there is $\{f_n\}\subseteq X^{\ast}_{\leq 1}$ such that $\{f_n\}$ converges to zero pointwise on $X$ and $\left\| f\Big|_X \right\|\geq\theta$ for each $f\in\operatorname{conv}\{f_n\}$. Extending each $f_n$ by setting $f_n\Big|_{B\backslash X} = 0$ completes the proof.
    
    \item $2)\implies 3)$:
    Since $\{f_n\}$ converges to zero pointwise, $L(\{f_n\}) = \{0\}$ and hence the proof follows by \textbf{Lemma \ref{Lemma 2.7}}.
    
    \item $3)\implies 4)$:
    By making $\lambda_1$ comparatively large enough, we can find $\delta \in (0, \frac{1}{2}\theta^2)$ such that for each $n, \lambda_{n+1} < \delta\lambda_n$. We then will show that $\sum_{i \geq 1}\lambda_i (g_i - \omega)$ does not obtain its $\sup$ in the closed unit ball for each $\omega \in L(\{g_i\})$. By the fact that $\{g_n\}$ converges to zero pointwise, fix an arbitrary $x \in B_{\leq 1}$ and then we can find large $m, n \in \mathbb{N}\, (m \leq n)$ such that: 
    
    $$
    \begin{aligned}
    & \hspace{1cm} g_{n+1}(x) < \inf_{k \geq m}g_k(x) + \theta^2 - 2\delta \leq \liminf_k g_k(x) + \theta^2 - 2\delta \leq \omega(x) + \theta^2 - 2\delta\\
    &\implies\, g_{n+1}(x) - \omega(x) < \theta^2 - 2\delta
    \end{aligned}
    $$
    According to the proof in \textbf{Lemma \ref{Lemma 2.7}} we have $\theta \leq \alpha$ and hence $g_{n+1}(x) - \omega(x) \leq \alpha\theta - 2\delta$, which implies:
    
    $$
    \begin{aligned}
    &\hspace{0.42cm}\sum_{i \geq 1}\lambda_i (g_i - \omega)(x)\\
    & < \sum_{i \leq n}\lambda_i (g_i - \omega)(x) + (\alpha\theta - 2\delta)\lambda_{n+1} + \sum_{i > n+1}\lambda_i(g_i - \omega)(x)\\
    & \leq \left\| \sum_{i \leq n}\lambda_i (g_i - \omega) \right\| + (\alpha\theta - 2\delta)\lambda_{n+1} + 2\sum_{i > n+1}\lambda_i\\
    & < \alpha\left( 1-\theta\sum_{i > n}\lambda_i \right) + (\alpha\theta-2\theta)\lambda_{n+1} + 2\sum_{i > n+1}\lambda_i\\
    & < \alpha - \alpha\theta\sum_{i > n}\lambda_i + (\alpha\theta - 2\delta)\lambda_{n+1} + 2\delta\sum_{i > n}\lambda_i\\
    & = \alpha - (\alpha\theta - 2\delta)\sum_{i > n+1}\lambda_i < \alpha - (\theta^2 - 2\delta) < \alpha
    \end{aligned}
    $$
    Hence $\sum_{i \geq 1}\lambda_i (g_i - \omega)(x) < \alpha\,\forall\,x \in B_{\leq 1}$
    
\end{itemize}
\end{proof}

\begin{rem}[\cite{13}]\label{Remark 2.8}

Given topological vector space $T$, according to {\cite[Theorem 5.11.4]{15}}, there exists a family of Banach spaces $(\{B_{\alpha}\}_, \|\cdot\|_{\alpha})_{\alpha\in\Omega}$ such that $T$ is linearly isomorphic to a subspace (not necessarily closed in the product topology generated by each norm in $B_{\alpha}$) of $\prod_{\alpha\in\Omega} B_{\alpha}$. Set $B = \prod_{\alpha\in\Omega}B_{\alpha}$. By the same theorem, if $T$ is complete, then its image in $B$ will be closed and hence weakly closed. Set $P_{\alpha}$ to be the canonical projection from $B$ to $B_{\alpha}$. Given a separable subset $X\subseteq B$, by \textbf{Tychonorff's Theorem}, $X$ is weakly compact iff $X_{\alpha} = P_{\alpha}X$ is weakly compact for each $\alpha\in\Omega$. Therefore, if $X$ is not weakly compact, there exists $\beta\in\Omega$ such that the weakly closure of $X_{\beta}$ is not weakly compact in $B_{\beta}$. Notice that the weak closure of $X_{\beta}$ is contained in $\overline{\operatorname{conv}(X_{\beta})}^{\|\cdot\|_{\beta}}$. If $X$ is separable, so is $X_{\beta}$. Hence $\overline{\operatorname{conv}(X_{\beta})}^{\|\cdot\|_{\beta}}$ is separable and so is the weak closure of $X_{\beta}$. 
    
\end{rem}

\begin{proof}[Proof of Theorem 2.3:]

According to \textbf{Remark \ref{Remark 2.8}} it suffices to prove the case where $X$ is a separable bounded weakly closed subset in a Banach space.\\

\begin{itemize}

    \item $1)\implies 2)$:
    Suppose $X$ is not weakly compact and let $Y = \overline{\operatorname{Span}X}$. Let $Q$ be the canonical mapping from $B$ to $B^{\ast\ast}$ and if $Y$ is reflexive, then $X$ will be weakly compact and hence we can find $F \in Y^{\ast\ast}_{\leq 1}$ that is not in $Q(Y)$. Assume $d[F, Q(Y)] > \delta > 0$\\

    \noindent
    Now find $\{x_n\}$ a countable dense subset of $X_{\leq 1}$. According to \textbf{Theorem \ref{Theorem 1.16}}, by setting $M = \dfrac{\delta}{d[F, Q(Y)]} < 1$, for each $n \in \mathbb{N}$ we could find $f_n \in Y^{\ast}_{\leq 1}$ such that:
    \begin{enumerate}
        \item $\|f_n\| < 1$
        \item $F(f_n) = \delta$
        \item $f_n(x_i) = 0\,\forall\,i \leq n$
    \end{enumerate}

    \noindent
    because by any $\{t_i\}_{i\leq n}\subseteq \mathbb{C}$:
    $$
    \delta = \left\vert\, F(f_n) + \sum_{i \leq n}t_if_i(x_i)\, \right\vert \leq M\left\| F+\sum_{i \leq n}t_i f_i \right\| 
    $$
    Now given for each $f \in \operatorname{conv}\{f_i\}$, suppose $f = \sum_{i \leq m}\alpha_i f_{n_i}$ where $n_1$ is the smallest among $\{n_1, n_2, \cdots, n_m\}$. Then:
    
    $$
    \delta  = \left\vert\,F(f) + f(x_{n_1})\,\right\vert \leq \|F\|\|f\|
    $$
    Therefore letting $\theta = \dfrac{\delta}{\|F\|}$ will prove $2)$.
    
    \item $2)\implies 3)$:
    The result follows mainly by \textbf{Lemma \ref{Lemma 2.7}}. Notice that $\alpha_n$ (obtained in the proof of \textbf{Lemma \ref{Lemma 2.7}}) is still non-decreasing. By scaling we could find $N > 0$ such that given $\|x\| \leq 1$ we will have $\,\vert f_n(x)\,\vert \leq N\,\forall\,n \in\mathbb{N}$. Because $X$ is bounded, then each $\alpha_n \leq N\sup_{x\in X}\|x\|$ and hence $\{\alpha_n\}$ will still converges.
    
    \item $3)\implies 4)$:
    Similar to the proof of \textbf{Theorem 2.2}, we will have $\|\sum_{i\geq 1}\lambda_i g_i\| = \alpha$ but $\sum_{i\geq 1}\lambda_ig_i$ never obtained its norm in the closed unit ball of $X$.
    
    \item $4)\implies 1)$:
    If $X$ is weakly compact, then because each $f \in B^*$ is continuous with respect to the weak topology, hence each $f$ can obtain $\sup_{x\in X}f(x)$ in $X$
    
\end{itemize}
\end{proof}

\begin{proof}[Proof of Theorem 2.4:]
$\hspace{0.44cm}\\$
\begin{itemize}
    \item $1)\implies2)$: First, according to \textbf{Remark \ref{Remark 2.8}}, it suffices to consider the case where $X$ is a bounded weakly closed subset of a Banach space. If $X$ is not weakly compact, then the result follows by the proof of \textbf{Theorem 2.2}.
    
    \item $2)\implies 3)\implies 4)\implies 1)$: The result follows by the proof of \textbf{Theorem 2.3}
    
\end{itemize}
\end{proof}

\section{Characterizations by affine sets}

This section will focus on characterizing reflexivity or non-reflexivity by studying weakly compact subsets in a Banach space or its dual. Different from the geometric method that is used in \textbf{Section 2}, in \textbf{Section 3.2}, the method to construct a weakly vanishing sequence of bounded linear functionals, whose closed convex hull is distant from zero by using \textbf{Helly's condition} (see \textbf{Theorem \ref{Theorem 1.16}}), a result that precisely describes when a finite set of hyperplanes in a Banach space has non-empty intersection. The method to prove weakly compactness in \textbf{Section 3.1} is still geometric and is mainly based on several separations theorems.

\subsection{Affine sets in a Banach space}

\begin{defn}\label{Definition 3.1}
Given a normed linear space $X$, call a subset $E$ \textbf{affine} iff $E-e$ is a proper subspace for any $e \in E$. Given any subset $A \subseteq X$, we use $\operatorname{lin}(A)$ to denote the smallest flat set that contains $A$.
\end{defn}

\begin{defn}
Given a normed linear space $X$, for any subset $A \subseteq X$ define $\|A\| = \inf_{a \in A}\|a\|$. 
\end{defn}

\begin{theorem}[{\cite[Theorem I.6.2 (Mazur's Theorem)]{4}}]\label{Theorem 3.3}
Given a locally convex topological vector space (in short LCTVS) $X$, let $K$ be a convex set with non-empty interior and $E$ a affine set
disjoint from $K$. Then there is a closed hyperplane $H$ such that $H\cap K=\emptyset$ but $E \subseteq H$. In other words, there is $f \in X^*$ and a real number such that $f(x) = c\,\forall\,x \in E$ and $f(y) > c$ for any $y\in \operatorname{int}(K)$.

\end{theorem}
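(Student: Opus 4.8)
The plan is to deduce this from the analytic Hahn--Banach theorem by manufacturing a sublinear functional that dominates a suitable linear functional on the span of $E$. First I would reduce to a convenient normalization: fix an interior point $a\in\operatorname{int}(K)$ and translate everything by $-a$, so that $0\in\operatorname{int}(K)$ while $E$ stays affine and disjoint from $K$; in particular $0\notin E$. Since $K$ is now a neighborhood of the origin it is absorbing, so its Minkowski gauge $p(x)=\inf\{t>0: x\in tK\}$ is finite on all of $X$, and because $K$ is convex with $0$ interior, $p$ is sublinear ($p(x+y)\le p(x)+p(y)$ and $p(\lambda x)=\lambda p(x)$ for $\lambda\ge 0$) and continuous, with $\{x:p(x)<1\}=\operatorname{int}(K)\subseteq K\subseteq\{x:p(x)\le 1\}$.

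Next I would build the functional on the linear span of $E$. Writing $E=e_0+M$ with $e_0\in E$ and $M$ a linear subspace, the relation $0\notin E$ forces $e_0\notin M$, so $Y:=\operatorname{span}(E)=M\oplus\mathbb{R}e_0$ and I can define $g\colon Y\to\mathbb{R}$ by $g(m+te_0)=t$. This $g$ is linear, satisfies $g\equiv 1$ on $E$, and has $\ker g=M$. The crucial step is the domination estimate $g\le p$ on $Y$. Disjointness supplies the anchor: any $y\in E$ with $p(y)<1$ would lie in $\operatorname{int}(K)\subseteq K$, contradicting $E\cap K=\emptyset$, so $p\ge 1=g$ on $E$. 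For a general $y\in Y$ I would argue by the sign of $g(y)$: if $g(y)=t>0$ then $y/t\in E$, whence $p(y)=t\,p(y/t)\ge t=g(y)$ by positive homogeneity; if $g(y)\le 0$ then $g(y)\le 0\le p(y)$ trivially.

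With $g\le p$ on $Y$ in hand, the analytic Hahn--Banach theorem extends $g$ to a (real-)linear $f$ on $X$ with $f\le p$ everywhere. Continuity is then automatic: $f\le p$ together with $f(-x)=-f(x)$ bounds $f$ on the neighborhood $\operatorname{int}(K)$ of $0$, and a linear functional bounded near the origin is continuous, so $f\in X^*$ (over complex scalars one passes to $x\mapsto f(x)-if(ix)$, with the separation carried by the real part). Since $f$ restricts to $g$, we obtain $f\equiv 1$ on $E$ and $f(y)\le p(y)<1$ on $\operatorname{int}(K)$. Undoing the translation and replacing $f$ by $-f$, with $c$ the corresponding constant, converts this into the stated orientation $f\equiv c$ on $E$ and $f>c$ on $\operatorname{int}(K)$; the hyperplane $H=\{x:f(x)=c\}$ is then closed, contains $E$, and misses $\operatorname{int}(K)$.

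I expect the main obstacle to be establishing the domination inequality $g\le p$ on all of $Y$ rather than merely on $E$: the positivity and positive homogeneity of $p$ are exactly what lift the disjointness bound $p\ge 1$ on $E$ to the whole subspace while also covering the nonpositive values of $g$. A secondary point requiring care is the clean identification $\operatorname{int}(K)=\{p<1\}$ and the continuity of $p$ coming from $0$ being interior, which is what ultimately upgrades the purely algebraic functional produced by Hahn--Banach to an element of $X^*$ and makes $H$ closed.
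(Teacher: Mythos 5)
Your proposal is correct, but note that the paper itself gives no proof of this theorem: it is quoted verbatim from Day's \emph{Normed Linear Spaces} (reference \cite{4}) and used as a black box, so there is no in-paper argument to compare against. What you have written is the standard proof: translate so the interior point of $K$ sits at the origin, take the Minkowski gauge $p$ of $K$, define the linear functional $g\equiv 1$ on $\operatorname{span}(E)=M\oplus\mathbb{R}e_0$ with kernel $M$, verify $g\le p$ by positive homogeneity (the disjointness $E\cap K=\emptyset$ giving $p\ge 1$ on $E$ is exactly the right anchor), extend by Hahn--Banach, and read off continuity from boundedness of $f$ on a symmetric neighborhood of the origin inside $\operatorname{int}(K)\cap(-\operatorname{int}(K))$. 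All the steps check out, including the identification $\operatorname{int}(K)=\{p<1\}$ and the handling of the case $g(y)\le 0$. The only caveat worth flagging is that your argument (like the standard one) yields $H\cap\operatorname{int}(K)=\emptyset$ rather than the literal $H\cap K=\emptyset$ asserted in the first sentence of the statement, since boundary points of $K$ may satisfy $f=c$; this matches the operative ``in other words'' formulation $f(y)>c$ for $y\in\operatorname{int}(K)$, which is also the only form the paper ever invokes (e.g., in \textbf{Proposition \ref{Proposition 1.13}} and \textbf{Theorem \ref{Theorem 3.11}}).
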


\begin{prop}[{\cite[Chapter II, Theorem 4.1]{4}}]\label{Proposition 1.13}
Given $E$, a closed proper affine set that does not contain zero, in a normed linear space $X$, we can always find a hyperplane $H$ such that $\|H\| = \|E\|$.
\end{prop}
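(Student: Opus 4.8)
The plan is to exhibit the distance $d = \|E\|$ as the radius of an open ball disjoint from $E$, to separate that ball from $E$ by a closed hyperplane via Mazur's Theorem, and then to read off that the separating hyperplane has exactly the same distance to the origin as $E$ does.

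First I would establish that $d = \inf_{e\in E}\|e\| > 0$. Because $E$ is closed and $0 \notin E$, the origin lies in the open complement of $E$, so the distance from $0$ to $E$ is strictly positive. This positivity is the crux of the set-up: it guarantees that $K = \{x \in X : \|x\| < d\}$ is a nonempty convex set with nonempty interior (it is open and contains $0$), which is precisely the hypothesis needed to invoke \textbf{Theorem \ref{Theorem 3.3}}. By the definition of $d$ as an infimum, every $e \in E$ satisfies $\|e\| \geq d$, so $E \cap K = \emptyset$.

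Applying \textbf{Theorem \ref{Theorem 3.3}} to the convex set $K$ and the affine set $E$ disjoint from it yields a closed hyperplane $H$ with $E \subseteq H$ and $H \cap K = \emptyset$. The disjointness $H \cap K = \emptyset$ says that every $h \in H$ has $\|h\| \geq d$, so $\|H\| \geq d$; meanwhile $E \subseteq H$ forces $\|H\| \leq \|E\| = d$, since an infimum taken over a larger set cannot increase. Together these give $\|H\| = \|E\|$, as desired.

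I expect the only delicate point to be the very first step, namely confirming $d > 0$ so that $K$ really is a set with nonempty interior: if the distance vanished there would be no convex set to separate and Mazur's Theorem could not be brought to bear. Once that positivity is secured, the remainder is a direct application of the separation theorem together with the monotonicity of $\|\cdot\|$ under inclusion, and no genuine computation is required.
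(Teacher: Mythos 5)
Your proof is correct and follows essentially the same route as the paper: separate $E$ from the open ball of radius $\|E\|$ centred at the origin using Mazur's Theorem, then deduce $\|H\|\geq\|E\|$ from disjointness with the ball and $\|H\|\leq\|E\|$ from the inclusion $E\subseteq H$. Your explicit justification that $\|E\|>0$ (from closedness of $E$ and $0\notin E$) is a detail the paper asserts without comment, and is a welcome addition.
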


\begin{proof}
Since $\|E\| > 0$, the open ball that is centred at the origin and has radius $\|E\|$ is disjoint from $E$. Then, by \textbf{Theorem \ref{Theorem 3.3}}, there is a hyperplane $H$ that separates $E$ and that open ball. Therefore $\|E\|\geq\|H\|$. However, because $H$ is disjoint from that open ball with radius $\|E\|$ we then have $\|H\| \geq \|E\|$\\ 
\end{proof}

\begin{prop}[{\cite[Chapter II, Theorem 4.2]{4}}]\label{Proposition 1.14}
A real-valued function $f$ defined on a subset $A \subseteq X$ has an extension $F$ on $X$ with $\|F\|\leq M$ iff for some $M>0$, $\vert\,\sum_{i \leq n}t_i f(x_i)\,\vert \leq M\|\sum_{i \leq n}t_i x_i\|$ for any choice of real numbers $t_i$ and any choice of $x_i \in A$.
\end{prop}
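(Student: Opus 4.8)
The plan is to prove both implications, with the forward direction being essentially immediate and the reverse direction requiring the construction of an extension in two stages: first onto the linear span of $A$, and then onto all of $X$ via the Hahn--Banach theorem.

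For the \emph{only if} direction, suppose $F \in X^*$ extends $f$ with $\|F\| \leq M$. Then for any finite collection $x_1, \dots, x_n \in A$ and any reals $t_1, \dots, t_n$, linearity of $F$ and agreement with $f$ on $A$ give
$$
\left|\, \sum_{i \leq n} t_i f(x_i) \,\right| = \left|\, F\!\left( \sum_{i \leq n} t_i x_i \right) \right| \leq \|F\| \left\| \sum_{i \leq n} t_i x_i \right\| \leq M \left\| \sum_{i \leq n} t_i x_i \right\|,
$$
which is exactly the asserted inequality.

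For the \emph{if} direction, I would first define a candidate functional $F$ on the linear span of $A$. Every $y \in \operatorname{span}(A)$ admits some representation $y = \sum_{i \leq n} t_i x_i$ with $x_i \in A$ and $t_i$ real, and I set $F(y) = \sum_{i \leq n} t_i f(x_i)$. The step I expect to be the crux is \textbf{well-definedness}: if $\sum_{i} t_i x_i = \sum_{j} s_j y_j$ are two representations of the same $y$, then their difference is a linear combination of elements of $A$ equal to zero, so Helly's inequality forces $\left| \sum_i t_i f(x_i) - \sum_j s_j f(y_j) \right| \leq M \cdot 0 = 0$, and the two values of $F$ agree. Once well-definedness is secured, linearity of $F$ on $\operatorname{span}(A)$ is immediate from the definition, and boundedness follows directly from the hypothesis: for $y = \sum_i t_i x_i$ we get $|F(y)| = \left| \sum_i t_i f(x_i) \right| \leq M \| y \|$, so $F$ has norm at most $M$ on $\operatorname{span}(A)$.

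Finally, I would invoke the Hahn--Banach theorem to extend $F$ from $\operatorname{span}(A)$ to a functional on all of $X$ without increasing its norm, yielding the desired $F \in X^*$ with $\|F\| \leq M$ that restricts to $f$ on $A$. The only genuine subtlety throughout is the well-definedness argument, and Helly's condition is tailored precisely to dispose of it; the remaining verifications are routine.
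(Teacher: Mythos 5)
Your proposal is correct and follows essentially the same route as the paper's proof: define $F$ on $\operatorname{Span}(A)$ by linear extension, bound it by $M\|\cdot\|$ using the hypothesized inequality, and extend to all of $X$ by Hahn--Banach. You in fact make explicit the well-definedness of $F$ on $\operatorname{Span}(A)$, a point the paper's proof passes over silently, so nothing is missing.
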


\begin{proof}

The $(\implies)$ direction is obvious. On the other direction, we can define:

$$
F: \operatorname{Span}(A) \rightarrow\mathbb{R}, \hspace{0.3cm} \sum_{i\leq n}t_ix_i \mapsto \sum_{i\leq n}t_if(x_i)
$$
Since $F$ is a bounded linear functional on $\operatorname{Span}(A)$, $F$ can be continuously extended to be defined on $\overline{\operatorname{Span}(A)}$. Notice that the function $M\|\cdot\|$ is a semi-norm defined on $X$ and dominates $F$. Then by \textbf{Hahn-Banach Theorem}, $F$ can be extended to $X$ and satisfies $\|F\|\leq M$.

\end{proof}

\begin{prop}[{\cite[Chapter II, Corollary 4.1]{4}}]\label{Proposition 1.15}
Given a finite set $F_x = \{x_1, \cdots, x_n\} \subseteq X$ and a finite set $F_r = \{r_1, \cdots, r_n\} \subseteq \mathbb{R}$ there exists $f \in X^*$ such that $f(x_i) = r_i$ for all $1\leq i\leq n$ iff for some $M>0$, $\vert\,\sum_{i \leq n}t_i r_i\,\vert \leq M\|\sum_{i \leq n}t_i x_i\|$ for all choice of real numbers $t_i$
\end{prop}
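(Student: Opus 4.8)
The plan is to deduce this as a direct corollary of \textbf{Proposition \ref{Proposition 1.14}}, applying it to the finite subset $A = F_x = \{x_1, \dots, x_n\}$ with the target assignment $x_i \mapsto r_i$. The forward implication is the easy one: if some $f \in X^*$ satisfies $f(x_i) = r_i$ for all $i$, then by linearity and boundedness, for every choice of reals $t_i$,
$$
\left| \sum_{i \le n} t_i r_i \right| = \left| f\!\left( \sum_{i \le n} t_i x_i \right) \right| \le \|f\| \left\| \sum_{i \le n} t_i x_i \right\|,
$$
so the constant $M = \|f\|$ witnesses the claimed inequality.

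For the reverse implication, I would first verify that the rule $x_i \mapsto r_i$ genuinely defines a function $g$ on the set $A$, which is not automatic if the $x_i$ are not distinct. If $x_i = x_j$ for some $i \ne j$, then taking $t_i = 1$, $t_j = -1$ and all other coefficients zero in the hypothesis gives $|r_i - r_j| \le M\|x_i - x_j\| = 0$, forcing $r_i = r_j$; hence $g$ is well defined on $A$. The one remaining point is to reconcile the single indexed inequality in the statement (over $x_1, \dots, x_n$) with the form of Helly's condition in \textbf{Proposition \ref{Proposition 1.14}}, which quantifies over \emph{arbitrary} finite selections of points from $A$, repetitions allowed. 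Any such selection $y_1, \dots, y_m \in A$ with reals $s_1, \dots, s_m$ collapses after collecting like terms: setting $t_i = \sum_{j : y_j = x_i} s_j$ yields $\sum_j s_j y_j = \sum_{i \le n} t_i x_i$ and, by well-definedness, $\sum_j s_j g(y_j) = \sum_{i \le n} t_i r_i$. Thus the hypothesis of \textbf{Proposition \ref{Proposition 1.14}} holds with the same $M$, and that proposition furnishes an extension $F \in X^*$ with $\|F\| \le M$ satisfying $F(x_i) = g(x_i) = r_i$, which is the desired $f$.

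I do not anticipate any genuine analytic difficulty: all of the real content is carried by \textbf{Proposition \ref{Proposition 1.14}} and, through it, by the Hahn--Banach Theorem. The only steps demanding care are bookkeeping ones, namely establishing well-definedness of $g$ when the $x_i$ coincide and checking that the single indexed inequality is equivalent to the ``all finite multisets from $A$'' version of the condition. Both amount to collecting coefficients of repeated points, so the main (and rather minor) obstacle is simply to state this reduction cleanly enough that the application of \textbf{Proposition \ref{Proposition 1.14}} is unambiguous.
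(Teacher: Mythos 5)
Your proposal is correct and follows essentially the same route as the paper: the forward direction from boundedness of $f$, and the reverse direction by defining $f$ on $F_x$ via $f(x_i)=r_i$ and invoking \textbf{Proposition \ref{Proposition 1.14}}. The well-definedness check for repeated $x_i$ and the reduction from arbitrary finite selections to the single indexed inequality are details the paper leaves implicit, and your handling of them is sound.
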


\begin{proof}
Given $f\in X^{\ast}$ with $f(x_i) = r_i$ for each $i\leq n$, we will have $\vert\,f(x)\,\vert\leq \|f\|\|x\|$ for each $x\in \operatorname{Span}(F_x)$. On the other hand, if there exists $M>0$ such that the given inequality holds for all choice of scalars $t_i$, define $f$ on $F_x$ by $f(x_i) = r_i$ for each $i\leq n$. Then the conclusion follows by \textbf{Proposition \ref{Proposition 1.14}}.
\end{proof}

\begin{theorem}[{\cite[Chapter II, Theorem 4.3]{4}}][Helly's condition]\label{Theorem 1.16}
If $\{f_1, \cdots, f_n\}\subseteq X^*, M > 0, \{c_1, \cdots, c_n\}\subseteq \mathbb{R}$, then for each $\epsilon > 0$ there is $x \in X$ with $\|x\| < M+\epsilon$ such that $f_i(x) = c_i$ for all $i\leq n$ iff:

$$
\left\vert\,\sum_{i \leq n}t_i c_i \,\right\vert \leq M\|\sum_{i \leq n}t_i f_i\|
$$
for all choice of real numbers $t_i$. \\

\noindent
Equivalently, if we define $H_i = f_i^{-1}\{c_i\}$, $E = \bigcap_{i \leq n}H_i$ and:
$$
M_0 = \sup \left\{ \frac{\vert\,\sum_{i \leq n}t_i c_i \,\vert}{\|\sum_{i \leq n}t_i f_i\|} : \{t_i\}_{i \leq n} \subseteq \mathbb{C} \hspace{0.2cm}\&\hspace{0.2cm} \left\| \sum_{i \leq n}t_i f_i \right\| > 0 \right\}
$$
Then we claim $\|E\| = M_0$.
\end{theorem}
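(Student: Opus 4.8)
The plan is to prove the equivalent quantitative statement $\|E\| = M_0$, from which the stated ``iff'' follows immediately: the hypothesis $\left|\sum_{i\le n} t_i c_i\right| \le M\left\|\sum_{i\le n} t_i f_i\right\|$ for all real $t_i$ is exactly the assertion $M_0 \le M$, while the existence, for every $\epsilon>0$, of $x\in X$ with $\|x\|<M+\epsilon$ and $f_i(x)=c_i$ is exactly the assertion $\|E\| = \inf_{x\in E}\|x\| \le M$. Thus once $\|E\|=M_0$ is in hand, both sides of the equivalence reduce to $M_0 \le M$. One of the two inequalities in $\|E\|=M_0$ is immediate: for any $x\in E$ and any scalars $t_i$ we have $\left|\sum_i t_i c_i\right| = \left|\big(\sum_i t_i f_i\big)(x)\right| \le \left\|\sum_i t_i f_i\right\|\,\|x\|$, so dividing by $\left\|\sum_i t_i f_i\right\|$ and taking the supremum over admissible $t$ gives $M_0 \le \|x\|$, and infimizing over $x\in E$ yields $M_0 \le \|E\|$.

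For the reverse inequality $\|E\| \le M_0$ I would run a separation argument paralleling the proof of \textbf{Proposition \ref{Proposition 1.13}}. First I would record that $E\neq\emptyset$: the hypothesis forces $\sum_i t_i c_i = 0$ whenever $\sum_i t_i f_i = 0$, which is precisely the consistency of the finite linear system $f_i(x)=c_i$ (the tuple $(c_i)$ is orthogonal, in $\mathbb{R}^n$, to the annihilator of the range of $x\mapsto (f_1(x),\dots,f_n(x))$). Now suppose toward a contradiction that $\|E\| > r > M_0$ for some $r$. Every $x\in E$ then satisfies $\|x\| \ge \|E\| > r$, so the open ball $K=\{x:\|x\|<r\}$, which is convex with nonempty interior, is disjoint from the affine set $E$. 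By \textbf{Theorem \ref{Theorem 3.3}} there is a nonzero $g\in X^*$ and a real $c$ with $g\equiv c$ on $E$ and $g>c$ on $K$.

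The final step converts this into a scalar contradiction. Since $K$ is symmetric, $g>c$ on $K$ forces $c<g(y)<-c$ for all $y\in K$, hence $c<0$ and $r\|g\| = \sup_{y\in K}|g(y)| \le |c|$. On the other hand $g$ is constant on $E$, so $g$ annihilates the direction space $\bigcap_i \ker f_i$ of $E$; by the standard fact that the annihilator of a finite intersection of kernels equals the span of the functionals, $g=\sum_i s_i f_i$ for some reals $s_i$, whence $c=g(x_0)=\sum_i s_i c_i$ for any chosen $x_0\in E$ and $\|g\|=\left\|\sum_i s_i f_i\right\|$. The definition of $M_0$ now gives $|c| = \left|\sum_i s_i c_i\right| \le M_0\left\|\sum_i s_i f_i\right\| = M_0\|g\|$. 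Combining, $r\|g\| \le |c| \le M_0\|g\|$, and since $\|g\|>0$ this forces $r\le M_0$, contradicting $r>M_0$. Hence $\|E\|\le M_0$, and together with the first inequality, $\|E\|=M_0$.

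I expect the main obstacle to be this reverse inequality, and within it the two ``translation'' steps. The first is recognizing that a functional constant on $E$ must be a genuine linear combination of $f_1,\dots,f_n$, which is what allows the geometric separator $g$ to be fed back into the quotient defining $M_0$; this in turn relies on having first secured $E\neq\emptyset$, so that the direction space and the constant value $c=\sum_i s_i c_i$ are meaningful. The second is the sign and symmetry bookkeeping needed to upgrade Mazur's one-sided separation $g>c$ into the two-sided bound $r\|g\|\le |c|$. The easy inequality and the passage from $\|E\|=M_0$ to the two displayed formulations are routine by comparison.
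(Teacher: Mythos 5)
Your proposal is correct and follows essentially the same route as the paper: the easy inequality $M_0 \le \|E\|$ is identical, and the reverse inequality is obtained in both cases by separating the nonempty affine set $E$ from an open ball via Mazur's theorem and then identifying the separating functional as a linear combination $\sum_i s_i f_i$ so that its value on $E$ can be fed back into the quotient defining $M_0$. The only notable variation is cosmetic: you establish $E\neq\emptyset$ by the finite-dimensional consistency/duality criterion and run a direct contradiction with an intermediate radius $r$, whereas the paper constructs an explicit point of $E$ from linearly independent $f_i$ and routes the separation through \textbf{Proposition \ref{Proposition 1.13}}, computing $\|H\| = \vert c\vert/\|f\|$ exactly.
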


\begin{proof}$\hspace{0.44cm}\\$
\begin{enumerate}[label = \arabic*)]

    \item $(\Longrightarrow):$ For each $\epsilon>0$ and $x\in E$, we have that for any finite set of scalars $\{t_i\}_{i\leq n}$:

    $$
    \left\vert\, \sum_{i\leq n}t_i c_i \,\right\vert = \left\vert\, \big(\sum_{i\leq n}c_i f_i\big)(x) \,\right\vert \leq \|x\| \left\|\sum_{i\leq n}t_if_i \right\| \hspace{0.3cm} \implies\hspace{0.3cm} M_0\leq \|x\|
    $$
    If $\left\| \sum_{i\leq n}t_if_i \right\|>0$, since $x$ is arbitrarily piced from $E$, we then have $M_0\leq \|E\|$.
    
    \item $(\Longleftarrow)$: WLOG suppose the set $\{f_i\}$ is linear independent and hence for each $i\leq n$, $\bigcap_{j \leq n, j \neq i}\operatorname{Ker}f_j \backslash \operatorname{Ker}f_i$ is non-empty. For each $i\leq n$, pick $y_i \in \bigcap_{j \leq n, j \neq i}\operatorname{Ker}f_j \backslash \operatorname{Ker}f_i$. If we set $y = \sum_{i \leq n}c_i y_i$, we then have $f_i(y) = c_i$ for all $i\leq n$. Therefore $y \in E$. By \textbf{Proposition \ref{Proposition 1.13}}, we can find a hyperplane $H$ that contains $E$ such that $\|H\|=\|E\|$. Suppose $H = f^{-1}\{c\}$ for some $c \in \mathbb{C}$ and $f \in X^*$. Then:

    $$
    E-y = \bigcap_{i\leq n}\big( H_i - y\big) = \bigcap_{i\leq n}\operatorname{Ker}f_i \subseteq H-y = \operatorname{Ker}f
    $$
    which implies that $f = \sum_{i\leq n}r_if_i$ for a finite set of scalars $\{r_i\}_{i\leq n}$ and that, for each $x\in E$, $f(x) = f(y)$. Suppose $\{x_n\} \subseteq X_{=1}$ is a sequence such that $f(x_n)\rightarrow \|f\|$. Then $\dfrac{c}{f(x_n)}x_n\in H$ for each $n\in\mathbb{N}$ and:

    $$
    \forall\,n\in\mathbb{N}, \hspace{0.3cm} \|H\| \leq \left\| \frac{c}{f(x_n)}x_n \right\| \hspace{0.3cm}\implies\hspace{0.3cm} \|H\| \leq \lim_n \left\| \frac{c}{f(x_n)}x_n\right\|  = \frac{\vert\,c\,\vert}{\|f\|}
    $$
    Hence, we have $\|H\|= \dfrac{\vert\,c\,\vert}{\|f\|}$. Then:
    
    $$
    \|E\| = \|H\| = \frac{\vert\,c\,\vert}{\|f\|} = \frac{\vert\,\sum_{i \leq n}t_ic_i\,\vert}{\|\sum_{i \leq n}t_i f_i\|} \leq M_0
    $$
    Hence $\|E\|\leq M_0$, which implies $\|E\|=M_0$.
    
\end{enumerate}
\end{proof}

\begin{cor}[{\cite[Chapter II, Corollary 4.2]{4}}]\label{Corollary 1.17}
Given a normed linear space $X$ and $\phi\in X^{\ast\ast}$, let $\{f_i\}_{i \leq n}\subseteq X^*, H_i = \{x \in X\,\vert\,f_i(x) = \phi(f_i)\}, E = \bigcap_{i \leq n}H_i$, then we have $\|E\| \leq \|\phi\|$
\end{cor}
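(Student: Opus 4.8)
The plan is to apply \textbf{Theorem \ref{Theorem 1.16}} directly with the scalars $c_i = \phi(f_i)$. First I would recall that, in the notation of that theorem, $\|E\| = M_0$, where
$$
M_0 = \sup\left\{\frac{\vert\,\sum_{i \leq n}t_i c_i\,\vert}{\|\sum_{i \leq n}t_i f_i\|} : \{t_i\}_{i \leq n} \subseteq \mathbb{C},\ \left\|\sum_{i \leq n}t_i f_i\right\| > 0\right\},
$$
so it suffices to bound $M_0$ from above by $\|\phi\|$.

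The key computation uses only the linearity and boundedness of $\phi$. For any choice of scalars $\{t_i\}_{i \leq n}$ we have
$$
\sum_{i \leq n}t_i c_i = \sum_{i \leq n}t_i \phi(f_i) = \phi\left(\sum_{i \leq n}t_i f_i\right),
$$
and therefore, since $\phi \in X^{\ast\ast}$ has operator norm $\|\phi\|$,
$$
\left\vert\,\sum_{i \leq n}t_i c_i\,\right\vert = \left\vert\,\phi\left(\sum_{i \leq n}t_i f_i\right)\right\vert \leq \|\phi\|\left\|\sum_{i \leq n}t_i f_i\right\|.
$$
Dividing by $\|\sum_{i \leq n}t_i f_i\|$ whenever this quantity is positive and taking the supremum over all admissible $\{t_i\}$ yields $M_0 \leq \|\phi\|$.

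Finally I would invoke \textbf{Theorem \ref{Theorem 1.16}} to conclude $\|E\| = M_0 \leq \|\phi\|$. The only point that deserves a remark is that $\|E\| = \inf_{a \in E}\|a\|$ is meaningful only once $E$ is non-empty; but the bound $M_0 \leq \|\phi\| < \infty$ together with the forward direction of \textbf{Theorem \ref{Theorem 1.16}} (applied with $M = M_0$) produces, for each $\epsilon > 0$, a point $x \in E$ with $\|x\| < M_0 + \epsilon$, so $E$ is indeed non-empty and the identity $\|E\| = M_0$ holds. I do not anticipate any genuine obstacle: the corollary is essentially the observation that the Helly quantity $M_0$ associated to the constants $c_i = \phi(f_i)$ is automatically controlled by the norm of the bidual element that generates them.
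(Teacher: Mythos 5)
Your proposal is correct and is exactly the argument the paper intends: the paper's proof simply cites the second part of \textbf{Theorem \ref{Theorem 1.16}}, and your computation $\sum_{i\leq n}t_i\phi(f_i)=\phi\big(\sum_{i\leq n}t_if_i\big)$ followed by the bound $\vert\,\phi(g)\,\vert\leq\|\phi\|\|g\|$ is precisely the "immediate" step being left implicit there. Your added remark on the non-emptiness of $E$ is a reasonable extra care point but does not change the route.
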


\begin{proof}
Immediately follows by the second part of \textbf{Theorem \ref{Theorem 1.16}}.
\end{proof}

\begin{cor}[{\cite[Chapter II, Corollary 4.3]{4}}]\label{Corollary 1.18}
Let $Q$ be the canonical mapping from $X$ to $X^{\ast\ast}$ and pick $\phi \in X^{\ast\ast}\backslash Q(X)$. Fix $c \in \mathbb{C}$ with and define $H_0 = \{f\in X^*\,\vert\,\phi(f) = c\}$. For any $M > 0$, if $M > \dfrac{\vert\,c\,\vert}{d[\phi, Q(X)]}$ then the zero linear functional in $X^*$ is in the weak-$\ast$ closure of $H_0\cap X^*_{< M}$
\end{cor}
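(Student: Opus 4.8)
The plan is to unwind the definition of the weak-$\ast$ topology and reduce the statement to a single application of \textbf{Theorem \ref{Theorem 1.16}} (Helly's condition), but carried out \emph{inside} the dual space $X^\ast$. A basic weak-$\ast$ neighborhood of the zero functional has the form $U = \{f \in X^\ast \mid |f(x_j)| < \epsilon,\ j \leq n\}$ for some finite collection $x_1, \ldots, x_n \in X$ and some $\epsilon > 0$, so it suffices to show that every such $U$ meets $H_0 \cap X^\ast_{<M}$. Concretely, for an arbitrary finite set $\{x_j\}_{j\leq n} \subseteq X$ I must produce $f \in X^\ast$ with $\|f\| < M$, with $\phi(f) = c$, and with $f(x_j) = 0$ for each $j \leq n$ (exact vanishing certainly forces $|f(x_j)| < \epsilon$).

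First I would regard $\phi, Q(x_1), \ldots, Q(x_n)$ as elements of $(X^\ast)^\ast = X^{\ast\ast}$ and form, inside $X^\ast$, the hyperplanes $\tilde H_0 = \{f \mid \phi(f) = c\}$ and $\tilde H_j = \{f \mid Q(x_j)(f) = 0\}$ together with their intersection $E = \tilde H_0 \cap \bigcap_{j \leq n} \tilde H_j$. Every $f \in E$ satisfies the three requirements above apart from the norm bound, so the entire problem collapses to showing $\|E\| < M$. The second form of \textbf{Theorem \ref{Theorem 1.16}}, applied with the normed space taken to be $X^\ast$, supplies the exact value
$$
\|E\| = M_0 := \sup \left\{ \frac{|t_0 c|}{\left\| t_0 \phi + \sum_{j \leq n} t_j Q(x_j) \right\|} : \{t_j\}_{j \leq n} \subseteq \mathbb{C},\ \left\| t_0\phi + \sum_{j\leq n} t_j Q(x_j)\right\| > 0 \right\}.
$$

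The heart of the argument is then to bound $M_0$. Since the numerator vanishes whenever $t_0 = 0$, only the terms with $t_0 \neq 0$ contribute; dividing through by $t_0$ and setting $s_j = -t_j/t_0$ rewrites the supremum as
$$
M_0 = \frac{|c|}{\inf_{s_1, \ldots, s_n} \left\| \phi - \sum_{j \leq n} s_j Q(x_j) \right\|}.
$$
The infimum in the denominator is exactly the distance from $\phi$ to $\operatorname{Span}\{Q(x_j)\}_{j\leq n}$, and because $Q$ is linear this span sits inside $Q(X)$; hence the infimum is at least $d[\phi, Q(X)]$. This gives $M_0 \leq |c|/d[\phi, Q(X)] < M$ by the hypothesis on $M$. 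As $\|E\| = M_0 < M$ is an infimum lying strictly below $M$, some $f \in E$ has $\|f\| < M$; this $f$ lies in $H_0$, in $X^\ast_{<M}$, and in $U$, so $U$ meets $H_0 \cap X^\ast_{<M}$, and since $U$ was arbitrary the zero functional belongs to the weak-$\ast$ closure of $H_0 \cap X^\ast_{<M}$.

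I expect the only genuinely delicate step to be the identification of the denominator's infimum as a distance dominated by $d[\phi, Q(X)]$, together with the small bookkeeping that linearity of $Q$ places $\operatorname{Span}\{Q(x_j)\}$ inside $Q(X)$ (which also guarantees the infimum is positive, so that no division by zero occurs and $E \neq \emptyset$). Everything else is a faithful transcription of Helly's condition from $X$ into $X^\ast$.
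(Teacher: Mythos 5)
Your proposal is correct and follows essentially the same route as the paper: both reduce the weak-$\ast$ density claim to producing, for each finite set $\{x_j\}_{j\leq n}$, an element of $E = \{f : \phi(f)=c,\ f(x_j)=0\}$ of norm below $M$, and both obtain $\|E\| = M_0 \leq \vert c\vert / d[\phi, Q(X)] < M$ by applying \textbf{Theorem \ref{Theorem 1.16}} in $X^\ast$ to the functionals $\phi, Q(x_1), \ldots, Q(x_n)$ and dividing through by $t_0$. The only cosmetic difference is that the paper explicitly assumes the $x_j$ (hence $\{Q(x_j)\}\cup\{\phi\}$) linearly independent before invoking Helly's condition, a reduction you leave implicit.
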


\begin{proof}
It suffices to prove the case where $c\neq 0$. To show that $0$ is in the weak-$\ast$ closure of $H_0\cap X^{\ast}_{< M}$, it suffices to show that for any finite (linear independent) subset $\{x_i\}_{1\leq i\leq n}\subset X$ and $\epsilon\in(0, 1)$, there exists $h \in H_0\cap X^{\ast}_{< M}$ such that $\vert\, h(x_i)\,\vert < \epsilon$ for each $1\leq i\leq n$. Let $\{x_i\}_{1\leq i \leq n}$ be a linear independent subset of $X$. For each $1\leq i \leq n$ set $H_i = \operatorname{Ker}Q(x_i)$ and $E = \bigcap_{0\leq i \leq n}H_i$. Since the set of linear functionals $\big\{Q(x_1), \cdots, Q(x_n), \phi\big\}$ is linear independent, given $\{t_i\}_{0\leq i \leq n}\subseteq\mathbb{C}$ and $f\in E$, we have:

$$
\left\vert\, \left[ t_0\phi + \sum_{1\leq i \leq n}t_iQ(x_i)\right](f) \,\right\vert = \left\vert\, t_0\phi(f) + \sum_{1\leq i \leq n}t_i f(x_i) \,\right\vert = \vert\, t_0\phi(f)\,\vert
$$
Therefore by \textbf{Theorem \ref{Theorem 1.16}}, for each $y\in E$:

\begin{equation}\label{e4}
\begin{aligned}
\|E\|
& = \sup\left\{ \frac{\vert\, t_0\phi(f) \,\vert}{\| t_0\phi + \sum_{1\leq i\leq n}t_iQ(x_i) \|} : \{t_i\}_{0 \leq i \leq n}\subseteq\mathbb{C} \hspace{0.3cm}\&\hspace{0.3cm} \left\| t_0\phi + \sum_{1\leq i \leq n}t_iQ(x_i) \right\| > 0\right\}\\
& = \sup\left\{ \frac{\vert\, \phi(f) \,\vert}{\left\| \phi + \sum_{1\leq i\leq n} \dfrac{t_i}{t_0} Q(x_i) \right\|} : t_0 \in\mathbb{C} \backslash\{0\},\, \{t_i\}_{1\leq i \leq n}\subseteq\mathbb{C} \hspace{0.3cm}\&\hspace{0.3cm} \left\| t_0\phi + \sum_{1\leq i \leq n}t_iQ(x_i) \right\| > 0\right\}\\
& = \sup\left\{ \frac{\vert\, \phi(f) \,\vert}{\| \phi + \sum_{1\leq i\leq n}t_iQ(x_i) \|} : \{t_i\}_{1\leq i \leq n}\subseteq\mathbb{C} \right\} \leq \frac{\vert\,\phi(f)\,\vert}{d\big[ \phi, Q(X)\big]} = \frac{\vert\, c \,\vert}{d\big[ \phi, Q(X)\big]} < M
\end{aligned}
\end{equation}
Hence, there exists $h\in E$ with $\|h\|<M$ such that $h(x_i) = 0$ for each $i\leq n$.

\end{proof}

\begin{theorem}[{\cite[Chapter III, Theorem 2.1]{4}}]\label{Theorem 1.19}
In a normed linear space $X$ the following statements about a bounded subset $E \subseteq X$ are equivalent:
\begin{enumerate}[label = \arabic*)]
    \item $E$ is weakly compact
    \item $E$ is weakly sequentially compact
    \item $E$ is weakly countably compact
    \item For each $\{e_n\}_{n \in\mathbb{N}}\subseteq E$ there is $e \in E$ such that $\liminf_n f(e_n) \leq f(e) \leq \limsup_n f(e_n)$ for all $f\in X^*$.
    \item If $\{K_n\}$ is a decreasing sequence of closed convex sets in $X$ and $K_n\cap E\neq\emptyset$ for all $n\in\mathbb{N}$, then $\bigcap_{n \in \mathbb{N}}K_n\cap E\neq\emptyset$.
\end{enumerate}
\end{theorem}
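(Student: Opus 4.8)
The plan is to establish the cyclic chain $(1)\Rightarrow(2)\Rightarrow(3)\Rightarrow(4)\Rightarrow(5)\Rightarrow(1)$, which threads every condition back to weak compactness while placing the two genuinely substantial steps — upgrading compactness to sequential compactness, and recovering compactness from the nested-convex-set property — at opposite ends of the loop. Two links are pure bookkeeping: for $(2)\Rightarrow(3)$, a weakly convergent subsequence is in particular weakly clustering, so $E$ is weakly countably compact; for $(3)\Rightarrow(4)$, a weak cluster point $e\in E$ of $\{e_n\}$ meets each slab $\{y:|f(y)-f(e)|<\epsilon\}$ infinitely often, and letting $\epsilon\downarrow 0$ yields a subsequence with $f(e_{n_k})\to f(e)$, forcing $f(e)\in[\liminf_n f(e_n),\limsup_n f(e_n)]$, which is exactly $(4)$.

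For $(1)\Rightarrow(2)$ I would localize a given sequence to the separable closed subspace $Y=\overline{\operatorname{Span}}\{e_n\}$. Since $E$ is weakly compact it is weakly closed, and $\overline{\{e_n\}}^{\,w}\subseteq Y$ is weakly compact. Choosing a countable norming family $\{f_k\}\subseteq X^*_{\leq 1}$ for a dense subset of $Y$, the map $y\mapsto(f_k(y))_k$ is weakly continuous and injective on $Y$, hence a homeomorphism of the weakly compact set $\overline{\{e_n\}}^{\,w}$ onto a compact metrizable image; metrizability promotes weak compactness to weak sequential compactness, giving a weakly convergent subsequence with limit in $E$. For $(4)\Rightarrow(5)$, take a decreasing sequence of closed convex sets $K_n$ with $K_n\cap E\neq\emptyset$, pick $e_n\in K_n\cap E$, and let $(4)$ supply $e\in E$ squeezed between the liminf and limsup of $f(e_n)$ for all $f$. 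If $e\notin K_n$ for some $n$, then $K_n$ is weakly closed by Mazur's theorem (\textbf{Theorem \ref{Theorem 3.3}}), so strict separation gives $f$ and $\alpha$ with $f(e)>\alpha>f(x)$ for all $x\in K_n$; since $e_m\in K_m\subseteq K_n$ for $m\geq n$ we get $\limsup_m f(e_m)\leq\alpha<f(e)$, contradicting $(4)$. Hence $e\in\bigcap_n K_n\cap E$.

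The heart of the argument, which I expect to be the main obstacle, is $(5)\Rightarrow(1)$, argued by contraposition. If the bounded set $E$ is not weakly compact, its canonical image has a weak-$\ast$ cluster point $\phi\in\overline{Q(E)}^{\,w\ast}\setminus Q(X)$ (weak-$\ast$ closure is weak-$\ast$ compact by boundedness), so $\delta:=d[\phi,Q(X)]>0$. I would then build interleaved sequences $\{e_k\}\subseteq E$ and $\{f_k\}\subseteq X^*$ by alternating two moves: given $e_1,\dots,e_{n-1}$, invoke \textbf{Corollary \ref{Corollary 1.18}} (with $c$ tied to $\delta$ and $M$ just above $|c|/\delta$) to select $f_n$ of controlled norm with $\phi(f_n)=c$ and $|f_n(e_i)|$ as small as desired for $i<n$; given $f_1,\dots,f_n$, use that $\phi$ lies in the weak-$\ast$ closure of $Q(E)$ to choose $e_n\in E$ with $|f_i(e_n)-\phi(f_i)|$ tiny for $i\leq n$. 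Setting $K_n=\overline{\operatorname{conv}}\{e_k:k\geq n\}$ produces a decreasing family of closed convex sets each meeting $E$. Any hypothetical $x\in\bigcap_n K_n\cap E$ would have $f_i(x)$ forced close to $\phi(f_i)=c$ for every $i$ (from the approximation of $\phi$ by the later $e_k$), while the separating design of the $f_n$ keeps $\phi(f_n)-f_n(x)$ below $\delta$; together these squeeze $\|Q(x)-\phi\|$ arbitrarily small and contradict $d[\phi,Q(X)]=\delta>0$. Thus $\bigcap_n K_n\cap E=\emptyset$, violating $(5)$.

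The real difficulty is reconciling the two competing estimates in this construction: the error tolerances governing the choice of each $f_n$ (through \textbf{Theorem \ref{Theorem 1.16}} and \textbf{Corollary \ref{Corollary 1.18}}) and the choice of each $e_n$ must be made summably small, so that passing to convex combinations and weak limits inside each $K_n$ preserves the near-equalities $f_i(x)\approx\phi(f_i)$; simultaneously the constant $c$ must be calibrated against $\delta$ sharply enough that the separation yields a strict contradiction rather than a merely tight inequality. Managing this bookkeeping, rather than any single conceptual leap, is where the work lies.
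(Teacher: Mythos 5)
Your chain of implications, your separable-reduction proof of $(1)\Rightarrow(2)$ (which the paper simply labels immediate), and your separation argument for $(4)\Rightarrow(5)$ are all sound and match the paper's architecture; the interleaved construction you set up for $(5)\Rightarrow(1)$ — functionals $f_n$ with $\phi(f_n)=c$ and $|f_n(e_i)|$ small for $i<n$, points $e_n\in E$ with $f_i(e_n)\approx\phi(f_i)$ for $i\leq n$, and $K_n=\overline{\operatorname{conv}}\{e_k:k\geq n\}$ — is also essentially the paper's. The gap is in how you close that last implication. From $x\in\bigcap_n K_n$ you can legitimately conclude $f_i(x)\approx c$ for every $i$ (approximate $x$ in norm by convex combinations of $e_k$ with $k>i$ and use the uniform bound $\|f_i\|\leq M$), but the inference that ``together these squeeze $\|Q(x)-\phi\|$ arbitrarily small'' is false: $\|Q(x)-\phi\|=\sup_{\|f\|\leq 1}|f(x)-\phi(f)|$, and controlling $|f_i(x)-\phi(f_i)|$ on a countable bounded family gives no control of that supremum — if it did, $\phi$ would be norm-approximable from $Q(X)$, which is exactly what $d[\phi,Q(X)]>0$ forbids. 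The correct contradiction is already latent in your construction and needs no distance estimate: choosing the errors so that $|f_i(e_j)|<2^{-i}$ for $j<i$, one gets $f_i(y)\to 0$ for every $y\in K_1=\overline{\operatorname{conv}}\{e_k:k\geq 1\}$ (first for finite convex combinations, then by $\sup_i\|f_i\|\leq M$ for norm limits), whereas $x\in K_{i+1}$ forces $f_i(x)\geq c-o(1)$; taking $c>0$ real, no such $x$ can exist, so $\bigcap_n K_n\cap E=\emptyset$ and $(5)$ fails. Alternatively, the paper closes the loop by first extracting, via weak-$\ast$ compactness of $\overline{Q(E)}^{w\ast}$, finitely many functionals $g_1,\dots,g_N$ and $\epsilon_0>0$ such that every $v\in E$ satisfies $|g_j(v)-\phi(g_j)|>\epsilon_0$ for some $j$, splicing these into the sequence $\{f_k\}$, and reading the contradiction off the point $e\in E\cap\bigcap_n K_n$ directly.

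A secondary issue: you launch the contraposition from a cluster point $\phi\in\overline{Q(E)}^{w\ast}\setminus Q(X)$ so that $\delta=d[\phi,Q(X)]>0$ and \textbf{Corollary \ref{Corollary 1.18}} applies. The theorem only assumes $E$ bounded, and a bounded set can fail to be weakly compact merely by failing to be weakly closed, in which case $\overline{Q(E)}^{w\ast}$ may be contained in $Q(X)$ and your $\delta$ vanishes, leaving the construction of the $f_n$ without a starting point. The paper's version works with any $\phi\in\overline{Q(E)}^{w\ast}\setminus Q(E)$ and separates $\phi$ from each individual point of $Q(E)$ rather than from all of $Q(X)$, which covers this case; you would need either to adopt that formulation or to dispose of the non-weakly-closed case by a separate argument before assuming $\phi\notin Q(X)$.
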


\noindent
Below are several set-up results that are borrowed from \cite{4} and for proving \textbf{Theorem \ref{Theorem 1.19}}.

\begin{theorem}[{\cite[Theorem I.6.4 (Eidelheit Separation Theorem)]{4}}]\label{Theorem 3.11}
In a LCTVS $X$ let $K_1, K_2$ be convedx sets such that $K_1$ has an interior point and $K_2$ contains no interior points of $K_1$. Then there is a closed hyperplane $H$ separating $K_1$ from $K_2$; on the other hands, there is $f \in X^*$ such that $\sup_{k_2\in K_2}f(k_2) \leq \inf_{k_1 \in K_1}f(k_1)$
\end{theorem}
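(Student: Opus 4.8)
The plan is to reduce this two-set separation to the single-convex-set-versus-affine-set separation already available as \textbf{Theorem \ref{Theorem 3.3}} (Mazur's Theorem), via the difference-set device. We may assume $K_2 \neq \emptyset$, else the assertion is vacuous. Set
$$
U = \operatorname{int}(K_1) - K_2 = \big\{\, u - k_2 : u \in \operatorname{int}(K_1),\ k_2 \in K_2 \,\big\}.
$$
First I would verify that $U$ is a nonempty open convex set: it is nonempty because $\operatorname{int}(K_1)\neq\emptyset$; it is convex as a Minkowski difference of the convex sets $\operatorname{int}(K_1)$ and $K_2$; and it is open because $U = \bigcup_{k_2 \in K_2}\big(\operatorname{int}(K_1) - k_2\big)$ is a union of translates of the open set $\operatorname{int}(K_1)$. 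The decisive use of the hypothesis is that $0 \notin U$: if $0 = u - k_2$ with $u \in \operatorname{int}(K_1)$ and $k_2 \in K_2$, then $k_2 = u \in K_2 \cap \operatorname{int}(K_1)$, contradicting that $K_2$ contains no interior point of $K_1$.

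Next I would apply \textbf{Theorem \ref{Theorem 3.3}} to the convex set $U$ (which, being open, equals its own interior) and the affine set $E = \{0\}$, disjoint from $U$ by the previous step. This yields a nonzero $f \in X^*$ together with the constant $c = f(0) = 0$ such that $f(y) > 0$ for every $y \in U$. Unwinding the definition of $U$, this says $f(u) > f(k_2)$ for all $u \in \operatorname{int}(K_1)$ and all $k_2 \in K_2$. Fixing $k_2$ and taking the infimum over $u$ gives $\inf_{u \in \operatorname{int}(K_1)} f(u) \geq f(k_2)$, and then the supremum over $k_2$ gives $\sup_{k_2 \in K_2} f(k_2) \leq \inf_{u \in \operatorname{int}(K_1)} f(u)$.

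It remains to replace $\operatorname{int}(K_1)$ by $K_1$ on the right, and this is the one genuinely topological step. I would invoke the line-segment principle for a convex set with nonempty interior: if $a \in \operatorname{int}(K_1)$ and $b \in K_1$, then $(1-t)a + tb \in \operatorname{int}(K_1)$ for $t \in [0, 1)$, so every point of $K_1$ is the limit of a net $u_\alpha \in \operatorname{int}(K_1)$, i.e. $K_1 \subseteq \overline{\operatorname{int}(K_1)}$. Since $f$ is continuous, for $k_1 \in K_1$ we obtain $f(k_1) = \lim_\alpha f(u_\alpha) \geq \inf_{\operatorname{int}(K_1)} f \geq \sup_{K_2} f$, whence $\inf_{k_1 \in K_1} f(k_1) \geq \sup_{k_2 \in K_2} f(k_2)$. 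Choosing any $\gamma$ between these two numbers gives the closed hyperplane $H = f^{-1}\{\gamma\}$ separating $K_1$ from $K_2$, which is the claim.

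I expect the main obstacle to be precisely this final extension: verifying $K_1 \subseteq \overline{\operatorname{int}(K_1)}$ in a general LCTVS through the line-segment principle, and being careful that the strict inequality valid on $\operatorname{int}(K_1)$ degrades only to the non-strict inequality on $K_1$ (so that weak, not strict, separation is what one obtains). By contrast, the convexity, openness, and the $0 \notin U$ bookkeeping are routine once the difference-set reduction is in place.
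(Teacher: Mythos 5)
Your proof is correct and follows essentially the same route as the paper's: form the difference set, separate it from the origin via Mazur's Theorem (\textbf{Theorem \ref{Theorem 3.3}}), and translate the resulting inequality back to $K_1$ and $K_2$. Your choice to work with the open set $\operatorname{int}(K_1)-K_2$ rather than $K_1-K_2$, and your explicit line-segment argument extending the inequality from $\operatorname{int}(K_1)$ to $K_1$, merely supply rigor for steps the paper leaves implicit.
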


\begin{proof}
Let $K = K_1 - K_2$. Therefore $K_1$ has interior points and $0$ is disjoint from $\operatorname{int}(K)$. By \textbf{Theorem 3.3}, there is a non-zero $f \in X^*$ such that $f\vert_K \geq 0\,\implies\,f(k_1)> f(k_2)$ for any $k_1\in K_1$ and any $k_2\in K_2$. Now set $c = \inf_{k_1\in K_1}f(k_1)$ and let $H = f^{-1}\{c\}$. Then we have for each $k_1 \in \operatorname{int}(K_1), f(k_1) > c, f(k_2) \leq c$ for any $k_2\in K_2$.
\end{proof}

\begin{cor}[{\cite[Chapter I, Theorem 6.5]{4}}]\label{Corollary 1.22}
If $K$ is a closed convex set in a LCTVS $X$ and if $x \notin K$ then there exists $f\in X^*$ such that $f(x) > \sup_{k\in K}f(k)$.
\end{cor}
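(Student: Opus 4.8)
The plan is to deduce this strict separation from the \textbf{Eidelheit Separation Theorem} (\textbf{Theorem \ref{Theorem 3.11}}) by first thickening the point $x$ into an open convex set that remains disjoint from $K$. Since $K$ is closed and $x \notin K$, the set $X \setminus K$ is an open neighborhood of $x$, so $(X\setminus K) - x$ is an open neighborhood of $0$. Because $X$ is locally convex, I can choose a convex open neighborhood $W$ of $0$ with $W \subseteq (X\setminus K) - x$, i.e. $(x + W)\cap K = \emptyset$. Replacing $W$ by the symmetric convex open neighborhood $U = W \cap (-W)$ (so that $U = -U$ and $U \subseteq W$), I still have $(x + U)\cap K = \emptyset$.

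Next I would set $K_1 = x + U$ and $K_2 = K$. Then $K_1$ is open, so every one of its points is an interior point, and since $K_1 \cap K = \emptyset$ the set $K_2 = K$ contains no interior point of $K_1$. Applying \textbf{Theorem \ref{Theorem 3.11}} yields an $f \in X^*$ with $\sup_{k \in K} f(k) \le \inf_{y \in x+U} f(y)$; moreover $f \neq 0$, since the proof of that theorem produces $f$ via \textbf{Theorem \ref{Theorem 3.3}} applied to $K_1 - K_2$, which forces $f$ to be non-zero.

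The key remaining step is to upgrade this weak inequality to the strict one claimed. Writing $\inf_{y \in x+U} f(y) = f(x) + \inf_{u \in U} f(u)$, it suffices to show $\inf_{u \in U} f(u) < 0$. Since $f \neq 0$, I can pick $u \in U$ with $f(u) \neq 0$; by symmetry $-u \in U$ as well, and exactly one of $f(u)$ and $f(-u) = -f(u)$ is negative, so $\inf_{u \in U} f(u) \le -\lvert f(u)\rvert < 0$. Combining gives $\sup_{k \in K} f(k) \le f(x) + \inf_{u \in U} f(u) < f(x)$, which is precisely the assertion.

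I expect the main obstacle to be the very first step: producing an \emph{open convex} thickening of $x$ that is disjoint from $K$, as this is exactly where both the closedness of $K$ and the local convexity of $X$ are essential (without these one cannot separate $K_1$ and $K_2$ as convex sets). The manipulation of the separating functional in the last paragraph is routine by comparison, though the passage to a symmetric $U$ is what makes the inequality strict rather than merely $\sup_K f \le f(x)$.
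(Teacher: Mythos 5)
Your proposal is correct and follows essentially the same route as the paper, whose entire proof is the one-line instruction to take a convex neighborhood $K_1$ of $x$ and apply \textbf{Theorem \ref{Theorem 3.11}} to $K_1$ and $K_2=K$. You in fact supply the details the paper omits — the use of closedness of $K$ and local convexity to get an open convex $K_1$ disjoint from $K$, and the symmetric-neighborhood argument (valid since a neighborhood of $0$ is absorbing, so $f\neq 0$ yields $u\in U$ with $f(u)\neq 0$) that upgrades the weak separation to the strict inequality.
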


\begin{proof}
Take a convex neighborhood $K_1$ of $x$ and apply \textbf{Theorem 3.11} to $K_1$ and $K_2 = K$
\end{proof}

\begin{cor}[{\cite[Chapter I, Corollar 6.2]{4}}]\label{Corollary 1.23}
Given $E$ a subset of a LCTVS $X$, we have 
$$
\overline{\operatorname{conv}(E)} = \bigcap_{f \in X^*}\left\{ x \in X\,\vert\,f(x) \leq \sup_{y \in E}f(y) \right\}
$$
\end{cor}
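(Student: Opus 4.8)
The plan is to prove the set equality by two inclusions. Write $S = \bigcap_{f \in X^*}\{x \in X \mid f(x) \le \sup_{y \in E} f(y)\}$ for the right-hand side, so that the goal is $\overline{\operatorname{conv}(E)} = S$.

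First I would establish $\overline{\operatorname{conv}(E)} \subseteq S$, which is the easy inclusion and requires no separation theorem. Fix any $f \in X^*$ and set $c_f = \sup_{y \in E} f(y)$. The half-space $H_f = \{x \in X \mid f(x) \le c_f\}$ is convex, and it is closed because $f$ is continuous. Since every $y \in E$ satisfies $f(y) \le c_f$, we have $E \subseteq H_f$; being convex and closed, $H_f$ must then contain both $\operatorname{conv}(E)$ and its closure $\overline{\operatorname{conv}(E)}$. As $f$ was arbitrary, $\overline{\operatorname{conv}(E)} \subseteq \bigcap_{f} H_f = S$.

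For the reverse inclusion $S \subseteq \overline{\operatorname{conv}(E)}$, I would argue by contraposition: suppose $x \notin \overline{\operatorname{conv}(E)}$. The set $\overline{\operatorname{conv}(E)}$ is closed and convex, so \textbf{Corollary \ref{Corollary 1.22}} applies and yields some $f \in X^*$ with
$$
f(x) > \sup_{k \in \overline{\operatorname{conv}(E)}} f(k).
$$
Since $E \subseteq \overline{\operatorname{conv}(E)}$, the supremum on the right dominates $\sup_{y \in E} f(y)$, so $f(x) > \sup_{y \in E} f(y)$. This exhibits one functional for which $x$ fails the defining inequality of $S$, hence $x \notin S$. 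Taking the contrapositive gives $S \subseteq \overline{\operatorname{conv}(E)}$, and combining the two inclusions completes the proof.

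There is no genuine obstacle here; the content is entirely carried by the Hahn--Banach separation packaged as \textbf{Corollary \ref{Corollary 1.22}}. The only point demanding a line of care is the passage from the separating inequality against $\overline{\operatorname{conv}(E)}$ to the inequality against $E$ itself, which is immediate from $E \subseteq \overline{\operatorname{conv}(E)}$ and monotonicity of the supremum.
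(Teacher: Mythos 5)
Your proof is correct and follows exactly the route the paper intends: the paper states this corollary without proof, immediately after \textbf{Corollary \ref{Corollary 1.22}}, and your argument (the trivial inclusion via closed convex half-spaces containing $E$, plus the reverse inclusion by separating a point outside $\overline{\operatorname{conv}(E)}$ using \textbf{Corollary \ref{Corollary 1.22}}) is the standard derivation it has in mind. No gaps.
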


\begin{defn}
Given a topological vector space $X$, say a subspace $\Gamma \leq X^*$ is total if for all $x\in X$, $f(x)=0$ for all $f\in \Gamma$ implies $x=0$. We use $\tau(\Gamma)$ to denote the weak topology generated by $\Gamma$.
\end{defn}

\begin{theorem}[{\cite[Theorem V.3.9]{3}}]\label{Theorem 1.25}
In a topological vector space $X$, let $\Gamma$ be a total subspace of linear functionals in $X^*$. Then all linear functionals continuous with respect to $\tau(\Gamma)$ belong to $\Gamma$
\end{theorem}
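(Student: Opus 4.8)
The plan is to show that $\tau(\Gamma)$-continuity of a linear functional $g$ forces an inclusion of kernels which, by a purely algebraic lemma, expresses $g$ as a finite linear combination of elements of $\Gamma$; since $\Gamma$ is a subspace, this places $g$ in $\Gamma$. Throughout let $\mathbb{K}$ denote the scalar field.

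First I would exploit continuity at the origin. Because $\tau(\Gamma)$ is by definition the coarsest topology making every $f \in \Gamma$ continuous, a base of $\tau(\Gamma)$-neighborhoods of $0$ is given by the sets $U = \{x \in X : |f_i(x)| < \epsilon \text{ for } i = 1, \dots, n\}$ with $f_1, \dots, f_n \in \Gamma$ and $\epsilon > 0$. Continuity of $g$ at $0$ then yields one such $U$ on which $|g(x)| < 1$. Here the totality of $\Gamma$ is what guarantees that $\tau(\Gamma)$ is a genuine (Hausdorff) vector topology, so that these basic sets form an honest neighborhood base.

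Next I would deduce the kernel inclusion $\bigcap_{i=1}^n \operatorname{Ker} f_i \subseteq \operatorname{Ker} g$ by a scaling argument. If $x$ lies in every $\operatorname{Ker} f_i$, then so does $tx$ for every scalar $t$, whence $tx \in U$ and therefore $|t|\,|g(x)| = |g(tx)| < 1$ for all $t \in \mathbb{K}$; letting $|t| \to \infty$ forces $g(x) = 0$.

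The main step, and the one I expect to be the crux, is the algebraic lemma: if $g, f_1, \dots, f_n$ are linear functionals on a vector space with $\bigcap_{i=1}^n \operatorname{Ker} f_i \subseteq \operatorname{Ker} g$, then $g \in \operatorname{Span}\{f_1, \dots, f_n\}$. I would prove this by factoring through the map $T : X \to \mathbb{K}^n$ defined by $T(x) = (f_1(x), \dots, f_n(x))$. Since $\operatorname{Ker} T = \bigcap_{i \leq n} \operatorname{Ker} f_i \subseteq \operatorname{Ker} g$, the functional $g$ descends to a well-defined linear functional on the subspace $T(X) \subseteq \mathbb{K}^n$; extending it arbitrarily to all of $\mathbb{K}^n$ produces scalars $c_1, \dots, c_n$ with $g = \sum_{i=1}^n c_i f_i$. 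Because each $f_i \in \Gamma$ and $\Gamma$ is a subspace of $X^*$, it follows that $g \in \Gamma$, which completes the argument.
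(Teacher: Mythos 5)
Your proposal is correct and follows essentially the same route as the paper: continuity at the origin yields a basic $\tau(\Gamma)$-neighborhood determined by finitely many $f_1,\dots,f_n\in\Gamma$, a scaling argument gives $\bigcap_{i\leq n}\operatorname{Ker}f_i\subseteq\operatorname{Ker}g$, and the algebraic lemma then expresses $g$ as a linear combination of the $f_i$. You additionally spell out the proof of that lemma via the factoring map $T:X\to\mathbb{K}^n$, which the paper merely asserts, so your write-up is if anything more complete.
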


\begin{proof}
Clearly all functionals in $\Gamma$ are continuous with respect to $\tau(\Gamma)$. Now let $g\in X^*$ be a $\tau(\Gamma)$-continuous linear functional. Suppose for some finite set $\{f_i\}_{i \leq n}\subseteq \Gamma$ and $\epsilon\in (0, 1)$, we have:

$$
N(0, f_1, f_2, \cdots, f_n, \epsilon) = \{x\in X\,\vert\,f_i(x) < \epsilon\,\forall\,i \leq n\} \subseteq g^{-1}\Big( \big\{\lambda\in\mathbb{C}: \vert\, \lambda\,\vert \leq 1\big\} \Big)
$$
WLOG assume $\{f_i\}_{i \leq n}$ is linear independent. Pick $x_0 \in \bigcap_{i \leq n}\operatorname{Ker}f_i$ (by $X$ having infinite dimension). Therefore $\vert\,g(x_0)\,\vert < 1$ and also $\vert\,g(nx_0)\,\vert < 1$ for all $n\in\mathbb{N}$, which implies $g(x_0)=0$. Hence we have $\bigcap_{i \leq n}\operatorname{Ker}f_i \subseteq \operatorname{Ker}g$ and conclude $g$ is a linear combination of $\{f_i\}_{i \leq n}$
\end{proof}

\begin{proof}[Proof of \textbf{Theorem \ref{Theorem 1.19}}:]

The implications of $1)\,\implies\,2),2)\,\implies\,3), 3)\,\implies\,4)$ are immediate. Then we will work on $4)\,\implies\,5), 5)\,\implies\,1)$. Assume $4)$ holds. Then pick $x_n \in K_n \cap E$ and therefore obtain $\{x_n\}\subseteq E$. By \textbf{Corollary \ref{Corollary 1.23}} we have: 

$$
K_n = \bigcap_{f \in X^*}\left\{ x \in X\,\vert\,f(x) \leq \sup_{y \in K_n}f(y) \right\}$$
By \textbf{Corollary \ref{Corollary 1.23}} we can find $x \in E$ such that $\liminf_n f(x_n) \leq f(x) \leq \limsup_n f(x_n)$, for large $n \in \mathbb{N}$ we have $x \in K_n$ and hence $x \in \bigcap_{n \geq 1}K_n\cap E$.\\

\noindent
Then we will prove $5)\,\implies\,1)$. Let $Q$ be the canonical mapping from $X$ to $X^{\ast\ast}$. Assume by contradiction that there exists $\phi$ in the weak-$\ast$ closure of $Q(E)$ but not in $Q(E)$. Therefore for each $v \in E$ we can find $f_v\in X^*_{\leq 1}$ and $\epsilon_v > 0$ such that $\vert\,f_v(v)-\phi(f_v)\,\vert > \epsilon_v$. Then we have: 

$$
Q(E) \subseteq \bigcup_{v\in E_{\infty}}\Big\{\psi\in X^{\ast\ast}: \vert\, \psi(f_v) - \phi(f_v)\,\vert > \epsilon_v\Big\}
$$
which implies:

$$
Q(E) \subseteq \overline{Q(E)}^{w\ast} \subseteq \bigcup_{v\in E_{\infty}}\left\{\psi\in X^{\ast\ast}: \vert\, \psi(f_v) - \phi(f_v)\,\vert \geq \epsilon_v \right\} \subseteq \bigcup_{v\in E_{\infty}} \left\{\psi\in X^{\ast\ast}: \vert\, \psi(f_v) - \phi(f_v)\,\vert > \frac{\epsilon_v}{2}\right\}
$$
Since the weak-$\ast$ closure of $E$ is weak-$\ast$ compact,  without losing generality, assume:

$$
Q(E_{\infty}) \subseteq \bigcup_{k\leq n}\left\{\psi\in X^{\ast\ast}: \vert\, \psi(g_k) - \phi(g_k)\,\vert > \frac{\epsilon_k}{2} \right\}
$$
where $\{g_1, g_2, \cdots, g_N\}\subseteq \{f_v\,\vert\,v \in E_{\infty}\}$. Let $\epsilon_0 < \min_{k \leq N}\epsilon_k$ so that we have for each $v \in E$, there is $g\in \{g_i\}_{i\leq N}$ such that $\vert\,g(v)-\phi(g)\,\vert > \epsilon_0$. Let $l$ be smallest integer such that $\dfrac{1}{l} < \epsilon_0$.\\

\noindent
Let $L_n = \left\{f \in X^*_{\leq 1}\,\vert\,\phi(f) > \dfrac{n}{n+1}\|\phi\| \right\}$ and $L_{n+1}\subseteq L_n$ for each $n\in\mathbb{N}$. Again first pick an arbitrary $f_1 \in L_1$ and then find $x_1\in E$ such that $\vert\,\phi(f_1)-f_1(x_1)\,\vert < \dfrac{1}{2}$. Next pick an arbitrary $f_2 \in L_2$. There exists $x_2 \in E$ such that $\vert\,\phi(f_2)-f_2(x_2)\,\vert < \dfrac{1}{3}$ and $\vert\, \phi(f_1)-f_1(x_2)\,\vert < \dfrac{1}{3}$. Repeat steps above until $l$ ($l$ is given above with $\dfrac{1}{l}<\epsilon_0$). For each $k\leq l$, set $B_k = \{f_1, \cdots, f_k\}$ (where $f_i\in L_i$ for each $i\leq k$) and $S_k = \{x_1, \cdots, x_k\}$. Then we have, for each $k\leq l$:

$$
\forall\,i\leq j \leq k, \hspace{0.3cm} \big\vert\, \phi(f_i) - f_i(x_j)\, \big\vert < \frac{1}{j}
$$ 
Since $\phi$ is picked from the weak-$\ast$ closure of $Q(E)$, for each $1\leq i\leq N$, we can find $x_{l+i}\in E$ such that:

$$
\forall\,j\leq i, \hspace{0.3cm} \vert\, \phi(g_j) - g_j(x_{l+i})\,\big\vert < \frac{1}{l+i}
$$
and:

$$
\forall\,k\leq N, \hspace{0.3cm} \big\vert\, \phi(f_k) - f_k(x_{l+i})\,\big\vert < \frac{1}{l+i}
$$
For each $i\leq N$, set $f_{l+i} = g_i$, $B_{l+i} = \{f_1, f_2, \cdots, f_{l+i}\}$ and $S_{l+i} = \{x_1, x_2, \cdots, x_{l+i}\}$. For each $k>l+N$, given an arbitrary $f_k\in L_k$, again we can find $x_k\in E$ such that:

$$
\forall\,j\leq k, \hspace{0.3cm} \big\vert\, \phi(f_j) - f_j(x_k)\,\big\vert < \frac{1}{k}
$$
Then for each $k\in\mathbb{N}$, $B_k = \{x_1, \cdots, x_k\}$ is defined and each $B_k$ is contained in $E$. For each $j\in\mathbb{N}$, define $K_j = \overline{\operatorname{conv}\{x_j, x_{j+1}, \cdots\}}$. By \textbf{5)}, suppose $e\in E\cap \bigcap_{j\geq 1}K_j$. Hence we have:

$$
\forall\,f\in \bigcup_{k\in\mathbb{N}}B_k\, \forall\,n\in\mathbb{N},  \hspace{0.2cm} \vert\, \phi(f) - f(e)\,\vert < \frac{1}{n} \hspace{0.3cm}\implies\hspace{0.3cm} \forall\,f\in \bigcup_{k\in\mathbb{N}}B_k, \hspace{0.3cm} \phi(f) = f(e)
$$
However, since $e\in E$, there exists $g\in \{g_i\}_{i\leq N}$ such that $\vert\,\phi(g) - g(e)\,\vert > 0$ and meanwhile we have $\{g_i\}_{i\leq N} s\subseteq \bigcup_{k\in\mathbb{N}}B_k$. Hence, we obtain a contradiction and we can now conclude that $Q(E)$ is weak-$\ast$ close. Hence, since $E$ is bounded, $Q(E)$ is weak-$\ast$ compact, or $E$ is weakly compact.

\end{proof}

\subsection{Affine sets in the dual space}

\begin{prop}[{\cite[Theorem V.6.2]{3}}]\label{Proposition 3.1}
In a normed linear space, any countable family of norm-closed convex subset, which possesses finite intersection property, has a non-empty intersection if and only if any countable family of weakly closed subset, which possesses finite intersection property, has a non-empty intersection.
\end{prop}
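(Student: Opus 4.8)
The plan is to prove the two implications separately, the backward one being routine and the forward one carrying all the content. For the implication that the weakly-closed statement forces the convex one: a norm-closed convex set $K$ is already weakly closed, since by Corollary \ref{Corollary 1.23} we may write $K = \overline{\operatorname{conv}}(K) = \bigcap_{f \in X^*}\{x \in X : f(x) \le \sup_{y\in K} f(y)\}$, an intersection of weakly closed half-spaces. Hence any countable family of norm-closed convex sets is in particular a countable family of weakly closed sets, and if every such weakly closed family with the finite intersection property has non-empty intersection, so does the convex one. This settles one direction with no further work.

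For the converse, suppose every countable family of norm-closed convex sets with the finite intersection property has non-empty intersection, and let $\{A_n\}$ be weakly closed with the finite intersection property. Passing to $F_n = \bigcap_{k\le n} A_k$ I reduce to a decreasing sequence $F_1 \supseteq F_2 \supseteq \cdots$ of non-empty weakly closed sets with $\bigcap_n A_n = \bigcap_n F_n$, and I must show this is non-empty. Choosing $x_n \in F_n$, I note that $x_k \in F_n$ whenever $k \ge n$, so the weak closure of each tail satisfies $\overline{\{x_k : k \ge n\}}^{\,w} \subseteq F_n$; consequently every weak cluster point of $(x_n)$ lies in $\bigcap_n F_n$. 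The problem is thereby reduced to manufacturing a single weak cluster point of $(x_n)$ out of the convex hypothesis alone. Replacing $X$ by $\overline{\operatorname{span}}\{x_n\}$ (a closed subspace that inherits the convex hypothesis, since a norm-closed convex subset of a closed subspace is norm-closed and convex in $X$) I may further assume $X$ separable and fix a countable total family $\{f_j\} \subseteq X^*$ to drive a diagonal argument.

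The natural attempt is to apply the hypothesis to the decreasing closed convex sets $C_n = \overline{\operatorname{conv}}\{x_k : k \ge n\}$, obtaining a common point $z \in \bigcap_n C_n$. The difficulty, which is the crux of the whole proposition, is that $z$ need not be a weak cluster point of $(x_n)$: because the $F_n$ are not convex, the hulls $C_n$ are genuinely larger, and $z$ can be a weak limit of convex combinations of the tail with no tail point coming weakly near it. To close this gap I would, assuming $z$ is not a cluster point, isolate it by a basic weak neighborhood (given by finitely many functionals) meeting only finitely many $x_k$, and then intersect the tail-hulls with suitable closed half-spaces supplied by Mazur's separation theorem (Theorem \ref{Theorem 3.3}), refining $\{C_n\}$ into a new decreasing family of non-empty closed convex sets whose forced common point is dragged toward an actual weakly convergent subsequence; iterating this against the total family $\{f_j\}$ by a diagonal selection should produce the required cluster point.

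I expect this separation-and-refinement step to be the main obstacle, since the entire subtlety of the statement is exactly the conversion of a common point of the convex hulls of the tails into a genuine weak cluster point of the sequence, i.e. defeating the non-convexity of the $F_n$. By contrast, the two reductions (to a decreasing sequence, and to a separable subspace equipped with a total sequence of functionals) are routine bookkeeping, and once a weak cluster point is in hand the conclusion follows immediately from the inclusion $\overline{\{x_k : k \ge n\}}^{\,w} \subseteq F_n$ recorded above.
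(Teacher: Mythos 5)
Your easy direction agrees with the paper's (a norm-closed convex set is weakly closed, so the weakly-closed hypothesis subsumes the convex one) and is correct. The hard direction, however, has a genuine gap exactly where you flag it: after reducing to a decreasing family $F_n$ with $x_n\in F_n$ and producing $z\in\bigcap_n\overline{\operatorname{conv}}\{x_k:k\ge n\}$ from the convex hypothesis, everything hinges on upgrading $z$ to a weak cluster point of $(x_n)$, and your proposal only gestures at this (``should produce the required cluster point''). That upgrade is not bookkeeping; it is the whole theorem. It is essentially the implication $5)\Rightarrow 1)$ of \textbf{Theorem \ref{Theorem 1.19}} (the hard half of the Eberlein--\v{S}mulian circle), whose proof in the paper requires a bidual argument, not merely Mazur separation plus a diagonal over a total sequence of functionals. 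As sketched, your refinement step is not even well posed: when you intersect the tail hulls with separating half-spaces you must guarantee that the refined convex sets remain non-empty and still retain points of the original sequence rather than only of its convex combinations, and nothing in the sketch secures either.

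That said, you have diagnosed the difficulty more honestly than the paper itself. The paper's proof of this direction consists of the chain $\emptyset=\operatorname{conv}\big(\bigcap_n F_n\big)=\bigcap_n\operatorname{conv}(F_n)$ followed by $\overline{\bigcap_n\operatorname{conv}(F_n)}=\bigcap_n\overline{\operatorname{conv}(F_n)}$; both purported equalities are in general only inclusions of the left-hand side into the right-hand side (already $\operatorname{conv}\big(\{0,1\}\cap\{0,2\}\big)=\{0\}\subsetneq[0,1]=\operatorname{conv}\{0,1\}\cap\operatorname{conv}\{0,2\}$ in $\mathbb{R}$), which is the useless direction here: $\bigcap_n F_n=\emptyset$ does not by itself force $\bigcap_n\overline{\operatorname{conv}(F_n)}=\emptyset$. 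A repairable route is to feed your reduction into machinery the paper already has: the convex hypothesis applied to the sets $\overline{\operatorname{conv}}\{x_k:k\ge n\}$ is precisely condition $5)$ of \textbf{Theorem \ref{Theorem 1.19}} for the set $\{x_n\}$, and $5)\Rightarrow 1)\Rightarrow 2)$ there supplies the weak cluster point, after which your observation that every weak cluster point of $(x_n)$ lies in $\bigcap_n F_n$ finishes the argument; note that this requires boundedness of $(x_n)$, a hypothesis the statement of the proposition silently omits and which neither your proposal nor the paper addresses.
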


\begin{proof}

$(\Longleftarrow):$ The result follows by the fact that, in a normed linear space, the weak closure of a convex set coincides with its normed closure.\\

\noindent    
$(\Longrightarrow):$ Suppose $\{F_n\}$ is a countable family of weakly-closed subset, which possesses finite intersection property, but has an empty intersection. Therefore, we have:

$$
\emptyset = \operatorname{conv}\left( \bigcap_{n\in\mathbb{N}}F_n \right) = \bigcap_{n\in\mathbb{N}} \operatorname{conv}(F_n) \hspace{0.3cm}\implies\hspace{0.3cm} \overline{\operatorname{conv}\left( \bigcap_{n\in\mathbb{N}}F_n \right)} = \overline{\bigcap_{n\in\mathbb{N}} \operatorname{conv}(F_n)} = \bigcap_{n\in\mathbb{N}} \overline{\operatorname{conv}(F_n)} = \emptyset
$$
Hence $\{\overline{\operatorname{conv}(F_n)}\}$ is a decreasing sequence of normed-closed convex set, which possesses finite intersection property but has empty intersection.

\end{proof}

\begin{theorem}[{\cite[Theorem 1]{17}}, {\cite[Theorem 7]{7}}]\label{Theorem 3.2}
Given a Banach space $X$ the following statements are equivalent:
\begin{enumerate}[label = \arabic*)]
    \item $X$ is not reflexive
    
    \item For each $\theta \in (0, 1)$ there is a sequence $\{x_i\}\subseteq X_{\leq 1}$ and $\{g_n\}\subseteq X^*_{< 1}$ such that:
    $$
    g_n(x_i) = 
    \begin{cases}
    \theta,\hspace{1cm} i \geq n\\
    0,\hspace{1cm} i < n
    \end{cases}
    $$
    
    \item There exists $\theta \in (0, 1)$ there is a sequence $\{x_i\}\subseteq X_{\leq 1}$ and $\{g_n\}\subseteq X^*_{< 1}$ such that:
    $$
    g_n(x_i) = 
    \begin{cases}
    \theta,\hspace{1cm} i \geq n\\
    0,\hspace{1cm} i < n
    \end{cases}
    $$
    
    \item For each $\theta \in (0, 1)$ there is a sequence $\{x_i\} \subseteq X_{\leq 1}$ such that for each $n\in\mathbb{N}$: 
    
    $$
    d[\operatorname{conv}\{x_i\}_{i \leq n}, \operatorname{conv}\{x_i\}_{i > n}] \geq \theta
    $$
    
    \item For some $\theta \in (0, 1)$ there is a sequence $\{x_i\}\subseteq X_{\leq 1}$ such that for each $n\in\mathbb{N}$: 
    
    $$
    d[\operatorname{conv}\{x_i\}_{i \leq n}, \operatorname{conv}\{x_i\}_{i > n}] \geq \theta
    $$
    
\end{enumerate}
\end{theorem}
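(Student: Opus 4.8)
The plan is to establish the single cycle $1)\Rightarrow 2)\Rightarrow 4)\Rightarrow 5)\Rightarrow 1)$ together with the side branch $2)\Rightarrow 3)\Rightarrow 5)$, so that all five statements fall into one equivalence class. Two of the links are free: $2)\Rightarrow 3)$ and $4)\Rightarrow 5)$ are mere specializations of a ``for each $\theta$'' assertion to a ``for some $\theta$'' one. The links $2)\Rightarrow 4)$ and $3)\Rightarrow 5)$ are a one-line separation computation using the functional $g_{n+1}$: fixing $n$, one has $g_{n+1}(x_i)=0$ for $i\leq n$ and $g_{n+1}(x_i)=\theta$ for $i>n$, so every $u\in\operatorname{conv}\{x_i\}_{i\leq n}$ gives $g_{n+1}(u)=0$ while every $v\in\operatorname{conv}\{x_i\}_{i>n}$ gives $g_{n+1}(v)=\theta$; hence $\theta=|g_{n+1}(v-u)|\leq\|g_{n+1}\|\,\|v-u\|<\|v-u\|$, which yields $d[\operatorname{conv}\{x_i\}_{i\leq n},\operatorname{conv}\{x_i\}_{i>n}]\geq\theta$ for all $n$.

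For $5)\Rightarrow 1)$ I argue by contraposition. If $X$ is reflexive then $X_{\leq 1}$ is weakly compact, so by \textbf{Theorem \ref{Theorem 1.19}} the bounded sequence $\{x_i\}$ admits a weak cluster point $x\in X_{\leq 1}$. Deleting finitely many terms preserves cluster points, so for every $n$ we have $x\in\overline{\{x_i\}_{i>n}}^{\,w}\subseteq\overline{\operatorname{conv}\{x_i\}_{i>n}}$, the weak and norm closures of a convex set agreeing. Since the distance to a set equals the distance to its closure, the separation hypothesis forces $\|x-u\|\geq\theta$ for every $u\in\operatorname{conv}\{x_i\}_{i\leq n}$ and every $n$; as $\bigcup_n\operatorname{conv}\{x_i\}_{i\leq n}=\operatorname{conv}\{x_i\}$, this gives $d(x,\operatorname{conv}\{x_i\})\geq\theta$. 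But $x\in\overline{\operatorname{conv}\{x_i\}}$, a contradiction, so no such $\theta$-separated sequence can live in a reflexive space.

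The heart of the matter is $1)\Rightarrow 2)$, the James-type construction. Given $\theta\in(0,1)$, non-reflexivity makes $Q(X)$ a proper closed subspace of $X^{\ast\ast}$, and by Riesz's lemma followed by a rescaling I fix $\phi\in X^{\ast\ast}$ with $\|\phi\|<1$ and $d[\phi,Q(X)]>\theta$. I then build $g_1,x_1,g_2,x_2,\dots$ interleaved by repeated use of \textbf{Helly's condition} (\textbf{Theorem \ref{Theorem 1.16}}), viewing $\phi,Q(x_1),Q(x_2),\dots$ as elements of $(X^\ast)^\ast=X^{\ast\ast}$. At stage $n$ I choose $g_n\in X^\ast$ solving $\phi(g_n)=\theta$ and $g_n(x_i)=0$ for $i<n$; the governing Helly constant is $M_0=\theta/d[\phi,Q(X)]<1$, because $\inf_{s}\|\phi+\sum_i s_iQ(x_i)\|\geq d[\phi,Q(X)]$, so $\|g_n\|<1$. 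Then I choose $x_n\in X$ solving $g_i(x_n)=\theta$ for $i\leq n$; here the Helly constant is at most $\|\phi\|<1$, since for any scalars $|\theta\sum_i s_i|=|\phi(\sum_i s_i g_i)|\leq\|\phi\|\,\|\sum_i s_i g_i\|$, so $\|x_n\|\leq 1$. These relations say precisely that $g_n(x_i)=\theta$ for $i\geq n$ and $0$ for $i<n$.

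The main obstacle, and the reason the rescaling step is not cosmetic, is that \emph{both} norm bounds must emerge strictly below $1$ at the same time: this is exactly what dictates demanding $\|\phi\|<1$ together with $d[\phi,Q(X)]>\theta$, and it is the one delicate point of the construction. Everything else is routine — that the two Helly constants are bounded uniformly in $n$, and that the chosen interleaving order supplies all the required constraints for each $x_n$ and each $g_n$.
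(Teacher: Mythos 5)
Your proof is correct and follows the same skeleton as the paper's: an interleaved Helly construction for $1)\Rightarrow 2)$, separation by the functional $g_{n+1}$ to pass from the bi-orthogonal system to the convex-hull distance, and weak compactness of $X_{\leq 1}$ for $5)\Rightarrow 1)$. Three of your choices improve on the paper's write-up. First, your implication graph (proving $2)\Rightarrow 4)$ and $3)\Rightarrow 5)$ as two instances of the same one-line computation) repairs an organizational defect in the paper, whose step labelled ``$3)\Rightarrow 4)$'' only establishes the distance bound for the single $\theta$ furnished by $3)$, i.e.\ it really proves $3)\Rightarrow 5)$. Second, and more substantively, your normalization $\|\phi\|<1$ together with $d[\phi,Q(X)]>\theta$ is exactly what the paper's version of $1)\Rightarrow 2)$ is missing: the paper takes $\|f\|=1$, so its Helly constant for producing $x_n$ equals $1$ and \textbf{Theorem \ref{Theorem 1.16}} then only yields $\|x_n\|<1+\epsilon$, not $x_n\in X_{\leq 1}$ as claimed; with $\|\phi\|<1$ one takes $\epsilon=1-\|\phi\|$ and the bound $\|x_n\|\leq 1$ comes out cleanly, while the constant $\theta/d[\phi,Q(X)]<1$ still controls the $g_n$-step (the paper instead routes that step through an auxiliary functional in $X^{\ast\ast\ast}$ annihilating $Q(X)$, which costs an extra dual and does not fix the $x_n$-bound). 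Third, your $5)\Rightarrow 1)$ goes through a weak cluster point of $\{x_i\}$ rather than the paper's countable-intersection argument via \textbf{Proposition \ref{Proposition 3.1}}; the two are interchangeable given \textbf{Theorem \ref{Theorem 1.19}}, and both reach the same contradiction that the common point is simultaneously at distance $0$ and at distance at least $\theta$ from $\operatorname{conv}\{x_i\}$.
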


\begin{proof}$\hspace{0.44cm}\\$
\begin{itemize}
    \item $1)\implies 2)$: Assume $X$ is not reflexive. Then by \textbf{Riesz's Lemma}, for each $\theta\in(0, 1)$ there exists $f\in X^{\ast\ast}$ with $\|f\|=1$ such that $d\big[ f, Q(X)\big] > \theta$. Hence, there exists $\phi\in X^{\ast\ast\ast}_{\leq 1}$ such that $Q(X) \subseteq \operatorname{Ker}\phi$ and $\phi(f) > \theta$. First, find $g_1\in X^{\ast\ast}_{< 1}$ such that $f(g_1) = \theta$. Then, for any $a_1\in\mathbb{C}$, we have:

    \begin{equation}\label{e11}
    \vert\,a_1\theta\,\vert = \vert\, f(a_1g_1)\,\vert \leq \|a_1g_1\|
    \end{equation}
    Hence, by \textbf{Theorem \ref{Theorem 1.19}}, there exists $x_1\in X_{\leq 1}$ with $g_1(x_1) = \theta$. Next, for any $a_1, a_2\in\mathbb{C}$, we have:

    \begin{equation}\label{e12}
    \vert\,a_2\theta\,\vert = \frac{\theta}{\phi(f)} \vert\,\phi(a_2 f)\,\vert = \frac{\theta}{\phi(f)} \Big\vert\, \phi\big[ a_2 f + a_1Q(x_1) \big] \,\Big\vert \leq \frac{\theta}{\phi(f)} \|a_2 f + a_1Q(x_1)\|
    \end{equation}
    Again by \textbf{Theorem \ref{Theorem 1.16}}, there exists $g_2\in X^{\ast}$ with $\|g_2\|<1$ (since $\phi(f)>\theta$) such that $f(g_2) = \theta$ and $g_2(x_1) = 0$. With $g_1, g_2 \in X^{\ast}_{< 1}$ and $x_1\in X_{\leq 1}$, for any $a_1, a_2\in\mathbb{C}$, replace $a_1 g_1$ in (\ref{e11}) by $a_1 g_1 + a_2 g_2$ and then we have:

    $$
    \vert\, a_1\theta + a_2\theta\,\vert = \vert\, f(a_1g_1 + a_2 g_2)\,\vert \leq \|a_1 g_1 + a_2 g_2\|
    $$
    By \textbf{Theorem \ref{Theorem 1.16}}, we can find $x_2 \in X_{\leq 1}$ such that $g_2(x_2) = g_1(x_2) = \theta$. Next, with $a_1, a_2, a_3\in\mathbb{C}$, replace $a_2 f + a_1Q(x_1)$ in (\ref{e12}) by $a_3 f + a_2 Q(x_2) + a_1Q(x_1)$. Then by \textbf{Theorem \ref{Theorem 1.16}}, we can then find $g_3\in X^{\ast}_{< 1}$ such that $f(g_3) = \theta$ and $g_3(x_1) = g_3(x_2) = 0$. By induction, we then can obtain the desired $\{g_n\}\subseteq X^{\ast}_{< 1}$ and $\{x_n\} \subseteq X_{\leq 1}$.

    \item $2)\implies 3)$: Immediate.
    
    \item $3)\implies 4)$: Let $\{x_n\}\subseteq X_{\leq 1}$ and $\{g_n\}\subseteq X^{\ast}_{< 1}$ be given as $g_n(x_i) = \theta$ for all $i\geq n$ and zero for all $i<n$. Now for any $x \in \operatorname{conv}\{x_1, x_2, \cdots, x_n\}, z \in \operatorname{conv}\{x_{n+1}, x_{n+2}, \cdots\}, \|z-x\| \geq \vert\,f_{n+1}(z-x)\,\vert = \theta$.

    \item $4)\implies 5)$: Immediate.
    
    \item $5)\implies 1)$: Let $C_n = \overline{\operatorname{conv}\{x_j\}_{j > n}}$. If $X$ is reflexive, the normed closed unit ball of $X$ is also weakly compact, which, by \textbf{Proposition \ref{Proposition 3.1}}, implies that $\bigcap_{n\in\mathbb{N}}C_n \neq\emptyset$. Let $y\in \bigcap_{n\in\mathbb{N}}C_n$. Then for each $n\in\mathbb{N}$, there exists $x_n \in C_n$ with $\|x_n - y\|< \dfrac{1}{n}$. Therefore, for each $n\in\mathbb{N}$:

    $$
    \|x_n - x_{n+1}\| \leq \|x_n-y\|+\|x_{n+1}-y\| < \frac{2}{n} \hspace{0.3cm}\implies\hspace{0.3cm} d\big[ \operatorname{conv}\{x_i\}_{i\leq n}, C_n\big] < \frac{2}{n} \overset{n\rightarrow\infty}{\longrightarrow} 0
    $$
    
\end{itemize}
\end{proof}

\section{Characterizations by Schauder basis}

This section includes results that describe reflexivity based on properties of a Schauder basis. It is clear that all subspaces of a reflexive Banach space are reflexive and, for instance, $\ell^1$, $c_0$ are rather famous non-reflexive Banach spaces. One way to characterize a non-reflexive Banach space is to show that there exists a subspace that is linearly isomorphic to $\ell^1$ or $c_0$. We will see how the existence of such a subspace is related to certain properties of a Schauder basis. Meanwhile when a subspace is linearly isomorphic to $\ell^1$ or $c_0$, a non-separable subspace will exist in the double dual. As a side result, checking separability of the double dual (see \textbf{Corollary \ref{Corollary 4.20}}) can also help with checking reflexivity.  On the other hand, since a Banach space is reflexive if and only if its dual is reflexive (see \textbf{Theorem \ref{Proposition 4.19}}), techniques introduced above can also be applied to determine when the dual space is reflexive. We will also see even in a Banach space with a Schauder basis, the set of linear functionals defined by giving the $n$-th coefficients is not always a Schauder basis of the dual space.

\subsection{Rosenthal's Condition}

\begin{defn}
In an infinite set $S$, given $X\subseteq S$ and a sequence of subsets $\{A_n\}_{n\in\mathbb{N}}$, we say $\{A_n\}$ \textbf{converges on $X$} if $\{\chi_{A_n}\}$ converges pointwise on $X$. In particular if $X=\emptyset$, we say $\{A_n\}$ \textbf{converges vaguely on} $X$. We call $\{A_n\}$ \textbf{Boolean independent} if for every pair of disjoint finite subsets $N_1, N_2 \subseteq\mathbb{N}$ with $\max N_1 < \min N_2$:

$$
\bigcap_{i\in N_1}\bigcap_{j\in N_2}A_i\backslash A_j \neq\emptyset
$$

\end{defn}

\begin{defn}

In an infinite set $S$, let $\{(A_n, B_n)\}_{n\in\mathbb{N}}$ be a sequence of pair of subsets of $S$ such that $A_n\cap B_n=\emptyset$ for each $n\in\mathbb{N}$. Given $X\subseteq S$, we say $\{(A_n, B_n)\}$ \textbf{converges on} $X$ if $\{A_n\}$ or $\{B_n\}$ converges on $X$. In particular, if $X=\emptyset$, we say $\{(A_n, B_n)\}_{n\in\mathbb{N}}$ \textbf{converges vaguely on} $X$. We call $\{(A_n, B_n)\}$ \textbf{Boolean independent} if for every pair of disjoint finite subsets $N_1, N_2\subseteq\mathbb{N}$ with $\max N_1 < \min N_2$:

$$
\bigcap_{i\in N_1}\bigcap_{j\in N_2} A_i\cap B_j \neq\emptyset
$$
Given two infinite subsets $M_1, M_2\subseteq \mathbb{N}$, we call $\{(A_n, B_n)\}_{n\in M_1}$ is a \textbf{subsequence of} $\{(A_n, B_n)\}_{n\in M_2}$ if $M_1\backslash M_2$ is finite.

\end{defn}

\begin{lem}\label{Lemma 4.3}

Given an infinite set $S$ and $\{ (A_n, B_n) \}_{n\in\mathbb{N}}$ a sequence of pair of subsetes of $S$ with $A_n\cap B_n=\emptyset$ for each $n$, suppose that for each $l\in\mathbb{N}$, there exists $\{X_i\}_{i\leq l}$ a $l$-size family of disjoint subsets of $S$ such that $\{(A_n, B_n)\}_{n\in\mathbb{N}}$ has no convergent subsequences on each $X_i$. Then there exists an infinite subset $M\subseteq\mathbb{N}$ and $K\in M$ such that $\{(A_n, B_n)\}_{n\in M}$ has no convergent subsequences on $X_i\cap A_K$ or $X_i\cap B_K$ for each $i\leq l$.

\end{lem}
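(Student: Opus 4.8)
The plan is to read everything in terms of pointwise convergence in $\{0,1\}^{S}$. For an index set $N\subseteq\mathbb{N}$ and a set $Y\subseteq S$, call $Y$ \textbf{$N$-split} if neither $\{A_n\}_{n\in N}$ nor $\{B_n\}_{n\in N}$ has a subsequence converging on $Y$; unwinding the definition, $Y$ is $N$-split exactly when, for every infinite $N'\subseteq N$, some point of $Y$ lies in $A_n$ for infinitely many $n\in N'$ and outside $A_n$ for infinitely many $n\in N'$, and (independently) some point of $Y$ does the same for the $B_n$. The hypothesis is that each $X_i$ is $\mathbb{N}$-split, and the conclusion asks for one index $K$ and one infinite $M\ni K$ for which all $2l$ sets $X_i\cap A_K$, $X_i\cap B_K$ — pairwise disjoint, since the $X_i$ are disjoint and $A_K\cap B_K=\emptyset$ — are $M$-split. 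Two observations guide the argument. First, splitting is monotone in the index set: if $Y$ is $N$-split and $M\subseteq N$ then $Y$ is $M$-split, because a subsequence convergent within $M$ already converges within $N$. Second, every $N$-split set is uncountable, since on a countable set a diagonal argument extracts a pointwise convergent subsequence from any sequence of $\{0,1\}$-valued functions. The tension to overcome is that shrinking the index set only helps splitting, whereas shrinking the base set — exactly what passing from $X_i$ to $X_i\cap A_K$ does — can destroy it.

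First I would reduce the statement to the one-set version: \textbf{Sub-lemma.} If $X$ is $N$-split, then there are an infinite $M\subseteq N$ and infinitely many $K\in M$ such that $X\cap A_K$ and $X\cap B_K$ are both $M$-split. Granting this, I process $X_1,\dots,X_l$ along a decreasing tower of index sets. Set $G_0=N$; given an infinite $G_{i-1}\subseteq N$, the set $X_i$ is $G_{i-1}$-split by monotonicity, so the sub-lemma applied inside $G_{i-1}$ yields $M_i\subseteq G_{i-1}$ and an infinite pool $G_i\subseteq M_i$ of indices $K$ for which $X_i\cap A_K$ and $X_i\cap B_K$ are $M_i$-split. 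Since $G_l\subseteq M_l\subseteq G_{l-1}\subseteq\cdots\subseteq G_i$, any $K\in G_l$ lies in every $G_i$ and is therefore good for $X_i$ with respect to $M_i$; taking $M:=M_l\subseteq M_i$, monotonicity upgrades each of these to $M$-splitting. Thus a single $K\in G_l$ paired with $M=M_l$ proves the lemma, and the role of carrying \emph{infinitely many} good indices at each stage is precisely to keep the tower alive and to leave a common index at the end.

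The real content is the sub-lemma, and I expect it to be the main obstacle, because ``$M$-split'' quantifies over all infinite subsets of $M$ and so cannot be pinned down at any finite stage. The naive route — assume no good $K$ survives, and from the resulting abundance of local convergences build a subsequence converging on all of $X$, contradicting that $X$ is split — stalls exactly because $X$ is uncountable: the convergences one would harvest live on the countably many sets $X\cap A_K$, which cannot cover $X$. I would therefore argue constructively, by a fusion/diagonal scheme that builds $M=\{m_1<m_2<\cdots\}$, the good indices $K_j$, and oscillating witnesses at once. At stage $j$ I use that $X$ is both $A$- and $B$-split to locate inside $A_{K_j}$ and inside $B_{K_j}$ points that still oscillate along the uncommitted tail, and then thin that tail — which, by the monotonicity above, can only preserve splitting — so as to freeze those witnesses into genuine oscillators along every later subset; the disjointness $A_n\cap B_n=\emptyset$ keeps the two families of witnesses from colliding. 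The two delicate points, where I would concentrate the care, are (i) ensuring that the thinning which protects $K_1,\dots,K_j$ does not exhaust the supply of oscillating witnesses needed to choose $K_{j+1}$, and (ii) checking at the end that the diagonal set $M$ really defeats every convergent subsequence on each $X\cap A_{K_j}$ and $X\cap B_{K_j}$; I would organize this as a finitely-branching tree of witnesses and stabilize the relevant containment patterns with the infinite Ramsey theorem before extracting the diagonal.
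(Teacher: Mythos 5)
Your reduction of the lemma to the one-set sub-lemma is sound: monotonicity of splitting under shrinking the index set, the upgrade from ``one good $K$'' to ``infinitely many good $K$'' by iterating and diagonalizing, and the decreasing tower $G_0\supseteq M_1\supseteq G_1\supseteq\cdots$ that leaves a common index $K\in G_l$ all check out, and this packaging is arguably cleaner than the paper's induction on $l$, which re-runs a base-case-style construction at each inductive step to absorb $X_{l+1}$. The problem is that the sub-lemma --- which, as you say yourself, is the entire content of the statement and coincides with the paper's $l=1$ base case --- is never proved. In its place you offer a plan (a ``fusion/diagonal scheme'', a ``finitely-branching tree of witnesses'', stabilization ``with the infinite Ramsey theorem'') together with an explicit admission that the two points on which the plan could founder remain unresolved. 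Your point (ii) is the fatal one as described: a witness $y$ that oscillates along $M$ need not oscillate along an arbitrary infinite $M'\subseteq M$, so freezing countably many witnesses at countably many stages cannot by itself defeat the uncountably many candidate convergent subsequences of $\{(A_n,B_n)\}_{n\in M}$ on $X\cap A_{K_j}$; some further idea is required and is not supplied.

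Moreover, the route you dismiss is essentially the one the paper uses. The base case argues by contradiction: if for every infinite $M$ and every $K$ some subsequence of $\{(A_n,B_n)\}_{n\in M}$ converged on $A_K$ or on $B_K$, one builds a decreasing chain $M_1\supseteq M_2\supseteq\cdots$ and indices $n_1<n_2<\cdots$ with, say, $\{A_n\}_{n\in M_i}$ convergent pointwise on $A_{n_i}$ for each $i$ in an infinite half $M_a$. The contradiction is not obtained by covering $X$ with the countably many sets $A_{n_i}$ --- the step you correctly observe is impossible --- but from a dichotomy at a single point: for $x\in A_{n_i}$, convergence of $\chi_{A_n}(x)$ along $M_i$ to $0$ puts $x$ outside $A_n$ for all large $n\in M_i$, while convergence to $1$ puts $x$ inside $A_n$ and hence, by $A_n\cap B_n=\emptyset$, outside $B_n$ for all large $n\in M_i$. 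A single witness $y$ for the failure of the diagonal sequence $\{(A_{n_i},B_{n_i})\}_{i\in M_a}$ to converge lies in $A_{n_i}$ for infinitely many $i$ and in $B_{n_i}$ for infinitely many $i$, yet the dichotomy forces $y$ to be eventually excluded from all the $A_{n_J}$ with $n_J\in M_i$ large, or from all the $B_{n_J}$ --- contradicting that oscillation. Your objection to the contradiction route therefore rests on a mischaracterization of where its contradiction comes from, and without that route (or a completed version of your fusion argument) the proposal does not prove the lemma.
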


\begin{proof}

We will complete the proof by induction. When $l = 1$, by assumption $\{(A_n, B_n)\}$ has no convergent subsequences on $S$. Next, assume by contradiction that for any infinite subset $M\subseteq\mathbb{N}$ and $K\in\mathbb{N}$, $\{(A_n, B_n)\}_{n\in M}$ has a convergent subsequence on $A_K$ or $B_K$. Start from $M_0 = \mathbb{N}$ and $n_0 = 1$. Then we can find an infinite subset $N_1\subseteq \mathbb{N}$ such that $\{(A_n, B_n)\}_{n\in N_1}$ converges on $A_1$ or $B_1$. Set $M_1 = N_1\cap M_0 = N_1$ and hence $\{(A_n, B_n)\}_{n\in M_1}$ converges on $A_1$ or $B_1$. Set $n_1 = \min M_1$. By assumption, we then can find an infinite subset $N_2\subseteq\mathbb{N}$ such that $N_2\backslash M_1$ is finite and $\{(A_n, B_n)\}_{n\in N_2}$ converges on $A_{n_2}$ or $B_{n_2}$. Set $M_2 = N_2\cap M_1$, $n_2 = \min M_2$ and hence $\{(A_n, B_n)\}_{n\in M_2}$ converges on $A_{n_2}$ or $B_{n_2}$, as well as on $A_{n_1}$ or $B_{n_1}$. By induction, we then can find an non-decreasing sequence of integers $\{n_i\}_{i\in\mathbb{N}}$ and $\{M_i\}_{i\in\mathbb{N}}$, an decreasing (in the sense of inclusion) sequence of infinite subsets of $\mathbb{N}$, such that for each $j\in\mathbb{N}$, $\{n_i\}_{i\geq j}\subseteq M_j$ and $\{(A_n, B_n)\}_{n\in M_j}$ converges on $A_i$ or $B_i$ for each $i\leq j$.\\

\noindent
Next, for each $i\in\mathbb{N}$, if $\{(A_n, B_n)\}_{n\in M_i}$ converges on $A_{n_i}$, set $a_i = 1$ and define $M_a = \{i\in\mathbb{N} \,\vert\, a_i = 1\}$; if $\{(A_n, B_n)\}_{n\in M_i}$ converges on $B_{n_i}$, set $b_i = 1$ and define $M_b = \{i\in\mathbb{N} \,\vert\, b_i = 1\}$. Since for each $i\in\mathbb{N}$, $\{(A_n, B_n)\}_{n\in M_i}$ converges on $A_{n_i}$ or $B_i$, we have at least one of $M_a$ or $M_b$ must infinite. Without losing generality, assume $M_a$ be infinite. For each $i\in M_a$ and $x\in A_{n_i}$:

\begin{itemize}

    \item If $\lim_{n\in M_i} \chi_{A_n}(x) = 0$, then there exists $n(i, x) \in\mathbb{N}$ with $n(i, x) \geq n_i$ such that for each $n\in M_i$, $x\notin A_n$ whenever $n\geq n(i, x)$.
    
    \item If $\lim_{n\in M_i} \chi_{A_n}(x) = 1$, then there exists $n(i, x) \in\mathbb{N}$ with $n(i, x) \geq n_i$ such that for each $n\in M_i$, $x\in A_n$ whenever $n\geq n(i, x)$. In this case, since $A_k\cap B_k=\emptyset$ for each $k\in\mathbb{N}$, we will have that $x\notin B_n$ for each $n\in N_i$ whenever $n\geq n(i, x)$
    
\end{itemize}

\noindent
According to our assumption, for any infinite subset $M\subseteq\mathbb{N}$, $\{(A_n, B_n)\}_{n\in M}$ is not convergent on $S$. In particular, for the subsequence $\{(A_{n_i}, B_{n_i})\}_{i\in M_a}$, there exists $y\in S$ such that all of the following sets are infinite.

$$
\{i\in M_a\,\vert\, y\in A_{n_i}\},\hspace{0.2cm} M_a\backslash \{i\in M_a\,\vert\, y\in A_{n_i}\},\hspace{0.2cm} \{i\in M_a\,\vert\, y\in B_{n_i}\},\hspace{0.2cm} M_a\backslash \{i\in M_a\,\vert\, y\in B_{n_i}\} 
$$
However, if we fix $i\in M_a$ such that $y\in A_{n_i}$, we then can find $n(i, y)\geq n_i$ such that either:

\begin{equation}\label{e13}
\forall\, k\in \big\{n\in M_i\,\vert\, n\geq n(i, y)\big\},\hspace{0.3cm} \chi_{A_k}(y) = 0
\end{equation}
or:

\begin{equation}\label{e14}
\forall\, k\in \big\{n\in M_i\,\vert\, n\geq n(i, y)\big\},\hspace{0.3cm} \chi_{B_k}(y) = 0
\end{equation}
Recall that for each $j\in\mathbb{N}$, $\{n_i\}_{i\geq j}\subseteq M_j$. If (\ref{e13}) is true, since the set $\{i\in M_a\,\vert\, y\in A_{n_i}\}$ is infinite, there exists $J\in \{i\in M_a\,\vert\, y\in A_{n_i}\}$ such that $n_J \geq n(i, y)$ and $y\in A_{n_J}$. Then, since $n_J\geq n(i, y) \geq n_i$, $n_J\in M_i$ and hence $y\notin A_{n_J}$ by (\ref{e13}), which is absurd. The case when (\ref{e14}) is true will lead to the same contradiction. Therefore, the base case is proved.\\

\noindent
The base case when $l = 1$ is proved and now by induction assume the statement holds when $l = n \in \mathbb{N}$. Then we will prove the case when $l = n+1$. Let $\{X_i\}_{i\leq n+1}$ be a $(n+1)$-size of disjoint subsets of $S$ such that $\{(A_n, B_n)\}_{n\in\mathbb{N}}$ has no convergent subsequences on each $X_i$. Then by assumption there exists an infinite $M\subseteq\mathbb{N}$ and $K\in M$ such that $\{(A_n, B_n)\}_{n\in M}$ has no convergent subsequences on $X_i\cap A_K$ or $X_i\cap B_K$ for each $i\leq n$. Assume by contradiction that $\{(A_n, B_n)\}_{n\in M}$ has a convergent subsequence on $X_{n+1}\cap A_K$ or $X_{n+1}\cap B_K$. Suppose $M'$ is the infinite set that make $\{(A_n, B_n)\}_{n\in M'}$ convergent on $A_K\cap X_{n+1}$ or $B_K\cap X_{n+1}$ and we can assume $K\in M'$. Let $K_1 = K$ and $M_1 = \{n\in M'\,\vert\, n>K_1\}$. $\{(A_n, B_n)\}_{n\in M_1}$ is a subsequence of $\{(A_n, B_n)\}_{n\in\mathbb{N}}$ and hence does not converge on each $X_i (i\leq n+1)$. By assumption there exists $M_2\subseteq M'$ and $K_2\in M_2$ such that $\{(A_n, B_n)\}_{n\in M_2}$ has no convergent subsequences on $X_i\cap A_{K_2}$ or $X_i\cap B_{K_2}$ for each $i\leq n$. If $\{(A_n, B_n)\}_{n\in M_2}$ has a convergent subsequence on $X_{n+1}\cap A_{K_2}$ or $X_{n+1}\cap B_{K_2}$, suppose $\{(A_n, B_n)\}_{n\in (M_2)'}$ is the desired subsequence and we can assume $K_2\in (M_2)'$.\\

\noindent
Assume the process above will not stop. Then for each $i\in\mathbb{N}$, we have $\{(A_n, B_n)\}_{n\in M_i}$ is convergent on $X_{n+1}\cap A_{K_i}$ or $X_{n+1}\cap B_{K_i}$. Then for each $i\in\mathbb{N}$, set $a_i=1$ if $\{(A_n, B_n)\}_{n\in M_i}$ converges on $X_{n+1}\cap A_{K_i}$; set $b_i=1$ if $\{(A_n, B_n)\}_{n\in M_i}$ converges on $X_{n+1}\cap B_{K_i}$. Again define $N_a = \{i\in\mathbb{N}\,\vert\, a_i=1\}$ and $N_b = \{i\in\mathbb{N}\,\vert\, b_i=1\}$. Without losing generality, assume $N_a$ is infinite. Hence $\{(A_{K_i}, B_{K_i})\}_{i\in N_a}$ is a subsequence and has no convergent subsequences on $X_{n+1}$. Then there exists $y\in X_{n+1}$ such that all of the following sets are infinite:

$$
\{i\in N_a\,\vert\, y\in A_{K_i}\}, \hspace{0.2cm} N_a\backslash \{i\in N_a\,\vert\, y\in A_{K_i}\}, \hspace{0.2cm} \{i\in N_a\,\vert\, y\in B_{K_i}\}, \hspace{0.2cm} N_a\backslash \{i\in N_a\,\vert\, y\in B_{K_i}\}, 
$$
Fix $j\in M_a$ such that $y\in A_{K_j}$. Since $\{(A_n, B_n)\}_{n\in M_j}$ is convergent on $X_{n+1}\cap A_{K_j}$ or $X_{n+1}\cap B_{K_j}$, we then can find $k(j, y)\geq K_i$ such that either:

\begin{equation}\label{e15}
\forall\, k\in\{ m\in M_j\,\vert\, m\geq k(j, y)\}, \hspace{0.3cm} \chi_{A_k}(y)=0
\end{equation}
or

\begin{equation}\label{e16}
\forall\, k\in\{ m\in M_j\,\vert\, m\geq k(j, y)\}, \hspace{0.3cm} \chi_{B_k}(y)=0
\end{equation}
Recall that $\{K_i\}_{i\geq j}\subseteq M_j$. If (\ref{e15}) if true, then there exists $J\in\{i\in N_a\,\vert\, y\in A_{K_i}\}$ such that $K_J\geq k(j, y)$ and $y\in A_{K_J}$. However, by (\ref{e15}) we also have $y\notin A_{K_J}$, which is absurd. The case when (\ref{e16}) is true can also lead to a contradiction similarly.

\end{proof}

\begin{lem}\label{Lemma 4.4}
Let $\{(A_n, B_n)\}_{n \in \mathbb{N}}$ be a sequence of pairs of sets where $A_n\cap B_n=\emptyset\,\forall\,n\in\mathbb{N}$ and suppose $\{(A_n, B_n)\}$ has no convergent subsequence on $S$. Then we can find an infinite set $M \subseteq\mathbb{N}$ so that $\{(A_n, B_n)\}_{n \in M}$ is Boolean Independent.
\end{lem}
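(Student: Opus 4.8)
The plan is to build the Boolean independent subsequence by repeatedly splitting $S$ with the help of \textbf{Lemma \ref{Lemma 4.3}}, doubling at each stage the number of ``atoms'' on which no convergent subsequence survives. The first observation I would record is that, by the definitions, any sequence converges (vacuously, i.e.\ vaguely) on the empty set; consequently, whenever the restriction of $\{(A_n,B_n)\}$ to an infinite index set has no convergent subsequence on a set $X$, that set $X$ must be non-empty. This is the only place where the hypothesis ultimately produces points, and it is exactly what will guarantee that the intersections in the definition of Boolean independence are inhabited.

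The construction proceeds by recursion on $m$. I start with $M_0=\mathbb{N}$ and the one-element family $\{S\}$, on which the full sequence has no convergent subsequence by hypothesis. Suppose after stage $m$ I have an infinite set $M_m\subseteq\mathbb{N}$, strictly increasing indices $K_1<\cdots<K_m$ with $K_i\in M_i$, and the family of $2^m$ pairwise disjoint atoms $\bigcap_{i\le m}C_i$, where each $C_i\in\{A_{K_i},B_{K_i}\}$, such that $\{(A_n,B_n)\}_{n\in M_m}$ has no convergent subsequence on any atom. To pass to stage $m+1$, I first replace $M_m$ by its tail $\{n\in M_m: n>K_m\}$ --- this removes only finitely many indices, so it cannot create a new convergent subsequence on any atom --- and then apply \textbf{Lemma \ref{Lemma 4.3}} (after reindexing this tail as $\mathbb{N}$) to the current $l=2^m$ family of atoms. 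The lemma returns an infinite refinement $M_{m+1}$ and a single index $K_{m+1}\in M_{m+1}$, necessarily larger than $K_m$, so that $\{(A_n,B_n)\}_{n\in M_{m+1}}$ has no convergent subsequence on any set of the form $(\text{atom})\cap A_{K_{m+1}}$ or $(\text{atom})\cap B_{K_{m+1}}$. These $2^{m+1}$ sets are precisely the atoms at stage $m+1$, and they are pairwise disjoint since two distinct sign patterns differ in some coordinate $i$, where $A_{K_i}$ and $B_{K_i}$ are disjoint.

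Setting $M=\{K_1,K_2,\dots\}$, I claim $\{(A_n,B_n)\}_{n\in M}$ is Boolean independent. Given disjoint finite sets $N_1,N_2\subseteq M$ with $\max N_1<\min N_2$, write $N_1=\{K_i:i\in I_1\}$ and $N_2=\{K_j:j\in I_2\}$ and set $p=\max(I_1\cup I_2)$. At stage $p$ the atom obtained by choosing $C_i=A_{K_i}$ for $i\in I_1$, $C_j=B_{K_j}$ for $j\in I_2$, and (arbitrarily) $C_i=A_{K_i}$ for the remaining $i\le p$, is one of the $2^p$ atoms, hence carries no convergent subsequence, hence is non-empty by the opening observation. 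Since this atom is contained in $\bigcap_{n\in N_1}A_n\cap\bigcap_{n\in N_2}B_n$, that intersection is non-empty, which is exactly Boolean independence; note that the ordering condition $\max N_1<\min N_2$ comes for free, since we obtain non-emptiness for every sign pattern.

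The main obstacle I anticipate is bookkeeping rather than conceptual difficulty: correctly matching the sign patterns so that the atoms at stage $m+1$ are the splittings produced by \textbf{Lemma \ref{Lemma 4.3}}, and verifying that passing to subsequences --- both the tail truncations needed to force $K_{m+1}>K_m$ and the reindexing needed to invoke the lemma, which is stated for sequences indexed by $\mathbb{N}$ --- genuinely preserves the property ``no convergent subsequence.'' One must be slightly careful because the paper's notion of subsequence permits finitely many extra indices, but this only ever helps: a convergent subsequence of a tail is, up to a finite set, a convergent subsequence of the parent, so the property descends as required.
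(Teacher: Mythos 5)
Your proposal is correct and follows essentially the same route as the paper: iterate \textbf{Lemma \ref{Lemma 4.3}} to double the family of atoms at each stage, collect the indices $K_1<K_2<\cdots$ into $M$, and deduce Boolean independence from the fact that a set on which no convergent subsequence exists must be non-empty (since every sequence converges vaguely on $\emptyset$). The only cosmetic difference is that you verify the defining intersections non-empty directly for every sign pattern, while the paper argues by contradiction using only the ``all $A$'s then all $B$'s'' atom that the definition actually requires.
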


\begin{proof}
According to the proof of base case in \textbf{Lemma \ref{Lemma 4.3}}, we can find  $M_1 \subseteq \mathbb{N}$ and $n_1\in M_1$ such that $\{(A_n, B_n)\}_{n\in M_1}$ has no convergent subsequence on $A_{n_1}$ or $B_{n_1}$. With $S$ replaced by $A_{n_1}$ of $B_{n_1}$ (depending on which set is the one where $\{(A_n, B_n)\}_{n\in M_1}$ has no convergent subsequence), apply the proof of the base case in \textbf{Lemma \ref{Lemma 4.3}} again, we then can find an infinite subset $M_2\subseteq M_1$ and $n_2\in M_2$ such that $\{(A_n, B_n)\}_{n\in M_2}$ has no convergent subsequence on each of the following sets:

$$
A_{n_1}\cap A_{n_2}, \hspace{0.2cm} A_{n_1}\cap B_{n_2}, \hspace{0.2cm} B_{n_1}\cap A_{n_2}, \hspace{0.2cm} B_{n_1}\cap B_{n_2}
$$
Repeat the method above and then obtain an infinite sequence of integers $\{n_i\}_{i\in\mathbb{N}}$ and $\{M_i\}_{i\in\mathbb{N}}$ a decreasing sequence of infinite subsets of $\mathbb{N}$. Assume by contradiction that $\{(A_n, B_n)\}_{n\in M}$ is not Boolean independent. Then there exists $F, G$ two disjoint finite subsets of $M$ with $\max F<\min G$ such that:

$$
\bigcap_{i\in F}\bigcap_{j\in G}A_i\cap B_j = \emptyset
$$
Suppose $F\subseteq\{n_1, \cdots, n_F\}$ and $G\subseteq \{n_F+1, \cdots, n_G\}$ for some $n_F, n_G\in\mathbb{N}$. Define $F_M=\{n_1, \cdots, n_F\}$ and $G_M = \{n_F+1, \cdots, n_G\}$. Consider the set $M_{n_G}$. Since $\{(A_n, B_n)\}_{n\in M_{n_G}}$ has no convergent sequences on $\bigcap_{i\in F_M}\bigcap_{j\in G_M}A_i\cap B_j$, there exists $y\in \bigcap_{i\in F_M}\bigcap_{j\in G_M}A_i\cap B_j$ such that all of the following sets are infinite:

$$
\{n\in M_{n_G}\,\vert\, y\in A_n\}, \hspace{0.2cm} M_{n_G}\backslash \{n\in M_{n_G}\,\vert\, y\in A_n\}, \hspace{0.2cm} \{n\in M_{n_G}\,\vert\, y\in B_n\}, \hspace{0.2cm} M_{n_G}\backslash \{n\in M_{n_G}\,\vert\, y\in B_n\}
$$
Hence:

$$
y\in \bigcap_{i\in F_M}\bigcap_{j\in G_M} A_i\cap B_j \subseteq \bigcap_{i\in F}\bigcap_{j\in G}A_i\cap B_j = \emptyset
$$
which is absurd.

\end{proof}

\begin{prop}\label{Proposition 4.5}
Let $\{f_n\}_{n \in \mathbb{N}}$ be a uniformly bounded sequence of real-valued scalar function defined on $S$. Fix $\delta, r > 0$ and for each $n\in\mathbb{N}$, define $A_n = \{x\in S\,\vert\,f_n(x) > \delta+r\}, B_n=\{x\in S\,\vert\,f_n(x) < r\}$. If there exists an infinite set $M\subseteq\mathbb{N}$ such that $\{(A_n, B_n)\}_{n \in M}$ is Boolean Independent, then $\{f_n\}_{n \in M}$ is equivalent, in the $\sup$ norm, to the standard basis of the real $\ell_1$ space.
\end{prop}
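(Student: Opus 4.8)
The plan is to verify the two-sided estimate that defines equivalence with the unit-vector basis of $\ell^1$: I want constants $c, C > 0$ so that $c\sum_n |a_n| \le \big\| \sum_n a_n f_n \big\|_\infty \le C\sum_n |a_n|$ for every finitely supported real sequence $(a_n)_{n \in M}$. The upper estimate is free: if $K = \sup_n \|f_n\|_\infty$ denotes the uniform bound, the triangle inequality gives $\big\| \sum_n a_n f_n \big\|_\infty \le \sum_n |a_n|\,\|f_n\|_\infty \le K \sum_n |a_n|$, so $C = K$ works. All the substance lies in the lower estimate, for which I expect the constant $c = \delta/2$, the half-gap coming from the separation between the thresholds $\delta + r$ and $r$ that define $A_n$ and $B_n$.

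For the lower estimate I would fix a finite set of active indices in $M$ together with scalars $a_n$, and split the index set into $P = \{n : a_n \ge 0\}$ and $Q = \{n : a_n < 0\}$. The idea is to exhibit two points of $S$ at which the active $f_n$ are driven to opposite ends of their prescribed gap. Using Boolean independence I would pick a point $x \in \bigcap_{n \in P} A_n \cap \bigcap_{n \in Q} B_n$ and a point $y \in \bigcap_{n \in P} B_n \cap \bigcap_{n \in Q} A_n$; at these points $f_n(x) > \delta + r > r > f_n(y)$ for $n \in P$, and the reverse for $n \in Q$. Hence $f_n(x) - f_n(y) > \delta$ when $n \in P$ and $f_n(x) - f_n(y) < -\delta$ when $n \in Q$, so in either case $a_n\big(f_n(x) - f_n(y)\big) \ge \delta\,|a_n|$ with strict inequality whenever $a_n \ne 0$. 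Summing gives $\sum_n a_n\big(f_n(x) - f_n(y)\big) > \delta \sum_n |a_n|$, and since $\big\| \sum_n a_n f_n \big\|_\infty$ dominates both $\sum_n a_n f_n(x)$ and $-\sum_n a_n f_n(y)$, I conclude $2\big\| \sum_n a_n f_n \big\|_\infty > \delta \sum_n |a_n|$, which is the desired bound with $c = \delta/2$.

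The gap estimate and the triangle-inequality steps are routine; the genuine work, and the only place the hypothesis enters, is producing the two separating points $x$ and $y$, which is precisely an application of Boolean independence to the partition of the chosen finite index set into its positive- and negative-coefficient blocks and to the reversed partition. The delicate point I would have to watch is the ordering condition $\max N_1 < \min N_2$ built into the definition of Boolean independence: the sign pattern of $(a_n)$ need not respect the order of the indices, so I must make sure the independence I invoke really covers these partitions for an arbitrary interleaving of signs — that is, that the family furnishes a nonempty intersection for \emph{every} assignment of $A_n$ or $B_n$ to the active indices, not merely for order-separated ones. This is exactly the strength that the construction underlying \textbf{Lemma \ref{Lemma 4.4}} supplies (each stage there keeps all combinations of the $A_n, B_n$ alive), and granting it, the estimate above closes the argument.
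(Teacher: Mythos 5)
Your argument is correct and follows essentially the same route as the paper's: two points supplied by Boolean independence (one realizing the $A_n$'s on the positive-coefficient indices and the $B_n$'s on the negative ones, the other the reverse), the gap $\delta$ between the thresholds $\delta+r$ and $r$, and the factor of $2$ from taking the better of the two evaluations, yielding the same constants $\delta/2$ and $\sup_n\|f_n\|$. The ordering caveat you raise is genuine --- the paper's own proof likewise invokes the nonempty intersection for both $(F,G)$ and $(G,F)$, which the stated definition with $\max N_1<\min N_2$ does not literally cover --- and your resolution, that the construction in \textbf{Lemma \ref{Lemma 4.4}} in fact keeps every assignment of $A_n$ or $B_n$ alive at each stage, is the right one.
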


\begin{proof}
Without losing generality, we can let $\ell_1$ be indexed by $M$ since $M$ is infinite. Then it suffices to show that for any $v$ in the real $\ell_1$ (indexed by $M$):

$$
\frac{\delta}{2}\|v\|_1 \leq \sup_{x\in S}\left\vert\, \sum_{i\geq 1}v_i f_i(x)\,\right\vert \leq \sup_n\|f_n\| \|v\|_1
$$
The second inequality is immediate. Now fix $v$ in the real $\ell_1$ space with $\|v\|_1=1$. Define $V_+ = \{i\in M\,\vert\, v_i>0\}$ and $V_- = \{i\in M\,\vert\, v_i<0\}$. If we use $\mathcal{F}_+$ to denote the set of all finite subsets of $V_+$, $\mathcal{F}_-$ to denote the set of all finite subsets of $V_-$, we then have:

$$
\lim_{F\in\mathcal{F}_+, G\in\mathcal{F}_-} \sum_{i\in F\cup G}\vert\, v_i\,\vert = \lim_{F\in\mathcal{F}_+, G\in\mathcal{F}_-} \sum_{i\in F}v_i - \sum_{j\in G}v_i = \sum_{i\in M}\vert\,v_i\,\vert
$$
Fix $F\in\mathcal{F}_+$, $G\in\mathcal{F}_-$. By assumption suppose $x\in \bigcap_{i\in F}\bigcap_{j\in G}A_i\cap B_j$. Define $G_0 = \{j\in G\,\vert\, f_j(x)>0\}$. Then we have:

\begin{equation}\label{e17}
\begin{aligned}
&\hspace{1cm} \sum_{i\in G}v_i f_i(x)\geq \sum_{i\in G_0}v_i f_i(x) = \sum_{i\in G_0}\vert\,v_i\,\vert(-f_i(x)) \geq \sum_{i\in G_0} (-r)\vert\,v_i\,\vert \geq \sum_{i\in G}(-r)\vert\,v_i\,\vert\\
&\hspace{1cm} \sum_{i\in F}v_i f_i(x) \geq \sum_{i\in F}v_i (\delta+r)\\
&\implies\, \sum_{i\in F\cup G} v_i f_i(x) \geq \sum_{i\in F}v_i(\delta+r) + \sum_{i\in G}(-r)\vert\,v_i\,\vert
\end{aligned}
\end{equation}
Again by assumption, suppose $y\in \bigcap_{j\in G}\bigcap_{i\in F}A_j\cap B_i$. Then we have:

\begin{equation}\label{e18}
\begin{aligned}
&\hspace{1cm} \sum_{i\in G}v_i f_i(y) = \sum_{i\in G}\vert\,v_i\,\vert (-f_i(y)) \leq \sum_{i\in G}\vert\, v_i\,\vert(-(\delta+r))\\
&\hspace{1cm} \sum_{i\in F}v_i f_i(y) \leq r\sum_{i\in F}v_i\\
&\implies\, -\sum_{i\in F\cup G} v_if_i(y) \geq \sum_{i\in G}\vert\,v_i\,\vert(\delta+r) + \sum_{i\in F}v_i (-r)
\end{aligned}
\end{equation}
Combining (\ref{e17}) and (\ref{e18}) gives us:

$$
\sum_{i\in F\cup G}v_i f_i(x) + \left(-\sum_{i\in F\cup G}v_i f_i(y)\right) \geq \delta\sum_{i\in F\cup G}\vert\,v_i\,\vert
$$
If one of two sums on the left hand side is non-positive, then the other must be greater than or equal to $\delta\sum_{i\in F\cup G}\vert\,v_i\,\vert$. If both sums on the left hand side are positive, then at least one must be greater than or equal to $\dfrac{\delta}{2}\sum_{i\in F\cup G}\vert\,v_i\,\vert$. In general, we have:

$$
\sup_{x\in S}\left\vert\, \sum_{i\in M}v_if_i\,\right\vert \geq \frac{\delta}{2}\sum_{i\in F\cup G}\vert\,v_i\,\vert
$$
Since $F, G$ are arbitrarily picked from $\mathcal{F}_+$ and $\mathcal{F}_-$, we then can conclude:

$$
\sup_{x\in S}\left\vert\, \sum_{i\in M}v_if_i\,\right\vert \geq \lim_{F\in\mathcal{F}_+, G\in\mathcal{F}_-} \frac{\delta}{2}\sum_{i\in F\cup G}\vert\,v_i\,\vert = \frac{\delta}{2}
$$

\end{proof}

\begin{lem}\label{Lemma 4.6}
Given a sequence of uniformly bounded real-valued functions defined on $S$, say $\{f_n\}$, for each $M \subseteq \mathbb{N}$, define:
$$
\delta(M) = \sup_{x\in S}[\limsup_M f_m(x) - \liminf_M f_m(x)]
$$
Then there exists an infinite subset $Q\subseteq\mathbb{N}$ so that $\delta(L)=\delta(Q)$ for all infinite subsets $L\subseteq Q$.
\end{lem}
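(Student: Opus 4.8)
The plan is to exploit two elementary properties of $\delta$ and then run a diagonal stabilization argument. First I would record that $\delta$ is \emph{monotone} under inclusion: if $L\subseteq M$ are infinite, then for every fixed $x\in S$ the sequence $(f_m(x))_{m\in L}$ is a subsequence of $(f_m(x))_{m\in M}$, so $\limsup_{m\in L}f_m(x)\leq\limsup_{m\in M}f_m(x)$ while $\liminf_{m\in L}f_m(x)\geq\liminf_{m\in M}f_m(x)$; taking the supremum over $x$ yields $\delta(L)\leq\delta(M)$. Second, I would note that $\delta$ is \emph{invariant under finite modifications}: if $M,M'$ are infinite and both $M\setminus M'$ and $M'\setminus M$ are finite, then for each $x$ the two sequences $(f_m(x))$ have identical tails, so their $\limsup$ and $\liminf$ agree, whence $\delta(M)=\delta(M')$. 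Finally, uniform boundedness of $\{f_n\}$, say $|f_n|\leq C$, guarantees $0\leq\delta(M)\leq 2C$, so all the infima below are finite real numbers.

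With these in hand I would build a decreasing chain of infinite sets whose $\delta$-values are forced down to a common value. Set $Q_0=\mathbb{N}$, and having chosen $Q_n$, put $d_n=\inf\{\delta(M):M\subseteq Q_n\text{ infinite}\}$ and pick an infinite $Q_{n+1}\subseteq Q_n$ with $\delta(Q_{n+1})<d_n+\frac{1}{n+1}$. Since every infinite subset of $Q_{n+1}$ is an infinite subset of $Q_n$, the sequence $(d_n)$ is non-decreasing and bounded above by $2C$, hence converges to $d_\infty=\sup_n d_n$. Now I would diagonalize: choose $q_1<q_2<\cdots$ with $q_n\in Q_n$ and set $Q=\{q_n:n\in\mathbb{N}\}$, so that $Q\setminus Q_n\subseteq\{q_1,\dots,q_{n-1}\}$ is finite for every $n$.

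It then remains to verify that $Q$ works, and this is where the two properties do the real work. For any infinite $L\subseteq Q$ and any $n$, the set $L\setminus Q_n\subseteq Q\setminus Q_n$ is finite, so $L\cap Q_n$ is an infinite subset of $Q_n$ differing from $L$ by a finite set; by finite-invariance $\delta(L)=\delta(L\cap Q_n)$, and by the definition of $d_n$ this gives $\delta(L)=\delta(L\cap Q_n)\geq d_n$, so letting $n\to\infty$ yields $\delta(L)\geq d_\infty$. For the reverse inequality I would apply the same identity to $Q$ itself at index $n+1$: $\delta(Q)=\delta(Q\cap Q_{n+1})\leq\delta(Q_{n+1})<d_n+\frac{1}{n+1}$ by monotonicity, so $\delta(Q)\leq d_\infty$. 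Combining the two bounds gives $\delta(Q)=d_\infty$, and then monotonicity $\delta(L)\leq\delta(Q)=d_\infty$ forces $\delta(L)=d_\infty=\delta(Q)$ for every infinite $L\subseteq Q$, which is exactly the claim.

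The only genuinely delicate point, and the one I would be most careful about, is the passage from the chain-infima $d_n$ to a single set $Q$ on which $\delta$ is simultaneously constant across \emph{all} infinite subsets: the diagonal set $Q$ need not literally sit inside any $Q_n$, and it is precisely the finite-modification invariance of $\delta$ that lets me replace $Q$ (and each $L$) by $Q\cap Q_n$ and thereby transfer the lower bound $d_n$. Everything else is bookkeeping; in particular no Ramsey-type combinatorics is needed here, only that $\delta$ decreases under inclusion and ignores finite sets.
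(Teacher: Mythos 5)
Your proof is correct, but it takes a genuinely different route from the paper. The paper argues by contradiction: assuming every infinite $M$ contains an infinite $L$ with $\delta(L)<\delta(M)$, it invokes transfinite induction to produce a family $\{N_\alpha\}_{\alpha<\Omega}$ indexed by the countable ordinals with $\delta(N_\beta)<\delta(N_\alpha)$ for $\alpha<\beta$, and then derives a contradiction from the fact that a strictly decreasing transfinite family of reals must be countable. You instead give a direct construction: a countable decreasing chain $Q_0\supseteq Q_1\supseteq\cdots$ whose $\delta$-infima $d_n$ increase to a limit $d_\infty$, followed by a diagonal set $Q$ almost contained in every $Q_n$, with the finite-modification invariance of $\delta$ transferring the lower bound $d_n$ to every infinite $L\subseteq Q$. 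Your version buys two things: it avoids ordinals and transfinite recursion entirely (only countable dependent choice is used), and it makes explicit the pseudo-intersection step that the paper's transfinite argument silently needs at limit ordinals --- there, too, one must replace an intersection of countably many sets (which may be finite) by a diagonal set almost contained in each, which is exactly the mechanism you spell out. The paper's argument is shorter to state but leans on that unstated limit-stage construction; yours is longer but self-contained and, to my mind, the cleaner of the two. Both correctly identify monotonicity of $\delta$ under inclusion as the engine; you additionally isolate finite-modification invariance, which the paper never names but implicitly uses.
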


\begin{proof}
Given any two subsets $L, M\subseteq\mathbb{N}$, if $L\subseteq M$, we will have $\delta(L)\leq\delta(M)$. When $\delta(\mathbb{N}) = 0, \mathbb{N}$ will be the desired set in the conclusion so we from now on assume $\delta(\mathbb{N}) > 0$. Assume by contradiction any infinite subset $M\subseteq\mathbb{N}$ contain an infinite subset $L$ so that $\delta(L)<\delta(M)$. Then by transfinite induction, we can choose $\{N_{\alpha}\,\vert\,\alpha < \Omega\}$ a transfinite family of subsets of $\mathbb{N}$. Such family is indexed by the set of ordinals less than $\Omega$, the first uncountable ordinal, and has the property that for any $\alpha <\beta <\Omega$, $N_{\beta}\subseteq N_{\alpha}$ and $\delta(N_{\beta}) < \delta(N_{\alpha})$. Hence we obtain $\{\delta(N_{\alpha})\,\vert\, \alpha<\Omega\}$ a strictly decreasing transfinite family of positive real numbers, which will not exist since such family must be at most countable. 

\end{proof}

\begin{lem}\label{Lemma 4.7}
Let $\{f_i\}$ be a uniformly bounded sequence of real-valued functions defined on $S$. According to \textbf{Lemma \ref{Lemma 4.6}}, let $Q\subseteq\mathbb{N}$ be the set so that $\delta(Q) = \delta(N)$ for every infinite subset $N \subseteq Q$ and set $\lambda = \dfrac{\delta(Q)}{2}$. Show that there exists an infinite subset $M\subseteq Q$ and a rational number $r$ so that for every infinite subset $L\subseteq M$ there is $s \in S$ so that $\limsup_{i \in L}f_i(s) > \lambda+r$ and $\liminf_{i \in L}f_i(s) < r$
\end{lem}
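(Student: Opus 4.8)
The plan is to reduce the statement to a single monotone quantity and then run a diagonal (fusion) argument against the stabilization property of $Q$. For each rational $r$ and each infinite $L\subseteq Q$, I would introduce the oscillation-witness quantity
$$
\psi_r(L) = \sup_{s\in S}\min\!\big(\limsup_{i\in L}f_i(s) - (\lambda+r),\; r - \liminf_{i\in L}f_i(s)\big).
$$
The point of this definition is that the desired conclusion for a pair $(M,r)$ is \emph{exactly} the assertion that $\psi_r(L)>0$ for every infinite $L\subseteq M$: if $\psi_r(L)>0$ then some $s$ makes both entries of the minimum strictly positive, i.e. $\limsup_{i\in L}f_i(s)>\lambda+r$ and $\liminf_{i\in L}f_i(s)<r$, and conversely. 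Two elementary properties drive everything. First, $\psi_r$ is monotone: if $L'\subseteq L$ then $\psi_r(L')\le\psi_r(L)$, since passing to a smaller index set can only lower $\limsup$ and raise $\liminf$. Second, $\psi_r$ is unchanged under finite modification of $L$, because $\limsup$ and $\liminf$ ignore finite initial segments.

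I would argue by contradiction. If no such $M$ and $r$ exist, then for every infinite $M\subseteq Q$ and every rational $r$ there is an infinite $L\subseteq M$ with $\psi_r(L)\le 0$. Enumerate the rationals as $r_1,r_2,\ldots$ and build a decreasing chain $Q=M_0\supseteq M_1\supseteq\cdots$ with each $M_k$ infinite and $\psi_{r_k}(M_k)\le 0$, applying the hypothesis at stage $k$ with $M=M_{k-1}$ and $r=r_k$. Then diagonalize: choose $m_1<m_2<\cdots$ with $m_k\in M_k$ and set $M_\infty=\{m_k\}_{k\ge 1}$. For each fixed $j$ the tail $\{m_k:k\ge j\}$ is an infinite subset of $M_j$ differing from $M_\infty$ by a finite set, so by monotonicity and finite-modification invariance $\psi_{r_j}(M_\infty)=\psi_{r_j}(\{m_k:k\ge j\})\le\psi_{r_j}(M_j)\le 0$. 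Hence $\psi_r(M_\infty)\le 0$ for \emph{every} rational $r$ on this single infinite set $M_\infty\subseteq Q$.

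Finally I would contradict this using the stabilization from \textbf{Lemma \ref{Lemma 4.6}}. Since $M_\infty\subseteq Q$ is infinite, $\delta(M_\infty)=\delta(Q)=2\lambda$, so (assuming $\lambda>0$, which holds in the relevant case where $\{f_i\}$ has no convergent subsequence) there is a point $s^*$ with $\limsup_{i\in M_\infty}f_i(s^*)-\liminf_{i\in M_\infty}f_i(s^*)>\tfrac{3}{2}\lambda>\lambda$. Then the open interval $\big(\liminf_{i\in M_\infty}f_i(s^*),\ \limsup_{i\in M_\infty}f_i(s^*)-\lambda\big)$ is nonempty and contains some rational $r_j$; for that $r_j$ the point $s^*$ makes both entries of the minimum strictly positive, so $\psi_{r_j}(M_\infty)>0$, contradicting the previous paragraph. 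The main obstacle is precisely the quantifier ``for every infinite $L\subseteq M$'': one cannot merely stabilize a single $\psi_r$, because the admissible threshold interval $(\liminf,\ \limsup-\lambda)$ depends on $L$ and may drift as $L$ shrinks. The diagonal fusion across all rationals, played against the constancy $\delta(N)=2\lambda$ guaranteed by $Q$, is what forces one rational threshold to survive on a common infinite subset.
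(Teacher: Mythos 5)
Your proof is correct and takes essentially the same route as the paper's: negate the claim, build a decreasing chain of infinite subsets of $Q$ indexed by an enumeration of the rationals (each killing one rational threshold), diagonalize to a single infinite $M_\infty$, and then use the constancy $\delta(M_\infty)=\delta(Q)=2\lambda$ to produce a point and a rational violating the accumulated dichotomies. Your $\psi_r$ packaging and the explicit finite-modification step in the diagonalization are, if anything, a cleaner rendering of the same argument.
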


\begin{proof}
Let $\{r_1, r_2, \cdots\}$ be the enumeration of $\mathbb{Q}$ and assume the conclusion is false. Then we start from $M=Q$ and $r_1$. Then we can find $L_1\subseteq M$ such that for each $s\in S$, the following is true for $r=r_1$:

\begin{equation}\label{e19}
\limsup_{i\in L_1}f_i(s)\leq \lambda+r \hspace{1cm}\textbf{or}\hspace{1cm} \liminf_{i\in L_1}f_i(s)\geq r
\end{equation}
By induction, we then have $\{L_k\}$ a decreasing sequence of infinite subsets of $Q$ such that for each $k\in\mathbb{N}$ and for each $s\in S$, the following is true for all $r\in\{r_i\}_{i\leq k}$:

$$
\limsup_{i\in L_k}f_i(s)\leq \lambda+r \hspace{1cm}\textbf{or}\hspace{1cm} \liminf_{i\in L_k}f_i(s)\geq r
$$
Fix $s\in S$. For each $k\in L_k$, pick $n_k\in L_k$ such that $f_{n_k}(s)\leq \lambda+r_k$ and $f_{n_k}(s) \geq r_k$. Define $L=\{n_k\}_{k\in\mathbb{N}}$. Since $L_0\subseteq L_k$ for each $k$, we then have:

$$
\limsup_{i\in L_0}f_i(s) \leq \limsup_{i\in L_k}f_i(s)\leq \lambda+r_k \hspace{1cm}\textbf{or}\hspace{1cm} \liminf_{i\in L_0}f_i(s)\geq \liminf_{i\in L_k}f_i(s)\geq r_k
$$
for each $k\in\mathbb{N}$. Since $s$ is arbitrarily picked, we then have for each $s\in S$ and each $r\in\mathbb{Q}$:

\begin{equation}\label{e20}
\limsup_{i\in L_0}f_i(s)\leq \lambda+r \hspace{1cm}\textbf{or}\hspace{1cm} \liminf_{i\in L_0}f_i(s)\geq r
\end{equation}
Since $\delta(L_0)=\delta(Q)=2\lambda$, if we let $\epsilon=\dfrac{\lambda}{2}$, we then can find $s_0\in S$ such that $\limsup_{i\in L_0}f_i(s_0) - \liminf_{i\in L_0} f_i(s_0) > 2\lambda-\epsilon$. Set $a = \limsup_{i\in L_0}f_i(s_0)$ and $b = \liminf_{i\in L_0} f_i(s_0)$. Hence $a > 2\lambda-\epsilon + b > b$. Now find $r_0\in\mathbb{Q}$ such that $r-b\in \left( 0, \dfrac{\lambda}{2} \right)$. Thus:

$$
b < r_0 < r_0  + \lambda = (r_0 - b) + (\lambda + b) < \frac{\lambda}{2} + \lambda + b = 2\lambda-\epsilon + b < a
$$
which implies that $a>\lambda+r_0$ and $b<r_0$, but this contradicts our assumption.

\end{proof}

\begin{defn}

A bounded sequence $(x_n)_{n\in \mathbb{N}}$ in a Banach space $(B, \|\cdot\|)$ is \textbf{equivalent to the standard basis of} $\ell^1$ if there exists $\delta>0$ such that for any finite choice of scalars $(c_i)_{i\leq n}$ (depending on if $B$ is an $\mathbb{R}$- or $\mathbb{C}$-vector space):

$$
\delta\sum_{i\leq n}\vert\, c_i\,\vert \leq \left\| \sum_{i\leq n} c_ix_i \right\|
$$
In this case, if we let $(e_n)_{n\in \mathbb{N}}$ be the standard basis of $\ell^1$, the mapping $x_n\mapsto e_n$ can be linearly and continuously extended to be a linear isomorphism between the closure of span of $(x_n)_{n\in \mathbb{N}}$ and $\ell^1$.
    
\end{defn}

\begin{theorem}[Rosenthal's Condition]\label{Theorem 4.9}

Let $S$ be an infinite set and $\{f_n\}_{n\in\mathbb{N}}$ be a uniformly bounded sequence of real-valued functions defined on $S$. Then there exists $\{f_n\}_{n\in M}$ a subsequence of $\{f_n\}$ that satisfies precisely one of the following alternatives:

\begin{enumerate}[label = \arabic*)]

    \item $\{f_n\}_{n\in M}$ converges pointwise
    \item $\{f_n\}_{n\in M}$ is equivalent to the standard basis of a real $\ell_1$ space
    
\end{enumerate}
    
\end{theorem}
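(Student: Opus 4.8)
The plan is to run the oscillation dichotomy driven by the functional $\delta(\,\cdot\,)$ of \textbf{Lemma \ref{Lemma 4.6}} and then feed the combinatorial machinery into \textbf{Proposition \ref{Proposition 4.5}}. First I would apply \textbf{Lemma \ref{Lemma 4.6}} to fix an infinite $Q\subseteq\mathbb{N}$ with $\delta(L)=\delta(Q)$ for every infinite $L\subseteq Q$. If $\delta(Q)=0$, then for each $s\in S$ we have $\limsup_{i\in Q}f_i(s)=\liminf_{i\in Q}f_i(s)$, so $f_i(s)$ converges for every $s$; thus $\{f_n\}_{n\in Q}$ converges pointwise and alternative (1) holds with $M=Q$. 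The real content is the case $\delta(Q)=2\lambda>0$, where the goal is the $\ell^1$ alternative.

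In that case I would apply \textbf{Lemma \ref{Lemma 4.7}} to $Q$ to obtain an infinite $M\subseteq Q$ and a rational $r$ such that for every infinite $L\subseteq M$ there is $s\in S$ with $\limsup_{i\in L}f_i(s)>\lambda+r$ and $\liminf_{i\in L}f_i(s)<r$. Setting $A_n=\{s\in S: f_n(s)>\lambda+r\}$ and $B_n=\{s\in S: f_n(s)<r\}$ (which are disjoint), the key claim is that $\{(A_n,B_n)\}_{n\in M}$ has no convergent subsequence on $S$. Indeed, for an arbitrary infinite $L\subseteq M$, the witness $s$ supplied by \textbf{Lemma \ref{Lemma 4.7}} lies in $A_i$ for infinitely many $i\in L$ (where $f_i(s)>\lambda+r$) and in $B_i$, hence outside $A_i$, for infinitely many $i\in L$ (where $f_i(s)<r$); therefore $\chi_{A_i}(s)$ and $\chi_{B_i}(s)$ each cluster at both $0$ and $1$ along $L$, so neither $\{A_n\}_{n\in L}$ nor $\{B_n\}_{n\in L}$ converges at $s$, meaning $\{(A_n,B_n)\}_{n\in L}$ does not converge on $S$. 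With this hypothesis in hand, \textbf{Lemma \ref{Lemma 4.4}} yields an infinite $M'\subseteq M$ on which $\{(A_n,B_n)\}$ is Boolean independent, and \textbf{Proposition \ref{Proposition 4.5}}, applied with gap $\lambda$ and threshold $r$, shows that $\{f_n\}_{n\in M'}$ is equivalent in the sup norm to the standard basis of real $\ell^1$, with constant $\lambda/2$; this is alternative (2).

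It remains to argue that the produced subsequence satisfies \emph{precisely} one alternative. This also falls out of \textbf{Lemma \ref{Lemma 4.7}}: since $M'\subseteq M$, its witness property holds for every infinite $L\subseteq M'$. If some further subsequence $\{f_n\}_{n\in L}$ (with $L\subseteq M'$) converged pointwise to a function $g$, then for the witness $s$ we would have $\limsup_{i\in L}f_i(s)=\liminf_{i\in L}f_i(s)=g(s)$, which cannot simultaneously satisfy $g(s)>\lambda+r$ and $g(s)<r$ because $\lambda>0$. Hence the $\ell^1$ subsequence admits no pointwise convergent sub-subsequence, so (1) and (2) are mutually exclusive for the final index set. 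The step I expect to be the main obstacle is precisely the translation in the second paragraph: converting the $\limsup$/$\liminf$ conclusion of \textbf{Lemma \ref{Lemma 4.7}} into the ``no convergent subsequence of $\{(A_n,B_n)\}$'' hypothesis demanded by \textbf{Lemma \ref{Lemma 4.4}}, together with the careful bookkeeping of the nested index sets $Q\supseteq M\supseteq M'$ that makes the exclusivity argument valid.
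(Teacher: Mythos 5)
Your proof follows exactly the paper's route: \textbf{Lemma \ref{Lemma 4.6}} to stabilize $\delta$, \textbf{Lemma \ref{Lemma 4.7}} to produce the oscillation witness, then the sets $A_n, B_n$ fed into \textbf{Lemma \ref{Lemma 4.4}} and \textbf{Proposition \ref{Proposition 4.5}}. You are in fact more careful than the paper on the one delicate point, namely using the ``for every infinite $L\subseteq M$'' clause of \textbf{Lemma \ref{Lemma 4.7}} to verify the no-convergent-subsequence hypothesis of \textbf{Lemma \ref{Lemma 4.4}} (the paper only exhibits a witness for $M_0$ itself), and your closing remark correctly shows the produced $\ell^1$-subsequence admits no pointwise convergent sub-subsequence, which the paper's proof does not address.
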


\begin{proof}

Suppose $\{f_n\}_{n \in M}$ is a subsequence that does not converge pointwise. Hence $\delta(M)>0$ where $\delta$ is defined in \textbf{Lemma \ref{Lemma 4.6}}. Then by \textbf{Lemma \ref{Lemma 4.7}}, there exists $M_0\subseteq M$, $r\in\mathbb{Q}$ and $x\in S$ such that $\limsup_{i\in M_0}f_i(x) > \delta(M)+r$ and $\liminf_{i\in M_0} f_i(s) < \delta(M)$. For each $i\in M_0$, define $A_i = \{s\in S\,\vert\, f_i(s) > \delta(M)+r\}$ and $B_i = \{s\in S\,\vert\, f_i(s) < r\}$. Because of $x$, the following two sets are infinite:

$$
\{i\in M_0\,\vert\, f_i(x) > \delta(M)+r\}, \hspace{1cm} \{i\in M_0\,\vert\, f_i(x) < r\}
$$
which implies all of the following sets are infinite:

$$
\{i\in M_0\,\vert\, f_i(x) > \delta(M)+r\}, \hspace{0.2cm} M_0\backslash \{i\in M_0\,\vert\, f_i(x) > \delta(M)+r\}, \hspace{0.2cm} \{i\in M_0\,\vert\, f_i(x) <r \}, \hspace{0.2cm} M_0\backslash \{i\in M_0\,\vert\, f_i(x) < r\}
$$
Therefore $\{(A_n, B_n)\}_{n\in M_0}$ does not converge on $S$. By \textbf{Lemma \ref{Lemma 4.4}}, there exists $M_1\subseteq M_0$ such that $\{(A_n, B_n)\}_{n\in M_1}$ is Boolean independent. Hence by \textbf{Proposition \ref{Proposition 4.5}}, $\{f_i\}_{i\in M_1}$ is equivalent to the standard basis of a real $\ell_1$ space.

\end{proof}

\begin{theorem}[Rosenthal-Dor $\ell_1$ Theorem]\label{Theorem 4.10}
In a real Banach space $B$ any bounded sequence $\{x_n\}_{n \in \mathbb{N}}$ will have a subsequence $\{x_n\}_{n \in M}$ (for some infinite subset $M\subseteq \mathbb{N}$) that satisfy one of the following two mutually exclusive conditions:

\begin{enumerate}[label = \arabic*)]
    \item $\{x_n\}_{n \in M}$ is weakly Cauchy.
    \item $\{x_n\}_{n \in M}$ is equivalent to the standard basis of a real $\ell_1$ space.
\end{enumerate}
\end{theorem}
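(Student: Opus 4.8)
The plan is to deduce this from \textbf{Theorem \ref{Theorem 4.9}} (Rosenthal's Condition) by realizing the vectors $x_n$ as real-valued functions on a suitable set $S$. Concretely, I would take $S = B^*_{\leq 1}$, the closed unit ball of the dual, and for each $n$ define $f_n : S \to \mathbb{R}$ by $f_n(\phi) = \phi(x_n)$. Since $\{x_n\}$ is bounded, say $\|x_n\| \leq K$ for all $n$, we have $|f_n(\phi)| = |\phi(x_n)| \leq \|\phi\|\,\|x_n\| \leq K$ for every $\phi \in S$, so $\{f_n\}$ is a uniformly bounded sequence of real-valued functions on the infinite set $S$. \textbf{Theorem \ref{Theorem 4.9}} then furnishes an infinite $M \subseteq \mathbb{N}$ for which $\{f_n\}_{n\in M}$ either converges pointwise on $S$ or is equivalent, in the sup norm, to the standard $\ell_1$ basis; the whole argument reduces to translating each of these alternatives back into a statement about $\{x_n\}_{n\in M}$ in $B$.

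For the first alternative, pointwise convergence of $\{f_n\}_{n\in M}$ on $B^*_{\leq 1}$ means $\phi(x_n)$ converges for every $\phi$ in the unit ball of $B^*$. By homogeneity, writing an arbitrary $\phi \in B^*\setminus\{0\}$ as $\|\phi\|\cdot(\phi/\|\phi\|)$ shows the scalar sequence $\phi(x_n)$ also converges, and trivially so when $\phi = 0$; hence $\{x_n\}_{n\in M}$ is weakly Cauchy. For the second alternative, equivalence to the $\ell_1$ basis in sup norm gives a $\delta > 0$ with $\delta \sum_{i}|c_i| \leq \sup_{\phi\in S}\bigl|\sum_i c_i f_i(\phi)\bigr|$ for all finite scalar tuples. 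The key identity is that the right-hand side equals $\bigl\|\sum_i c_i x_i\bigr\|$, because $\sup_{\phi\in B^*_{\leq 1}}|\phi(y)| = \|y\|$ for every $y\in B$ by the \textbf{Hahn-Banach Theorem} (existence of norming functionals). Thus $\delta \sum_i |c_i| \leq \|\sum_i c_i x_i\|$, which is exactly the defining inequality for $\{x_n\}_{n\in M}$ to be equivalent to the standard basis of $\ell_1$.

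Finally, the two conditions are mutually exclusive. This is inherited from the corresponding mutual exclusivity asserted in \textbf{Theorem \ref{Theorem 4.9}}, but one can also see it directly: a sequence equivalent to the $\ell_1$ basis cannot be weakly Cauchy, since the $\ell_1$ basis itself is separated by the functional $(1,-1,1,-1,\dots)\in\ell_\infty$, on which $\langle e_n,\cdot\rangle$ oscillates, and this obstruction transfers through the isomorphism $\overline{\operatorname{span}}\{x_n\}_{n\in M}\cong\ell_1$ (a Banach-space isomorphism is weak-to-weak continuous and so preserves weak-Cauchy sequences).

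I do not expect any serious obstacle here: the combinatorial heart of the matter is already contained in Lemmas \ref{Lemma 4.3}--\ref{Lemma 4.7} and \textbf{Theorem \ref{Theorem 4.9}}, and the present statement is essentially a change of viewpoint. The only points requiring a little care are the passage from the unit-ball formulation to the full dual in the weakly-Cauchy case, and the use of the norming-functional identity $\|y\| = \sup_{\|\phi\|\leq 1}|\phi(y)|$ to match the sup norm on $S$ with the norm of $B$; both are routine consequences of Hahn--Banach, and no special treatment of the finite-dimensional case is needed since $S = B^*_{\leq 1}$ is infinite whenever $B \neq \{0\}$.
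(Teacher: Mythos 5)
Your proposal is correct and follows essentially the same route as the paper: the paper applies \textbf{Theorem \ref{Theorem 4.9}} to the functions $Q(x_n)$ on the dual via the canonical embedding, which is exactly your realization of each $x_n$ as the evaluation map $\phi \mapsto \phi(x_n)$ on $S = B^*_{\leq 1}$. Your write-up merely makes explicit the two translation steps (homogeneity for weak-Cauchyness on all of $B^*$, and the Hahn--Banach norming identity $\|y\| = \sup_{\|\phi\|\leq 1}|\phi(y)|$) that the paper leaves implicit.
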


\begin{proof}

Fix a bounded sequence $\{x_n\}_{n \in \mathbb{N}} \subseteq B$ and let $Q$ be the canonical mapping from $B$ to $B^{\ast\ast}$. Since $B$ is a real Banach space, $B^*$ is the space of all real linear bounded functional and same for $B^{\ast\ast}$. Then for any infinite $M\subseteq\mathbb{N}$, $\{x_n\}_{n\in M}$ being weakly Cauchy is equivalent that $\{Q(x_n)\}_{n\in M}$ is pointwise Cauchy, or pointwise convergent. Hence the conclusion follows immediately by \textbf{Theorem \ref{Theorem 4.9}}.

\end{proof}

\begin{theorem}

Given a Banach space $B$, if $B$ is weakly complete, then $B$ is either reflexive or contains a subspace that is linearly isomorphic to $\ell^1$. If weak convergence in $B$ is equivalent to norm convergence (the \textbf{Schur property}), then every infinite-dimensional subspace of $B$ contains a subspace that is isomorphic to $\ell^1$.
    
\end{theorem}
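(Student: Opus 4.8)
The plan is to deduce both assertions from the Rosenthal--Dor dichotomy (\textbf{Theorem \ref{Theorem 4.10}}) together with the characterization of weak compactness for bounded sets (\textbf{Theorem \ref{Theorem 1.19}}).

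For the first assertion I read \emph{weakly complete} as weakly sequentially complete, i.e. every weakly Cauchy sequence converges weakly, and I argue by contrapositive: assuming $B$ contains no subspace linearly isomorphic to $\ell^1$, I will show $B$ is reflexive. Let $\{x_n\}$ be any bounded sequence. By \textbf{Theorem \ref{Theorem 4.10}} there is an infinite $M$ so that $\{x_n\}_{n\in M}$ is either weakly Cauchy or equivalent to the standard basis of $\ell^1$. The latter is impossible, since by the \textbf{Definition} preceding \textbf{Theorem \ref{Theorem 4.9}} the closed span of such a sequence would be isomorphic to $\ell^1$, contradicting the hypothesis. Hence $\{x_n\}_{n\in M}$ is weakly Cauchy, and weak sequential completeness upgrades it to a weakly convergent sequence whose limit, by convexity and norm-closedness (hence weak closedness) of $B_{\leq 1}$, lies in $B_{\leq 1}$. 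Thus $B_{\leq 1}$ is weakly sequentially compact; being bounded, \textbf{Theorem \ref{Theorem 1.19}} promotes this to weak compactness of $B_{\leq 1}$, and a Banach space with weakly compact closed unit ball is reflexive, as recalled at the start of \textbf{Section 2}.

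For the second assertion, let $Y\subseteq B$ be an infinite-dimensional closed subspace and suppose $B$ has the Schur property. First I record that the Schur property forces every weakly Cauchy sequence to be norm Cauchy: were $\{y_n\}$ weakly Cauchy but not norm Cauchy, one could extract subsequences with $\|y_{p_k}-y_{q_k}\|\geq\varepsilon$ for some $\varepsilon>0$; but $y_{p_k}-y_{q_k}\to 0$ weakly, so by the Schur property $y_{p_k}-y_{q_k}\to 0$ in norm, a contradiction. Next, since $Y$ is infinite-dimensional, iterating \textbf{Riesz's Lemma} produces a sequence $\{x_n\}\subseteq Y$ on the unit sphere with $\|x_n-x_m\|\geq\tfrac12$ for $n\neq m$, which is bounded and admits no norm-Cauchy subsequence. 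By the preceding observation it admits no weakly Cauchy subsequence, so the only surviving alternative in \textbf{Theorem \ref{Theorem 4.10}} is that some subsequence $\{x_n\}_{n\in M}$ is equivalent to the standard basis of $\ell^1$; its closed span lies in $Y$ and is isomorphic to $\ell^1$, as desired.

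The two Riesz/Rosenthal extractions and the weak closedness of the ball are routine; the step needing the most care is the Schur-property reduction ``weakly Cauchy $\implies$ norm Cauchy,'' because \textbf{Theorem \ref{Theorem 4.10}} only yields a weakly \emph{Cauchy} subsequence rather than a weakly convergent one, and it is exactly this upgrade that lets me rule out the non-$\ell^1$ alternative inside an arbitrary infinite-dimensional subspace. I would also double-check that the mutual exclusivity in \textbf{Theorem \ref{Theorem 4.10}} is invoked correctly, namely that a sequence equivalent to the $\ell^1$-basis is never weakly Cauchy, so that the two halves of the argument genuinely appeal to disjoint alternatives.
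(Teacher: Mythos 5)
Your proposal is correct and follows essentially the same route as the paper: both parts rest on the Rosenthal--Dor dichotomy (\textbf{Theorem \ref{Theorem 4.10}}) combined with \textbf{Theorem \ref{Theorem 1.19}} for the first assertion and the non-compactness of the unit ball of an infinite-dimensional subspace for the second. Your explicit reduction ``weakly Cauchy $\implies$ norm Cauchy'' under the Schur property, and your remark that an $\ell^1$-basis sequence is never weakly Cauchy, merely make precise two steps the paper's proof leaves implicit.
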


\begin{proof}

If $B$ is reflexive, then all subspaces of $B$ are also reflexive. Hence if $B$ contains a subspace isomorphic to the real $\ell^1$ space, then $B$ cannot be reflexive. Otherwise, given that $B$ is weakly complete, by \textbf{Theorem 4.10}, we have that every bounded sequence has a weakly convergence subsequence. Then by \textbf{Theorem \ref{Theorem 1.19}}, the closed unit ball of $B$ is weakly compact. Together with {\cite[Goldstine's Theorem]{22}}, $B$ is reflexive.\\

\noindent
If $B$ has the Schur property, then by \textbf{Theorem \ref{Theorem 4.10}}, every bounded sequence $(x_n)_{n\in \mathbb{N}}$ either contains a convergent subsequence or a subsequence that is equivalent to the standard basis of $\ell^1$. If $V\leq B$ is an infinite-dimensional subspace, then the closed unit ball of $V$ cannot be compact and hence must contains a $(x_n)_{n\in \mathbb{N}}$ that has a subsequence that is equivalent to the standard basis of $\ell^1$.

\end{proof}

\subsection{Existence of isomorph of \texorpdfstring{$\ell_1$}{l1} or \texorpdfstring{$c_0$}{c0}}

\begin{prop}\label{Proposition 4.10}
Given a Banach space $B$ and a subspace $V \leq B$ that is isomorphic to $\ell_1$, there will be an $f \in B^*$ such that $f$ does not attain its $\sup$ on the unit sphere of $B$
\end{prop}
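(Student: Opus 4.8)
The plan is to avoid building $f$ by hand and instead reduce the statement to the non-reflexivity of $B$, which then lets me invoke the James-type characterization already established in \textbf{Theorem \ref{Theorem 2.1}} and \textbf{Theorem \ref{Theorem 2.2}}. Recall that part (d) of \textbf{Theorem \ref{Theorem 2.1}} asserts that a separable Banach space fails to be reflexive precisely when some bounded functional does not attain its maximum on the closed unit ball, while \textbf{Theorem \ref{Theorem 2.2}} removes the separability hypothesis. Thus it suffices to show that the mere presence of $V$ forces $B$ to be non-reflexive.

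First I would record that $V$ is automatically closed: an isomorphism $T\colon \ell^1 \to V$ is in particular a linear homeomorphism onto its image, so it carries Cauchy sequences to Cauchy sequences in both directions, and completeness of $\ell^1$ transfers to $V$; a complete subspace of a Banach space is closed. Next, since $\ell^1$ is not reflexive and reflexivity is invariant under linear isomorphism, $V$ is a non-reflexive \emph{closed} subspace of $B$. Because every closed subspace of a reflexive space is itself reflexive (equivalently, a reflexive space contains no non-reflexive closed subspace, as noted in the introduction), the existence of $V$ is incompatible with reflexivity of $B$. Hence $B$ is not reflexive.

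With $B$ non-reflexive, \textbf{Theorem \ref{Theorem 2.1}} in the separable case together with \textbf{Theorem \ref{Theorem 2.2}} in general yields an $f \in B^*$ that does not attain its supremum on the closed unit ball $B_{\leq 1}$. It remains only to pass to the unit sphere. Such an $f$ is necessarily nonzero, so $\sup_{\|x\|\leq 1} f(x) = \|f\| > 0$; if $f$ attained this value at some $x_0$ with $\|x_0\| \leq 1$, then necessarily $\|x_0\| = 1$, since otherwise replacing $x_0$ by $x_0/\|x_0\|$ would strictly increase the value of $f$. Consequently non-attainment on the ball is equivalent to non-attainment on the sphere, and the argument is complete.

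The step I expect to be the genuine pitfall is the temptation to argue constructively: one can pull back a non-norm-attaining functional on $\ell^1$, say $g = (1-1/n)_n \in \ell^\infty = (\ell^1)^*$, to a functional $h \in V^*$ and then extend it to $B$ by Hahn--Banach with the same norm. This does not work, because a norm-preserving extension of $h$ may well attain its norm at points of $B$ lying outside $V$, so non-attainment on the sphere of $V$ says nothing about the sphere of $B$. Routing the proof through non-reflexivity and \textbf{Theorem \ref{Theorem 2.1}}/\textbf{Theorem \ref{Theorem 2.2}} sidesteps this difficulty entirely, since those theorems produce a functional that globally fails to attain its maximum on $B_{\leq 1}$.
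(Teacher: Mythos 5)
Your argument is correct, but it is a genuinely different route from the one the paper takes. You reduce everything to non-reflexivity: since $\ell^1$ is non-reflexive, reflexivity passes to closed subspaces, and an isomorphic copy of $\ell^1$ is automatically closed, $B$ cannot be reflexive, and then the implication from non-reflexivity to the existence of a functional that fails to attain its maximum on the closed unit ball (\textbf{Theorem \ref{Theorem 2.1}}, extended to the non-separable case by \textbf{Theorem \ref{Theorem 2.2}}) finishes the job; your passage from ball to sphere is also sound, since a nonzero functional attaining $\|f\|$ on the ball must do so at a norm-one vector. What this buys is brevity and robustness: the $\ell^1$ structure is used only to certify non-reflexivity, and all the hard work is outsourced to the James-type machinery of Section~2 (\textbf{Lemma \ref{Lemma 2.5}} and the chain of implications proved there). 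The paper instead argues constructively from the $\ell^1$ basis: with $(x_n)$ the image of the standard basis it builds $f_n$ with $f_n(x_i)=1$ for $i\geq n$ and $-1$ for $i<n$, sets $f=\lim_n f_n$ and $g=-f+\sum_i 2^{-i}f_i$, and checks that $g(x_n)=2-2^{1-n}$, so $\|g\|=2$ is approached but never attained because $g(v)=2$ would force $f_k(v)=1$ for all $k$ and hence $f(v)=1$, collapsing $g(v)$ to $0$. That construction is far more elementary --- it needs nothing from Section~2 --- and yields an explicit witness, but its delicate point is precisely the one you identify in your closing paragraph: the functional is built on $V$ and must then be extended to all of $B$, and the paper's ``extend by zero outside $V$'' step glosses over the possibility that an extension attains its norm outside $V$. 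Your route sidesteps that difficulty entirely, at the cost of invoking the deep direction of James' theorem for a statement that admits a bare-hands proof.
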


\begin{proof}
Suppose $\{x_n\}$ is a countable basis of $V$ that is isomorphic to the standard basis in $\ell_1$ and define $f_n \in V^*$ such that:
$$
f_n(x_i) = 
\begin{cases}1,\hspace{1.1cm} i \geq n\\
-1,\hspace{0.8cm} i < n
\end{cases}
$$
WLOG assume each $f_n$ is already continuously extended to the entire $V$. Now define $f$ on $V$ by $f(x) = \lim_n f_n(x)$ so that for each $x = \sum_{i \geq 1}a_nx_n, f(x) = -\sum_{i \geq 1}a_n$. Extend $f$ to the entire $B$ by letting $f$ equal to zero outside of $V$. Define: 

$$
g(x) = -f(x) + \sum_{i \geq 1}\frac{1}{2^i}f_i(x)
$$
Then for each $x \in V_{\leq 1}, \vert\,g(x)\,\vert \leq 2\|x\|$ and we also have:

$$
g(x_n) = -f(e_n) + \sum_{i \leq n}\frac{1}{2^i} - \sum_{i > n}\frac{1}{2^i} = 1 - \frac{1}{2^n} - \frac{1}{2^n} + 1 = 2 - \frac{1}{2^{n-1}} 
$$
Hence $\|g\| = 2$. Assume $\|v\| = 1$ and $g(v) = 2$. Because each $\vert\,f_k(v)\,\vert \leq 1$ we must have $f_k(v) = 1$ in order to let $g(v) = 2$ and hence $f(v) = 1\,\implies\,g(v) = 0$. Contradiction

\end{proof}

\begin{theorem}\label{Theorem 4.11}

Suppose a Banach space $B$ contains a isomorph of $c_0$ (i.e., a closed subspace of $B$ is linearly isomorphic to $c_0$). Then $B^*$ contains a isomoph of $\ell_1$

\end{theorem}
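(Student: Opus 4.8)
The plan is to transport the canonical $\ell_1$-basis of $c_0^{\ast}=\ell_1$ into $B^{\ast}$ by extending the coordinate functionals of the embedded copy of $c_0$, and then to verify the lower $\ell_1$-estimate by testing the extended functionals against signed sums of the $c_0$-basis vectors. First I would fix a closed subspace $V\leq B$ together with a linear isomorphism $T\colon c_0\to V$, and write $u_n=T(e_n)$ for the images of the standard $c_0$-basis $\{e_n\}$. Then for all finitely supported scalars $(c_i)$ one has $\tfrac{1}{\|T^{-1}\|}\max_i|c_i|\leq\|\sum_i c_i u_i\|\leq\|T\|\max_i|c_i|$; set $b=\|T\|$. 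Next I would transport the coordinate functionals: via the dual isomorphism $T^{\ast}\colon V^{\ast}\to\ell_1$, define $u_n^{\ast}=(T^{\ast})^{-1}(e_n^{\ast})\in V^{\ast}$, where $e_n^{\ast}$ denotes the standard $\ell_1$-basis (the dual basis of $c_0$). These satisfy the biorthogonality $u_n^{\ast}(u_m)=\delta_{nm}$ and are uniformly bounded, $M:=\sup_n\|u_n^{\ast}\|\leq\|(T^{\ast})^{-1}\|<\infty$. By the \textbf{Hahn--Banach Theorem} I would extend each $u_n^{\ast}$ to $f_n\in B^{\ast}$ with $\|f_n\|_{B^{\ast}}=\|u_n^{\ast}\|_{V^{\ast}}\leq M$; crucially $f_n\big|_V=u_n^{\ast}$.

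The upper $\ell_1$-estimate is immediate from the triangle inequality, $\|\sum_n c_n f_n\|_{B^{\ast}}\leq M\sum_n|c_n|$, which in particular shows $\{f_n\}$ is bounded. For the lower estimate, which is the only point requiring an idea, I would, given scalars $c_1,\dots,c_N$, test $\sum_n c_n f_n$ against the single vector $x=\sum_{n\leq N}\sigma_n u_n\in V$, where $\sigma_n=\overline{c_n}/|c_n|$ (respectively the sign of $c_n$ in the real case) has modulus $1$. Because the embedded basis obeys the $c_0$ sup-norm bound, $\|x\|\leq b\max_n|\sigma_n|=b$ independently of $N$, while $f_n\big|_V=u_n^{\ast}$ together with biorthogonality give $(\sum_n c_n f_n)(x)=\sum_n c_n u_n^{\ast}(x)=\sum_n c_n\sigma_n=\sum_n|c_n|$. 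Hence $\|\sum_n c_n f_n\|_{B^{\ast}}\geq\tfrac{1}{b}\sum_n|c_n|$, so $\{f_n\}$ is equivalent to the standard basis of $\ell_1$ in the sense defined above, and its closed linear span in $B^{\ast}$ is therefore linearly isomorphic to $\ell_1$.

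The whole argument is short; the essential step is the choice of the test vector $x$ as a signed sum of the $c_0$-basis vectors. What makes it work is exactly the defining feature of $c_0$: the signed sum $\sum_n\sigma_n u_n$ stays uniformly bounded (by $\|T\|$) no matter how many terms are included, so a single bounded vector simultaneously captures the full $\ell_1$-mass $\sum_n|c_n|$ of the functional $\sum_n c_n f_n$. I expect no difficulty beyond keeping the real/complex sign convention consistent with the paper's definition of equivalence to the $\ell_1$-basis; note also that only the lower estimate and boundedness are needed, since the accompanying upper estimate and hence the isomorphism with $\ell_1$ follow automatically.
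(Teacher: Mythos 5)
Your proposal is correct and follows essentially the same route as the paper: both transport the coordinate functionals of the embedded copy of $c_0$ into the dual and obtain the lower $\ell_1$-estimate by testing against a signed sum $\sum_{n\leq N}\sigma_n u_n$, whose norm stays bounded by $\|T\|$ uniformly in $N$. Your version is in fact slightly more careful than the paper's, since you make the Hahn--Banach extension of the coordinate functionals from the subspace to all of $B$ explicit, a step the paper leaves implicit.
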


\begin{proof}
Let $M$ is a closed subspace isomorphic to $c_0$ and $\Phi$ be the linear isomorphism. By \textbf{Inverse Mapping Theorem} both $\Phi$ and $\Phi^{-1}$ are bounded. Let $\{x_n\}$ be the pre-image of the standard basis of $c_0$. For each $n\in\mathbb{N}$, define the following linear functional on $M$:

$$
f_n(x_i) = 
\begin{cases}
1, \hspace{1cm} i=n\\
0, \hspace{1cm} i\neq n
\end{cases}
$$
Next, define:

$$
V = \left\{ \sum_{i\geq 1}\alpha_i f_i: \sum_{i\geq 1}\vert\,\alpha_i\,\vert < \infty \right\}
$$
Let $\{e_n\}_{n\in\mathbb{N}}$ be the standard basis of $c_0$. Then we have for each $N\in\mathbb{N}$

$$
\left\| \sum_{n\leq N}x_n \right\| = \left\| \Phi^{-1}\left( \sum_{n\leq N}e_i \right) \right\| \leq \left\| \Phi^{-1}\right\| \left\| \sum_{n\leq N}e_n \right\|_{\infty} = \left\| \Phi^{-1} \right\|
$$
Hence for each $N\in\mathbb{N}$, fix $\{\alpha_i\}$ a absolutely summable sequence and let $\lambda_i$ be the sign of $\alpha_i$. Then:

$$
\begin{aligned}
&\hspace{1cm} \left\| \sum_{i\geq 1}\alpha_i f_i \right\| \geq \left\vert\, \sum_{i\geq 1}\alpha_i f_i \left( \frac{1}{\|\Phi^{-1}\|} \sum_{n\leq N}\lambda_n x_n \right) \, \right\vert = \frac{1}{\|\Phi^{-1}\|}\sum_{i\leq N}\big\vert\, \alpha_i \,\big\vert \\
&\implies\, \frac{1}{\|\Phi^{-1}\|}\sum_{i\geq 1}\big\vert\, \alpha_i \,\big\vert \leq \left\| \sum_{i\geq 1}\alpha_if_i \right\| \leq \sum_{i\geq 1}\big\vert\, \alpha_i\,\big\vert
\end{aligned}
$$
Then we can conclude that $\overline{V}$ is an isomorph of $\ell_1$.

\end{proof}

\subsection{Boundedly complete basis and shrinking basis}

\begin{theorem}[Banach's Condition]\label{Theorem 4.14}
In a Banach space $B$, given a linearly independent sequence $\{z_i\}_{i\in\mathbb{N}}$, $\{z_i\}_{i\in\mathbb{N}}$ is a Schauder basis of a (closed) subspace iff there is $K \geq 1$ such that:

$$
K \left\| \sum_{i \leq n+p}a_i z_i \right\| \geq \left\| \sum_{i \leq n}a_i z_i \right\|
$$
for every $n, p \in \mathbb{N}$ and $\{a_i\}_{i \leq n+p}\subseteq \mathbb{C}$.
\end{theorem}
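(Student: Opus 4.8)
The plan is to set $Y = \overline{\operatorname{span}\{z_i\}_{i\in\mathbb{N}}}$ and $E = \operatorname{span}\{z_i\}_{i\in\mathbb{N}}$, and to reduce the entire equivalence to one uniform boundedness fact about partial-sum operators. Since $\{z_i\}$ is linearly independent, for each $n$ the map $P_n$ sending $\sum_{i\le N}a_i z_i \mapsto \sum_{i\le n}a_i z_i$ (for any $N\ge n$) is a well-defined linear operator on $E$, and the displayed inequality with constant $K$ is exactly the assertion that $\|P_n x\|\le K\|x\|$ for all $x\in E$ and all $n$. So I would argue that $\{z_i\}$ being a Schauder basis of the closed subspace $Y$ is equivalent to $\sup_n\|P_n\|<\infty$ on $E$, which is precisely what the inequality encodes; note also that any closed subspace admitting $\{z_i\}$ as a Schauder basis must equal $Y$.

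For the converse (sufficiency of the inequality), each $P_n$ is bounded on the dense subspace $E$ with $\|P_n\|\le K$, so it extends uniquely to a bounded operator on $Y$ with $\|P_n\|\le K$. On $E$ one has $P_n x = x$ once $n$ exceeds the largest index occurring in $x$, so $P_n x\to x$ for every $x\in E$. A standard three-epsilon argument then upgrades this to all of $Y$: given $y\in Y$ and $x\in E$ with $\|y-x\|<\epsilon$, for large $n$ the middle term below vanishes and $\|P_n y - y\|\le \|P_n\|\,\|y-x\| + \|P_n x - x\| + \|x-y\|\le (K+1)\epsilon$. Setting $Q_n = P_n - P_{n-1}$ with $P_0=0$, the range of $Q_n$ on $E$ is the line $\operatorname{span}\{z_n\}$, so by continuity $Q_n y = a_n(y)z_n$ for a bounded linear functional $a_n$; then $\sum_{i\le n}a_i(y)z_i = P_n y\to y$ gives the expansion, and applying the continuous $Q_n$ to any norm-convergent expansion of $y$ forces uniqueness of the coefficients. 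Hence $\{z_i\}$ is a Schauder basis of $Y$.

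For the forward direction (necessity), $\{z_i\}$ is assumed to be a Schauder basis of $Y$, so each $y\in Y$ has a unique norm-convergent expansion $y=\sum_i a_i z_i$ and the $P_n$ are defined (a priori only algebraically) on all of $Y$; the task is to recover the uniform bound. I would introduce the auxiliary norm $q(y) = \sup_n\|P_n y\| = \sup_n\|\sum_{i\le n}a_i z_i\|$. It dominates $\|\cdot\|$ because the partial sums converge to $y$, and the substance is that $(Y,q)$ is complete. Granting completeness, the identity map $(Y,q)\to(Y,\|\cdot\|)$ is a continuous linear bijection, so by the \textbf{Inverse Mapping Theorem} its inverse is bounded, i.e. $q(y)\le C\|y\|$ for some $C$. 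Then $\|P_n y\|\le q(y)\le C\|y\|$, so $\sup_n\|P_n\|\le C$, and with $K=C$ the inequality follows by applying $P_n$ to $\sum_{i\le n+p}a_i z_i$.

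The main obstacle is verifying that $(Y,q)$ is complete, the crux of the forward implication. For a $q$-Cauchy sequence $(y^{(k)})$ I would first observe it is $\|\cdot\|$-Cauchy, hence $\|\cdot\|$-convergent to some $y\in Y$; the work is to show that the candidate coefficients $a_i=\lim_k a_i^{(k)}$ (each $a_i^{(k)}$ depending continuously on $y^{(k)}$ through $Q_i$) assemble into an expansion of $y$ whose tails are controlled uniformly in $k$, so that $q(y^{(k)}-y)\to 0$. Once completeness is established the remainder is routine. By contrast, the converse direction carries essentially no obstacle beyond the density argument, since there the uniform bound is delivered directly by the hypothesis.
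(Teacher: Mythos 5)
Your proposal is correct in outline, and on the backward (sufficiency) direction it coincides with the paper's argument: both extend the partial-sum operators $P_n$ from $\operatorname{Span}\{z_i\}$ by the bound $K$ and run the same three-epsilon density argument to get $P_n y\to y$ on the closure; you additionally spell out the step the paper glosses over, namely that $Q_n=P_n-P_{n-1}$ has one-dimensional range, so the coefficient functionals exist, are continuous, and the expansion is unique. Where you genuinely diverge is the forward (necessity) direction. The paper simply asserts that ``since $\{z_i\}$ is a Schauder basis, each $P_n$ is continuous'' and then applies the Uniform Boundedness Principle to the pointwise-bounded family $\{P_n\}$; it thus leans on the classical fact that coordinate functionals of a Schauder basis are automatically continuous, which it never proves. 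You instead refuse to assume that continuity and derive the uniform bound directly from the auxiliary norm $q(y)=\sup_n\|P_ny\|$, completeness of $(Y,q)$, and the Inverse Mapping Theorem. This is the original Banach-style argument: it is more self-contained (it simultaneously establishes the continuity the paper takes for granted and the uniform bound, without invoking UBP), but the price is the completeness of $(Y,q)$, which you correctly identify as the crux and only sketch. That sketch is viable --- note that in verifying completeness the coefficients $a_i^{(k)}$ converge because $Q_i$ is bounded with respect to $q$ (namely $\|Q_iy\|\leq 2q(y)$), not with respect to $\|\cdot\|$, which is exactly why the argument is not circular --- but it would need to be written out, including the uniform-in-$k$ control of the tails $\sup_{n}\|P_n(y^{(k)}-y)\|$, before the proof is complete. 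Net effect: your route proves slightly more than the paper's with one additional (standard but nontrivial) lemma, while the paper's route is shorter at the cost of an unproven input.
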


\begin{proof}
 
$(\Longrightarrow):$ Given $\{z_i\}_{i\in\mathbb{N}}$ a Schauder basis of a closed subspace $V$, define:

$$
P_n:V\rightarrow V, \hspace{0.3cm} \sum_{i\geq 1}a_i z_i \mapsto \sum_{i\leq n}a_i z_i
$$
Since $\{z_i\}$ is a Schauder basis, each $P_n$ is continuous. For each $x\in V$ and each $n\in\mathbb{N}$, we have:

$$
\|P_n x\| = \left\| \left( \sum_{i\geq 1}a_i z_i \right) - \sum_{i>n}a_iz_i \right\| \leq \|x\| + \left\| \sum_{i>n}a_i z_i \right\| \overset{n\rightarrow\infty}{\longrightarrow} \|x\|
$$
Hence, for each $x\in V$, $\sup_n\| P_n x\|<\infty$. By \textbf{Uniform Boundedness Theorem}, we have $K = \sup_n\|P_n\| < \infty$. Given $m, n\in\mathbb{N}$ with $m<n$, we then have:

$$
\left\| \sum_{i\leq m}a_i z_i \right\| = \left\| P_m \left( \sum_{i\leq n}a_i z_i \right) \right\| \leq K \left\| \sum_{i\leq n}a_i z_i \right\|
$$
Notice that for each $n\in\mathbb{N}$:

$$
\left\| P_n \left( \frac{z_1}{\|z_1\|} \right) \right\| = \left\| \frac{z_1}{\|z_1\|} \right\| \leq \sup_n\|P_n\| = K
$$
Hence we have $K\geq 1$.\\

\noindent
$(\Longleftarrow):$ Consider $S = \operatorname{Span}(\{z_i\})$. Clearly each element in $S$ is a finite linear combination of elements from $(z_i)_{i\in \mathbb{N}}$ but without confusion we write each element in $S$ as an infinite sum $\sum_{i\in \mathbb{N}} a_iz_i$ where only finitely many $(a_i)_{i\in \mathbb{N}}$ are non-zero. Next, define:

$$
P_n:S\rightarrow S, \hspace{0.3cm} \sum_{i\geq 1}a_i z_i \mapsto \sum_{i\leq n}a_i z_i
$$
According to the assumption, given $m, n\in\mathbb{N}$ with $m>n$, we have:

$$
\left\| P_n \left( \sum_{i\leq m}a_i z_i \right) \right\| \leq K\left\| \sum_{i\leq m}a_i z_i \right\|
$$
Hence, we have that for each $s\in S$, $\|P_n(s)\|\leq K\|s\|$ and hence each $P_n$ can continuously extended to $\overline{S}$. Suppose $x\in\overline{S}$ and $\{s_k\}_{k\in\mathbb{N}} \subseteq S$ converges to $x$. We then have:

$$
\|x - P_n(x)\| \leq \|x - s_k\| + \|s_k - P_n(s_k)\| + \|P_n(s_k) - P_n(x)\|
$$
Since $P_n$ is continuous on $\overline{S}$ and, for each $s\in S$, $P_n(s)$ is eventually equal to $s$, we then can conclude that $P_n(x) \rightarrow x$ and hence $\{z_i\}$ is a Schauder basis of $\overline{S}$.

\end{proof}

\begin{cor}

In the set-up of \textbf{Theorem \ref{Theorem 4.14}}, $(z_i)_{i\in \mathbb{N}}$ is an unconditional Schauder basis of a subspace if and only if there exists $K\geq 1$ such that:

$$
K\left\| \sum_{i\in A}a_iz_i \right\| \geq \left\| \sum_{i\in B} a_iz_i \right\|
$$
for all finite subsets $B\subseteq A \subseteq \mathbb{N}$ and $(a_i)_{i\in A} \subseteq \mathbb{C}$.
    
\end{cor}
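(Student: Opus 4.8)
The plan is to prove both directions through the uniform boundedness of the \emph{subset projections}
$$
P_B \colon \sum_{i} a_i z_i \longmapsto \sum_{i\in B} a_i z_i, \qquad B\subseteq\mathbb{N},
$$
exactly as the proof of \textbf{Theorem \ref{Theorem 4.14}} was driven by the partial-sum projections $P_n$; the only new feature is that $B$ now ranges over arbitrary finite subsets rather than initial segments. Throughout I write $S=\operatorname{Span}(\{z_i\})$ and $\overline{S}=\overline{\operatorname{Span}(\{z_i\})}$.

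For the forward direction, assume $(z_i)$ is an unconditional Schauder basis of a closed subspace $V$. Since $(z_i)$ is a Schauder basis, the partial-sum projections---and hence the coordinate functionals $z_i^{\ast}$, which agree with $P_{\{i\}}$---are continuous (cf.\ \textbf{Theorem \ref{Theorem 4.14}}), so for finite $B$ the operator $P_B=\sum_{i\in B} z_i^{\ast}(\cdot)\,z_i$ is a bounded finite-rank map on $V$. First I would fix $x=\sum_i a_i z_i\in V$ and observe that unconditional convergence of the series forces $\sup_{B\text{ finite}}\big\|\sum_{i\in B} a_i z_i\big\|<\infty$, that is, $\sup_B\|P_B x\|<\infty$. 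With this pointwise bound in hand, the \textbf{Uniform Boundedness Theorem} applied to the family $\{P_B : B\text{ finite}\}$ yields $K:=\sup_{B}\|P_B\|<\infty$, and $K\geq 1$ because $P_{\{1\}}$ restricts to the identity on $\operatorname{Span}(\{z_1\})$. Specializing to $x=\sum_{i\in A} a_i z_i$ for a finite $A$ and any $B\subseteq A$ gives $\big\|\sum_{i\in B} a_i z_i\big\|=\|P_B x\|\leq K\|x\|=K\big\|\sum_{i\in A} a_i z_i\big\|$, which is the asserted inequality.

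For the reverse direction, suppose the inequality holds with some $K\geq 1$. Taking $A=\{1,\dots,n+p\}$ and $B=\{1,\dots,n\}$ recovers Banach's condition verbatim, so \textbf{Theorem \ref{Theorem 4.14}} already guarantees that $(z_i)$ is a Schauder basis of $\overline{S}$. The same inequality bounds every finite projection $P_B$ by $K$ on $S$, so each $P_B$ extends continuously to $\overline{S}$. To upgrade the basis to an unconditional one I would verify the Cauchy criterion for unconditional convergence: given $x=\sum_i a_i z_i\in\overline{S}$ and $\epsilon>0$, the partial sums $S_m x=\sum_{i\leq m} a_i z_i$ converge, hence are Cauchy, so choose $N$ with $\|S_m x-S_N x\|<\epsilon/K$ for all $m>N$. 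For any finite $F\subseteq\{N+1,N+2,\dots\}$ with $\max F=m$ one has $\sum_{i\in F} a_i z_i=P_F(S_m x-S_N x)$, whence $\big\|\sum_{i\in F} a_i z_i\big\|\leq K\|S_m x-S_N x\|<\epsilon$. This is precisely the Cauchy condition characterizing unconditional convergence of $\sum_i a_i z_i$, so $(z_i)$ is unconditional.

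The main obstacle I expect is the pointwise-boundedness step $\sup_B\|P_B x\|<\infty$ in the forward direction: this is exactly where the genuine content of unconditionality enters, via the standard equivalence between unconditional convergence and bounded-multiplier convergence (so that $\big\|\sum_{i\in B} a_i z_i\big\|$ remains bounded over all finite $B$). Once that is secured, both directions are a faithful reprise of \textbf{Theorem \ref{Theorem 4.14}}, with the uniform boundedness principle and the Cauchy criterion doing the routine work; the only additional care needed is to legitimize the $P_B$ (continuity of the coordinate functionals) before invoking uniform boundedness.
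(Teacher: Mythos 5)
Your proposal is correct and follows exactly the route the paper takes: it reduces unconditionality to the uniform boundedness of the finite-subset projections $P_B$, in direct parallel with the partial-sum projections of \textbf{Theorem \ref{Theorem 4.14}}. The paper's own proof is only a one-sentence sketch of this equivalence, so your write-up (pointwise boundedness from unconditional convergence plus the \textbf{Uniform Boundedness Theorem} in one direction, and the Cauchy criterion for unconditional convergence in the other) supplies precisely the details the paper omits.
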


\begin{proof}

Put $S = \operatorname{Span}\big( \{z_i\}_{i\in \mathbb{N}} \big)$. For each finite subset $A\subseteq \mathbb{N}$, define:

$$
P_A: S\rightarrow S, \hspace{0.44cm} \sum_{i\geq 1}a_iz_i\mapsto \sum_{i\in A}a_iz_i
$$
Similar to the reasoning in the proof of \textbf{Theorem \ref{Theorem 4.14}}, $(z_i)_{i\in \mathbb{N}}$ is an unconditional Schauder basis of $\overline{S}$ if and only if the set of operators $\big( P_A: A\subseteq \mathbb{N}, \vert\, A\,\vert < \infty \big)$ is uniformly bounded.
    
\end{proof}

\noindent
Given a Schauder basis $(x_n)_{n\in \mathbb{N}}$ and a $(f_m)_{m\in \mathbb{N}} \subseteq B^*$ such that $f_m(x_n)=\delta_{m,n}$ for all $m, n\in \mathbb{N}$, {\cite[Theorem 3]{6}} provides necessary and sufficient condition for when $(f_m)_{m\in \mathbb{N}}$ is a basis of $B^*$. We will introduce this result later as it contains terminologies we only need in later subsections.

\begin{defn}[\cite{18}]

Given $\{x_i\}_{i\in\mathbb{N}}$ a Schauder basis of a Banach space $B$, $\{x_i\}_{i\in\mathbb{N}}$ is said to be \textbf{boundedly complete} if for any sequence of complex numbers $\{a_i\}_{i\in\mathbb{N}}$:

$$
\sup_n \left\| \sum_{i\leq n}a_i x_i \right\|<\infty \hspace{0.3cm}\implies\hspace{0.3cm} \sum_{i\geq 1}a_i x_i\textbf{  converges}
$$
For each $n\in\mathbb{N}$, let $V_n = \operatorname{Span}\{x_i\}_{i\geq n}$. For each $f\in X^*$, let:

$$
\|f\|_n = \sup_{n\in V_n}\frac{\vert\, f(x)\,\vert}{\|x\|}
$$
Then $\{x_i\}_{i\in\mathbb{N}}$ is said to be \textbf{shrinking} if for each $f\in B^*$, $\lim_n \|f\|_n = 0$.
    
\end{defn}

\begin{rem}\label{Remark 4.16}

Let $X, Y$ be two Banach spaces and $\Phi: X\rightarrow Y$ is a linear isomorphism. According to the \textbf{Inverse Mapping Theorem} and \textbf{Theorem \ref{Theorem 4.14}}, when $\{x_i\}_{i\in\mathbb{N}}\subseteq X$ is a Schauder basis of $X$, $\big\{ \Phi(x_i)\big\}_{i\in\mathbb{N}}$ is a Schauder basis of $Y$. Therefore the boundedly-completeness of a Schauder basis can be preserved by a linear isomorphism. Notice that for each $x\in X$ and $f\in X^*$, we have:

$$
\big\vert\, f\big[ \Phi(x) \big]\,\big\vert \leq \|\Phi\| \big\vert\, f(x)\,\big\vert
$$
Hence $\big\{ \Phi(x_i)\big\}$ is shrinking whenever $\{x_i\}$ is shrinking.
  
\end{rem}

\begin{theorem}[{\cite[Theorem 1]{6}}]\label{Theorem 4.15}
A Banach space $B$ with a Schauder basis $\{x_i\}_{i\in\mathbb{N}}$ is reflexive iff $\{x_i\}_{i\in\mathbb{N}}$ is both bounded complete and shrinking.
\end{theorem}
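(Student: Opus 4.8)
The plan is to prove both implications by exploiting the partial-sum projections $P_n : B \to B$, $\sum_i a_i x_i \mapsto \sum_{i \leq n} a_i x_i$, together with the coordinate functionals $f_i$ determined by $f_i(x_j) = \delta_{ij}$. As in the proof of \textbf{Theorem \ref{Theorem 4.14}}, the $P_n$ are uniformly bounded, so $K = \sup_n \|P_n\| < \infty$; their adjoints satisfy $P_n^* g = \sum_{i \leq n} g(x_i) f_i$ and $\|P_n^*\| = \|P_n\| \leq K$. Throughout, $Q$ denotes the canonical embedding $B \to B^{**}$.

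For the forward direction, assume $B$ is reflexive. To get shrinking, I would suppose toward a contradiction that $\|f\|_n \not\to 0$ for some $f \in B^*$; then there is $\epsilon > 0$ and norm-one vectors $y_n$ in the tail subspace $V_n$ with $|f(y_n)| \geq \epsilon$. By \textbf{Theorem \ref{Theorem 1.19}} the bounded sequence $\{y_n\}$ has a weakly convergent subsequence $y_{n_k} \rightharpoonup y$; since $f_j(y_{n_k}) = 0$ once $n_k > j$, every coordinate of $y$ vanishes, forcing $y = 0$ and hence $f(y_{n_k}) \to 0$, a contradiction. For bounded completeness, suppose $\sup_n \|\sum_{i \leq n} a_i x_i\| < \infty$; the partial sums $s_n$ are bounded, so again a subsequence converges weakly to some $x$, and matching coordinates gives $f_j(x) = a_j$ for all $j$. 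Then the basis expansion $x = \sum_j f_j(x) x_j = \sum_j a_j x_j$ converges in norm by definition of a Schauder basis.

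For the reverse direction, assume the basis is shrinking and boundedly complete, and fix $\phi \in B^{**}$; the goal is to produce $x \in B$ with $Q(x) = \phi$. First I would record that shrinking makes $\{f_i\}$ a Schauder basis of $B^*$: since $(f - P_n^* f)(x) = f(x - P_n x)$ with $x - P_n x$ in the closed tail span, one gets $\|f - P_n^* f\| \leq (1+K)\|f\|_{n+1} \to 0$, so the closed span of $\{f_i\}$ is all of $B^*$. Next, set $a_i = \phi(f_i)$ and $u_n = \sum_{i \leq n} a_i x_i$. The key computation is $g(u_n) = \phi(P_n^* g)$ for every $g \in B^*$, whence $\|u_n\| = \sup_{\|g\| \leq 1} |g(u_n)| \leq \|\phi\| \sup_{\|g\| \leq 1}\|P_n^* g\| \leq K\|\phi\|$, so the partial sums $u_n$ are uniformly bounded.

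Bounded completeness then yields a limit $x = \sum_i a_i x_i \in B$. Finally, $Q(x)(f_i) = f_i(x) = a_i = \phi(f_i)$ for every $i$, so $Q(x)$ and $\phi$ agree on each $f_i$; since $\{f_i\}$ spans a dense subspace of $B^*$ and both functionals are continuous, $Q(x) = \phi$, and $Q$ is surjective. The main obstacle is the reverse direction, and specifically the boundedness estimate $\|u_n\| \leq K\|\phi\|$: it is the one place where the adjoint projections must be used, and it is what links the abstract element $\phi$ to a concrete sequence of partial sums that bounded completeness can then force to converge, while shrinking is exactly what is needed to identify the limit and to conclude agreement on all of $B^*$.
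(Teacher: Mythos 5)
Your proposal is correct. The forward direction (reflexive $\Rightarrow$ boundedly complete and shrinking) is essentially the paper's argument: bounded partial sums or tail-supported unit vectors have weakly convergent subsequences, and matching coordinates via the $f_j$ identifies (or kills) the limit. The reverse direction, however, is genuinely different. The paper proves weak sequential compactness of the closed unit ball: given $\{y_n\}\subseteq B_{\leq 1}$ it extracts, by a diagonal argument, coordinatewise limits $\alpha_i$ of the coefficients $a(n,i)$, uses bounded completeness to form $y=\sum_i \alpha_i x_i$, and then uses shrinking to show $y_n\rightharpoonup y$, concluding reflexivity from the equivalences in \textbf{Theorem \ref{Theorem 1.19}}. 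You instead prove surjectivity of the canonical embedding $Q$ directly: the identity $g(u_n)=\phi(P_n^* g)$ gives the uniform bound $\|u_n\|\leq K\|\phi\|$ on the partial sums of $\sum_i \phi(f_i)x_i$, bounded completeness produces the candidate $x$, and shrinking (via $\|f-P_n^* f\|\leq (1+K)\|f\|_{n+1}\to 0$, i.e. density of $\operatorname{Span}\{f_i\}$ in $B^*$) identifies $Q(x)=\phi$. Your route is the classical one; it avoids both the diagonal extraction and the appeal to Eberlein--\v{S}mulian-type equivalences, at the cost of introducing the adjoint projections, and it has the side benefit of exhibiting the preimage of $\phi$ explicitly. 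It also connects more directly to \textbf{Theorem \ref{Theorem 4.21}}~b), where the density of $\operatorname{Span}\{f_i\}$ under the shrinking hypothesis is established independently. Both arguments are sound.
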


\begin{proof}

Suppose $B$ is reflexive. Define $u_n = \sum_{i \leq n}a_i x_i$ and suppose $\sup_n\|u_n\|\leq 1$. By reflexivity, without losing generality, assume $u_n$ (or a subsequence of $\{u_n\}$) converges to $u\in B_{\leq 1}$ weakly. Define $f_n$ to be the $n$-th coordinate coefficient of each element in $B$. Since $\{x_i\}$ is a Schauder basis, each $f_n$ is continuous. Suppose $u = \sum_{i\geq 1}\alpha_i x_i$. Then:

$$
\forall\,i\in\mathbb{N}, \hspace{0.3cm} a_i = \lim_n f_i(u_n) = f_i(u) = \alpha_i \hspace{1cm}\implies\hspace{1cm} u = \sum_{i\geq 1}a_i x_i
$$
Hence when $B$ is reflexive, $\{x_i\}$ is boundedly complete. Now suppose the basis is not shrinking when $B$ is reflexive. Then we can find $\epsilon > 0$, $g \in B^*$, an increasing sequence of integers $\{p_n\}$ and a sequence of $\{v_n\}\subseteq B_{\leq 1}$ such that for each $n\in\mathbb{N}$:

$$
v_n = \sum_{i\geq p_n}\alpha(n, i) x_i
$$
and $g(v_n) > \epsilon$. By reflexivity $\{v_n\}$ (or a subsequence of $\{v_n\}$) has a weak limit $v\in B_{\leq 1}$. Notice that for each $i\in\mathbb{N}$, $\lim_n f_i(v_n) = 0$. If $v\neq 0$, then $\lim_n f_i(v_n) = f_i(v) = 0$, which is absurd. If $v = 0$, then $\epsilon < \lim_n g(v_n) = g(v) = 0$, which is also absurd. Hence when $B$ is reflexive the basis is shrinking.\\

\noindent
Conversely, suppose the $\{x_i\}$ is both boundedly complete and shrinking. By \textbf{Theorem 4.12}, there exists $\epsilon\in (0, 1]$ such that for each $x = \sum_{i\geq 1}a_ix_i$ and $n, p\in\mathbb{N}$:

\begin{equation}\label{e21}
\begin{aligned}
&\hspace{1cm}\|x\| \geq \epsilon \left\| \sum_{i < n}a_i x_i \right\| \hspace{0.3cm}\textbf{and}\hspace{0.3cm} \|x\| \geq \epsilon  \left\| \sum_{i \leq n+p}a_i x_i \right\|\\
&\implies\, 2\|x\| \geq \epsilon \left\| \sum_{i \leq n+p}a_i x_i \right\| + \left\| \sum_{i < n}a_i x_i \right\| \geq \epsilon \left\| \sum_{i\leq n+p}a_ix_i - \sum_{i<n}a_i x_i \right\|\\
&\implies\, \left\| \sum_{n\leq i \leq n+p}a_i x_i \right\| \leq \frac{2}{\epsilon}\|x\|
\end{aligned}
\end{equation}
Now fix a sequence $\{y_n\}\subseteq B_{\leq 1}$. Assume $y_n = \sum_{i \geq 1}a(n, i)x_i$. By (\ref{e21}), we have for each $i\in\mathbb{N}$:

$$
\sup_n \|a(n, i)x_i\| \leq \sup_n \frac{2}{\epsilon} \|y_n\| = \frac{2}{\epsilon} \hspace{1cm}\implies\hspace{1cm} \sup_n \vert\, a(n, i)\,\vert \leq \frac{2}{\epsilon\|x_i\|}
$$
Hence for each $i$, let $\alpha_i$ be a clustered point of $\{a(n, i)\}_{n\in\mathbb{N}}$. By diagonal method, we can assume for each $i\in\mathbb{N}$, $\lim_n a(n, i) = \alpha_i$. Fix $N\in\mathbb{N}$. By (\ref{e21}), we have:

$$
\lim_n \left\| \sum_{i\leq N} a(n, i)x_i\right\| \leq \frac{\|y_n\|}{\epsilon} \leq \frac{1}{\epsilon} \hspace{1cm}\implies\hspace{1cm} \left\| \sum_{i\leq N}\alpha_i x_i\right\| \leq \frac{1}{\epsilon}
$$
Since the basis is bounded complete, $y = \sum_{i\geq 1}\alpha_i x_i$ exists and it remains to show that $y_n$ converges to $y$ weakly. Fix $f \in B^*_{\leq 1}$, $\gamma \in (0, 1)$ and $M \in \mathbb{N}$ such that $\|f\|_M < \dfrac{\epsilon\gamma}{8}$. Find $M_1\in\mathbb{N}$ so that for all $n\geq M_1$:

$$
\left\| \sum_{i < M}(\alpha_i - a(n, i)x_i \right\| < \frac{\gamma}{2\|f\|}
$$
By (\ref{e21}), we have:

$$
\lim_p \left\| \sum_{M\leq i\leq M+p}a_i x_i \right\| = \left\| \sum_{i\geq M}a_i x_i \right\| \leq \frac{2}{\epsilon}\|y\| = \frac{2}{\epsilon}
$$
and

$$
\left\| \sum_{i\geq M}\alpha_ix_i \right\| \leq \frac{2}{\epsilon}
$$
Therefore, for any $n \geq \max(M_1, M)$:
    
$$
\vert\,f(y - y_n)\,\vert = \left\vert\, f \left( \sum_{i < M}[\alpha_i - a(n, i)]x_i \right) \,\right\vert + \left\vert\, f \left[ \sum_{i \geq M}\alpha_i x_i - \sum_{i \geq M}a(n, i)x_i \right]\, \right\vert \leq \frac{\gamma}{2\|f\|}\|f\| + \|f\|_M(\frac{2}{\epsilon}+\frac{2}{\epsilon}) = \gamma
$$  
Hence $\lim_n f(y_n) = f(y)$.

\end{proof}

\subsection{Boundedly complete basis and isomorph of \texorpdfstring{$c_0$}{c0}}

\begin{defn}

In a Banach space $B$, a Schauder basis $(x_i)_{i\in \mathbb{N}}$ is \textbf{unconditional} if for any $x\in B$, the series form of $x$:

$$
x = \sum_{i\in \mathbb{N}} \lambda_ix_i
$$
converges unconditionally. 
    
\end{defn}

\begin{prop}[{\cite[Lemma 1]{6}}]\label{Proposition 4.16}
In $B$ a Banach space with a unconditional Schauder basis $\{x_i\}_{i\in\mathbb{N}}$, $\{x_i\}_{i\in\mathbb{N}}$ is not boundedly complete iff $B$ contains an isomorph of $c_0$.
\end{prop}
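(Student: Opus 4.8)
The plan is to establish the two implications separately, using throughout the unconditional constant $K\ge 1$ furnished by the Corollary following \textbf{Theorem \ref{Theorem 4.14}}, that is, $K\|\sum_{i\in A}a_ix_i\|\ge\|\sum_{i\in B}a_ix_i\|$ for all finite $B\subseteq A\subseteq\mathbb{N}$. From this I first extract the multiplier bound $\|\sum_i\theta_ia_ix_i\|\le 2K\|\sum_ia_ix_i\|$ whenever $|\theta_i|\le 1$: for sign choices this follows by splitting into the positive and negative sub-sums and applying the suppression inequality to each, and a general $\theta_i\in[-1,1]$ is a convex combination of sign patterns (the complex case costs only another factor of $2$). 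These two estimates are the engine of both directions.

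For the implication that a non-boundedly-complete basis produces an isomorph of $c_0$, I would start from scalars $\{a_i\}$ with $\sup_n\|\sum_{i\le n}a_ix_i\|=M<\infty$ but $\sum_ia_ix_i$ divergent. The partial sums are then bounded and not Cauchy, so I can choose $p_1<q_1<p_2<q_2<\cdots$ with $\|y_k\|\ge\epsilon$, where $y_k=\sum_{p_k<i\le q_k}a_ix_i$ are disjoint successive blocks. For finite scalars $\{c_k\}_{k\le N}$, writing $A_N=\bigcup_{k\le N}(p_k,q_k]$ and noting $\|\sum_{i\in A_N}a_ix_i\|\le KM$ by suppression, the multiplier bound gives the upper estimate, while suppression applied to the single block achieving $\max_k|c_k|$ gives the lower estimate, so that
\[
\frac{\epsilon}{K}\max_{k\le N}|c_k|\;\le\;\Big\|\sum_{k\le N}c_ky_k\Big\|\;\le\;2K^2M\max_{k\le N}|c_k|.
\]
The upper bound also yields the Cauchy criterion for $\sum_kc_ky_k$ when $(c_k)\in c_0$, so $e_k\mapsto y_k$ extends to an isomorphism of $c_0$ onto $\overline{\operatorname{Span}}\{y_k\}$.

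For the converse, suppose $V\le B$ with $\Phi\colon c_0\to V$ an isomorphism and set $u_n=\Phi(e_n)$, which are semi-normalized ($\|\Phi^{-1}\|^{-1}\le\|u_n\|\le\|\Phi\|$). Writing $u_n=\sum_ia(n,i)x_i$ and using that each coordinate functional composed with $\Phi$ lies in $(c_0)^\ast=\ell^1$, I get $a(n,i)\to 0$ as $n\to\infty$ for every fixed $i$. A gliding-hump argument then produces a subsequence $\{u_{n_k}\}$ together with a genuine block basis $z_k=\sum_{i\in J_k}\beta_ix_i$ on disjoint successive blocks $J_k=(q_{k-1},q_k]$ with $\sum_k\|u_{n_k}-z_k\|$ arbitrarily small; the principle of small perturbations makes $\{z_k\}$ equivalent to $\{u_{n_k}\}$, hence to the $c_0$-basis. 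Defining $b_i=\beta_i$ on $\bigcup_kJ_k$ and $b_i=0$ elsewhere, the partial sum $\sum_{i\le m}b_ix_i$ for $q_{N-1}<m\le q_N$ equals $\sum_{k<N}z_k$ plus a sub-block of $z_N$; the first term is bounded because $\{z_k\}$ is $c_0$-equivalent and the second is bounded by $K\|z_N\|$ via suppression, so $\sup_m\|\sum_{i\le m}b_ix_i\|<\infty$, while $\sum_ib_ix_i=\sum_kz_k$ diverges since $\sum_ke_k$ diverges in $c_0$. Thus $\{x_i\}$ is not boundedly complete.

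The main obstacle is the converse direction, specifically the gliding-hump extraction of a block basis from $\{u_n\}$ and the bookkeeping of the perturbation sizes $\eta_k$ needed so that small-perturbation equivalence applies (one must keep $\sum_k\eta_k$ small relative to the basis constant of $\{u_{n_k}\}$ and relative to $\inf_k\|u_{n_k}\|$). Everything else reduces to careful but routine manipulation of the suppression and multiplier inequalities; the only subtlety in the first direction is the constant bookkeeping in the two-sided estimate, and in the converse it is ensuring the within-block remainder terms stay uniformly bounded, which is exactly where unconditionality is indispensable.
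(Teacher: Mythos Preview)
Your proposal is correct in both directions, and in fact your converse is more complete than the paper's.

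For the forward direction, both you and the paper extract disjoint successive blocks $y_k$ (resp.\ $z_i$) from the non-Cauchy bounded partial sums and show they are equivalent to the $c_0$-basis. The difference is in how the two-sided estimate is obtained: the paper introduces the equivalent norm $\|x\|_1=\sup_{F}\|P_Fx\|$ and runs a convexity argument on the functions $t\mapsto\|tz_k+\sum_{i\neq k}\beta_iz_i\|_1$ to reach the bound, whereas you go straight through the suppression/multiplier inequalities for unconditional bases. Your route is shorter and gives explicit constants; the paper's route makes the monotonicity in each coordinate more visible but is heavier.

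For the converse, the approaches genuinely diverge. The paper takes the preimages $s_i=\Phi^{-1}(e_i)$ of the $c_0$-basis, shows that $\sup_N\|\sum_{i\le N}s_i\|\le\|\Phi^{-1}\|$ while $\|\sum_{i\in F}s_i\|\ge\|\Phi\|^{-1}$ for every finite $F$, and then asserts in one line that this forces the original basis $\{x_i\}$ to be non-boundedly-complete. That last step is not justified in the paper: the $s_i$ are arbitrary vectors, not coefficient sequences against $\{x_i\}$, and passing from ``some series with bounded partial sums diverges'' to ``some coefficient series $\sum a_ix_i$ with bounded partial sums diverges'' is exactly the nontrivial content. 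Your gliding-hump argument (pass to a subsequence of $\{u_n\}$ close to a genuine block basis $\{z_k\}$, invoke small perturbations, then read off the scalars $b_i$ from the block coefficients) is the standard Bessaga--Pe\l czy\'nski device that fills this gap cleanly, and your use of suppression to control the within-block remainder $\sum_{q_{N-1}<i\le m}b_ix_i$ is precisely where unconditionality enters. So your converse is longer but actually complete; the paper's is terse to the point of omitting the key step.
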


\begin{proof}

For each finite subset $F\subseteq\mathbb{N}$, define:

$$
P_F: B\rightarrow B, \hspace{0.3cm} x = \sum_{i\geq 1}a_i x_i \mapsto \sum_{i\in F}a_i x_i
$$
Because $\{x_i\}$ is a unconditional Schauder basis, similar to (\ref{e21}),there exists $\epsilon\in (0, 1]$ such that for each finite subset $F\subseteq\mathbb{N}$, we have: 

$$
\big\| P_Fx\big\| \leq \frac{2}{\epsilon}\|x\|
$$
For each $x\in X$, define:

$$
\|x\|_1 = \sup_{F\subseteq\mathbb{N}, \vert\,F\,\vert<\infty} \big\|P_Fx\big\|
$$
Obviously $\|\cdot\|_1$ is a well-defined norm in $B$ and we also have:

$$
\|x\| \leq \sup_n \left\| \sum_{i \leq n}a_i x_i \right\| \leq \|x\|_1 \leq \frac{2}{\epsilon}\|x\|
$$
which implies $\|\cdot\|$ is equivalent to $\|\cdot\|_1$.\\

\noindent
$(\Longrightarrow):$ First suppose $\{x_i\}$ is not boundedly complete and $\left\{\sum_{i \leq n}a_i x_i \right\}_{n\in\mathbb{N}}$ is a bounded sequence that does not converge. For each $n\in\mathbb{N}$ define $s_n= \sum_{i\leq n}a_i x_i$ and $M=\sup_n \|s_n\|$. Since $\{s_n\}$ does not converge with respect to $\|\cdot\|$, there exists $\delta>0$ such that:

$$
\limsup_{n, m}\|s_n - s_m\|>\delta 
$$
Then there exists a strictly increasing sequence $\{n_i\}\subseteq\mathbb{N}$ such that $\|s_{n_{i+2}} - s_{n_{i+1}}\| > \delta$ for each $i\in\mathbb{N}$. For each $i$, define $z_i = s_{n_{i+2}} - s_{n_{i+1}}$ and:

$$
V = \overline{\operatorname{Span}(\{z_i\}_{i\in\mathbb{N}})}^{\|\cdot\|_1}
$$
Notice that for any $N\in\mathbb{N}$ and any $G\subseteq \{1, 2, \cdots, N\}$:

$$
\left\| \sum_{i\in G}z_i \right\|_1 \leq \|s_{n_{N+2}}\|_1 \leq \frac{2M}{\epsilon}
$$
Fix $N\in\mathbb{N}$. First fix $k\in \{1, 2, \cdots, N\}$, a finite set $F \subseteq \{1, 2, \cdots, N\}\backslash \{k\}$, $\theta_k \in[0, 1]$ and $\{\beta_i\}_{i\in F}$ a set of complex numbers indexed by $F$. Then we have:

$$
\left\| \theta_k z_k + \sum_{i \in F}\beta_i z_i \right\| \leq (1-\theta_k) \left\| \sum_{i \in F}\beta_i z_i \right\|_1 + \theta_k \left\| z_k+\sum_{i \in F}\beta_i z_i \right\|_1
$$
Therefore the following function defined on $[0, 1]$:

$$
g_k: [0, 1]\rightarrow (0, \infty), \hspace{0.5cm} r\mapsto \left\|rz_k + \sum_{i\leq n, i\neq k} \beta_i z_i \right\|_1
$$
is a convex function (concave up). Because $g(0) \leq g(1)$, we have $g(\theta)\leq g(1)$ for each $\theta\in[0, 1]$. Similarly, with $z_k$ replaced by $-z_k$, the following function:

$$
g_{-k}: [0, 1]\rightarrow (0, \infty), \hspace{0.5cm} r\mapsto \left\|r(-z_k) + \sum_{i\leq n, i\neq k} \beta_i z_i \right\|_1
$$
is also convex. Hence, for each $r\in [-1, 0]$, we have:

$$
\begin{aligned}
&\hspace{0.46cm} \left\| rz_k + \sum_{i\leq n, i\neq k}\beta_i z_i \right\|_1 = \left\| (-r)(-z_k) + \sum_{i\leq n, i\neq k}\beta_i z_i \right\|_1\\
&\leq \left\| \left(\sum_{i\leq n, i\neq k}\beta_i z_i \right) - z_k \right\|_1 \leq \left\| \left(\sum_{i\leq n, i\neq k}\beta_i z_i \right) \right\|_1 + \|z_k\|_1\\
& \leq \left\| \sum_{i\leq n, i\neq k}\beta_i z_i \right\|_1 + \frac{2M}{\epsilon} \leq \left\| z_k + \sum_{i\leq n, i\neq k}\beta_i z_i \right\|_1 + \frac{2M}{\epsilon}
\end{aligned}
$$
Next fix $i, j\in \{1, 2, \cdots, N\}$ and $\theta_i, \theta_j\in (0, 1)$. Fix $G$ a finite subset in $\{1, 2, \cdots, N\}\backslash \{i, j\}$ and $\{\beta_k\}_{k\in F}$ a finite set of complex numbers indexed by $F$. Then:

$$
\begin{aligned}
\left\| \theta_i z_i + \theta_j z_j + \sum_{k\in F}\beta_k z_k \right\|_1 &\leq  (1-\theta_i)(1-\theta_j) \left\| \sum_{k \in F}\beta_k z_k \right\|_1 + (1-\theta_i)\theta_j \left\| z_j + \sum_{k \in F}\beta_k z_k \right\|_1 \\
& + \theta_i(1-\theta_j) \left\|z_i+\sum_{k \in F}\beta_k z_k \right\|_1 + \theta_i\theta_j \left\|z_i+z_j+\sum_{k \in F}\beta_k z_k \right\|_1
\end{aligned}
$$
Therefore the following function defined on $[0, 1]\times[0, 1]$:

$$
h_{i, j}: [0, 1]\times [0, 1] \rightarrow (0, \infty), \hspace{0.5cm} (r_1, r_2) \mapsto \left\| r_1 z_i + r_2 z_j + \sum_{k\in F}\beta_k z_k \right\|_1
$$
is bi-convex. Since all of $h_{i, j}(0, 0), h_{i, j}(0, 1)$ and $h_{i, j}(1, 0)$ are less than or equal to $h(1, 1)$, we then have for each $r_1, r_2\in[0, 1]$, $h_{i, j}(r_1, r_2)\leq h(1, 1)$. Similar to the previous case, given $r_1, r_2\in [-1, 1]$, if $r_1\leq 0, r_2\geq 0$, then:

\begin{equation}\label{e22}
\begin{aligned}
&\hspace{0.46cm} \left\| r_1 z_i + r_2 z_j + \sum_{k\in F}\beta_k z_k \right\|_1 \leq \left\| \left( z_j + \sum_{k\in F}\beta_k z_k \right) - z_i\right\|_1\\
&\leq \left\| z_j + \sum_{k\in F}\beta_k z_k \right\|_1 + \|z_i\|_1 \leq \left\| z_i + z_j + \sum_{k\in F}\beta_k z_k \right\|_1 + \frac{2M}{\epsilon}
\end{aligned}
\end{equation}
If both $r_1, r_2$ are non-positive, then:

\begin{equation}\label{e23}
\begin{aligned}
&\hspace{0.46cm} \left\| r_1 z_i + r_2 z_j + \sum_{k\in F}\beta_k z_k \right\|_1 \leq \left\| \left(\sum_{k\in F}\beta_k z_k \right) - z_i - z_j \right\|_1\\
& \leq \left\| \sum_{k\in F}\beta_k z_k \right\|_1 + \|z_i+z_j\|_1 \leq \left\| z_i + z_j + \sum_{k\in F}\beta_k z_k \right\|_1 + \frac{2M}{\epsilon}
\end{aligned}
\end{equation}
Combining (\ref{e22}) and (\ref{e23}) gives us:

$$
\forall\, (r_1, r_2)\in [-1, 1]\times[-1, 1], \hspace{0.3cm} \left\| r_1z_i + r_2 z_j + \sum_{k\in F}\beta_k z_k \right\|_1 \leq \left\| z_i + z_j + \sum_{k\in F}\beta_k z_k \right\|_1 + \frac{2M}{\epsilon}
$$
By induction, we then can conclude:

\begin{equation}\label{e24}
\forall\,(\theta_1, \theta_2, \cdots, \theta_N) \in [-1, 1]^N, \hspace{0.3cm} \left\| \sum_{i\leq N}\theta_i z_i \right\|_1 \leq \left\| \sum_{i\leq N}z_i \right\|_1 + \frac{2M}{\epsilon} \leq \frac{4M}{\epsilon}
\end{equation}
Since in $(V, \|\cdot\|_1)$, the sequence $\{z_i\}$ satisfies the condition in \textbf{Theorem \ref{Theorem 4.14}} (with respect to $\|\cdot\|_1$), then $\{z_i\}$ is a Schauder basis of $V$. Pick $v\in V$ and assume $v = \sum_{i\geq 1}\beta_i z_i$. By our assumption, we have $\|z_i\|\in (\delta, 2M)$ for each $i\in\mathbb{N}$ and $\|\beta_iz_i\| \leq \|v\|_1$. Hence $\beta = \sup_n\vert\,\beta_n\,\vert < \infty$. Define:

$$
\begin{aligned}
& S_{1, +} = \{i\in\mathbb{N}, i\leq N \,\vert\, \operatorname{Re}\beta_i \geq 0\}, \hspace{1cm} S_{1, -} = \{i\in\mathbb{N}, i\leq N \,\vert\, \operatorname{Re}\beta_i < 0\}\\
& S_{2, +} = \{i\in\mathbb{N}, i\leq N \,\vert\, \operatorname{Im}\beta_i \geq 0\}, \hspace{1cm} S_{2, -} = \{i\in\mathbb{N}, i\leq N \,\vert\, \operatorname{Im}\beta_i < 0\}
\end{aligned}
$$
and hence by (\ref{e24}):

$$
\begin{aligned}
&\hspace{0.46cm} \left\| \sum_{i \leq N}\frac{\beta_i}{\beta}z_i \right\|_1 \leq \left\| \sum_{i\leq N} \frac{\operatorname{Re}\beta_i}{\beta}z_i \right\|_1 + \left\| \sum_{i\leq N} \frac{\operatorname{Im}\beta_i}{\beta}z_i \right\|_1\\
&\leq \left\| \sum_{i\in S_{1, +}} \frac{\operatorname{Re}\beta_i}{\beta}z_i \right\|_1 + 
\left\| \sum_{i\in S_{1, -}} \frac{\operatorname{Re}\beta_i}{\beta}z_i \right\|_1 +
\left\| \sum_{i\in S_{2, +}} \frac{\operatorname{Im}\beta_i}{\beta}z_i \right\|_1 + 
\left\| \sum_{i\in S_{2, -}} \frac{\operatorname{Im}\beta_i}{\beta}z_i \right\|_1 \leq \frac{8M}{\epsilon}
\end{aligned}
$$
Since $N$ is arbitrarily fixed, now we can conclude that:

$$
\|v\|_1 = \lim_N \left\| \sum_{i\leq N}\beta_i z_i \right\|_1 \leq \beta\frac{8M}{\epsilon}
$$
Since for each $i\in\mathbb{N}$, $\|v\|_1 \geq \|\beta_iz_i\| > \delta\vert\,\beta_i\,\vert$, now we can conclude:

\begin{equation}\label{e25}
\delta\sup_n \vert\,\beta_i\,\vert < \left\| \sum_{i\geq 1}\beta_i z_i \right\|_1 \leq \frac{8M}{\epsilon}\sup_n\vert\,\beta_i\,\vert
\end{equation}
Now define: 

$$
F: V \rightarrow \ell_{\infty}, v=\sum_{i \geq 1}\beta_i z_i \rightarrow (\beta_1, \beta_2, \cdots)
$$ 
By (\ref{e25}) $F(V)$ is closed in $\ell_{\infty}$. Meanwhile for each $N\in\mathbb{N}$, we have:

$$
\sup_{n\geq N}\vert\,\beta_i\,\vert < \frac{1}{\delta} \left\| \sum_{i\geq N}\beta_i z_i\right\|_1 \overset{N\rightarrow\infty}{\longrightarrow} 0
$$
Hence $F(V)\subseteq c_0$. On the other hand, given $(\alpha_i)_{i\in\mathbb{N}}\in c_0$, for each $N\in\mathbb{N}$ and a finite subset $G\subseteq \{N, N+1, \cdots\}$, again by (\ref{e25}):

$$
\Big\| \sum_{i\in G}\alpha_i z_i\Big\|_1\leq \frac{8M}{\epsilon}\sup_{n\geq N}\vert\,\alpha_i\,\vert \overset{N\rightarrow\infty}{\longrightarrow} 0 \hspace{0.3cm}\implies\hspace{0.3cm} \left\{ \sum_{i\leq N}\alpha_i z_i \right\}_{N\in\mathbb{N}} \textbf{  converges  in  }\|\cdot\|_1
$$
Hence $F(V) = c_0$. Since $(V, \|\cdot\|_1)$ is linearly isomorphic to $(V, \|\cdot\|)$, we can now conclude $B$ contains an isomorph of $c_0$.\\

\noindent
$(\Longleftarrow):$ Conversely, suppose $S$ is a closed subspace of $B$, which is linearly isomorphic to $c_0$ through $\Phi: S\rightarrow c_0$. Then by \textbf{Inverse Mapping Theorem} both $\Phi$ and $\Phi^{-1}$ are bounded. Let $\{s_i\}_{i\in\mathbb{N}}\subseteq S$ be the pre-image of $\{e_i\}_{i\in\mathbb{N}}$ the standard basis of $c_0$. Then for each $N\in\mathbb{N}$:

$$
\left\| \sum_{i\leq N}s_i \right\| = \left\| \Phi^{-1} \left( \sum_{i\leq N}e_i \right) \right\| \leq \left\| \sum_{i\leq N}e_i \right\|_{\infty}\|\Phi^{-1}\| = \|\Phi^{-1}\|
$$
Meanwhile, for each $N\in\mathbb{N}$ and $F$ a finite subset of $\{N, N+1, \cdots\}$, we have:

$$
1 = \left\| \sum_{i\in F}e_i \right\|_{\infty} = \left\| \Phi \left( \sum_{i\in F}s_i \right)\right\|_{\infty} \leq \left\| \sum_{i\in F}s_i \right\| \|\Phi\| \hspace{0.3cm}\implies\hspace{0.3cm} \left\| \sum_{i\in F}s_i \right\| \geq \frac{1}{\|\Phi\|}.
$$
which implies that the sequence $\{\sum_{i\leq N}s_i\}_{N\in\mathbb{N}}$ does not converge in $\|\cdot\|_1$, nor in $\|\cdot\|$. Thus $\{s_i\}$ is not boundedly complete, which implies that $\{z_i\}$ is not either.

\end{proof}

\subsection{Shrinking basis and isomorph of \texorpdfstring{$\ell_1$}{l1}}

\begin{prop}[{\cite[Lemma 2]{6}}]\label{Proposition 4.17}
Let $B$ be a Banach space with a unconditional Schauder basis $\{z_i\}_{i\in\mathbb{N}}$. $\{z_i\}_{i\in\mathbb{N}}$ is not shrinking iff $B$ contains an isomorphs of $\ell_1$.
\end{prop}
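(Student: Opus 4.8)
The plan is to prove both directions directly, invoking unconditionality only for the forward implication. For $(\Longrightarrow)$, suppose $\{z_i\}$ is not shrinking. Since $V_{n+1}\subseteq V_n$, the numbers $\|f\|_n$ are non-increasing in $n$, so the definition of a shrinking basis fails for some $f\in B^*$, which I normalize to $\|f\|=1$, together with an $\epsilon>0$ such that $\|f\|_n>\epsilon$ for every $n$. From this I would extract a block basic sequence of $\{z_i\}$ on which $f$ stays uniformly large: starting from $n_1=1$, choose $x\in V_{n_1}$ with $\|x\|=1$ and $|f(x)|>\epsilon$ (possible since $\|f\|_{n_1}>\epsilon$), rotate by a unimodular scalar to set $u_1=\tfrac{\overline{f(x)}}{|f(x)|}x$ so that $f(u_1)=|f(x)|>\epsilon$, and let $N_1$ be the top index of the finite support of $u_1$. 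Taking $n_2=N_1+1$ and iterating yields normalized $u_k$ with consecutive disjoint supports and $f(u_k)>\epsilon$ for all $k$.

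The estimate then runs as follows. Let $K$ be the unconditional multiplier constant, namely the finite supremum of $\|\sum_i\mu_ia_iz_i\|/\|\sum_ia_iz_i\|$ over scalars with $|\mu_i|\le1$; its finiteness follows from the Corollary to \textbf{Theorem \ref{Theorem 4.14}} via a uniform boundedness argument exactly as in the proof of \textbf{Proposition \ref{Proposition 4.16}}. Because each $u_k$ is a block, multiplying $u_k$ by a scalar is an instance of such a multiplier. Given scalars $(c_k)$, take unimodular $\mu_k=|c_k|/c_k$ so that $\mu_kc_k=|c_k|$; then, using $\|f\|=1$,
$$
\epsilon\sum_k|c_k|\ \le\ f\!\Big(\sum_k|c_k|u_k\Big)\ \le\ \Big\|\sum_k|c_k|u_k\Big\|\ \le\ K\Big\|\sum_k c_ku_k\Big\|,
$$
while $\|\sum_k c_ku_k\|\le\sum_k|c_k|$ is immediate. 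Hence $(u_k)$ satisfies the two-sided $\ell_1$ estimate with $\delta=\epsilon/K$, so $(c_k)\mapsto\sum_k c_ku_k$ is an isomorphism of $\ell_1$ onto the closed subspace $\overline{\operatorname{Span}}\{u_k\}\le B$.

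For $(\Longleftarrow)$ I would argue the contrapositive: assume $\{z_i\}$ is shrinking and show $B$ contains no isomorph of $\ell_1$. First I establish that the coordinate functionals $z_i^*$ form a Schauder basis of $B^*$, so that $B^*$ is separable. Writing $P_n$ for the basis projection onto the first $n$ coordinates, \textbf{Theorem \ref{Theorem 4.14}} gives $K=\sup_n\|P_n\|<\infty$, and for $f\in B^*$, $x\in B$ one computes $\big(f-\sum_{i<n}f(z_i)z_i^*\big)(x)=f(x-P_{n-1}x)$. Since $x-P_{n-1}x\in\overline{V_n}$ with $\|x-P_{n-1}x\|\le(1+K)\|x\|$, this yields $\|f-\sum_{i<n}f(z_i)z_i^*\|\le(1+K)\|f\|_n\to0$, so $f=\sum_i f(z_i)z_i^*$ and $B^*=\overline{\operatorname{Span}}\{z_i^*\}$ is separable. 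Now if some $W\le B$ were isomorphic to $\ell_1$, then by \textbf{Hahn--Banach} the restriction map $B^*\to W^*$ would be surjective, making $W^*$ a continuous linear image of the separable space $B^*$, hence separable; but $W^*\cong(\ell_1)^*\cong\ell_\infty$ is non-separable, a contradiction.

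The main obstacle is the forward direction. The two technical points there are the inductive selection of a block sequence whose supports march off to infinity while keeping $f$ bounded below, and the careful handling of complex scalars: one must upgrade the subset projections furnished by the Corollary to \textbf{Theorem \ref{Theorem 4.14}} to the modulus-$\le1$ multiplier constant $K$ that lets me realign the phases of the $c_k$. By contrast the converse is soft, resting only on the separability dichotomy between $B^*$ and $\ell_\infty$, and notably does not use unconditionality at all.
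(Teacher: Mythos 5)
Your proposal is correct, and it is worth recording how it differs from the paper's proof. For the forward direction the paper also extracts normalized vectors $y_i$ supported far out along the basis with $f(y_i)>\tfrac{1}{2}$, but it keeps them in nested tails $V_{n_i}$ and obtains the lower $\ell_1$ estimate by splitting the coefficients into the four sets $S_{1,\pm}, S_{2,\pm}$ of real/imaginary, positive/negative parts and bounding each of $\|y_{1,\pm}\|, \|y_{2,\pm}\|$ by $\tfrac{1}{\epsilon}\|y\|$ via unconditionality; you instead arrange genuinely disjoint block supports together with a phase rotation making each $f(u_k)$ real and larger than $\epsilon$, after which one application of the modulus-$\leq 1$ multiplier constant $K$ yields $\epsilon\sum_k\vert c_k\vert \leq K\left\|\sum_k c_k u_k\right\|$. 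Your version is cleaner, and the explicit disjointness of supports is precisely what justifies applying the unconditionality constant to the blocks --- a point the paper's nested-tail formulation leaves implicit; the only step you must not gloss over is the passage from the subset-projection constant of the Corollary to \textbf{Theorem \ref{Theorem 4.14}} to the full multiplier constant $K$, which costs a fixed factor exactly as in the $S_{1,\pm}, S_{2,\pm}$ computation of \textbf{Proposition \ref{Proposition 4.16}}, and which you correctly flag. For the converse you genuinely diverge: the paper pulls the standard $\ell_1$ basis back to $\{s_i\}\subseteq S$, exhibits $F\in B^*$ with $F(s_i)=1$ for all $i$, and then asserts that $\{s_i\}$ failing to be shrinking forces $\{z_i\}$ to fail as well --- a step it does not actually justify. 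Your contrapositive (shrinking implies $\{z_i^*\}$ is a basis of $B^*$, hence $B^*$ is separable, whereas a subspace isomorphic to $\ell_1$ would make the non-separable $\ell_\infty$ a quotient of $B^*$) closes that gap with a soft separability argument, at the mild cost of re-deriving part b) of \textbf{Theorem \ref{Theorem 4.21}}; you are also right that unconditionality plays no role in this direction.
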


\begin{proof}

$(\Longrightarrow):$ Assume by contradiction that there is $f \in B^*$ such that $\lim_n\|f\|_n > 0$. WLOG assume $\lim_n\|f\|_n = 1$. For each $n$ define $V_n = \operatorname{Span}\{z_i\}_{i \geq n}$. Hence there exists a strictly increasing sequence of integers $\{n_i\}$, $\{y_i\}\subseteq B$ where for each $i\in\mathbb{N}$, $y_i \in V_i$ and $\|y_i\|=1$, such that for each $i\in\mathbb{N}$, $f(y_i) > \dfrac{1}{2}$. Next define $Y_0 = \operatorname{Span}(\{y_i\}_{i\in\mathbb{N}})$ and in $Y$ define:

$$
\left\| \sum_{i\leq N}\alpha_i y_i \right\|_1 = \sum_{i\leq N}\vert\, \alpha_i\,\vert
$$
Clearly $\|\cdot\|_1$ defines a norm in $Y_0$ and for each $y\in Y_0$, $\|y\|\leq \|y\|_1$. Let $Y$ be the completion of $Y_0$ with respect to $\|\cdot\|_1$ and we have $Y \subseteq \overline{Y_0}^{\|\cdot\|}$. According to \textbf{Theorem \ref{Theorem 4.14}}, we have $\{y_i\}_{i\in\mathbb{N}}$ is a Schauder basis of $(Y, \|\cdot\|_1)$. Next, since in $\mathbb{R}^2$, $1$-norm and $2$-norm are equivalent, there exists $0 < r \leq R$ such that for any $(\alpha, \beta)\in\mathbb{R}^2$:

\begin{equation}\label{e26}
r\sqrt{\alpha^2 + \beta^2} \leq \vert\,\alpha\,\vert + \vert\,\beta\,\vert \leq R\sqrt{\alpha^2 + \beta^2}
\end{equation}
Since $\{y_i\}$ is a Schauder basis of $Y$, each $y\in Y$ can be uniquely written as $y = \sum_{i\geq 1}\alpha_i y_i$. Fix $y=\sum_{i\geq 1}\alpha_i y_i$ in $Y$ and define:

$$
\begin{aligned}
& S_{1, +} = \{i\in\mathbb{N} \,\vert\, \operatorname{Re}\alpha_i \geq 0\}, \hspace{1cm} S_{1, -} = \{i\in\mathbb{N} \,\vert\, \operatorname{Re}\alpha_i < 0\}\\
& S_{2, +} = \{i\in\mathbb{N} \,\vert\, \operatorname{Im}\alpha_i \geq 0\}, \hspace{1cm} S_{2, -} = \{i\in\mathbb{N} \,\vert\, \operatorname{Im}\alpha_i < 0\}
\end{aligned}
$$
Next, define:

$$
y_{1, +} = \sum_{i\in S_{1, +}} \operatorname{Re}(\alpha_i) y_i\hspace{1cm} y_{2, +} = \sum_{i\in S_{2, +}}\operatorname{Im}(\alpha_i) y_i \hspace{1cm} y_{1, -} = \sum_{i\in S_{1, -}} \operatorname{Re}(\alpha_i) y_i\hspace{1cm} y_{2, -} = \sum_{i\in S_{2, -}}\operatorname{Im}(\alpha_i) y_i
$$
Since all these four points converges with respect to $\|\cdot\|_1$, they also converge with respect to $\|\cdot\|$. Since each $y_i\in V_i$ and $\{z_i\}_{i\in\mathbb{N}}$ is a unconditional basis, according to \textbf{Theorem \ref{Theorem 4.14}}, there exists $\epsilon\in(0, 1]$ such that:

\begin{equation}\label{e27}
\max \big( \|y_{1, +}\|, \|y_{1, -}\|, \|y_{2, +}\|, \|y_{2, -}\| \big) \leq \frac{1}{\epsilon}\|y\|
\end{equation}
By assumption, we have $f(y_i)>\dfrac{1}{2}$ for each $i\in\mathbb{N}$. Therefore:

\begin{equation}\label{e28}
\begin{aligned}
& \frac{1}{2}\sum_{i\in S_{1, +}}\operatorname{Re}(\alpha_i) < f(y_{1, +})\leq \|f\|\|y_{1, +}\|\hspace{1cm} \frac{1}{2} \sum_{i\in S_{1, -}}-\operatorname{Re}(\alpha_i) < -f(y_{1, -}) = \vert\,f(y_{1, -})\,\vert \leq \|f\|\|y_{1, -}\| \\
& \frac{1}{2}\sum_{i\in S_{2, +}}\operatorname{Im}(\alpha_i) < f(y_{2, +})\leq \|f\|\|y_{2, +}\|\hspace{1cm} \frac{1}{2} \sum_{i\in S_{2, -}}-\operatorname{Im}(\alpha_i) < -f(y_{2, -}) = \vert\,f(y_{2, -})\,\vert \leq \|f\|\|y_{2, -}\| \\
\end{aligned}
\end{equation}
Hence combining (\ref{e26}), (\ref{e27}) and (\ref{e28}) gives us:

$$
\begin{aligned}
&\hspace{0.46cm} \|y\|_1 = \sum_{i\geq 1}\vert\,\alpha_i\,\vert = \sum_{i\geq 1} \sqrt{\big(\operatorname{Re}\alpha_i\big) ^2 + \big(\operatorname{Im}\alpha_i\big)^2} \leq \frac{1}{r}\sum_{i\geq 1}\big( \vert\, \operatorname{Re}\alpha_i\,\vert + \vert\,\operatorname{Im}\alpha_i\,\vert \big) \\
& = \frac{1}{r}\sum_{i\geq 1}\vert\,\operatorname{Re}\alpha_i\,\vert + \frac{1}{r} \sum_{i\geq 1}\vert\, \operatorname{Im}\alpha_i \,\vert = \frac{1}{r}\sum_{i\in S_{1, +}} \operatorname{Re}\alpha_i - \frac{1}{r}\sum_{i\in S_{1, -}}\operatorname{Re}\alpha_i + \frac{1}{r} \sum_{i\in S_{2, +}}\operatorname{Im}\alpha_i - \frac{1}{r} \sum_{i\in S_{2, -}} \operatorname{Im}\alpha_i\\
& \leq \frac{2\|f\|}{r}\big( \|y_{1, +}\| + \|y_{1, -}\| + \|y_{2, +}\| + \|y_{2, -}\| \big) \leq \frac{8\|f\|}{r\epsilon}\|y\|
\end{aligned}
$$
Since $y$ is arbitrarily picked from $Y$, we then can conclude for each $y\in Y$:

$$
\|y\| \leq \|y\|_1 \leq \frac{8\|f\|}{r\epsilon}\|y\|.
$$
Hence $\|\cdot\|$ is equivalent to $\|\cdot\|_1$, or $Y$ is linearly isomorphic to $\overline{Y_0}^{\|\cdot\|}$. Since $Y$ is obviously linear isomorphic to $\ell_1$, $\overline{Y_0}^{\|\cdot\|}$ is linearly isomorphic to $\ell_1$.\\

\noindent
$(\Longleftarrow):$ Conversely, suppose $S$, a closed subspace of $B$, is linearly isomorphic to $B$ and $\{s_i\}_{i\in\mathbb{N}}$ is the pre-image of the standard basis of $\ell_1$. By \textbf{Remark \ref{Remark 4.16}} $\{s_i\}$ is a Schauder basis of $S$. Since there exists $F\in B^*$ such that $F(s_i)=1$ for each $i\in\mathbb{N}$ and zero else, we could conclude that $\{s_i\}$ is not shrinking so that $\{z_i\}$ is not either.

\end{proof}

\begin{theorem}[{\cite[Theorem 1]{6}}]\label{Theorem 4.22}

Given a Banach space $B$ with a unconditional Schauder basis $\{z_i\}_{i\in\mathbb{N}}$, $B$ is non-reflexive iff $B$ contains an isomorph of $c_0$ or $\ell_1$.

\end{theorem}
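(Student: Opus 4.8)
The plan is to read off the equivalence as a direct combination of the three structural results already established for bases, with \textbf{Theorem \ref{Theorem 4.15}} supplying the bridge between reflexivity and the two basis properties, and \textbf{Proposition \ref{Proposition 4.16}}, \textbf{Proposition \ref{Proposition 4.17}} translating the failure of each property into the presence of a copy of $c_0$ or $\ell_1$. Since $\{z_i\}_{i\in\mathbb{N}}$ is assumed unconditional, all three prerequisites apply verbatim, so no new geometric construction is needed; the work is entirely in chaining the implications correctly.

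For the forward direction I would argue by contraposition at the level of basis properties. Suppose $B$ is non-reflexive. By \textbf{Theorem \ref{Theorem 4.15}}, reflexivity is equivalent to $\{z_i\}_{i\in\mathbb{N}}$ being simultaneously boundedly complete and shrinking, so non-reflexivity forces at least one of these two properties to fail. If $\{z_i\}_{i\in\mathbb{N}}$ is not boundedly complete, then \textbf{Proposition \ref{Proposition 4.16}} (which requires precisely the unconditionality hypothesis we have) yields a closed subspace of $B$ isomorphic to $c_0$. If instead $\{z_i\}_{i\in\mathbb{N}}$ is not shrinking, then \textbf{Proposition \ref{Proposition 4.17}} yields a closed subspace isomorphic to $\ell_1$. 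In either case $B$ contains an isomorph of $c_0$ or of $\ell_1$, as required.

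For the converse I would use the stability of reflexivity under passage to closed subspaces. Both $c_0$ and $\ell_1$ are non-reflexive: for example $c_0^{\ast\ast}\cong\ell_\infty\neq c_0$ and $\ell_1^{\ast\ast}\cong(\ell_\infty)^\ast\neq\ell_1$, so neither admits a canonical image that is all of its bidual. Since a Banach space linearly isomorphic to a non-reflexive space is itself non-reflexive, and since every closed subspace of a reflexive space is reflexive, the existence in $B$ of a closed subspace isomorphic to $c_0$ or $\ell_1$ prevents $B$ from being reflexive. Combining the two directions gives the stated equivalence.

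The only point demanding any care, and the closest thing to an obstacle, is making sure the unconditionality hypothesis is genuinely available each time \textbf{Proposition \ref{Proposition 4.16}} and \textbf{Proposition \ref{Proposition 4.17}} are invoked, and that the failure dichotomy from \textbf{Theorem \ref{Theorem 4.15}} is applied to the \emph{same} basis $\{z_i\}_{i\in\mathbb{N}}$; everything else is a routine assembly of cited statements rather than fresh analysis.
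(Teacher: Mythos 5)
Your proof is correct and follows essentially the same route as the paper, which simply cites \textbf{Theorem \ref{Theorem 4.15}}, \textbf{Proposition \ref{Proposition 4.16}} and \textbf{Proposition \ref{Proposition 4.17}} and declares the result immediate. The only cosmetic difference is that for the converse you invoke the hereditary nature of reflexivity and the non-reflexivity of $c_0$ and $\ell_1$ directly, whereas one could equally use the reverse implications of the two propositions; both are valid and equally short.
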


\begin{proof}
Immediate by \textbf{Theorem \ref{Theorem 4.15}}, \textbf{Proposition \ref{Proposition 4.16}} and \textbf{Proposition \ref{Proposition 4.17}}.
\end{proof}

\subsection{Bases in the dual space}

\begin{prop}\label{Proposition 4.19}
A Banach space $B$ is reflexive iff $B^*$ is reflexive
\end{prop}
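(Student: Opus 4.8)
The plan is to work throughout with the two canonical embeddings $Q_B \colon B \to B^{**}$ and $Q_{B^{*}} \colon B^{*} \to B^{***}$, using the standard fact that a Banach space is reflexive precisely when its canonical embedding is \emph{surjective}. Recall that each such embedding is always a linear isometry, hence injective; and since $B$ is complete and $Q_B$ is isometric, $Q_B(B)$ is automatically a closed subspace of $B^{**}$. I would prove the two implications separately.

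For the forward direction, assume $B$ is reflexive, so $Q_B$ is onto, and take an arbitrary $\Psi \in B^{***}$; the goal is to produce a preimage under $Q_{B^{*}}$. The natural candidate is $f := \Psi \circ Q_B$, which lies in $B^{*}$ because $Q_B \colon B \to B^{**}$ and $\Psi \colon B^{**} \to \mathbb{C}$ are both bounded and linear. To check $Q_{B^{*}}(f) = \Psi$, I would take any $F \in B^{**}$, use reflexivity of $B$ to write $F = Q_B(x)$ for some $x \in B$, and compute directly that $Q_{B^{*}}(f)(F) = F(f) = Q_B(x)(f) = f(x) = \Psi(Q_B(x)) = \Psi(F)$. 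Since $F$ was arbitrary, $Q_{B^{*}}(f) = \Psi$, and $Q_{B^{*}}$ is surjective; thus $B^{*}$ is reflexive.

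For the reverse direction, assume $B^{*}$ is reflexive and argue by contradiction, supposing $Q_B(B)$ is a proper subspace of $B^{**}$. Since $Q_B(B)$ is closed (as noted above), the Hahn--Banach theorem furnishes a nonzero $\Psi \in B^{***}$ vanishing on all of $Q_B(B)$. Reflexivity of $B^{*}$ then gives $f \in B^{*}$ with $\Psi = Q_{B^{*}}(f)$, i.e.\ $\Psi(F) = F(f)$ for all $F \in B^{**}$. Evaluating at $F = Q_B(x)$ yields $f(x) = Q_B(x)(f) = \Psi(Q_B(x)) = 0$ for every $x \in B$, so $f = 0$ and hence $\Psi = Q_{B^{*}}(0) = 0$, contradicting $\Psi \neq 0$. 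Therefore $Q_B(B) = B^{**}$ and $B$ is reflexive.

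The computations in both directions are routine unwindings of the definitions of the canonical maps; the genuinely essential ingredient — the one place where completeness is used — is the observation that $Q_B(B)$ is closed, which is exactly what legitimizes the Hahn--Banach separation in the reverse direction. I expect that to be the only step requiring care, and I would state it explicitly rather than leave it implicit.
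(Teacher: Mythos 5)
Your proof is correct. The forward direction matches the paper's in substance (the paper just writes it as the identification $B^* = (B^{**})^* = (B^*)^{**}$, which is exactly your computation $Q_{B^*}(\Psi\circ Q_B)=\Psi$ unwound). The reverse direction, however, takes a genuinely different route: the paper applies the forward implication to $B^*$ to get that $B^{**}$ is reflexive, and then invokes the fact that a closed subspace of a reflexive space is reflexive, applied to $Q_B(B)\subseteq B^{**}$. You instead argue directly that $Q_B(B)$ cannot be a proper closed subspace of $B^{**}$, by separating it with a nonzero $\Psi\in B^{***}$ via Hahn--Banach and then using surjectivity of $Q_{B^*}$ to force $\Psi=0$. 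Your version is more self-contained, since the subspace-hereditary property of reflexivity (which the paper only cites, not proves) itself requires a Hahn--Banach extension argument; the paper's version is shorter at the cost of that external dependency. Your explicit remark that $Q_B(B)$ is closed because $Q_B$ is an isometry on a complete space is exactly the point that legitimizes the separation step, and it is good that you flagged it.
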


\begin{proof}
When $B$ is reflexive, then obviously $B^* = (B^{\ast\ast})^* = (B^*)^{\ast\ast}$. Now when $B^*$ is reflexive, we have $B^{\ast\ast}$ is reflexive. Hence the canonical embedding of $X$ into $X^{\ast\ast}$ as a closed subspace of $X^{\ast\ast}$ is also reflexive, which implies that $X$ is reflexive.
\end{proof}

\begin{cor}[\cite{6}]\label{Corollary 4.20}
Let $B$ be a Banach space with a unconditional basis. If $B^{\ast\ast}$ is separable, then $B$ is reflexive. 
\end{cor}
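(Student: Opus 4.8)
The plan is to argue by contraposition, showing that if $B$ is \emph{not} reflexive then $B^{\ast\ast}$ must fail to be separable. The engine is \textbf{Theorem \ref{Theorem 4.22}}: since $B$ carries an unconditional Schauder basis, non-reflexivity of $B$ forces $B$ to contain a closed subspace linearly isomorphic to $c_0$ or to $\ell_1$. I will treat these two alternatives separately, and in each case exhibit an isomorph of $\ell_\infty$ sitting as a quotient of $B^\ast$ or of $B^{\ast\ast}$, which is incompatible with separability.

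Two elementary facts about separability will be used throughout. First, for any closed subspace $V$ of a Banach space $Y$, the restriction map $Y^\ast \to V^\ast$, $f \mapsto f|_V$, is a bounded linear surjection (surjectivity is exactly the Hahn--Banach extension theorem); consequently $V^\ast$ is a continuous linear image of $Y^\ast$, so separability of $Y^\ast$ would force separability of $V^\ast$. Second, if $X^\ast$ is separable then $X$ is separable (take a countable dense subset of the unit sphere of $X^\ast$, pick norming vectors, and observe that their closed linear span must be all of $X$); equivalently, non-separability of $X$ propagates up to $X^\ast$.

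Now suppose first that $B$ contains a subspace $V \cong \ell_1$. Then $V^\ast \cong (\ell_1)^\ast \cong \ell_\infty$ is non-separable, and by the first fact $V^\ast$ is a quotient of $B^\ast$, so $B^\ast$ is non-separable; by the second fact $B^{\ast\ast} = (B^\ast)^\ast$ is then non-separable as well. Suppose instead that $B$ contains a subspace isomorphic to $c_0$. Here $c_0^\ast \cong \ell_1$ is separable, so one extra level is needed: \textbf{Theorem \ref{Theorem 4.11}} furnishes a subspace $W \cong \ell_1$ inside $B^\ast$, whence $W^\ast \cong \ell_\infty$ is a non-separable quotient of $(B^\ast)^\ast = B^{\ast\ast}$, again forcing $B^{\ast\ast}$ to be non-separable. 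In both cases the hypothesis that $B^{\ast\ast}$ is separable is contradicted, so $B$ must be reflexive.

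The substantive content is entirely carried by \textbf{Theorem \ref{Theorem 4.22}} and \textbf{Theorem \ref{Theorem 4.11}}; the remaining steps are the two separability lemmas above, which I expect to be the only place requiring care, since one must keep straight at which dual level the $\ell_\infty$ quotient lives (the $c_0$ alternative genuinely needs $B^{\ast\ast}$, not merely $B^\ast$). An alternative that bypasses \textbf{Theorem \ref{Theorem 4.11}} is to use the isometric embedding $V^{\ast\ast} \hookrightarrow B^{\ast\ast}$ valid for any closed subspace $V$, together with the non-separability of $c_0^{\ast\ast} \cong \ell_\infty$ and of $(\ell_1)^{\ast\ast} \cong (\ell_\infty)^\ast$; I prefer the quotient argument, since the restriction map and the preservation of separability under continuous images are more elementary than the bidual-embedding identification.
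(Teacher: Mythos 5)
Your proof is correct, and its skeleton coincides with the paper's: both reduce the problem via \textbf{Theorem \ref{Theorem 4.22}} to ruling out isomorphs of $c_0$ and $\ell_1$, and both invoke \textbf{Theorem \ref{Theorem 4.11}} in the $c_0$ case to move the $\ell_1$-copy into $B^*$. Where you diverge is in how non-separability of $B^{**}$ is actually extracted. The paper works by hand: for the $\ell_1$-copy it builds an uncountable norm-separated family $\{f_M\}_{M\in\mathcal{P}(\mathbb{N})}$ of functionals (the ``characteristic functions'' of subsets of $\mathbb{N}$, extended by Hahn--Banach), and in the $\ell_1$-in-$B$ case it then climbs one more level by defining a second family $\{F_M\}\subseteq B^{**}$ through its values on the $f_N$ --- a construction whose boundedness and consistency the paper does not really verify. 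You instead route everything through two standard lemmas: the restriction map $Y^*\to V^*$ is a surjection, so a non-separable subspace dual forces a non-separable ambient dual; and separability of $X^*$ forces separability of $X$, so non-separability propagates from $B^*$ to $B^{**}$. This buys you a cleaner argument that never has to exhibit elements of $B^{**}$ explicitly and, in particular, sidesteps the paper's second, more delicate construction; the cost is only that you lean on the (entirely standard, and correctly sketched) separability lemmas rather than keeping everything self-contained. Your parenthetical alternative via the isometric embedding $V^{**}\hookrightarrow B^{**}$ would also work, and your choice to avoid it is reasonable. I see no gap.
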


\begin{proof}

By \textbf{Theorem 4.\ref{Theorem 4.22}}, it suffices to show that $B$ contains no isomorphs of $c_0$ or $\ell_1$. If $B$ contains an isomorph of $c_0$, by \textbf{Theorem \ref{Theorem 4.11}}, $B^*$ contains an isomorph of $\ell_1$ and suppose $\{s_i\}_{i\in\mathbb{N}}$ is the pre-image of the standard basis of $\ell_1$. Then for each subset $M\subseteq\mathbb{N}$, there exists $f_M\in B^{\ast\ast}$ such that $f(e_i)=1$ for each $i\in M$ and zero elsewhere. Then $\{f_M\}_{M\in\mathcal{P}(\mathbb{N})}$ is a non-separable subset, which contradicts our assumption. When $B$ contains an isomorph of $\ell_1$, similarly we can define $\{f_M\}_{M\in\mathcal{P}(\mathbb{N})} \subseteq B^*$ as above. Then for each $M\subseteq\mathbb{N}$, there exists $F_M\in B^{\ast\ast}$ such that for each $N\subseteq\mathbb{N}$:

$$
F_M(f_N) = 
\begin{cases}
1, \hspace{1cm}N\subseteq M\\
0, \hspace{1cm}N\backslash M\neq\emptyset
\end{cases}
$$
and $F_M$ is zero outside of $\operatorname{Span}(\{f_M\}_{M\in \mathcal{P}(\mathbb{N})})$. Thus $\{F_M\}_{M\in \mathcal{P}(\mathbb{N})}$ is another non-separable set in $B^{\ast\ast}$, which is absurd.

\end{proof}

\begin{theorem}[{\cite[Theorem 3]{6}}]\label{Theorem 4.21}
Let $B$ be a Banach space with a Schauder basis $\{x_i\}_{i\in\mathbb{N}}$ and $f_n$ be the $n$-th coordinate coefficient linear functional. Then:

\begin{enumerate}[label = \alph*)]

    \item If $\{x_i\}$ is unconditional and shrinking, then $\{f_n\}$ is a unconditional Schauder basis for $B^*$.
    \item $\{f_n\}$ is a Schauder basis of $B^*$ iff $\{x_i\}$ is shrinking.
    \item $\{f_n\}$ is boundedly complete if it is a Schauder basis of $B^*$.
    
\end{enumerate}

\end{theorem}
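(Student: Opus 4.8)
The plan is to prove Theorem \ref{Theorem 4.21} in three parts, building each assertion on the bi-orthogonality relation $f_m(x_n)=\delta_{m,n}$ and on the boundedly-complete/shrinking dichotomy already developed. Throughout, let $P_n$ denote the canonical projection $x\mapsto\sum_{i\le n}f_i(x)x_i$ onto the first $n$ basis vectors, and recall from \textbf{Theorem \ref{Theorem 4.14}} that the basis constant $K=\sup_n\|P_n\|$ is finite. The crucial observation linking $B^*$ to the coordinate functionals is that the adjoint $P_n^*$ acts as $P_n^*(f)=\sum_{i\le n}f(x_i)f_i$, so that the partial sums of $\sum_i f(x_i)f_i$ are exactly $\{P_n^*f\}_n$, and $\|f-P_n^*f\|=\|f\|_n$ in the notation of the shrinking definition.

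\emph{Part (b).} For the implication that shrinking forces $\{f_n\}$ to be a basis, I would fix $f\in B^*$ and show $\|f-P_n^*f\|\to 0$ directly: by definition $\|f-P_n^*f\|$ agrees with $\|f\|_{n+1}$ (the supremum of $|(f-P_n^*f)(x)|/\|x\|$ over $x\in V_{n+1}=\overline{\operatorname{Span}}\{x_i\}_{i>n}$, since $f-P_n^*f$ annihilates $x_1,\dots,x_n$), which tends to zero precisely because the basis is shrinking. Uniqueness of the coefficient representation follows from bi-orthogonality, so $\{f_n\}$ is a Schauder basis. Conversely, if $\{f_n\}$ is a basis then for every $f$ the tails $\|f-P_n^*f\|=\|f\|_{n+1}$ converge to zero, which is exactly shrinking. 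For part (a), once (b) gives that shrinking makes $\{f_n\}$ a basis, unconditionality of $\{f_n\}$ follows from unconditionality of $\{x_i\}$ via the dual characterization in the corollary after \textbf{Theorem \ref{Theorem 4.14}}: uniform boundedness of the finite projections $P_A$ on $B$ transfers to uniform boundedness of the adjoints $P_A^*$ on $B^*$, and these adjoints are exactly the coordinate projections for $\{f_n\}$.

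\emph{Part (c).} Here I want to show $\{f_n\}$ is boundedly complete whenever it is a basis of $B^*$. The plan is to take scalars $\{a_i\}$ with $\sup_N\|\sum_{i\le N}a_if_i\|<\infty$ and produce the limit functional explicitly. Define $\phi$ on the dense subspace $\operatorname{Span}\{x_i\}$ by $\phi(x_j)=a_j$, extended linearly; the uniform bound $\sup_N\|\sum_{i\le N}a_if_i\|=:M$ gives $|\phi(\sum_{j}c_jx_j)|=|\sum_j c_ja_j|=\lim_N|(\sum_{i\le N}a_if_i)(\sum_j c_jx_j)|\le M\|\sum_j c_jx_j\|$ for finitely supported $\{c_j\}$, so $\phi$ is bounded on $\operatorname{Span}\{x_i\}$ and extends to $f\in B^*$ with $f(x_j)=a_j$. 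Since $\{f_n\}$ is assumed to be a basis, $f=\sum_i f(x_i)f_i=\sum_i a_if_i$ converges in norm, which is the desired conclusion.

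\emph{The hard part} will be the clean bookkeeping in part (b) identifying $\|f-P_n^*f\|$ with the quantity $\|f\|_{n+1}$ appearing in the shrinking definition; one must verify that $f-P_n^*f$ vanishes on $\operatorname{Span}\{x_i\}_{i\le n}$ and hence that its norm is computed entirely over $V_{n+1}$, which requires checking that $(f-P_n^*f)(x_j)=f(x_j)-\sum_{i\le n}f(x_i)\delta_{i,j}=0$ for $j\le n$ while agreeing with $f$ on the tail. The other potential subtlety is justifying the interchange of limit and evaluation in part (c) (using continuity of each $x\mapsto(\sum_{i\le N}a_if_i)(x)$ and the uniform bound), but this is routine once the uniform boundedness is in hand. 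Everything else reduces to the adjoint-projection identity and the dichotomy theorems already proved.
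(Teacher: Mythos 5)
Your argument is correct, but it is packaged differently from the paper's. The paper proves (a) by a direct estimate on $\bigl\|f-\sum_{i\le n}a_{p_i}f_{p_i}\bigr\|$ for an arbitrary permutation $p$, using $\|f\|_N<\gamma/(1+K)$, and then obtains the $(\Longleftarrow)$ half of (b) as the identity-permutation special case; you instead prove (b) first via the adjoint projections $P_n^*$ and then deduce (a) from the uniform boundedness of the finite projections $P_A$ transferring to $P_A^*$, which is cleaner and makes the logical dependence (a)$\,\Leftarrow\,$(b) explicit. For (c) the paper shows pointwise convergence of $\sum_i a_if_i(x)$ and boundedness of the limit (and, somewhat awkwardly, invokes unconditionality of $\{x_i\}$, which is not a hypothesis of (c)); your construction of the limit functional $f$ on $\operatorname{Span}\{x_i\}$ from the uniform bound, followed by the observation that the basis expansion of $f$ in $\{f_n\}$ must have coefficients $f(x_j)=a_j$ and hence converges in norm, is tighter and correctly isolates where the hypothesis that $\{f_n\}$ is a basis is used. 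One caveat: your claimed identity $\|f-P_n^*f\|=\|f\|_{n+1}$ is not an equality in general. Since $f-P_n^*f=f\circ(I-P_n)$, one only gets the two-sided estimate
$$
\|f\|_{n+1}\;\le\;\|f-P_n^*f\|\;\le\;(1+K)\,\|f\|_{n+1},
$$
because a functional vanishing on $\operatorname{Span}\{x_i\}_{i\le n}$ does not have its norm computed over $V_{n+1}$ alone; one must factor through $I-P_n$ and pay the constant $1+K$. This is exactly the $(1+K)$ appearing in the paper's estimate, and since all you need is that the two quantities tend to zero together, the argument is unaffected.
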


\begin{proof}

Let $\epsilon\in (0, 1]$ be the constant provided by \textbf{Theorem \ref{Theorem 4.14}} such that for any $N, M\in\mathbb{N}$ with $N<M$:
        
$$
\left\| \sum_{i \leq M}\alpha_i x_i \right\| \geq \epsilon \left\| \sum_{i \leq N}\alpha_i x_i \right\|.
$$

\begin{enumerate}[label = \alph*)]
    
    \item Fix $\gamma \in (0, 1)$, $f \in B^*$ and find $N \in \mathbb{N}$ such that $\|f\|_N < \dfrac{\gamma}{1+K}$. Now let $\{x_{p_i}\}_{i \in \mathbb{N}}$ be a permutation of $\{x_i\}$. Then there exists $M \in \mathbb{N}$ such that $p_n \geq N$ iff $n > M$. Since $\{x_i\}$ is assumed unconditional, fix $x\in X$ with $\|x\|=1$ and we have:

    $$
    x = \sum_{i\geq 1}a_i x_i = \sum_{i\geq 1}a_{p_i}x_i
    $$
    Hence, for each $n\geq M$:
        
    $$
    \begin{aligned}
    &\hspace{1cm} \left\vert\,f(x) - \sum_{i \leq n}a_{p_i} f_{p_i}(x)\, \right\vert = \left\vert\,f \left( \sum_{i > n}a_{p_i} x_i \right)\, \right\vert \leq \|f\|_N \left\| \sum_{i > n}a_{p_i}x_i \right\|\\
    &\hspace{0.54cm} \leq \|f\|_N\left( \|x\| + \left\|\sum_{i \leq n}a_{p_i}x_i \right\| \right) \leq \|f\|_N(\|x\|+K\|x\|) < \gamma\\
    &\implies\, \left\| f-\sum_{i\leq n} a_{p_i}f_{p_i} \right\| < \gamma
    \end{aligned}
    $$
    since $x$ is arbitrarily fixed. 
    
    \item $(\Longrightarrow):$ For each $n\in\mathbb{N}$ let $V_n = \operatorname{Span}\{x_i\}_{i\geq n}$. Fix $f\in B^*$ and suppose $f = \sum_{i\geq 1} \alpha_i f_i$. Fix $N\in\mathbb{N}$. Then for each $k\in\mathbb{N}$, there exists $v_k\in V_N$ with $\|v_k\|=1$ such that:

    $$
    \|f\|_N - \frac{1}{k} \leq f(v_k) = \sum_{i\geq 1}\alpha_i f_i(v_k) = \sum_{i\geq N}\alpha_i f_i(v_k) \leq \left\| \sum_{i\geq N}\alpha_i f_i \right\| \overset{N\rightarrow\infty}{\longrightarrow} 0
    $$
    Hence $\{x_i\}$ is shrinking.\\

    \noindent
    $(\Longleftarrow):$ The proof to this direction is a special case of the proof to \textbf{a)} where the permutation is the identity.

    \item Fix a sequence of complex numbers $\{a_i\}$ such that $R = \sup_n\|\sum_{i \leq n}a_i f_i\| < \infty$. Then we want to show $\sum_{i \geq 1}a_i f_i$ converges. Now fix $x = \sum_{i \geq 1}\alpha_i x_i$, $N, M\in\mathbb{N}$ with $N<M$. Then:
    
    \begin{equation}\label{e29}
    \begin{aligned}
    &\hspace{0.46cm} \left\vert\, \sum_{N \leq i \leq M}a_i f_i(x) \,\right\vert 
    = \left\vert\, \sum_{N \leq i \leq M}a_i f_i \left( \sum_{N \leq i \leq M}\alpha_i x_i \right) \,\right\vert \\
    &\leq \left\| \sum_{N \leq i \leq M}a_i f_i\right\|\, \left\| \sum_{N \leq i \leq M}\alpha_i x_i \right\| < 2R \left\| \sum_{N \leq i \leq M}\alpha_i x_i \right\|
    \end{aligned}
    \end{equation}
    Since $\{x_i\}$ is a Schauder basis, according to the proof of \textbf{Proposition \ref{Proposition 4.16}}, the norm $\|\cdot\|_1$ (defined in the proof of \textbf{Proposition \ref{Proposition 4.16}}) is equivalent to $\|\cdot\|$. Hence we have:

    $$
    \left\vert\, \sum_{N\leq i \leq M}a_i f_i(x)\, \right\vert < 2R\left\| \sum_{N\leq i \leq M}\alpha_i x_i \right\| \leq 2R \left\| \sum_{i\geq N}\alpha_i x_i \right\|_1 \overset{N\rightarrow\infty}{\longrightarrow} 0
    $$
    Hence $\sum_{i \geq 1}a_i f_i(x)$ converges for any $x \in B$. Since $\{x_i\}$ is unconditional. by (\ref{e21}) (\ref{e29}) and the equivalence between $\|\cdot\|_1$ and $\|\cdot\|$, we have for each $M\in\mathbb{N}$:

    $$
    \left\vert\, \sum_{i\leq M}a_if_i(x)\,\right\vert < 2R\left\| \sum_{i\leq M}\alpha_i x_i \right\| \leq 2R\|x\|_1 \leq \frac{4R}{\epsilon}\|x\| \hspace{0.3cm}\implies\hspace{0.3cm} \left\vert\, \sum_{i\geq 1}a_i f_i(x)\, \right\vert \leq \frac{4R}{\epsilon}\|x\|
    $$
    Since $x$ is arbitrarily fixed, we then can conclude $\sum_{i\geq 1}a_i f_i\in B^*$ and by definition of each $f_i$, $\{a_i\}_{i\in\mathbb{N}}$ uniquely corresponds to $\sum_{i\geq 1}a_i f_i$.
        
\end{enumerate}
\end{proof}

\begin{theorem}[{\cite[Proposition 5]{18}}]

In the set-up of \textbf{Theorem \ref{Theorem 4.21}}, $(f_m)_{m\in \mathbb{N}}$ is shrinking if and only if $(x_n)_{n\in \mathbb{N}}$ is boundedly complete; $(f_m)_{m\in \mathbb{N}}$ is boundedly complete if and only if $(x_n)_{n\in \mathbb{N}}$ is shrinking.
    
\end{theorem}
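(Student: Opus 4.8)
The plan is to carry out everything inside $Z := \overline{\operatorname{span}}\{f_n\}^{\,\|\cdot\|}\subseteq B^*$. First I would record that $\{f_n\}$ is automatically a basic sequence, so that ``shrinking'' and ``boundedly complete'' for $\{f_m\}$ are to be read as properties of the Schauder basis $\{f_n\}$ of $Z$. Writing $P_nx=\sum_{i\le n}f_i(x)x_i$ for the basis projections (so $\|P_n\|\le K$ by \textbf{Theorem \ref{Theorem 4.14}}), the adjoints satisfy $P_n^*g=\sum_{i\le n}g(x_i)f_i$; hence $P_n^*$ maps $B^*$ into $\operatorname{span}\{f_i\}_{i\le n}\subseteq Z$, restricts to the partial-sum projections of $\{f_n\}$, and has $\|P_n^*\|=\|P_n\|\le K$. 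Thus $\{f_n\}$ is a basis of $Z$. I would single out the pointwise identity $(P_n^*g)(x)=g(P_nx)$, which is the engine of every estimate below.

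I would prove the second equivalence ($\{f_m\}$ boundedly complete $\iff$ $\{x_n\}$ shrinking) first, since it is direct. For the backward direction, assume $\{x_n\}$ shrinking; by \textbf{Theorem \ref{Theorem 4.21}}(b) this means $Z=B^*$ and $P_n^*g\to g$ in norm for every $g\in B^*$. Given scalars with $\sup_n\bigl\|\sum_{i\le n}a_if_i\bigr\|=:M<\infty$, Banach--Alaoglu furnishes a weak-$*$ cluster point $g$ of these bounded partial sums with $\|g\|\le M$; since $f_i(x_j)=\delta_{ij}$, evaluating along the subnet at each fixed $x_j$ forces $g(x_j)=a_j$, so $P_n^*g=\sum_{i\le n}a_if_i$ and shrinking gives norm convergence to $g$. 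For the forward direction, assume $\{f_m\}$ boundedly complete; for arbitrary $f\in B^*$ put $a_i=f(x_i)$, so $\sum_{i\le n}a_if_i=P_n^*f$ has norm $\le K\|f\|$ and hence converges in $Z$ to some $g$, while the identity $(P_n^*f)(x)=f(P_nx)\to f(x)$ identifies $g=f$; thus $f\in Z$, so $Z=B^*$ and $\{x_n\}$ is shrinking.

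The first equivalence ($\{f_m\}$ shrinking $\iff$ $\{x_n\}$ boundedly complete) I would route through the canonical map $J\colon B\to Z^*,\ J(x)=Q(x)\big|_Z$, where $Q$ is the embedding into $B^{**}$. I would first check $J$ is an isomorphic embedding: $\|J(x)\|\le\|x\|$ is trivial, and for the lower bound, choosing $\|g\|=1$ with $g(P_nx)=\|P_nx\|$ gives $P_n^*g\in Z$ with $\|P_n^*g\|\le K$ and $(P_n^*g)(x)=\|P_nx\|$, so $\|J(x)\|\ge\|P_nx\|/K\to\|x\|/K$. The crux lemma is $\{x_n\}$ boundedly complete $\iff$ $J$ is onto: for ($\Rightarrow$), given $\Phi\in Z^*$ set $a_i=\Phi(f_i)$, note $\sum_{i\le n}a_ig(x_i)=\Phi(P_n^*g)$ bounds $\bigl\|\sum_{i\le n}a_ix_i\bigr\|\le K\|\Phi\|$, and bounded completeness yields $x=\sum a_ix_i$ with $J(x)=\Phi$; for ($\Leftarrow$), a bounded sequence of partial sums defines $\Phi\in Z^*$ via $g\mapsto\lim_n g(\sum_{i\le n}a_ix_i)$ with $\Phi(f_j)=a_j$, whose preimage $x$ has $f_j(x)=a_j$, forcing convergence. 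Finally, the coefficient functionals of $\{f_n\}$ in $Z$ are exactly $\hat x_n:=J(x_n)$ (as $\hat x_n(f_m)=f_m(x_n)=\delta_{mn}$), so \textbf{Theorem \ref{Theorem 4.21}}(b) applied to $Z$ gives $\{f_n\}$ shrinking $\iff$ $\overline{\operatorname{span}}\{\hat x_n\}=Z^*$, which holds $\iff$ $J$ is onto, closing the loop.

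The main obstacle is the lemma ``$\{x_n\}$ boundedly complete $\iff$ $J$ onto'': the two preimage constructions are delicate, and in particular the lower estimate $\|J(x)\|\ge\|x\|/K$ — the statement that $Z$ norms $B$ up to the basis constant — is the one step most easily overlooked, as is the bookkeeping that identifies $\{\hat x_n\}$ as the coefficient functionals of $\{f_n\}$ so that \textbf{Theorem \ref{Theorem 4.21}}(b) can be invoked at the level of $Z$. The two direct implications in the second equivalence, by contrast, should be short.
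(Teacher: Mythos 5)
Your proof is correct, and it follows the route the paper itself gestures at: dualize the biorthogonal system and apply \textbf{Theorem \ref{Theorem 4.21}} to $(f_m)_{m\in\mathbb{N}}$ paired with the images of $(x_n)_{n\in\mathbb{N}}$ under the canonical embedding. The paper's own proof is only two sentences, and your write-up supplies the content it leaves implicit: the restriction to $Z=\overline{\operatorname{span}}\{f_n\}$ (needed because \textbf{Theorem \ref{Theorem 4.21}} is stated for a basis of the whole dual, while $(f_m)$ is in general only a basis of $Z$), the verification that $J=Q(\cdot)\big|_Z$ is an isomorphic embedding with $\|J(x)\|\geq \|x\|/K$, the identification of $\big(J(x_n)\big)_{n\in\mathbb{N}}$ as the coefficient functionals of $(f_m)$ on $Z$, and the key lemma that bounded completeness of $(x_n)$ is equivalent to surjectivity of $J$. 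You also prove the implication that $(f_m)$ boundedly complete forces $(x_n)$ shrinking by showing every $f\in B^*$ lies in $Z$; this direction does not follow from part c) of \textbf{Theorem \ref{Theorem 4.21}}, which is only a one-way statement, so the paper's claim that the second equivalence ``follows immediately'' is an overstatement that your argument repairs. All the individual steps (the weak-$*$ cluster point argument, the estimate $\|\sum_{i\leq n}a_ix_i\|\leq K\|\Phi\|$, and the construction of $\Phi\in Z^*$ from a bounded sequence of partial sums) check out.
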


\begin{proof}

The second part follows immediately by \textbf{Theorem \ref{Theorem 4.21}}. For the first part, we first let $Q$ denote the canonical mapping from $B$ to $B^{\ast\ast}$. Then again apply \textbf{Theorem \ref{Theorem 4.21}} to the bi-orthogonal system $(f_m)_{m\in \mathbb{N}}$ and $\big( Q(x_n) \big)_{n\in \mathbb{N}}$.

\end{proof}

\section{Characterizations by bi-orthogonal System}

In this section, we will study results mainly from \cite{15}, which are about a linearly independent sequence $(e_n)_{n\in\mathbb{N}}$ and a sequence of linear functionals $(f_n)_{n\in\mathbb{n}}$ where $f_n$ is defined to output the $n$-th coefficient of each element in $\operatorname{Span}(e_n)_{n\in\mathbb{N}}$. The idea of proofs in this section seem to be similar to the one in \textbf{Theorem \ref{Theorem 3.2}} or results in \textbf{Section 2}, but is not based on a certain affine set but the boundedness of $(e_n)_{n\in \mathbb{N}}$ and $(f_n)_{n\in \mathbb{N}}$. Throughout this section, given a Banach space $X$, we will use $Q$ to denote the canonical embedding from $X$ to $X^{\ast\ast}$, and $Q'$ to denote the canonical embedding from $X^*$ to $X^{\ast\ast\ast}$.

\begin{defn}\label{Definition 5.1}
Let $E$ be a Banach space. A \textbf{bi-orthogonal System} is a sequence $\{e_n\}\subseteq B$ and $\{f_n\}\subseteq B^*$ so that for each $i, j\in\mathbb{N}$:

$$
f_i(e_j)=
\begin{cases}
1, \hspace{1cm}i=j\\
0, \hspace{1cm}i\neq j
\end{cases}
$$
A bi-orthogonal System is bounded if both sequences are bounded in their own norms. In this section, whenever we are given $S = \{(e_i, f_j)\}_{i,j\in\mathbb{N}}$ a bi-orthogonal system, we define: 

$$
A(S) = \overline{\operatorname{Span}\{e_i\}}^{\|\cdot\|}, \hspace{1cm}
B_1(S) = \overline{\operatorname{Span}\{f_i\}}^{\|\cdot\|}, \hspace{1cm}
B(S) = \overline{\operatorname{Span}\{f_i\}}^{w\ast}
$$

\end{defn}

\begin{prop}\label{Proposition 5.2}
Given $X$ a non-reflexive Banach space, $Q$ the canonical mapping from $X$ to $X^{\ast\ast}, r \in X^{\ast\ast}\backslash Q(X)$ with $\|r\|\leq 1$ and $\sigma > 0$, show that for every finite sequence $\{f_1, f_2, \cdots, f_n\} \subseteq X^*$ and any $\sigma\in(0, 1)$, there exists $b \in X$ with $\|b\|\leq 1+\sigma$ such that $f_i(b) = r(f_i)$ for each $i\leq n$.
\end{prop}

\begin{proof}

Assume that $\{f_i\}_{i\leq n}\subseteq X^*$ is a finite set such that for any $b\in X_{\leq 1}$, there exists $i\leq n$ with $f_i(b)\neq r(f_i)$. Then in $\mathbb{C}^n$ define:  

$$
W = \big\{ (f_1(x), f_2(x), \cdots, f_n(x)\,\vert\,x \in X_{\leq 1} \big\}
$$
Obviously $W$ contains an open neighborhood of zero. Now let $z = (r(f_1), r(f_2), \cdots, r(f_n))$. By assumption and the fact that $\overline{W}$ is compact, $z \notin \overline{W}$. Since $\overline{W}$ compact we can find a bounded linear functional that strictly separates $\overline{W}$ and $z_0$. Namely we can find $(\lambda_1, \lambda_2, \cdots, \lambda_n)\in\mathbb{C}^n$ such that:

$$
\sum_{i \leq n}\lambda_i r(f_i) > 1 \hspace{1cm}\textbf{ 
 and  }\hspace{1cm} \forall\,(x_1, \cdots, x_n)\in \overline{W}, \hspace{0.3cm} \sum_{i\leq n}\lambda_i x_i \leq 1
$$
Therefore $\left\| \sum_{i\leq n}\lambda_i f_i \right\|\leq 1$ and hence:

$$
1 < r\left( \sum_{i\leq n}\lambda_i f_i \right) \leq \|r\| \left\| \sum_{i\leq n}\lambda_i f_i \right\| \leq 1
$$
which is absurd. Hence for any $\{f_i\}_{i\leq n}\subseteq X^*$ and $\sigma\in(0, 1)$:

$$
(r(f_1), \cdots, r(f_n)) \in  \overline{ \{ (f_1(x), \cdots, f_n(x)) \,\vert\, x\in X_{\leq 1}\big\}} \subseteq (1+\sigma)\big\{ (f_1(x), \cdots, f_n(x)) \,\vert\, x\in X_{\leq 1}\big\}
$$
or 

$$
(r(f_1), \cdots, r(f_n)) \in \big\{ (f_1(x), \cdots, f_n(x)) \,\vert\, x\in X_{\leq 1+\sigma}\big\}
$$

\end{proof}

\begin{prop}\label{Proposition 5.3}
Any non-reflexive Banach space $X$ will have a bounded bi-orthogonal system $(e_i, f_j)_{i, j\in\mathbb{N}}$ such that $\sup_n\|\sum_{i \leq n}e_i\| < \infty$ or $\sup_n\|\sum_{i \leq n}f_i\| < \infty$
\end{prop}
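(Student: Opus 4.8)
The plan is to extract the needed data directly from the James-type sequence furnished by Theorem \ref{Theorem 3.2} and then manufacture the bi-orthogonal system by a telescoping trick on the dual side, so that the second alternative $\sup_n\|\sum_{i\le n}f_i\|<\infty$ holds. Since $X$ is non-reflexive, the equivalence $(1)\Leftrightarrow(2)$ of Theorem \ref{Theorem 3.2} applies; fixing any $\theta\in(0,1)$ (say $\theta=\tfrac12$), I obtain $\{x_i\}\subseteq X_{\le 1}$ and $\{g_n\}\subseteq X^*_{<1}$ with $g_n(x_i)=\theta$ when $i\ge n$ and $g_n(x_i)=0$ when $i<n$. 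I take $e_i=x_i$ for the primal part and define the functionals by the finite difference
$$
f_n=\frac{1}{\theta}\bigl(g_n-g_{n+1}\bigr).
$$

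The key computation is the bi-orthogonality $f_n(e_i)=\delta_{ni}$, which I verify by three cases from the triangular pattern of $g_n(x_i)$: for $i>n$ both terms equal $\theta$ and cancel; for $i<n$ both vanish; for $i=n$ only $g_n(x_n)=\theta$ survives, and the factor $1/\theta$ normalizes it to $1$. Bi-orthogonality then forces linear independence of $\{e_i\}$ automatically, so $(e_i,f_j)$ is a genuine bi-orthogonal system in the sense of Definition \ref{Definition 5.1}. Boundedness is immediate, since $\|e_i\|=\|x_i\|\le 1$ and $\|f_n\|\le\frac{1}{\theta}\bigl(\|g_n\|+\|g_{n+1}\|\bigr)\le \frac{2}{\theta}$.

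Finally, the partial sums of the functionals telescope to $\sum_{i\le n}f_i=\frac{1}{\theta}(g_1-g_{n+1})$, whence $\|\sum_{i\le n}f_i\|\le\frac{2}{\theta}$ for every $n$ and $\sup_n\|\sum_{i\le n}f_i\|<\infty$, settling the second alternative. In this route there is essentially no obstacle beyond correctly reading off the upper-triangular values $g_n(x_i)$; the only place needing care is the three-case check of bi-orthogonality. One could instead aim for the first alternative $\sup_n\|\sum_{i\le n}e_i\|<\infty$ by building the primal vectors through Proposition \ref{Proposition 5.2}, realizing the prescribed partial-sum values $f_j(s_n)=1$ for $j\le n$ and $0$ for $j>n$ with uniformly controlled norm and setting $e_n=s_n-s_{n-1}$; but that path requires an interleaved inductive construction of the $f_j$ and the $s_n$ governed by a single $r\in X^{\ast\ast}\setminus Q(X)$, which is where the real work would lie.
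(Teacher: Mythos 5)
Your proof is correct, and it takes a genuinely different route from the paper's. You import the upper-triangular system $g_n(x_i)=\theta$ for $i\ge n$, $0$ for $i<n$ from Theorem \ref{Theorem 3.2} (already established in Section 3, so there is no circularity) and obtain the bi-orthogonal system by the dual-side difference $f_n=\theta^{-1}(g_n-g_{n+1})$; the three-case check of $f_n(e_i)=\delta_{ni}$, the bound $\|f_n\|\le 2/\theta$, and the telescoping $\sum_{i\le n}f_i=\theta^{-1}(g_1-g_{n+1})$ are all exactly as you state, which settles the second alternative of the disjunction. The paper instead builds everything from scratch inside Section 5: it fixes $r\in X^{\ast\ast}\setminus Q(X)$, uses \textbf{Proposition \ref{Proposition 5.2}} (a Helly/Goldstine-type approximation) to construct inductively $b_i\in X_{\le 1+\sigma}$ and $y_i\in X^*_{\le 1}$ with $y_i(b_j)=\beta_i$ for $i\le j$ and $0$ for $j<i$, proves $\inf_i\beta_i>0$ by a weak-topology argument via \textbf{Theorem \ref{Theorem 1.25}}, and then telescopes on either side ($e_i=b_i-b_{i-1}$ or $h_i=\beta_i^{-1}y_i-\beta_{i+1}^{-1}y_{i+1}$), thereby realizing both alternatives at once. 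Your argument is shorter because it reuses proven machinery; what the paper's construction buys is the explicit quantitative data ($\{b_i\}$, $\{y_i\}$, $\{\beta_i\}$, the lower bound $\beta$) that the subsequent theorems of Section 5 invoke by referring to ``the proof of Proposition \ref{Proposition 5.3}'' rather than to its statement, so the longer construction is not redundant in the context of the paper even though your proof fully establishes the proposition as stated.
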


\begin{proof}

Let $r \in  X^{\ast\ast}\backslash Q(X)$ and suppose $H$ is an arbitrary finite dimensional subspace of $X$. Define: 

$$
\rho = \sup \big\{ \vert\,r(y)\,\vert: y \in X^*_{= 1}\cap H^{\perp} \big\}.
$$
Suppose $H = \operatorname{Span}\{h_1, h_2, \cdots, h_n\}$ and $H^* = \operatorname{Span}\{f_1, f_2, \cdots, f_n\}$ where $f_i(h_j) = 1$ iff $i = j$. If $\rho=0$, then $r\in (H^{\perp})^{\perp}$. Observe that $Q(H)^{\perp} = Q'(H^{\perp})$ and $(H^{\perp})^{\perp} \subseteq Q'(H^{\perp})_{\perp}$. Then $r\in \big[ Q(H)^{\perp}\big]_{\perp} = Q(H)\subseteq Q(X)$, which is absurd. Hence $\rho>0$. Below we will start constructing the desired bi-orthogonal system.\\

\noindent
Suppose $\|r\| = 1$. Fix $\sigma\in(0, 1)$. $y_1 \in X^*_{= 1}$ so that $\beta_1 = r(y_1) > \frac{1}{2}$. By \textbf{Proposition \ref{Proposition 5.2}} we can find $b_1\in X_{\leq 1+\sigma}$ so that $y_1(b_1) = \beta_1$. Define $E_1 = \operatorname{Span}\{b_1\}$ and: 

$$
\rho_1 = \sup\big\{ r(y)\,\vert\,y \in X^*_{= 1}\cap E_1^{\perp} \big\}
$$
By the previous remark, we have $\rho_1>0$. Next find $y_2 \in X^*_{\leq 1}\cap E_1^{\perp}$ so that $\beta_2 = r(y_2) > \frac{1}{2}\rho_1$. Again by \textbf{Proposition \ref{Proposition 5.2}}, find $b_2 \in X_{\leq 1+\sigma}$ so that $y_1(b_2) = \beta_1, y_2(b_2) = \beta_2$. Define $E_2 = \operatorname{Span}\{b_1, b_2\}$ and:

$$
\rho_2 = \sup \big\{r(y)\,\vert\,y \in X^*_{= 1}\cap E_2^{\perp} \big\}
$$
and similarly $\rho_2>0$. By induction for each $n \in \mathbb{N}$ we will have $\{b_i\}_{i\leq n} \subseteq X_{\leq 1+\sigma}$, $E_n = \operatorname{Span}\{b_i\}_{i\leq n}$, $y_{i+1}\in X^{\ast}_{\leq 1}\cap E_i^{\perp}$, $\{\beta_i\}_{i\leq n}$ and:

$$
\rho_n = \sup\big\{ r(y)\,\vert\, y\in X^{\ast}_{= 1}\cap E_n^{\perp}\big\}
$$
such that $\beta_1 = r(y_1) >\dfrac{1}{2}$, $\beta_{n+1} = r(y_{n+1}) \in \left(\dfrac{1}{2}\rho_n, \rho_n \right]$ and for each $i, j\in\{1, 2, \cdots, n\}$:
    
$$ 
y_i(b_j) = 
\begin{cases}r(y_i),\hspace{0.5cm} 1 \leq i \leq j \leq n\\
0,\hspace{1.06cm} 1 \leq j < i \leq n
\end{cases}
$$
Since $H_n^{\perp}$ is decreasing as $n$ increases, we have $\{\rho_n\}$ is non-increasing sequence in $(0, 1]$ and hence convergent. Assume that $\inf_n\rho_n = \lim_n\rho_n = 0$. For each $\epsilon\in(0, 1)$ and then suppose $\rho_n < \epsilon$ for all $n\geq N$. Fix $f\in X^{\ast}_{\leq 1}$ and for each $n\in\mathbb{N}$, define $\tau_n(f) = \max_{i\leq n}\vert\, f(b_i)\,\vert$ and:
    
$$
z_n(f) = \frac{1}{\beta_1}f(b_1)y_1+\sum_{1 < i \leq n}\frac{1}{\beta_i} \big[ f(b_i)-f(b_{i-1}) \big]y_i
$$
Since for each $n\in\mathbb{N}$:

$$
0 < \frac{1}{2}\rho_{n+1} < \beta_n \leq \rho_n
$$
we then have $\|z_n(f)\| \leq \dfrac{4\tau_n(f)}{\rho_n}$
and $r\big[z_n(f)\big] = f(b_n)$. Hence:

$$
\left\vert\, \rho_n r \big[ f - z_n(f) \big]\, \right\vert \leq \rho_n \| f - z_n(f) \| \leq \left[ 1+\dfrac{4n\tau_n(f)}{\rho_n} \right]\rho_n = \rho_n+4n\tau_n(f)
$$
Hence, for each $n\geq N$, whenever $\tau_n(f)< \dfrac{\epsilon}{4n}$, we then have:

$$
\big\vert\, \rho_n r(f) \,\big\vert \leq \Big\vert\, \rho_n r\big[ z_n(f)\big] \,\Big\vert +\rho_n + 4n\tau_n(f) < \rho_n\epsilon + \rho_n + \epsilon < 3\epsilon
$$
Hence:

\begin{equation}\label{e30}
\Big\{ f\in X^*: \max_{i\leq N} \vert\, f(b_i)\,\vert <\epsilon\Big\} \subseteq \Big\{ f\in X^*: \vert\,r(f)\,\vert < \epsilon\Big\}
\end{equation}
Since we assume that $\lim_n\rho_n=0$, for each $\epsilon\in(0, 1)$, we can then find $N\in\mathbb{N}$ so that (\ref{e30}) is true, which implies that $r$ is continuous with respect to the weak topology (on $X^*$) generated by $Q(X)$. Since $Q(X)$ separates points on $X^*$, by \textbf{Theorem \ref{Theorem 1.25}} we have $r\in Q(X)$ and obtain a contradiction. Hence there exists $\beta\in(0, 1)$ such that $\inf_n\rho_n > \beta$ and $\beta_i>\beta$ for each $i\in\mathbb{N}$.\\

\noindent
Next, define $e_1 = b_1$ and for each $i\in\mathbb{N}$ with $i>1$:

$$
e_i = b_i - b_{i-1}.
$$
For each $i\in\mathbb{N}$, define $f_i = \dfrac{1}{\beta}y_i$. Hence for each $i\in\mathbb{N}$, $\|e_i\|\leq 2(1+\sigma)$, $\|f_i\|\leq \frac{1}{\beta}$ and for each $n\in\mathbb{N}$:

$$
\left\| \sum_{i\leq n}e_i \right\| = \|b_n\|\leq 1+\sigma
$$
Then $\{(e_i, f_j)\}_{i, j\in\mathbb{N}}$ is a bounded bi-orthogonal system where $\sup_n\big\|\sum_{i\leq n}e_i\big\| < \infty$. On the other hand, for each $i\in\mathbb{N}$, define:

$$
h_i = \frac{1}{\beta_i}y_i - \frac{1}{\beta_{i+1}}y_{i+1}.
$$
Hence for each $i\in\mathbb{N}$, $\|h_i\|\leq \dfrac{2}{\beta}$ and for each $n\in\mathbb{N}$:

$$
\left\| \sum_{i\leq n}h_i \right\| = \left\| \frac{1}{\beta_1}y_1 - \frac{1}{\beta_{n+1}}y_{n+1} \right\| \leq \frac{2}{\beta}.
$$
In this case we have $\{(b_i, h_j)\}_{i, j\in\mathbb{N}}$ a bounded bi-orthogonal system  where $\sup_n\big\| \sum_{i\leq n}h_i\big\| < \infty$.

\end{proof}

\begin{prop}\label{Proposition 5.4}

Given a doubly indexed bounded sequence $\{a_{m, n}\}_{m, n\in\mathbb{N}} \subseteq\mathbb{R}$, define $c_{1, n} = a_{1, n}$ and for each $m\in\mathbb{N}$ with $m>1$, define  $c_{m, n} = a_{m, n} - a_{m-1, n}$. If, for each $n\in\mathbb{N}$, $\{a_{m, n}\}_{m\in\mathbb{N}}$ is monotonely increasing and, for each $m\in\mathbb{N}$, $\{c_{m, n}\}_{n\in\mathbb{N}}$ is also monotonely increasing, then:

$$
\lim_m \lim_n a_{m, n} = \lim_n \lim_m a_{m, n}
$$

\end{prop}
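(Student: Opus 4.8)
The plan is to show that both iterated limits coincide with the single quantity $L = \sup_{m,n\in\mathbb{N}} a_{m,n}$, which is finite because $\{a_{m,n}\}$ is bounded. The entire argument rests on upgrading the two hypotheses into the statement that $a_{m,n}$ is nondecreasing in each index \emph{separately}; once that is in place, every limit in sight is a supremum and the two iterated suprema visibly agree.

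First I would record the existence of all the limits. For fixed $n$, the hypothesis gives that $\{a_{m,n}\}_m$ is nondecreasing and it is bounded, so $A_n := \lim_m a_{m,n} = \sup_m a_{m,n}$ exists. The key step is to prove that for fixed $m$ the sequence $\{a_{m,n}\}_n$ is also nondecreasing: using the telescoping identity $a_{m,n} = c_{1,n} + c_{2,n} + \cdots + c_{m,n} = \sum_{k\le m} c_{k,n}$, each summand is nondecreasing in $n$ by hypothesis, hence so is the finite sum. Boundedness then yields $B_m := \lim_n a_{m,n} = \sup_n a_{m,n}$.

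Next I would check that the two outer limits exist and are again suprema. Since $a_{m,n}$ is nondecreasing in $m$ for each fixed $n$, passing to the limit in $n$ preserves the inequality, so $\{B_m\}_m$ is nondecreasing; likewise $\{A_n\}_n$ is nondecreasing because $a_{m,n}$ is nondecreasing in $n$. Both sequences are bounded, so $\lim_m B_m = \sup_m B_m$ and $\lim_n A_n = \sup_n A_n$ exist. Finally $\lim_m \lim_n a_{m,n} = \sup_m \sup_n a_{m,n} = L$ and $\lim_n \lim_m a_{m,n} = \sup_n \sup_m a_{m,n} = L$, since for any bounded-above doubly indexed family the iterated supremum in either order equals the supremum over the whole index set.

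The only genuine content is the middle step — deducing monotonicity of $a_{m,n}$ in $n$ from monotonicity of the increments $c_{m,n}$ in $n$ — and this is exactly where the telescoping structure $a_{m,n} = \sum_{k\le m} c_{k,n}$ is indispensable; without it the hypothesis on the $c$'s would not transfer to the $a$'s. Everything else reduces to the standard fact that the iterated limits of a separately monotone bounded double sequence commute because each equals $\sup_{m,n} a_{m,n}$.
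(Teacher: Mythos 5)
Your proof is correct, and it takes a noticeably different route from the paper's. Both arguments start from the same telescoping identity $a_{m,n}=\sum_{k\le m}c_{k,n}$, but the paper uses it to write $\lim_m a_{m,n}$ as the infinite series $\sum_m c_{m,n}$ and then interchanges $\lim_n$ with that series via the Monotone Convergence Theorem; you instead use only the \emph{finite} partial sums to deduce that $a_{m,n}$ is nondecreasing in $n$ for each fixed $m$, after which the conclusion follows from the elementary fact that for a bounded, separately monotone double sequence both iterated limits equal $\sup_{m,n}a_{m,n}$. Your version is the more self-contained one: it sidesteps the MCT entirely, and in particular it does not need the two hypotheses of that theorem which the paper leaves implicit, namely that $c_{m,n}\ge 0$ for $m\ge 2$ (which follows from monotonicity of $\{a_{m,n}\}_m$) and that $\sum_m c_{m,n}$ converges (which follows from boundedness). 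The paper's version is shorter on the page and generalizes more readily to situations where the inner index set is replaced by a genuine measure space, but for the statement as given your supremum argument is arguably the cleaner proof.
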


\begin{proof}

By assumption and \textbf{Monotone Convergence Theorem}:

$$
\lim_n \sum_{m\in\mathbb{N}} c_{m, n} = \sum_{m\in\mathbb{N}} \lim_n c_{m, n}
$$
Hence:

$$
\lim_n\lim_m a_{m, n} = \lim_n \lim_m \sum_{i\leq m}c_{m, n} = \lim_m \sum_{i\leq m}\lim_nc_{m, n} = \lim_m \lim_n \sum_{i\leq m}c_{m, n} = \lim_m \lim_n a_{m, n}
$$
    
\end{proof}

\begin{theorem}\label{Theorem 5.5}

A Banach space $E$ is reflexive iff for every bounded bi-orthogonal system $\{(e_i, f_j)\}_{i, j\in\mathbb{N}}$, $\big\{ \sum_{i\leq n}e_i \big\}_{n\in\mathbb{N}}$ is not uniformly bounded and $\big\{ \sum_{j\leq n}f_j \big\}_{n\in\mathbb{N}}$ is not uniformly bounded.
    
\end{theorem}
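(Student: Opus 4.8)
The plan is to prove the two implications separately, observing that one of them is essentially a restatement of \textbf{Proposition \ref{Proposition 5.3}} while the other is the genuinely new content. Throughout I fix a bounded bi-orthogonal system, say $\|e_i\|\le C_1$ and $\|f_j\|\le C_2$, with $f_i(e_j)=\delta_{ij}$ as in \textbf{Definition \ref{Definition 5.1}}.

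First I would dispose of the easy direction, namely that the stated condition forces reflexivity. Arguing the contrapositive, if $E$ is not reflexive then \textbf{Proposition \ref{Proposition 5.3}} produces a bounded bi-orthogonal system $\{(e_i,f_j)\}$ with either $\sup_n\|\sum_{i\le n}e_i\|<\infty$ or $\sup_n\|\sum_{j\le n}f_j\|<\infty$, which directly contradicts the requirement that both partial-sum sequences be unbounded for every system. Hence reflexivity follows.

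For the main direction I again argue the contrapositive: starting from a bounded bi-orthogonal system whose partial sums are uniformly bounded, I will show $E$ is not reflexive by verifying condition (5) of \textbf{Theorem \ref{Theorem 3.2}}. Assume first $M:=\sup_n\|\sum_{i\le n}e_i\|<\infty$, write $s_n=\sum_{i\le n}e_i$, and normalize $x_i:=s_i/M\in E_{\le 1}$. The key computation is that $f_{n+1}$ separates the two convex hulls: since $f_{n+1}(s_k)=\sum_{i\le k}\delta_{n+1,i}$ equals $0$ for $k\le n$ and $1$ for $k>n$, every $p\in\operatorname{conv}\{x_i\}_{i\le n}$ satisfies $f_{n+1}(p)=0$ while every $q\in\operatorname{conv}\{x_i\}_{i>n}$ satisfies $f_{n+1}(q)=1/M$. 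Therefore $\|q-p\|\ge |f_{n+1}(q-p)|/\|f_{n+1}\|\ge 1/(C_2 M)$ for all $n$. Since $1/M=f_{n+1}(q)\le C_2\|q\|\le C_2$ forces $C_2 M\ge 1$, the value $\theta=1/(2C_2 M)$ lies in $(0,1)$, and the sequence $\{x_i\}$ realizes condition (5) of \textbf{Theorem \ref{Theorem 3.2}}, so $E$ is non-reflexive.

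It remains to treat the symmetric case $\sup_n\|\sum_{j\le n}f_j\|<\infty$, which I would reduce to the previous one by passing to the dual. Setting $\tilde e_i=Q(e_i)\in E^{\ast\ast}$, bi-orthogonality gives $\tilde e_i(f_j)=f_j(e_i)=\delta_{ij}$, so $\{(f_j,\tilde e_i)\}$ is a bounded bi-orthogonal system in $E^*$: its vectors $f_j$ satisfy $\|f_j\|\le C_2$, and because $Q$ is isometric its functionals satisfy $\|\tilde e_i\|=\|e_i\|\le C_1$. Its vector partial sums $\sum_{j\le n}f_j$ are uniformly bounded by hypothesis, so the previous paragraph applied inside $E^*$ shows $E^*$ is not reflexive, whence \textbf{Proposition \ref{Proposition 4.19}} yields that $E$ is not reflexive. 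The main obstacle is really conceptual rather than computational: one must recognize that a bounded bi-orthogonal system with uniformly bounded partial sums is exactly the James-type configuration of \textbf{Theorem \ref{Theorem 3.2}}(5), with $f_{n+1}$ serving as the separating functional; once this is seen the estimate is immediate, and the only remaining subtlety is the dual reduction together with the isometry of $Q$ that keeps the transported system bounded.
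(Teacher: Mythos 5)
Your proposal is correct, and the easy direction (non-reflexivity produces a system with bounded partial sums) is exactly the paper's argument via \textbf{Proposition \ref{Proposition 5.3}}. The other direction, however, is handled quite differently. The paper argues directly: assuming $E$ reflexive and $\sup_n\|\sum_{i\le n}e_i\|<\infty$, it extracts a weakly convergent subsequence of the partial sums with limit $e$, notes $f_n(e)=1$ for all $n$ while $\lim_n f_n\big(\sum_{i\le k}e_i\big)=0$ for each $k$, and derives $1=0$ from the double-limit interchange of \textbf{Proposition \ref{Proposition 5.4}}. You instead argue the contrapositive and route the conclusion through \textbf{Theorem \ref{Theorem 3.2}}(5): normalizing $x_i=\frac{1}{M}\sum_{j\le i}e_j$, the functional $f_{n+1}$ takes the constant value $0$ on $\operatorname{conv}\{x_i\}_{i\le n}$ and the constant value $1/M$ on $\operatorname{conv}\{x_i\}_{i>n}$, giving a uniform separation $\ge 1/(C_2M)$ and hence non-reflexivity; the case of bounded $\sum_{j\le n}f_j$ is transported to $E^*$ via the isometry $Q$ and finished with \textbf{Proposition \ref{Proposition 4.19}}, exactly as the paper also falls back on duality for that case. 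Your computation is sound (including the check that $C_2M\ge 1$, so $\theta=1/(2C_2M)\in(0,1)$, and that $\|f_{n+1}\|>0$), and there is no circularity since \textbf{Theorem \ref{Theorem 3.2}} is independent of Section 5. What your route buys is a quantitative separation statement and the avoidance of the monotone double-limit lemma, whose hypotheses are somewhat delicate to verify for the $0$--$1$ array $f_n\big(\sum_{i\le k}e_i\big)$; what the paper's route buys is a short direct proof that stays entirely within the weak-compactness toolkit of Section 5.
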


\begin{proof}

One direction is proved by \textbf{Proposition \ref{Proposition 5.3}}. When $E$ is reflexive, assume $\{(e_i, f_j)\}_{i,j\in\mathbb{N}}$ is a bi-orthogonal system where $\sup_n\|\sum_{i \leq n}e_i\| < \infty$. Therefore $\big\{ \sum_{i \leq n}e_i \big\}_{n \in\mathbb{N}}$ (or restricted to a subsequence) will converge weakly to a point, say $e$. Then for each $n\in\mathbb{N}$, $f_n(e) = \lim_k f_n\big( \sum_{i\leq k}e_i\big) = 1$. For each $k\in\mathbb{N}$, $\lim_n f_n\big( \sum_{i\leq k}e_i \big) = 0$. However, the doubly indexed sequence $\Big\{ f_n\big( \sum_{i\leq k}e_i\big)\Big\}_{n, k\in\mathbb{N}}$ satisfies the condition in \textbf{Proposition \ref{Proposition 5.4}} and hence:

$$
1 = \lim_n \lim_k f_n\big( \sum_{i\leq k}e_i\big) = \lim_k \lim_n f_n\big( \sum_{i\leq k}e_i\big) = 0
$$
which is absurd. On the other hand, when $E$ is reflexive, $E^*$ is also reflexive by \textbf{Proposition \ref{Proposition 4.19}}. Therefore if $\sup_n \big\| \sum_{i\leq n}f_i\big\| < \infty$, the sequence $\{\sum_{i\leq n}f_i\}_{n\in\mathbb{N}}$ (or a subsequence) converges weakly to a point, say $f$, which will lead to a contradiction similar to above. 

\end{proof}

\begin{theorem}

Given a Banach space $E$, the following are equivalent:

\begin{enumerate}[label = \arabic*)]

    \item $E$ is non-reflexive.
    
    \item There exists a bounded bi-orthogonal system $\{(e_i, f_j)\}_{i, j\in\mathbb{N}}$ and $\omega>0$ so that for every strictly decreasing sequence $\{\alpha_i\}_{i\in\mathbb{N}}\subseteq (0, \infty)$ with $\lim_i\alpha_i = 0$, we have $\sum_{i\geq 1}\alpha_i e_i$ converges and $\Big \|\sum_{i\geq 1}\alpha_i e_i\Big\| \leq \omega \alpha_1$.

    \item There exists a bounded bi-orthogonal system $S = \{(e_i, f_j)\}_{i, j\in\mathbb{N}}$ and $\gamma>0$ such that for every summable sequence $\{\lambda_i\}_{i\in\mathbb{N}}\subseteq (0,\infty)$:

    $$
    \inf\left\{ \left\|x - \sum_{i\geq 1}\lambda_i e_i \right\|: x\in B(S)_{\perp}\right\} \geq \gamma\sum_{i\geq 1}\lambda_i
    $$
    
\end{enumerate}
    
\end{theorem}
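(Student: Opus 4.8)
The plan is to prove the two chains $1)\Leftrightarrow 2)$ and $1)\Leftrightarrow 3)$ separately: in each case I obtain the distinguished bi-orthogonal system from \textbf{Proposition \ref{Proposition 5.3}}, and I recover non-reflexivity by feeding a suitable system into \textbf{Theorem \ref{Theorem 5.5}}. Writing $S_n^{e}=\sum_{i\le n}e_i$ and $G_n=\sum_{j\le n}f_j$, recall that the proof of \textbf{Proposition \ref{Proposition 5.3}} in fact delivers \emph{two} bounded bi-orthogonal systems: one with $\sup_n\|S_n^e\|<\infty$ (the $(e_i,f_j)$ with $\|S_n^e\|=\|b_n\|\le 1+\sigma$) and one with $\sup_n\|G_n\|<\infty$ (the $(b_i,h_j)$). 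I will use the first for $2)$ and the second for $3)$.

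For $1)\Rightarrow 2)$ take the system with $M:=\sup_n\|S_n^e\|<\infty$ and set $\omega=M$. Given strictly decreasing $\alpha_i\downarrow 0$, Abel summation gives $\sum_{i\le n}\alpha_i e_i=\alpha_nS_n^e+\sum_{i<n}(\alpha_i-\alpha_{i+1})S_i^e$; since $\alpha_i-\alpha_{i+1}>0$ telescopes to $\alpha_1-\alpha_n$, the partial sums are bounded by $M\alpha_1$, while the analogous estimate on a block is $\le 2M\alpha_m\to 0$, giving norm convergence and $\|\sum_{i\ge1}\alpha_ie_i\|\le\omega\alpha_1$. For $1)\Rightarrow 3)$ take the system with $N:=\sup_n\|G_n\|<\infty$. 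By Banach--Alaoglu the bounded sequence $\{G_n\}$ has a weak-$\ast$ cluster point $g\in\overline{\operatorname{Span}\{f_j\}}^{w\ast}=B(S)$ with $\|g\|\le N$ (the norm is weak-$\ast$ lower semicontinuous), and since $G_n(e_i)=1$ for $n\ge i$, the cluster subnet gives $g(e_i)=1$ for all $i$. For summable positive $\{\lambda_i\}$ the series $x=\sum_i\lambda_ie_i$ converges absolutely, so $g(x)=\sum_i\lambda_i$; as $g$ vanishes on $B(S)_\perp$, every $y\in B(S)_\perp$ satisfies $\|x-y\|\ge|g(x)|/\|g\|\ge N^{-1}\sum_i\lambda_i$, so $\gamma=N^{-1}$ works.

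The reverse implications carry the weight. For $2)\Rightarrow 1)$ I first extract $\sup_n\|S_n^e\|\le\omega$ from the hypothesis: fixing $n$, apply the bound to the strictly decreasing test sequence $\alpha_i=1-(i-1)\delta$ for $i\le n$, continued geometrically by $\alpha_i=(1-(n-1)\delta)\delta^{\,i-n}$ for $i>n$; then $\alpha_1=1$ and, letting $\delta\to 0^+$, $\sum_i\alpha_ie_i\to S_n^e$ in norm while each term has norm $\le\omega$, so $\|S_n^e\|\le\omega$. The resulting system has uniformly bounded $\sum_{i\le n}e_i$, and \textbf{Theorem \ref{Theorem 5.5}} forces $E$ non-reflexive. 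For $3)\Rightarrow 1)$ the idea is to quotient out the pre-annihilator. Put $W=B(S)_\perp=\bigcap_j\ker f_j$, let $Y=E/W$ with quotient map $q$, and set $\bar e_i=q(e_i)$ and $\bar f_j\in Y^\ast$ the functional induced by $f_j$ (well defined since $f_j|_W=0$). Then $\{(\bar e_i,\bar f_j)\}$ is a bounded bi-orthogonal system in $Y$ whose functionals are \emph{total}, as $\bigcap_j\ker\bar f_j=\{0\}$, and $3)$ becomes the positive-$\ell^1$ lower estimate $\|\sum_i\lambda_i\bar e_i\|_Y=d[\sum_i\lambda_ie_i,W]\ge\gamma\sum_i\lambda_i$. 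Assume for contradiction $E$, hence $Y$, is reflexive. The nested closed bounded convex sets $C_n=\overline{\operatorname{conv}}\{\bar e_i:i\ge n\}$ are weakly compact with the finite intersection property, so by \textbf{Theorem \ref{Theorem 1.19}} there is $z\in\bigcap_nC_n$. Every convex combination $\sum_{i\ge n}t_i\bar e_i$ has norm $\ge\gamma$ by the lower estimate (approximating finitely supported nonnegative weights by strictly positive summable ones), so $\|z\|\ge\gamma>0$; but each weakly continuous $\bar f_j$ vanishes on $\{\bar e_i:i>j\}$, hence on $C_{j+1}\ni z$, so $\bar f_j(z)=0$ for all $j$, whence $z=0$ by totality, a contradiction.

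I expect $3)\Rightarrow 1)$ to be the main obstacle. The subtlety is that the positive-$\ell^1$ lower estimate by itself does \emph{not} preclude reflexivity: for instance $u_i=\varepsilon_1+\varepsilon_{i+1}$ in $\ell^2$ satisfies $\|\sum_i\lambda_iu_i\|\ge\sum_i\lambda_i$ for all $\lambda_i\ge0$. What rescues the argument is that, after quotienting by $B(S)_\perp$, the induced functionals $\{\bar f_j\}$ become total, and totality together with weak compactness is precisely what annihilates the limit point $z$. Recognizing this reduction, rather than attempting to read off $\sup_n\|\sum f_j\|<\infty$ from $3)$ directly (which need not hold for the given system), is the crux.
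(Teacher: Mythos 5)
Your proof is correct, and while the three implications $1)\Rightarrow 2)$, $1)\Rightarrow 3)$ and $2)\Rightarrow 1)$ follow essentially the paper's route (the two systems of \textbf{Proposition \ref{Proposition 5.3}}, Abel summation, and reduction to \textbf{Theorem \ref{Theorem 5.5}} -- your limiting test sequences make explicit a step the paper only asserts, and your weak-$\ast$ cluster point $g$ of $\sum_{j\le n}f_j$ replaces the paper's direct use of the finite sums $\frac{\beta}{2}\sum_{j\le n}h_j$, an immaterial difference), your $3)\Rightarrow 1)$ is genuinely different from the paper's. The paper introduces $V_n=\overline{\operatorname{Span}\big(B(S)_{\perp},\{e_i\}_{i>n}\big)}$, builds by Hahn--Banach norm-one functionals $g_k$ vanishing on $V_k$ with $g_k(e_i)>\gamma$ for $i\le k$, extracts a weak-$\ast$ cluster point $g\in B(S)$ with $g(e_k)>\gamma$ for all $k$, and then invokes the iterated-limit \textbf{Proposition \ref{Proposition 5.4}} to show $g\notin B_1(S)$, contradicting the identity $B(S)=B_1(S)$ valid when $E^*$ is reflexive. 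You instead pass to the quotient $Y=E/B(S)_{\perp}$, where the induced functionals $\bar f_j$ become total and hypothesis $3)$ becomes the lower estimate $\|\sum_i\lambda_i\bar e_i\|_Y\ge\gamma\sum_i\lambda_i$; the nested weakly compact sets $C_n=\overline{\operatorname{conv}}\{\bar e_i\}_{i\ge n}$ then yield a common point $z$ with $\|z\|\ge\gamma$ that is annihilated by every $\bar f_j$, forcing $z=0$. This is shorter, avoids both the Hahn--Banach construction and the double-limit lemma, and your closing remark correctly identifies the crux: the positive-$\ell^1$ lower estimate alone is compatible with reflexivity, and it is the totality gained by quotienting out $B(S)_{\perp}$ (playing the role of the paper's subspaces $V_n$ on the dual side) that makes the weak-compactness contradiction go through.
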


\begin{proof}$\hspace{1cm}\\$
\begin{itemize}

    \item $1)\Longrightarrow 2):$ Suppose $E$ is non-reflexive. Then according to the proof of \textbf{Proposition \ref{Proposition 5.3}}, for an arbitrary $\sigma\in(0, 1)$, we will have $\{b_i\}_{i \geq 1}\subseteq X_{\leq 1+\sigma}$, $\{y_i\}_{i \geq 1}\subseteq X_{\leq 1}$, $\{\beta_i\}_{i \geq 1}\subseteq (0,\infty)$ such that $\inf_n\beta_n = \beta > 0$ and:
    
    $$
    y_i(b_j) = 
    \begin{cases}
    \beta_i,\hspace{0.8cm} 1 \leq i \leq j \leq n\\
    0,\hspace{0.93cm} 1 \leq j < i \leq n
    \end{cases}
    $$
    Define $e_1 = b_1$ and for each $i\in\mathbb{N}$ with $i>1$, define $e_i = b_i - b_{i-1}$. For each $i\in\mathbb{N}$, define $f_i = \dfrac{1}{\beta_i}y_i$. Then $\{(e_i, f_j)\}_{i, j\in\mathbb{N}}$ is a bounded bi-orthogonal system. Given a strictly decreasing sequence $\{\alpha_i\}_{i\in\mathbb{N}} \subseteq (0, 1)$ with $\lim_i\alpha_i = 0$, for each $n, m\in\mathbb{N}$ with $m>n$, we have:
    
    $$
    \begin{aligned}
    &\hspace{1cm} \sum_{n < i \leq m}\alpha_i e_i = \alpha_{n+1} (b_{n+1} - b_n) + \cdots + \alpha_m (b_m - b_{m-1})\\
    &\hspace{0.52cm} = \alpha_m b_m + (\alpha_{m-1}-\alpha_m) b_{m-1} + \cdots + (\alpha_{n+1} - \alpha_{n+2}) b_{n+1} - \alpha_{n+1}b_n\\
    &\implies\, \left\| \sum_{n < i\leq m}\alpha_i e_i \right\| = \left\| \alpha_m b_m - \alpha_{n+1} b_{n+1} + \sum_{n < i < m} \big( \alpha_i - \alpha_{i+1} \big)b_i \right\|\\
    &\hspace{3cm} \leq (1+\sigma) \left( \alpha_m + \alpha_{n+1} + \sum_{n < i < m}\big( \alpha_i - \alpha_{i+1}\big) \right) = 2(1+\sigma)\alpha_{n+1} \overset{n\rightarrow\infty}{\longrightarrow} 0
    \end{aligned}
    $$
    Hence, we have:

    $$
    \lim_n \sum_{i\leq n}\alpha_i e_i = \sum_{i\geq 1}\alpha_i e_i \hspace{1cm}\textbf{and}\hspace{1cm} \left\| \sum_{i\geq 1}\alpha_i e_i \right\| \leq (1+\sigma)\alpha_1
    $$
    
    \item $1)\Longrightarrow 3):$ Let $\{\lambda_i\}_{i\in\mathbb{N}}$ be a summable sequence and for an arbitrary $\sigma\in(0, 1)$, let $\{b_i\}_{i\geq 1}$, $\{y_i\}_{i\geq 1}$, $\{\beta_i\}_{i\geq 1}$ be given in the proof of \textbf{Proposition \ref{Proposition 5.3}} such that $\inf_n\beta_n = \beta>0$ and:

    $$
    y_i(b_j) = 
    \begin{cases}
    \beta_i,\hspace{0.8cm} 1 \leq i \leq j \leq n\\
    0,\hspace{0.93cm} 1 \leq j < i \leq n
    \end{cases}
    $$
    For each $i\in\mathbb{N}$ define:

    $$
    h_i = \frac{1}{\beta_i}f_i - \frac{1}{\beta_{i+1}}f_{i+1}.
    $$
    Again by the proof of \textbf{Proposition \ref{Proposition 5.3}}, $\mathcal{S} = \{(b_i, h_j)\}_{i, j\in\mathbb{N}}$ is a bounded bi-orthogonal system and $\sup_n\Big\| \sum_{i\leq n}h_i\Big\| \leq \dfrac{2}{\beta}$. Then for any $z\in B(\mathcal{S})_{\perp}$ and $n\in\mathbb{N}$
    
    $$
    \left\| z+\sum_{i \leq n}\lambda_i b_i \right\| \geq \left\vert\, \frac{\beta}{2}\sum_{j\leq n}h_j \left( z + \sum_{i\leq n}\lambda_i b_i \right)\, \right\vert = \frac{\beta}{2}\sum_{i\leq n}\lambda_i.
    $$
    Hence:

    $$
    \begin{aligned}
    &\hspace{1cm} \forall\,n\in\mathbb{N}, \hspace{0.3cm} \frac{\beta}{2}\sum_{i\leq n}\lambda_i \leq \inf\left\{ \left\| x - \sum_{i\geq 1}\lambda_i e_i \right\|: x\in B(S)_{\perp}\right\} \\
    & \implies\, \frac{\beta}{2} \sum_{i\geq 1}\lambda_i \leq \inf\left\{ \left\|x - \sum_{i\geq 1}\lambda_i e_i \right\|: x\in B(S)_{\perp}\right\}
    \end{aligned}
    $$
    
    \item $2)\Longrightarrow 1):$ Suppose $S = \{(e_i, f_j)\}_{i, j\in\mathbb{N}}$ is a bounded bi-orthogonal system and $\omega>0$ such that for any sequence $\{\alpha_i\}_{i\in\mathbb{N}} \subseteq (0, \infty)$ with $\lim_i\alpha_i = 0$, we have $\sum_{i\geq 1}\alpha_i e_i$ converges and $\Big\| \sum_{i\geq 1}\alpha_i e_i\Big\| \leq \omega\alpha_1$. Hence:

    $$
    \forall\,n\in\mathbb{N}, \hspace{0.3cm} \left\| \sum_{i\leq n}e_i \right\| \leq \omega \hspace{1cm}\implies\hspace{1cm} \sup_n \left\| \sum_{i\leq n}e_i \right\| \leq \omega
    $$
    which implies that $E$ is non-reflexive by \textbf{Theorem \ref{Theorem 5.5}}.
    
    \item $3)\Longleftarrow 1):$ Let $S = \{(e_i, f_j)\}_{i, j\in\mathbb{N}}$ be the bounded bi-orthogonal system and $\gamma>0$ be given such that for any summable sequence of positive real numbers $\{\lambda_i\}_{i\in\mathbb{N}}$:

    $$
    \inf\left\{ \left\|x - \sum_{i\geq 1}\lambda_i e_i \right\|: x\in B(S)_{\perp}\right\} \geq \gamma\sum_{i\geq 1}\lambda_i
    $$
    Define $M = \sup_j\|f_j\|$. Without losing generality (or replace $\gamma$ by a smaller positive real number), suppose $\gamma\in(0, \dfrac{1}{M})$. For each $n\in\mathbb{N}$, define:

    $$
    V_n = \overline{\operatorname{Span}\big( B(S)_{\perp}, \{e_i\}_{i>n}\big)}^{\|\cdot\|}
    $$
    and

    $$
    V_0 = \overline{\operatorname{Span}\big( B(S)_{\perp}, \{e_i\}_{i\geq 1}\big)}^{\|\cdot\|}
    $$
    In the subspace $V_0$, for each different pair $i, j\in\mathbb{N}$, we have:

    $$
    \|e_i-e_j\| = \sup\Big\{ \big\vert\, f(e_i-e_j)\,\big\vert: f\in V_0^*, \|f\| \leq 1\Big\} \geq \frac{1}{M}\big\vert\, f_i(e_i-e_j)\,\big\vert > \gamma
    $$
    which implies that for each $n\in\mathbb{N}$ and $i\leq n$:

    $$
    \inf_{v\in V_n}\|e_i - v\| > \gamma
    $$
    By \textbf{Hahn-Banach Theorem}, when $n=1$, there exists $g_1\in V_0^*$ such that $\|g_1\| = 1$, $g_1\Big|_{V_1}=0$ and $g_1(e_1) > \gamma$. By induction assume that when $n=k$, there exists $g_k\in V_0^*$ such that $g_k\Big|_{V_k} = 0$ and $g_k(e_i)=0$ for each $i\leq k$. Hence for any $i \leq k+1$, all of the following hold:

    $$
    \left\vert\, \left( f_{k+1}\Big|_{V_0} g_k\right)(e_i) \, \right\vert > \gamma,
    \hspace{0.5cm} 
    \left\| f_{k+1}\Big|_{V_0} + g_k\right\| = 1, \hspace{0.5cm}
    \left( g_k + f_{k+1}\Big|_{V_0^*} \right)\Big|_{V_{k+1}} = 0
    $$
    Hence, for each $k\in\mathbb{N}$, there exists $g_k\in V_0^*$ such that $g_k(e_i)>\gamma$ for each $i\leq k$, $\|g_k\|=1$ and $g_k\Big|_{V_k} = 0$. For each $k\in\mathbb{N}$, let $\overline{g_k}$ be the norm-preserving extension of $g_k$ to the entire $E$. Hence $\{ \overline{g_k} \}_{k\in\mathbb{N}} \subseteq \big[ B(S)_{\perp} \big]^{\perp} = B(S)$ since $B(S)$ is weak-$\ast$ closed. Since $\{\overline{g_k}\}_{k\in\mathbb{N}}$ is bounded and, there exists $g\in B(S)$ such that $\{\overline{g_k}\}_{k\in\mathbb{N}}$ (or a proper subsequence) that converges to $g$ in weak-$\ast$ topology. Then we have $g(e_k)>\gamma$ for all $k\in\mathbb{N}$. Assume by contradiction that $g\in B_1(S)$ and $\{F_m\}_{m\in\mathbb{N}} \subseteq \operatorname{Span}\{f_i\}_{i\in\mathbb{N}}$ converges to $g$ in norm. By diagonal method to find a proper subsequence, we can assume for each $n\in\mathbb{N}$, $\Big\{ \big\vert\, F_m(e_n)\, \big\vert\Big\}_{m\in\mathbb{N}}$ is increasing. Since, for each $m\in\mathbb{N}$, the sequence $\Big\{ \big\vert\, F_{m+1}(e_n)\,\big\vert\, - \big\vert\, F_m(e_n)\,\big\vert \Big\}_{n\in\mathbb{N}}$ is eventually equal to zero, the sequence $\Big\{ \big\vert\, F_m(e_n)\, \big\vert\Big\}_{m\in\mathbb{N}}$ satisfies the condition in \textbf{Proposition \ref{Proposition 5.4}} and hence we have:

    $$
    \lim_n\lim_m \big\vert\, F_m(e_n)\, \big\vert = \lim_m \lim_n \big\vert\, F_m(e_n)\, \big\vert = 0
    $$
    Since $\{F_m\}$ also converges to $g$ pointwise and $g(e_k)>\gamma$ for each $k\in\mathbb{N}$, we then have:

    $$
    0 = \lim_n\lim_m \big\vert\, F_m(e_n)\,\big\vert = \lim_n \vert\, g(e_n)\,\vert > \gamma
    $$
    which is absurd. Hence $g\notin B_1(S)$. If $E$ is reflexive, so is $E^*$ by \textbf{Proposition \ref{Proposition 4.19}} and hence so is $B_1(S)$. In this case, $B(S)$ is equal to the weak closure of $\operatorname{Span}\{f_i\}_{i\in\mathbb{N}}$, which is again equal to $B_1(S)$. However, we then have $g\in B(S)\backslash B_1(S)$ and obtain a contradiction. We can now conclude $E$ is not reflexive.
    
\end{itemize}
\end{proof}

\section{Characterizations by weakly compact subsets}

The main theorems in this section are from \cite{7} and \cite{8}. In the source, the main focus is on a complete \textbf{locally convex topological vector space} (in short LCTVS) and $E\subseteq X$ a bounded weakly closed subset, and a list of conditions on $E$, which are equivalent to that $X$ is reflexive, will be given. By \textbf{Remark \ref{Remark 2.8}}, it suffices to only consider the case where $X$ is a Banach space. As it is mentioned in \textbf{Section 3}, $X$ being reflexive is equivalent to the closed unit ball of $X$ being weakly compact, which is an immediate consequence of {\cite[Goldstine's Theorem]{22}}. Also, given $S$ a weakly closed convex subset of $X$, the first two theorems from \cite{8} state precisely when $S$ is weakly compact, which generalize the \textbf{Proposition \ref{Proposition 4.10}}. 

\begin{theorem}[{\cite[Theorem 5]{8}}]

A weakly closed subset $S\subseteq X$ in a Banach space is weakly compact if and only if every element in $X^*$ attains its norm on $S$.
    
\end{theorem}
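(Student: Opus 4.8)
The plan is to split the biconditional into its two directions, treating the forward implication as an immediate consequence of weak continuity and reducing the reverse implication to the James-type characterization already established in \textbf{Theorem \ref{Theorem 2.3}} and \textbf{Theorem \ref{Theorem 2.4}}. Throughout I read the phrase ``attains its norm on $S$'' as ``attains the value $\sup_{x\in S} f(x)$ at some point of $S$'', so that the hypothesis lines up verbatim with condition $(4)$ of those theorems (which, on the unit ball, does reduce to attaining $\|f\|$).

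For the direction $(\Longrightarrow)$, suppose $S$ is weakly compact. Each $f\in X^*$ is by definition continuous for the weak topology, so its restriction to the weakly compact set $S$ attains its supremum. This costs nothing and is the entire argument for that direction.

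For $(\Longleftarrow)$, the first step I would carry out is to upgrade the hypothesis to norm-boundedness of $S$, which is the standing assumption needed before \textbf{Theorem \ref{Theorem 2.4}} applies. If every $f\in X^*$ attains its supremum on $S$, then $\sup_{x\in S} f(x)$ is finite for every $f$; applying this to $-f$ as well shows $f(S)$ is a bounded set of scalars for each fixed $f$. Viewing each $x\in S$ as the functional $Q(x)\in X^{\ast\ast}$ through the canonical embedding, the family $\{Q(x)\}_{x\in S}$ is therefore pointwise bounded on the Banach space $X^*$, and the \textbf{Uniform Boundedness Theorem} gives $\sup_{x\in S}\|x\|=\sup_{x\in S}\|Q(x)\|<\infty$. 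Thus $S$ is a bounded weakly closed subset of $X$, so by \textbf{Remark \ref{Remark 2.8}} it lies within the scope of \textbf{Theorem \ref{Theorem 2.4}}. With boundedness secured, I would invoke \textbf{Theorem \ref{Theorem 2.4}} directly: its condition $(4)$ asserts the existence of some $f\in X^*$ that fails to attain its supremum on $S$, which is precisely the negation of our hypothesis, so $(4)$ fails; since $(1)\Longleftrightarrow(4)$ there, condition $(1)$ fails too, i.e. $S$ is weakly compact.

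The genuine mathematical content sits entirely in the reverse direction, and in the logical economy of this paper that content has already been spent on \textbf{Theorem \ref{Theorem 2.3}}/\textbf{Theorem \ref{Theorem 2.4}} via the constructions in \textbf{Lemma \ref{Lemma 2.5}} and \textbf{Lemma \ref{Lemma 2.7}}. The only delicate point I anticipate is the boundedness reduction: one must be careful that \emph{norm-attainment} of every functional (not merely boundedness above) is what simultaneously supplies finiteness of the suprema and, by passing to $\pm f$, two-sided scalar boundedness, so that the Uniform Boundedness Theorem can be applied cleanly; everything after that is bookkeeping to match ``attains its norm on $S$'' with condition $(4)$.
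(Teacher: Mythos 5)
Your proposal is correct, and it is worth noting that the paper itself states this theorem without proof (it is only cited from \cite{8}), so you are supplying an argument the paper leaves implicit. Your route is the natural one given the paper's own machinery: the forward direction is the same one-line weak-continuity argument the paper uses for $4)\Rightarrow 1)$ in \textbf{Theorem \ref{Theorem 2.3}}, and the reverse direction correctly reduces to the equivalence $(1)\Leftrightarrow(4)$ of \textbf{Theorem \ref{Theorem 2.4}} (one could equally use condition $13)$ of \textbf{Theorem \ref{Theorem 6.4}}). The genuinely new content you add is the boundedness reduction, and you handle it properly: those cited equivalences all carry norm-boundedness of the set as a standing hypothesis, whereas the statement here does not, so one must first observe that attainment of the supremum of every $f$ (applied to both $f$ and $-f$, and to $\pm if$ in the complex case) makes $f(S)$ bounded for each $f$, and then the Uniform Boundedness Theorem applied to $\{Q(x)\}_{x\in S}\subseteq X^{\ast\ast}$ yields $\sup_{x\in S}\|x\|<\infty$. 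Your explicit disambiguation of ``attains its norm on $S$'' as ``attains $\sup_{s\in S}f(s)$ at a point of $S$'' is also the right reading and matches the intended meaning of condition $(4)$ in \textbf{Theorem \ref{Theorem 2.3}}, whose phrasing in the paper is itself imprecise. I see no gap.
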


\noindent
Several precise descriptions are given in \cite{8}, and then later summarized by main theorems in \cite{7}. Here we start from the first main theorem in \cite{7}

\begin{defn}

Given a vector space $V$, a sequence of points $\{v_i\}_{i\in\mathbb{N}}$ and $u_1, u_2\in \operatorname{conv}\{v_i\}_{i\in\mathbb{N}}$, $u_1, u_2$ are said to be \textbf{non-overlapping} if thee exists $N\in\mathbb{N}$ such that $u_1\in \operatorname{conv}\{v_i\}_{i\leq N}$ and $u_2\in \operatorname{conv}\{v_i\}_{i>N}$. For each $v\in\operatorname{conv}\{v_i\}_{i\geq 1}$, define:

$$
l(v) = \min\big\{ n\in\mathbb{N}: v\notin \operatorname{conv}\{v_i\}_{i>n} \big\}
$$
    
\end{defn}

\begin{theorem}[{\cite[Theorem 1]{8}}]\label{Theorem 6.2}
    A bounded closed convex subset $C$ of a Banach space is not weakly compact iff there exists $\mu, \sigma > 0$ such that for any $\delta, \Delta > 0$ with $\delta < \mu < \Delta$, there exists a sequence of elements $\{z_i\}\subseteq C$ such that both of the following hold:
    \begin{enumerate}[label = \arabic*)]
    
        \item $\forall\,\xi\in \operatorname{conv}\{z_i\}_{i\geq 1}$, $\|\xi\|\in (\delta, \Delta)$
        
        \item $\forall\,n\in\mathbb{N}$, $d\big[ \operatorname{conv}\{z_i\}_{i \leq n}, \operatorname{conv}\{z_i\}_{i > n} \big] \geq \sigma$
        
    \end{enumerate}
\end{theorem}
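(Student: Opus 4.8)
The plan is to prove the two implications separately, treating the forward (``only if'') direction as the substantial one and disposing of the converse first.

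For the converse, suppose such a sequence $\{z_i\}$ exists for some $\delta<\mu<\Delta$; I would use only the separation property 2). Set $K_n=\overline{\operatorname{conv}\{z_i\}_{i>n}}$, a decreasing sequence of closed convex sets, each of which meets $C$ since $z_{n+1}\in K_n\cap C$. If $C$ were weakly compact, then by condition 5) of \textbf{Theorem \ref{Theorem 1.19}} there would be a point $y\in C\cap\bigcap_n K_n$. On one hand $y\in K_0=\overline{\operatorname{conv}\{z_i\}_{i\ge 1}}$, so $d[y,\operatorname{conv}\{z_i\}_{i\ge 1}]=0$. On the other hand, fixing $n$ and any $u\in\operatorname{conv}\{z_i\}_{i\le n}$, the fact that $y\in\overline{\operatorname{conv}\{z_i\}_{i>n}}$ together with 2) gives $\|u-y\|\ge d[\operatorname{conv}\{z_i\}_{i\le n},\operatorname{conv}\{z_i\}_{i>n}]\ge\sigma$; since every finite convex combination lies in some $\operatorname{conv}\{z_i\}_{i\le n}$, this forces $d[y,\operatorname{conv}\{z_i\}_{i\ge 1}]\ge\sigma>0$, a contradiction. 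Note this direction needs neither the shell 1) nor the quantities $\mu,\delta,\Delta$.

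For the forward direction, since $C$ is not weakly compact, $X$ is non-reflexive and $Q(C)$ fails to be weak-$\ast$ closed, so I would fix $\phi\in\overline{Q(C)}^{w\ast}\setminus Q(C)$; because $C$ is weakly closed, any such $\phi$ lies outside $Q(X)$, whence $d[\phi,Q(X)]>0$. The construction mirrors the proof of \textbf{Theorem \ref{Theorem 3.2}} $(1\Rightarrow 2)$, relativized to $C$: I would build $\{z_i\}\subseteq C$ and $\{g_n\}\subseteq X^\ast$ with $\|g_n\|\le 1$ by interleaving, so that $g_n(z_i)\approx\theta$ for $i\ge n$ and $g_n(z_i)=0$ for $i<n$, where $\theta<d[\phi,Q(X)]$ is a target value. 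The $g_n$ come from \textbf{Theorem \ref{Theorem 1.16}} (Helly's condition) applied in $X^\ast$ to $Q(z_1),\dots,Q(z_{n-1}),\phi$ with value vector $(0,\dots,0,\theta)$: since any combination of the $Q(z_j)$ lies in $Q(X)$, the relevant constant is at most $\theta/d[\phi,Q(X)]<1$, giving $\|g_n\|\le 1$, which is exactly the mechanism behind \textbf{Corollary \ref{Corollary 1.18}}. The points $z_n\in C$ would be produced from the weak-$\ast$ density of $Q(C)$ near $\phi$, choosing $z_n$ with $g_j(z_n)$ as close to $\phi(g_j)=\theta$ as needed for $j\le n$. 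Applying $g_{n+1}$ then yields separation $\sigma$ slightly below $\theta$ between initial and tail convex hulls, while applying $g_1$ gives $g_1(\xi)\approx\theta$ for every $\xi\in\operatorname{conv}\{z_i\}$, hence the lower shell bound $\|\xi\|\gtrsim\theta$.

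The hard part will be condition 1): trapping every convex combination inside the narrow band $(\delta,\Delta)$ about a single fixed $\mu$. The bound from $g_1$ pins the hull norm at $\theta$ from below, and $\|\xi\|\le\max_i\|z_i\|$ controls it from above, but for the band to be admissible for \emph{all} $\delta<\mu<\Delta$ at once the hull norm must be essentially constant $\approx\mu$, forcing $\theta$, $\mu$, and $\sup_i\|z_i\|$ to share one value. This is where the choice of $\phi$ becomes delicate: I would want $\phi$ with $d[\phi,Q(X)]$ close to $\|\phi\|$ (so $\theta$ can be pushed up to $\approx\|\phi\|$ while keeping $\|g_n\|\le 1$), set $\mu=\|\phi\|$ and $\sigma=\theta$, and keep $\|z_i\|$ close to $\|\phi\|$ from above by a Goldstine-type step (\cite[Goldstine's Theorem]{22}): as $\|\phi\|<\Delta$, one tries to draw the approximants of $\phi$ from $C\cap\Delta B_X$, so that $\|z_i\|<\Delta$ and hence $\|\xi\|<\Delta$. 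I expect the genuinely difficult points to be (i) producing such a near-norming witness $\phi$ in the untranslated geometry of $C$ (a translation would realign the distance to $Q(X)$ but destroy the norm shell), and (ii) verifying that the Goldstine approximants can be taken simultaneously inside $C$ and of norm below $\Delta$; everything else is the bookkeeping of the interleaving already rehearsed in \textbf{Theorem \ref{Theorem 3.2}}.
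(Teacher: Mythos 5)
Your converse direction is correct and is in substance the paper's argument: both apply the finite-intersection characterization of weak compactness (condition 5) of \textbf{Theorem \ref{Theorem 1.19}} / \textbf{Proposition \ref{Proposition 3.1}}) to the tails $\overline{\operatorname{conv}\{z_i\}_{i>n}}$ and contradict 2); only condition 2) is needed, as you note. The forward direction, however, contains a genuine gap, and it is exactly the one you flag as ``the hard part.'' Note first that the paper never constructs the sequence directly: it argues by contradiction, fixing a decreasing sequence $\{T_n\}$ of closed convex subsets of $C$ with $\bigcap_n T_n=\emptyset$, taking $\mu=\lim_N\inf_{c\in T_N}\|c\|$, and showing that if no pair $(\mu,\sigma)$ works then one can manufacture a norm-Cauchy sequence whose limit would lie in $\bigcap_n T_n$. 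There the upper bound on hull norms is obtained by choosing points of mutual distance less than $\sigma_k$ (so the hull lies in a small ball), not by controlling the norms of weak-$\ast$ approximants of a functional, and $\mu$ records the norm level at which the non-compactness of $C$ lives. Your direct James-type construction does deliver condition 2) and the lower half of condition 1), but the upper half --- $\|\xi\|<\Delta$ for $\Delta$ arbitrarily close to $\mu$ --- is not obtainable from your setup, and neither of your proposed remedies closes it.

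Concretely: (i) you cannot in general find $\phi\in\overline{Q(C)}^{w\ast}\setminus Q(C)$ with $d[\phi,Q(X)]$ close to $\|\phi\|$, and, worse, your scheme forces $\mu\leq d[\phi,Q(X)]$ (Helly caps $\theta$ at $d[\phi,Q(X)]$, while beating every $\delta<\mu$ via $g_1$ forces $\theta\geq\mu$). Replacing $C$ by $C+x_0$ replaces $\phi$ by $\phi+Q(x_0)$, leaving $d[\phi,Q(X)]\leq\operatorname{diam}(C)$ fixed, while every convex combination then has norm at least $\|x_0\|-\sup_{c\in C}\|c\|$, so every admissible $\mu$ grows without bound as $\|x_0\|\to\infty$; hence $\mu\leq d[\phi,Q(X)]$ is simply false for such sets, and no choice of $\phi$ repairs this. (ii) The Goldstine step does not relativize: {\cite[Goldstine's Theorem]{22}} produces approximants of $\phi$ from the ball $X_{\leq\|\phi\|+\epsilon}$ of the whole space, but there is no reason a given $\phi\in\overline{Q(C)}^{w\ast}$ should be weak-$\ast$ approximable by elements of $C$ of norm less than $\Delta$ when $\Delta$ is close to $\mu$; weak-$\ast$ lower semicontinuity of the norm controls $\liminf_i\|z_i\|$ only from below, and the approximants available inside $C$ may all have norm near $\sup_{c\in C}\|c\|$. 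To rescue your route you would first have to localize: pass to a closed convex, still non-weakly-compact subset of $C$ on which the norm is nearly constant (this is precisely what the paper's $T_N$ and the identity $\mu=\lim_N\inf_{T_N}\|\cdot\|$ accomplish) and only then run the Helly construction inside that subset. Without that step, condition 1) is out of reach.
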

    
\begin{proof}$\hspace{0.44cm}\\$
\begin{itemize}

    \item $\Longleftarrow):$ Let $\{z_i\}\subseteq C, \delta, \Delta, \sigma, \mu$ be chosen so that conditions $1), 2)$ are satisfied. For each $n\in\mathbb{N}$, $K_n = \operatorname{conv}\{z_i\}_{i\geq n}$. Assume by contradiction $C$ is weakly compact. Hence by weakly compactness and \textbf{Proposition \ref{Proposition 3.1}}, $\bigcap_{n\in\mathbb{N}} \overline{K_n}\neq\emptyset$. Pick $w\in \bigcap_{n\in\mathbb{N}} \overline{K_n}$. Then we can find two non-overlapping points $\xi, \gamma\in K_1$ such that $\|w-\xi\| < \dfrac{\sigma}{2}$ and $\|w-\gamma\| < \dfrac{\sigma}{2}$. Hence $\|\xi-\gamma\| < \sigma$, which contradicts $2)$.
        
    \item $\Longrightarrow):$ Assume $C$ is not weakly compact. By \textbf{Proposition \ref{Proposition 3.1}}, we can find a strictly decreasing sequence of closed convex subsets of $C$, say $\{T_n\}$, such that $\bigcap_{n \geq 1}T_n = \emptyset$. For each $n\in\mathbb{N}$, $t_n \in T_n\backslash T_{n+1}$. Then we want to assume by contradiction that: for any $\mu, \sigma > 0$ there exists $\delta, \Delta > 0$ with $\delta < \mu < \Delta$, so that for any sequence $\{z_i\}\subseteq C$, there exists $\xi \in \operatorname{conv}\{z_i\}_{i \geq 1}$ so that $\|\xi\| \geq \Delta$ or $\|\xi\| \leq \delta$, or there exists $k\in\mathbb{N}$ so that 
    
    $$
    d\big[ \operatorname{conv}\{z_i\}_{i \leq k}, \operatorname{conv}\{z_i\}_{i > k} \big] < \sigma
    $$
    Since for each $n\in\mathbb{N}$ we have $T_{n+1}\subseteq T_n$, we then have:

    $$
    \sup_{N\in\mathbb{N}} \inf_{c\in T_N}\|c\| = \lim_N \inf_{c\in T_N}\|c\|
    $$
    If the limit above is equal to $0$, we then have $\inf_{c\in T_N}\|c\| = 0$ for all $N\in\mathbb{N}$, which implies that $0\in T_N$ for all $N\in\mathbb{N}$ since each $T_N$ is closed. However, this contradicts our assumption that $\bigcap_{n\in\mathbb{N}}T_n = \emptyset$. Hence we can conclude there exists $\mu > 0$ such that starting from some $N_1\in\mathbb{N}$, $\inf_{c\in T_n}\|c\| > \mu$ for all $n\geq N_1$. For each $n\in\mathbb{N}$ define $V_{\geq n} = \operatorname{conv}\{t_i\}_{i\geq n}$. Next we claim:

    \begin{equation}\label{e31}
    \exists\,t_1^1\in V_{\geq 1} \textbf{   such that   }\forall\,n\geq l(t_1^1),\hspace{0.3cm} d\big[ t_1^1, V_{\geq n} \big] \leq \frac{1}{2}
    \end{equation}
    Assume that the claim above is false. Then first let $z_1 = t_1$ and then we can find $n_1\geq l(t_1)$ such that $d\big[ t_1, V_{\geq n_1}\big] > \dfrac{1}{2}$. Next let $z_2 = t_{n_1}$ and then find $n_2\geq \max \big( l(z_1), l(z_2) \big)$ such that both $d\big[ z_1, V_{\geq n_2} \big]$ and $d\big[ z_2, V_{\geq n_2} \big]$ are greater than or equal to $\dfrac{1}{2}$. For each $z\in \operatorname{conv}\{z_1, z_2\}$, again find $n(z)$ such that $d\big[z, V_{\geq n(z)} \big] > \dfrac{1}{2}$. Since, for each $N\in\mathbb{N}$, the following function is continuous in $X$:

    $$
    d_N: C \rightarrow(0, \infty), \hspace{0.3cm} x\mapsto d\big[x, V_{\geq N} \big]
    $$
    Hence:

    $$
    \operatorname{conv}\{z_1, z_2\} \subseteq \bigcup_{z\in \operatorname{conv}\{z_1, z_2\}} d_{n(z)}^{-1} \left( \frac{1}{2}, \infty \right)
    $$
    Since $\operatorname{conv}\{z_1, z_2\}$ is compact, then there exists $N_2\in\mathbb{N}$ such that $d\big[z, V_{\geq N_2}\big] > \frac{1}{2}$ for each $z\in\operatorname{conv}\{z_1, z_2\}$. By induction, we can find $\{z_i\}_{i\in\mathbb{N}} \subseteq \{t_i\}_{i\in\mathbb{N}}$ so that for any $k\in\mathbb{N}$ with $k>1$:

    $$
    d\big[ \operatorname{conv}\{z_i\}_{i\leq k}, V_{\geq N_k} \big] > \frac{1}{2}
    $$
    Recall that $\inf_{c\in T_n}\|c\|>\mu$ for all $n\geq N_1$. Without losing generality, assume $\{z_i\}_{i\in\mathbb{N}} \subseteq \{t_i\}_{i\geq N_1}$ (or remove finitely many points) and $\{N_i\}$ is strictly increasing. For each $\delta, \Delta$ with $\mu\in(\delta, \Delta)$ and each $j\in\mathbb{N}$, pick $z_{n_j} \in \{z_i\}_{i\geq N_j}$ such that $\|z_{n_j}\|\in (\delta, \Delta)$ for all $j\in\mathbb{N}$ and for any $k\in\mathbb{N}$ there exists $K\in\mathbb{N}$ with:

    $$
    \begin{aligned}
    & \hspace{1cm} \{z_{n_j}\}_{j\leq k} \subseteq \{z_i\}_{i\leq K}, \hspace{1cm} \{z_{n_j}\}_{j>k} \subseteq V_{\geq N_K}\\
    &\implies\, d\big[ \operatorname{conv}\{z_{n_j}\}_{j\leq k}, \operatorname{conv}\{z_{n_j}\}_{j>k} \big] \geq d\big[ \operatorname{conv}\{z_i\}_{i\leq K}, V_{\geq N_k} \big] > \frac{1}{2}
    \end{aligned}
    $$
    which contradicts our assumption. Hence let $t_1^1 \in V_{\geq 1}$ such that for all $n\geq l(t_1^1)$, $d\big[ t_1^1, V_{\geq n} \big] < \dfrac{1}{2}$. Let $n_1 = \max(N_1, l(t_1^1)$, $\mu_1 = \inf_{c \in T_{n_1}}\|c\|$, $\sigma_1 = \dfrac{1}{2}$. Let $\{t_n^1\}_{n\in\mathbb{N}} \subseteq V_{\geq n_1}$ such that $\|t_1^1 - t_n^1\| < \sigma_1$ for all $n\in\mathbb{N}$. Let $\delta_1,\Delta_1$ be given by our assumption. Without losing generality (or restrict to a subsequence) suppose $\|t_n^1\|\in (\delta_1, \Delta_1)$ for all $n\in\mathbb{N}$. Since $\operatorname{conv}\{t_n^1\}_{n\in\mathbb{N}} \subseteq T_{n_1}$, for each $\xi\in \operatorname{conv}\{t_n^1\}$, $\|\xi\| \in  [\mu_1, \Delta_1)$. Hence by our assumption there exists $m_1\in\mathbb{N}$ such that:

    \begin{equation}\label{e32}
    d\big[ \operatorname{conv}\{t_n^1\}_{n\leq m_1}, \operatorname{conv}\{t_n^1\}_{n>m_1} \big] < \sigma_1
    \end{equation}
    For our next claim:

    \begin{equation}\label{e33}
    \exists\, t_1^2\in \operatorname{conv}\{t_n^1\}_{n > m_1}\textbf{   such that   }\forall\,n > \max(l(t_1^2), m_1), \hspace{0.3cm} d\big[ t_1^2, V_{\geq n}\big] \leq \frac{1}{2^2}
    \end{equation}
    similar to the how we prove (\ref{e31}), the claim above is true. Let $n_2 = \max(m_1, l(t_1^2))$, $\sigma_2 = \dfrac{1}{2^2}$ and $\mu_2 = \inf_{c\in T_{n_2}} \|c\|$. Then there exists $\{t_n^2\}_{n\in\mathbb{N}} \subseteq \{t_n^1\}_{n > m_1}$ such that, with $\delta_2, \Delta_2$ given by our assumption and $\mu_2\in(\delta_2, \Delta_2)$, $\|t_n^2\|\in(\delta_2, \Delta_2)$ and $\|t_n^2-t_1^2\| < \sigma_2$ for all $n\in\mathbb{N}$. Also since $\operatorname{conv}\{t_n^2\} \subseteq T_{n_2}$, by our assumption there exists $m_2\in\mathbb{N}$ such that:

    $$
    d\big[ \operatorname{conv}\{t_n^2\}_{n\leq m_2}, \operatorname{conv}\{t_n^2\}_{n>m_2} \big] < \sigma_2
    $$
    Since $\{t_n^2\}_{n\in\mathbb{N}} \subseteq \{t_n^1\}_{n>m_1}$, we have $\|t_{m_1}^1 - t_{m_2}^2\| < \sigma_1$ By induction, for each $k\in\mathbb{N}$, we have $\{t_n^k\}$, $m_k$ and $\sigma_k = \dfrac{1}{2^k}$ such that:

    $$
    d\big[ \operatorname{conv}\{t_n^k\}_{n\leq m_k}, \operatorname{conv}\{t_n^k\}_{n>m_k} \big] < \sigma_k
    $$ 
    and for each $i, j\in\mathbb{N}$ with $i\leq j$, we have $\|t_{m_i}^i - t_{m_j}^j\| < \sigma_i$. Hence $\{t_{m_i}^i\}_{i\in\mathbb{N}}$ is a Cauchy sequence in $T_{n_1}$ and suppose $t_{m_i}^i \rightarrow t$. Since for each $k\in\mathbb{N}$, there exists $M_K\in\mathbb{N}$ such that $\{t_{m_i}^i\}_{i\geq k} \subseteq T_{M_K}$, by the assumption that $\{T_n\}$ is strictly decreasing:

    $$
    t\in \bigcap_{K\in\mathbb{N}}T_{M_K} = \bigcap_{n\in\mathbb{N}}T_n = \emptyset
    $$
    which is absurd.
        
\end{itemize}
\end{proof}

\begin{theorem}[{\cite[Theorem 1]{7}}]\label{Theorem 6.3}
Let $X$ be a Banach space and $E$ a bounded weakly closed subset. Then the following statements are equivalent:

\begin{enumerate}[label = \arabic*)]
    \item $E$ is weakly compact
    \item $E$ is weakly countably compact
    \item For each sequence $\{x_n\}\subseteq E$, we can find $x \in E$ such that $\liminf f(x_n)\leq f(x)\leq \limsup f(x_n)\,\forall\,f \in X^*$
    
    \item If $\{K_n\}$ is a decreasing sequence of closed convex sets and $E\cap K_n\neq\emptyset\,\forall\,n\in\mathbb{N}$, then $E\cap\bigcap_n K_n\neq\emptyset$
    
    \item For each sequence of $\{x_n\}\subseteq E$ and bounded sequence of $\{f_n\}\subseteq X^*$ we have:
    $$
    \lim_n\lim_k f_n(x_k) = \lim_k\lim_n f_n(x_k)
    $$
    whenever these limits exists
    
    \item For each sequence $\{x_n\}\subseteq E$ and bounded sequence of $\{f_n\}\subseteq X^*$ we have:
    $$
    \inf\{f_n(x_k)\,\vert\,n < k\} \leq \sup\{f_n(x_k)\,\vert\,n > k\}
    $$
    
    \item For each sequence $\{x_n\}\subseteq E$, $0 \in \overline{\bigcup_{n \geq 1}[\operatorname{conv}\{x_i\}_{i \leq n} - \operatorname{conv}\{x_i\}_{i > n}]}$
    
    \item For each sequence $\{x_n\}\subseteq E$, $0 \in \overline{\bigcup_{n \geq 1}[\operatorname{Span}\{x_i\}_{i \leq n} - \operatorname{conv}\{x_i\}_{i > n}]}$
    
    \item There will \textbf{NOT} exist $\theta > 0$, a sequence $\{z_n\}\subseteq E$ and a bounded sequence $\{g_n\}\subseteq X^*$ such that
    $$
    g_n(x_k) 
    \begin{cases}
    > \theta, \hspace{1cm} n \leq k\\ 
    = 0, \hspace{1cm} n > k\end{cases}
    $$

    \item $E$ is weakly sequentially compact.
    
\end{enumerate}
\end{theorem}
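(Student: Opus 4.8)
The plan is to treat the five ``topological'' conditions $(1),(2),(3),(4),(10)$ as already mutually equivalent, since this is exactly the content of Theorem \ref{Theorem 1.19} applied to the bounded set $E$, and then to fasten the five ``quantitative'' conditions $(5),(6),(7),(8),(9)$ onto this block by two short cycles that pass through condition $(1)$. Concretely, I would establish the chain $(3)\Rightarrow(5)\Rightarrow(6)\Rightarrow(9)$ and the chain $(1)\Rightarrow(7)\Rightarrow(8)$ as the ``easy'' forward direction, and then close both loops with a single construction giving $\neg(1)\Rightarrow\neg(8)\wedge\neg(9)$, i.e. $(8)\Rightarrow(1)$ and $(9)\Rightarrow(1)$. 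Combined with Theorem \ref{Theorem 1.19} this yields all ten equivalences.

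For $(3)\Rightarrow(5)$, given $\{x_n\}\subseteq E$ and a bounded $\{f_n\}$ for which both iterated limits exist, I would take a weak-$\ast$ cluster point $g$ of $\{f_n\}$ via Banach--Alaoglu. Applying the cluster condition $(3)$ to each individual $f_n$ forces $f_n(x)=\lim_k f_n(x_k)$, whence $\lim_n f_n(x)=\lim_n\lim_k f_n(x_k)$; applying $(3)$ to $g$ gives $g(x)=\lim_k g(x_k)=\lim_k\lim_n f_n(x_k)$; and since $g(x)$ is a cluster value of the convergent sequence $f_n(x)$, the two iterated limits coincide. For $(5)\Rightarrow(6)$ I would argue by contraposition: if some $\{x_n\},\{f_n\}$ satisfy $\inf\{f_n(x_k):n<k\}>\sup\{f_n(x_k):n>k\}$, a diagonal extraction (legitimate by boundedness) yields subsequences along which both iterated limits exist but remain separated by the same gap, contradicting $(5)$. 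The implication $(6)\Rightarrow(9)$ is immediate on contraposition, since a system as in $(9)$ has $\inf\{g_n(z_k):n<k\}\ge\theta>0=\sup\{g_n(z_k):n>k\}$. For $(1)\Rightarrow(7)$ I would apply condition $(4)$ to the decreasing closed convex sets $\overline{\operatorname{conv}}\{x_i\}_{i>n}$ to obtain a common point $w$; because each point of $\operatorname{conv}\{x_i\}_{i\ge1}$ has finite support, $w$ is approximable both from some $\operatorname{conv}\{x_i\}_{i\le N}$ and from $\operatorname{conv}\{x_i\}_{i>N}$, so the corresponding difference tends to $0$. Finally $(7)\Rightarrow(8)$ holds trivially because $\operatorname{conv}\subseteq\operatorname{Span}$ only enlarges the set whose closure must contain $0$.

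The heart of the argument, and the step I expect to be hardest, is the construction behind $\neg(1)\Rightarrow\neg(8)\wedge\neg(9)$. If $E$ is not weakly compact then, as in the proof of $5)\Rightarrow1)$ in Theorem \ref{Theorem 1.19}, $Q(E)$ fails to be weak-$\ast$ compact, so there is $\phi$ in the weak-$\ast$ closure of $Q(E)$ with $\phi\notin Q(E)$; since $E$ is bounded and weakly closed and $Q$ is a weak-to-weak-$\ast$ homeomorphism on bounded sets, any such $\phi$ lies outside the closed subspace $Q(X)$, so $d[\phi,Q(X)]>0$. Imitating the inductive construction in the proof of Theorem \ref{Theorem 3.2}, I would alternately select points $x_n\in E$, using the weak-$\ast$ density of $Q(E)$ near $\phi$ to keep the selected points inside $E$ while prescribing finitely many functional values, and functionals $g_n$ produced by Helly's condition (Theorem \ref{Theorem 1.16}, together with Corollary \ref{Corollary 1.18}) so that each $g_n$ annihilates $x_1,\dots,x_n$ while taking a value at least $\theta$ on every $x_k$ with $k>n$, with $\sup_n\|g_n\|<\infty$. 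This is precisely a system of type $(9)$, and the very same $g_n$ witness $\neg(8)$: for $s\in\operatorname{Span}\{x_i\}_{i\le n}$ and $c\in\operatorname{conv}\{x_i\}_{i>n}$ one has $g_n(s-c)\le-\theta$, forcing $\|s-c\|\ge\theta/\sup_n\|g_n\|$ uniformly in $n$, so $0$ stays outside the closure.

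The delicate point, and the reason this last step is the main obstacle, is that I must keep the selected points in $E$ itself rather than merely in $\overline{\operatorname{conv}}(E)$, which is all that Theorem \ref{Theorem 6.2} supplies directly, and do so while simultaneously maintaining the triangular annihilation pattern and a uniform positive lower bound $\theta$. Threading the weak-$\ast$ approximation of $\phi$ by $Q(E)$ through the finite solvability guaranteed by Helly's condition, so that both the membership $x_n\in E$ and the biorthogonal-type inequalities survive at every stage, is where the genuine work of the proof concentrates.
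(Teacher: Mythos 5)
Your proposal is correct, and the forward chains $(1)\Rightarrow(2)\Rightarrow(3)\Rightarrow(4)$, $(3)\Rightarrow(5)\Rightarrow(6)\Rightarrow(9)$, $(4)\Rightarrow(7)\Rightarrow(8)$ coincide with the paper's arguments almost verbatim (the paper proves $(1)\Rightarrow(5)$ rather than $(3)\Rightarrow(5)$, but via the same cluster-point device). The genuine divergence is in how the loop is closed. The paper routes everything through condition $(7)$: it invokes \textbf{Theorem \ref{Theorem 6.2}} to get $(1)\Leftrightarrow(7)$, proves $(9)\Rightarrow(7)$ by contraposition using Mazur/Eidelheit separation (from a sequence with uniformly separated convex hulls it manufactures the hyperplanes $g_n$ and the triangular system), and handles $(10)$ by a separate two-way argument with $(9)$ using \textbf{Proposition \ref{Proposition 5.4}}. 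You instead absorb $(10)$ into the topological block via \textbf{Theorem \ref{Theorem 1.19}} and close the loop with a single construction $\neg(1)\Rightarrow\neg(8)\wedge\neg(9)$ built from an element $\phi$ of the weak-$\ast$ closure of $Q(E)$ lying outside $Q(X)$, alternating Helly's condition (for the $g_n$, with norm bound $\theta'/d[\phi,Q(X)]<1$) with weak-$\ast$ approximation of $\phi$ by $Q(E)$ (for the $x_n\in E$, giving $g_i(x_n)>\theta$ only approximately, which is all condition $(9)$ requires). This is the method of \textbf{Theorem \ref{Theorem 2.3}} and \textbf{Theorem \ref{Theorem 3.2}} transplanted to a general bounded weakly closed set. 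What your route buys: it produces the witnessing sequence directly inside $E$, whereas the paper's appeal to \textbf{Theorem \ref{Theorem 6.2}} for $\neg(1)\Rightarrow\neg(7)$ is stated for a bounded closed \emph{convex} set and a priori only yields a sequence in $\overline{\operatorname{conv}}(E)$, a point the paper glosses over; your construction also makes $(8)\Rightarrow(1)$ and $(9)\Rightarrow(1)$ fall out of one argument. What the paper's route buys is that it stays at the level of separation theorems in $X$ itself and keeps the quantitative link between the separation constant $\sigma$ of the convex hulls and the constant $\theta$ of the functional system, which is what later sections (e.g.\ \textbf{Theorem \ref{Theorem 6.8}}) exploit.
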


\begin{proof}$\hspace{0.44cm}\\$
\begin{itemize}

    \item $1)\implies 2)\implies 3):$ Immediate.
    
    \item $3)\implies 4):$ Follows immediately by \textbf{Corollary \ref{Corollary 1.23}}.
    
    \item $1)\implies 5):$ While the equivalence between $1)$ and $2)$ is proven in \textbf{Theorem \ref{Theorem 1.19}}, the conclusion follows immediately the existence of the weak clustered points of $(x_n)_{n\in \mathbb{N}}$ and $(f_n)_{n\in \mathbb{N}}$.
        
    \item $5)\implies 6):$ Given $\{x_n\}\subseteq E$ and a bounded sequence of continuous linear functionals $\{f_n\}$, we have the set $\{f_n(x_k)\}$ is bounded in $\mathbb{C}$. Therefore, we can find subsequences of $\{x_{k_j}\}\subseteq \{x_k\}$ and $\{f_{n_i}\}\subseteq \{f_n\}$ such that both $\lim_i\lim_j f_{n_i}x_{k_j}, \lim_j\lim_i f_{n_i}x_{k_j}$ exists. If $\inf\{f_n(x_k)\,\vert\,n < k\} > \sup\{f_n(x_k)\,\vert\,n > k\}$, then:
    
    $$
    \begin{aligned}
    &\hspace{1cm}\forall\,n, k\in\mathbb{N}, \hspace{0.3cm} \liminf_j f_n(x_{k_j}) = \lim_j f_n(x_{k_j}) > \limsup_i f_{n_i}(x_k) = \lim_i f_{n_i}(x_k)\\
    &\implies\,\forall\, i, j\in\mathbb{N}, \hspace{0.3cm} \lim_j f_{n_i}(x_{k_j})> \lim_i f_{n_i}(x_{k_j})\\
    &\implies\, \limsup_i \big( \lim_j f_{n_i}(x_{k_j}) \big) > \liminf_j \big( \lim_i f_{n_i}(x_{k_j}) \big)\\
    &\implies\, \lim_i\lim_j f_{n_i}(x_{k_j}) > \lim_j\lim_i f_{n_i}(x_{k_j})
    \end{aligned}
    $$
    
    which contradicts our assumption. 
        
    \item $6)\implies 9):$ If $9)$ is false, for the sequence $\{z_n\}$ and $\{g_n\}$ given by the assumption we have:

    $$
    0 = \sup\big\{ g_n(z_k) \,\vert\, n>k \big\} < \theta\leq \inf\big\{ g_n(z_k)\,\vert\, n<k\big\}
    $$
    
    \item $4)\implies 7):$ Given a sequence $\{x_n\}\subseteq E$, by assumption let $x \in \bigcap_n \overline{\operatorname{conv}\{x_i\}_{i \geq n}}$. Fix an open ball centred at $0$, say $W$ and then find a open convex neighborhood $U$ so that $U-U\subseteq W$. Since $x\in \overline{\operatorname{conv}\{x_i\}_{i\geq 1}}$, there $N\in\mathbb{N}$ and $v_N \in \operatorname{conv}\{x_i\}_{i\leq N}$ such that $x-v_N\in U$. Since $x\in \overline{\operatorname{conv}\{x_i\}_{i>N}}$, let $s_N\in \operatorname{conv}\{x_i\}_{i>N}$ such that $x-s_N \ in U$. Hence:

    $$
    v_N - s_N \in W\cap \operatorname{conv}\{x_i\}_{i\leq N} - \operatorname{conv}\{x_i\}_{i>N} \hspace{0.3cm}\implies\hspace{0.3cm} W\cap \bigcup_{n\geq 1} \big[ \operatorname{conv}\{x_i\}_{i\leq n} - \operatorname{conv}\{x_i\}_{i>n} \big]\neq\emptyset
    $$
    Since $W$ is an arbitrary open ball centred at $0$, the conclusion follows immediately.

    \item $7)\implies 8):$ Immediate.
        
    \item $8)\implies 9):$ Suppose $9)$ is false, Let $\{g_n\}\subseteq X^*$, $\{z_n\}\subseteq E$ and $\theta$ be given such that:

    $$
    g_n(z_k)
    \begin{cases}
    >\theta, \hspace{1cm} n\leq k\\
    =0, \hspace{1cm} n>k
    \end{cases}
    $$
    Without losing generality, suppose $\|g_n\|\leq 1$ for each $n\in\mathbb{N}$. Then for each $N\in\mathbb{N}$, $\{a_i\}_{i\leq N}\subseteq\mathbb{C}$ and a finite set of convex coefficients $\{\alpha_j\}$, we have:

    $$
    \left\| \sum_{i\leq N}a_i z_i - \sum_{j>N}\alpha_j z_j \right\| \geq \left\vert\, g_{N+1} \left( \sum_{i\leq N}a_i z_i - \sum_{j>N}\alpha_j z_j \right)\, \right\vert = \left\vert\, \sum_{j>N}\alpha_j g_{N+1}(z_j)\, \right\vert > \theta
    $$
    
    \item $1) \Longleftrightarrow 7):$ Follows immediately by \textbf{Theorem \ref{Theorem 6.2}}.

    \item $9)\Longrightarrow 1):$ Since $1)$ is equivalent to $7)$ according to \textbf{Theorem \ref{Theorem 6.2}}, it suffices to prove that $9)$ implies $7)$. Suppose $7)$ is false and let $\{z_i\}_{i\in\mathbb{N}} \subseteq E$ be a sequence such that for some $\delta\in(0, 1)$:

    \begin{equation}\label{e34}
    \inf_{n\in\mathbb{N}} d\big[ \operatorname{conv}\{z_i\}_{i\leq n}, \operatorname{conv}\{z_i\}_{i>n} \big] = \delta
    \end{equation}
    Without losing generality (or remove $z_1$ from $\{z_i\}_{i\in\mathbb{N}}$), we can assume:

    $$
    \inf\big\{ \|z\|: z\in\operatorname{conv}\{z_i\}_{i\in\mathbb{N}} \big\} = \delta
    $$
    Hence, by \textbf{Corollary \ref{Corollary 1.22}} there exists $f\in X^*_{\leq 1}$ such that:

    $$
    \lambda = \inf\big\{ f(z): z\in\operatorname{conv}\{z_i\}_{i\geq 1} \big\} > f(0) = 0
    $$
    Observe that for each $n\in\mathbb{N}$, $\operatorname{conv}\{z_i\}_{i\leq n}$ is compact and has non-empty interior. Then by (\ref{e34}) and \textbf{Theorem \ref{Theorem 3.11}}, for each $n\in\mathbb{N}$ there exists $g_n\in X^*_{\leq 1}$, $c_n\in\mathbb{R}$ such that:

    $$
    \sup\big\{ g_n(c): c\in \operatorname{conv}\{z_i\}_{i\leq n} \big\} < c_n < \inf\big\{ g_n(c): c\in \operatorname{conv}\{z_i\}_{i>n} \big\}
    $$
    In the case where $\{c_n\}$ is not bounded, we can then find $\{\lambda_n\}_{n\in\mathbb{N}} \subseteq (0, 1)$ such that $\{ \lambda_n c_n\}$ is bounded and $\inf_n \vert\, \lambda_n c_n\,\vert = 0$. Then replace each $g_n$ by $\lambda_n g_n$ and $c_n$ by $\lambda_n c_n$. Therefore without losing generality, we can assume the set $\{c_n\}$ is bounded and $\inf_n \vert\,c_n\,\vert = 0$. Since $\inf_n c_n=0$, we can further assume $\lim_n c_n=0$ (or replace $\{c_n\}$ by a proper subsequence) and then assume that $\sup_n \vert\,c_n\,\vert < \lambda$ (or remove the first $N$ elements). Next define: 

    $$
    h_n: X\rightarrow \mathbb{C}, \hspace{0.3cm} z\mapsto
    \begin{cases}
    0, \hspace{1cm} z\in\operatorname{Span}\{z_i\}_{i\leq n}\\
    g_n+f, \hspace{0.19cm} z\notin \operatorname{Span}\{z_i\}_{i>n}
    \end{cases}
    $$
    By our assumption, fix $\theta\in \big( 0, \lambda-\sup_n\vert\,c_n\,\vert \big)$. Then for each $n, k\in\mathbb{N}$

    $$
    h_n(z_k)
    \begin{cases}
    =0, \hspace{1cm} k\leq n \\
    >\theta, \hspace{1cm} k>n \\
    \end{cases}
    $$

    \item $9)\Longrightarrow 10):$ Suppose $E$ is not sequentially compact. Then $E$ is not weakly compact. By \textbf{Theorem \ref{Theorem 6.2}}, there exists $\mu, \sigma>0$ such that for any $\delta, \Delta>0$ with $\mu\in (\delta, \Delta)$, there exists a sequence $\{z_i\}\subseteq E$ such that:

    $$
    \forall\,\xi \in \operatorname{conv}\{z_n\}, \hspace{0.3cm} \|\xi\| \in (\delta, \Delta)
    $$
    and:

    $$
    \forall\,n\in\mathbb{N},\hspace{0.3cm} d\big[ \operatorname{conv}\{z_i\}_{i\leq n}, \operatorname{conv}\{z_i\}_{i>n} \big] \geq \sigma
    $$
    Assume by contradiction that $3)$ is true. Then there exists $z\in E$ such that for each $g\in X^*$:

    $$
    \liminf_i g(z_i) \leq g(z) \leq \limsup_i g(z_i)
    $$
    By \textbf{Corollary \ref{Corollary 1.23}}, we have $z\in \overline{\operatorname{conv}\{z_i\}_{i\geq 1}}$. Then there exists $z'\in \operatorname{conv}\{z_i\}_{i\leq N}$ for some $N\in\mathbb{N}$ such that $\|z-z'\|< \dfrac{\sigma}{2}$, which implies for any $x\in \operatorname{conv}\{z_i\}_{i>N}$:

    $$
    \|z-x\| \geq \big\vert\, \|z-z'\| - \|z'-x\| \,\big\vert > \frac{\sigma}{2}
    $$
    Hence $z\notin \overline{\operatorname{conv}\{z_i\}_{i>N}}$. By \textbf{Corollary \ref{Corollary 1.22}}, there exists $f\in X^*$ such that:

    \begin{equation}\label{e37}
    f(z) > \sup\big\{ f(v)\,\vert\, v\in\operatorname{conv}\{z_i\}_{i>N}\big\}
    \end{equation}
    Meanwhile, by assumption we have:

    $$
    f(z) \leq \limsup_i f(z_i) \leq \lim_n\sup\big\{ f(v) \,\vert\, v\in \operatorname{conv}\{z_i\}_{i\geq n} \big\} \leq \sup\big\{ f(v) \,\vert\, v\in \operatorname{conv}\{z_i\}_{i>N}\big\}
    $$
    which contradicts (\ref{e37}).
    
    \item $10)\Longrightarrow 9):$ Suppose $9)$ is false and let $\{z_i\}\subseteq E$, $\{g_n\}\subseteq X^*$ and $\theta>0$ be given such that:

    $$
    g_n(z_i) =
    \begin{cases}
    \theta, \hspace{1cm} n\leq i\\
    0, \hspace{1cm} n>i\\
    \end{cases}
    $$
    Assume by contradiction that $24)$ is correct. Then suppose $\{z_{n_i}\}_{i\in\mathbb{N}} \subseteq \{z_i\}$ is a subsequence that weakly converges to $z\in E$. Then for each $n\in\mathbb{N}$, $\{g_n(z_{n_i})\}_{i\in\mathbb{N}}$ is eventually equal to $\theta$ and for each $i\in\mathbb{N}$, $\{g_n(z_{n_i})\}_{n\in\mathbb{N}}$ is eventually equal to $0$. Then by \textbf{Proposition \ref{Proposition 5.4}}:

    $$
    \theta = \lim_n g_n(z) = \lim_n\lim_i g_n(z_{n_i}) = \lim_i\lim_n g_n(z_{n_i}) = 0
    $$
    which is absurd.

\end{itemize}
\end{proof}

\begin{theorem}[{\cite[Theorem 2]{7}}]\label{Theorem 6.4}
Let $X$ be a Banach space and $E$ a weakly closed bounded subset. Then the following statement are equivalent and each is equivalent to any one of statement listed under \textbf{Theorem \ref{Theorem 6.3}}.

\begin{enumerate}[label = \arabic*)]\addtocounter{enumi}{10}
    
    \item For each complex-valued function $f$ that is weakly continuous on $E$, $\sup_{e\in E}\vert\, f(e)\,\vert < \infty$.
    
    \item For each complex-valued function $f$ that is weakly continuous on $E$, there exists $e_0\in E$ such that $\vert\,f(e_0)\,\vert = \sup_{e\in E} \vert\,f(e)\,\vert$.
    
    \item For each $f\in X^*$, there exists $e_0\in E$ such that $\vert\, f(e_0)\,\vert = \sup_{e\in E} \vert\,f(e)\,\vert$.
    
    \item $\overline{\operatorname{circ}E}$ is weakly compact.
    
    \item $\overline{\operatorname{conv}E}$ is weakly compact.
    
    \item For an arbitrary sequence $\{x_n\}\subseteq E$, if $g$ is a weakly continuous linear functional such that $\lim_n g(x_n)$ exists, then there exists $e\in E$ with $g(e) = \lim_n g(x_n)$.
    
    \item For an arbitrary sequence $\{x_n\}\subseteq E$, if there exists $g\in X^*$ such that $\lim_n g(x_n)$ exists, then there exists $e\in E$ with $g(e) = \lim_n g(x_n)$.
    
\end{enumerate}
\end{theorem}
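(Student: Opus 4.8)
The plan is to show that each of the conditions $11)$ through $17)$ is equivalent to weak compactness of $E$, i.e.\ to condition $1)$ of \textbf{Theorem \ref{Theorem 6.3}}; combining this with the equivalences already proved there yields the full statement by transitivity. The whole argument is anchored on the observation that condition $13)$ is \emph{verbatim} the cited James-type theorem opening this section: since $E$ is bounded and weakly closed, the assertion that every $f\in X^*$ attains its norm $\sup_{e\in E}|f(e)|$ on $E$ is exactly that theorem's criterion for weak compactness. Thus $13)\Leftrightarrow 1)$ is obtained for free, and it suffices to reduce the remaining conditions either to $13)$ or to $1)$.

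First I would close the short cycle $1)\Rightarrow 11)\Rightarrow 12)\Rightarrow 13)$. The implication $1)\Rightarrow 11)$ is immediate: a weakly continuous complex-valued $f$ sends the weakly compact $E$ onto a compact, hence bounded, subset of $\mathbb{C}$. For $11)\Rightarrow 12)$ I use a reciprocal trick: given weakly continuous $f$ with $M=\sup_{e\in E}|f(e)|<\infty$ (finite by $11)$), if the supremum were not attained then $M-|f|$ would be weakly continuous, strictly positive on $E$, with infimum $0$, so $e\mapsto 1/\bigl(M-|f(e)|\bigr)$ would be weakly continuous and unbounded, contradicting $11)$. Finally $12)\Rightarrow 13)$ is trivial since every $f\in X^*$ is weakly continuous. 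This settles $11)$, $12)$, $13)$.

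Next I would treat the hull and sequential conditions. For $14)$ and $15)$ the backward implications are immediate: $E$ is weakly closed and contained in $\overline{\operatorname{circ}E}$ (resp.\ $\overline{\operatorname{conv}E}$), so if either hull is weakly compact then $E$ is a weakly closed subset of a weakly compact set, hence weakly compact. For the forward directions, starting from $1)$ I note the routine identities $\sup_{\operatorname{conv}E}|f|=\sup_{\operatorname{circ}E}|f|=\sup_{e\in E}|f(e)|$, the common value being attained at a point of $E$; applying the cited James theorem to the bounded weakly closed convex set $\overline{\operatorname{conv}E}$ gives its weak compactness, and $\overline{\operatorname{circ}E}$ is then a weakly closed subset of the weakly compact $\overline{\operatorname{conv}(\operatorname{circ}E)}$. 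For $16)$ and $17)$, a \emph{linear} functional is weakly continuous iff it lies in $X^*$ (by \textbf{Theorem \ref{Theorem 1.25}} with $\Gamma=X^*$), so $16)\Leftrightarrow 17)$. The implication $1)\Rightarrow 17)$ follows from weak sequential compactness (the equivalence $1)\Leftrightarrow 10)$ in \textbf{Theorem \ref{Theorem 6.3}}): extract $x_{n_k}\rightharpoonup e\in E$ and read off $g(e)=\lim_k g(x_{n_k})$. Conversely $17)\Rightarrow 13)$: given $f\in X^*$, choose $x_n\in E$ with $|f(x_n)|\to \sup_E|f|$, pass to a subsequence along which $f(x_n)\to\lambda$ (possible as $f$ is bounded on $E$), and apply $17)$ with $g=f$ to get $e\in E$ with $f(e)=\lambda$, whence $|f(e)|=\sup_E|f|$.

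The deep ingredient, which I would treat as a black box, is the cited James-type theorem furnishing $13)\Rightarrow 1)$, that is, that norm-attainment by every $f\in X^*$ forces weak compactness; this is the genuine obstacle and is not reducible to the elementary separation machinery of \textbf{Theorem \ref{Theorem 6.2}}. Everything else is point-set topology on $(E,\text{weak})$ together with the already-established \textbf{Theorem \ref{Theorem 6.3}}. The only remaining points demanding care are the weak-continuity checks in the reciprocal trick of $11)\Rightarrow 12)$ and the verification that the relevant suprema of $|f|$ over the convex and circled hulls coincide with $\sup_E|f|$ and are attained at points of $E$, both of which are routine but should be stated precisely.
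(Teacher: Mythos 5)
Your proposal is correct and follows essentially the same route as the paper: the same cycle $1)\Rightarrow 11)\Rightarrow 12)\Rightarrow 13)$ with the identical reciprocal trick, the same James norm-attainment black box to close the loop back to weak compactness (the paper invokes it as Theorem 2.4 applied to $\overline{\operatorname{circ}E}$ to get $13)\Rightarrow 14)\Rightarrow 15)\Rightarrow 1)$, rather than quoting the section-opening statement for $13)\Leftrightarrow 1)$ directly, but it is the same ingredient), and the same dispatch of $14)$--$17)$. The only cosmetic difference is that the paper phrases $1)\Rightarrow 11)$ and $1)\Rightarrow 16)$ as open-cover arguments where you use compactness of the image and weak sequential compactness.
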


\begin{proof}

From now on statements $1)\sim 9)$ will be corresponding statements in \textbf{Theorem \ref{Theorem 6.3}}.

\begin{itemize}
    
    \item $1)\Longrightarrow 11):$ Suppose $\pi$ is a weakly continuous complex-valued function and unbounded on $E$. Then if we let $\mathbb{D}$ be the unit disk on $\mathbb{C}$, then the weakly open cover, $\big\{ \pi^{-1}(n\mathbb{D}) \big\}_{n\in\mathbb{N}}$, does not have any finite sub-covers. Hence $E$ is not weakly compact.
    
    \item $11)\Longrightarrow 12):$ Suppose $\pi$ is a weakly continuous complex-valued function such that for all $e_0\in E$, $\vert\,\pi(e_0)\,\vert < \sup_{e\in E}\vert\, \pi(e)\,\vert$. Then define:

    $$
    \pi^*: E\rightarrow\mathbb{R}, \hspace{0.3cm} x\mapsto \frac{1}{\Big\vert\, \big[ \sup_{e\in E}\vert\, \pi(e) \,\vert \big] - \pi(x)\, \Big\vert}
    $$
    Clearly $\pi^*$ is a weakly continuous but unbounded on $E$.

    \item $12) \Longrightarrow 13):$ Immediate.
    
    \item $13)\Longrightarrow 14):$ By \textbf{Theorem \ref{Theorem 2.4}}, since $\overline{\operatorname{circ}E}$ is weakly closed and bounded, if $\overline{\operatorname{circ}E}$ is not weakly compact, there exists $g\in X^*$ such that for all $e_0\in E$, $\vert\, g(e_0)\,\vert < \sup_{e\in E} \vert\,g(e)\,\vert$.
    
    \item $14)\implies 15)\implies 1):$ Immediate.
    
    \item $1)\implies 16):$ Let $L = \lim_n g(x_n)$. Suppose $L\notin g(E)$, then the following weakly open cover of $E$:

    $$
    \left\{ \Big\{ x\in E: \big\vert\, L-g(x)\, \big\vert > \frac{1}{n} \Big\} \right\}_{n\in\mathbb{N}}
    $$
    does not have a finite sub-cover and hence $E$ cannot be weakly compact.
    
    \item $16)\implies 17)\implies 13):$ Immediate.
    
\end{itemize}
\end{proof}

\noindent
To prove the next list of conditions is equivalent to previous one (from \textbf{Theorem \ref{Theorem 6.3}} and \textbf{Theorem \ref{Theorem 6.4}}), we need the following result:

\begin{theorem}[{\cite[Schauder-Tychonoff Theorem]{19}}]\label{ST-Theorem}
In a LCTVS $V$, if $K\subseteq X$ is a convex subset and $K_0\subseteq K$ is compact, then every continuous mapping from $K$ to $K_0$ has a fixed point in $K_0$. 
    
\end{theorem}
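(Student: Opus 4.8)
The plan is to deduce this from the finite-dimensional Brouwer fixed point theorem by the standard Schauder-projection approximation, arranged so that no completeness hypothesis on $V$ is needed. First I would discard the ambient set and work on $K' = \operatorname{conv}(K_0)$, which is convex, contained in $K$ (since $K$ is convex and $K_0 \subseteq K$), and is mapped by $f$ into $K_0 \subseteq K'$. The point of passing to $K'$ is that every evaluation of $f$ in the argument below takes place at a point of $\operatorname{conv}(K_0)$, so I never need to extend $f$ nor to know that the closed convex hull of $K_0$ is compact.

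Next I would manufacture approximate fixed points. Fix a continuous seminorm $p$ on $V$ and an $\epsilon > 0$. Since $K_0$ is compact it has a finite $p$-net $\{y_1, \dots, y_n\} \subseteq K_0$, so every point of $K_0$ lies within $p$-distance $\epsilon$ of some $y_i$. Define the Schauder projection
$$ P_{p,\epsilon}(x) = \frac{\sum_{i \le n} \varphi_i(x)\, y_i}{\sum_{i \le n} \varphi_i(x)}, \qquad \varphi_i(x) = \max\bigl(0,\, \epsilon - p(x - y_i)\bigr), $$
which is continuous and well defined on $K_0$ (the denominator never vanishes there, as each $x \in K_0$ is within $\epsilon$ of some net point), which maps $K_0$ into the finite-dimensional simplex $S = \operatorname{conv}\{y_1, \dots, y_n\} \subseteq K'$, and which satisfies $p\bigl(P_{p,\epsilon}(x) - x\bigr) \le \epsilon$. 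The composite $P_{p,\epsilon} \circ f$ then carries $S$ continuously into $S$; since $S$ is a compact convex subset of the finite-dimensional space $\operatorname{Span}\{y_i\}$, Brouwer's theorem yields $x_{p,\epsilon} \in S$ with $P_{p,\epsilon}\bigl(f(x_{p,\epsilon})\bigr) = x_{p,\epsilon}$, whence $p\bigl(f(x_{p,\epsilon}) - x_{p,\epsilon}\bigr) = p\bigl(f(x_{p,\epsilon}) - P_{p,\epsilon}(f(x_{p,\epsilon}))\bigr) \le \epsilon$.

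Finally I would extract a genuine fixed point. The points $f(x_{p,\epsilon})$ all lie in the compact set $K_0$, so the net indexed by the directed family of pairs $(p,\epsilon)$ (ordered by $p \le p'$ pointwise and $\epsilon \ge \epsilon'$, which is directed because the maximum of two continuous seminorms is a continuous seminorm) has a cluster point $x^\ast \in K_0 \subseteq K$. Using the continuity of $f$ and of each seminorm together with the approximate-fixed-point estimate, I would show that $q\bigl(f(x^\ast) - x^\ast\bigr) = 0$ for every continuous seminorm $q$; since $V$ is Hausdorff the seminorms separate points, forcing $f(x^\ast) = x^\ast$, and $x^\ast \in K_0$ as the theorem requires. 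The main obstacle is precisely this limiting step: one must arrange the directed index set so that a single subnet converging to $x^\ast$ simultaneously witnesses $q(f(\cdot) - \cdot) \le \epsilon$ for all $q$, and then convert the family of approximate inequalities into the exact identity $f(x^\ast) = x^\ast$ via the cluster-point (subnet) characterization in the topology generated by the seminorms. By comparison the finite-dimensional construction and continuity of $P_{p,\epsilon}$ and the application of Brouwer's theorem are routine.
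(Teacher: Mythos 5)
The paper does not actually prove this statement: it is imported from \cite{19} and used as a black box in the step $1)\Longrightarrow 22)$ of \textbf{Theorem \ref{Theorem 6.6}}, so there is no in-paper argument to compare yours against. Your proposal is the standard Schauder-projection-plus-Brouwer proof, and it is sound. Two features are worth keeping explicit when you write it up: restricting to $K'=\operatorname{conv}(K_0)$ genuinely avoids any appeal to completeness or to compactness of the \emph{closed} convex hull of $K_0$ (which can fail in a non-complete space), and the estimate $p\bigl(P_{p,\epsilon}(x)-x\bigr)\le\epsilon$ holds because $P_{p,\epsilon}(x)$ is a convex combination of only those net points $y_i$ with $p(x-y_i)<\epsilon$. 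The limiting step you flag as the main obstacle does close without extra machinery, and you do not need one subnet that works for every seminorm simultaneously: fix a continuous seminorm $q$ and $\delta>0$, choose (using that finite intersections of seminorm balls form a local base and that a finite maximum of continuous seminorms is one) a single continuous seminorm $q_1\ge q$ with $q_1(x-x^{\ast})<\delta$ implying $q\bigl(f(x)-f(x^{\ast})\bigr)<\delta$; the indices $(p,\epsilon)$ with $p\ge q_1$ and $\epsilon<\delta/2$ form a tail of your directed set, a cluster point of a net remains a cluster point of its restriction to any tail, and picking an index in that tail with $f(x_{p,\epsilon})$ in the $q_1$- and $q$-ball of radius $\delta/2$ about $x^{\ast}$ yields $q\bigl(f(x^{\ast})-x^{\ast}\bigr)\le 3\delta$ by the triangle inequality; Hausdorffness then gives $f(x^{\ast})=x^{\ast}$. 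The only hypothesis you should surface is that the LCTVS is Hausdorff, which you use both for Brouwer's theorem on the finite-dimensional simplex and for the final separation of points.
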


\begin{theorem}[{\cite[Theorem 3]{7}}]\label{Theorem 6.6}

Let $X$ be a Banach space and $E$ a convex bounded closed subset of $X$. Then the following statements are equivalent to each of statements $1)\sim 17)$ in \textbf{Theorem \ref{Theorem 6.3}} and \textbf{Theorem \ref{Theorem 6.4}}.

\begin{enumerate}[label = \arabic*)]\addtocounter{enumi}{17}

    \item If $\big\{ (x_n, f_i) \big\}_{n, i\in\mathbb{N}}$ is a bi-orthogonal system where $\{f_i\}_{i\in\mathbb{N}}$ is bounded, then there exists $N\in\mathbb{N}$ such that $\sum_{i\leq N}x_i \notin E$.
    
    \item If $\{(x_n, f_i)\}$ is a bounded bi-orthogonal system, then there exists $N\in\mathbb{N}$ such that $\sum_{i\leq N}x_i\notin E$.
    
    \item For any sequence of hyperplanes $\{H_i\}_{i\in\mathbb{N}}$, we have:

    $$
    \forall\,n\in\mathbb{N}, \hspace{0.3cm} \bigcap_{i\leq N}E\cap H_i \neq\emptyset \hspace{1cm}\implies\hspace{1cm} \bigcap_{i\in\mathbb{N}} E\cap H_i \neq\emptyset
    $$
    
    \item Recall for each subset $A\subseteq X$, we use $\operatorname{flat}(A)$ to denote the smallest flat set that contains $A$ (check \textbf{Definition \ref{Definition 3.1}}). For each sequence $\{x_n\}\subseteq E$, we have:
    
    $$
    0 \in\overline{\bigcup_{n \geq 1}\operatorname{Span}\{x_i\}_{i \leq n}-\operatorname{flat}\{x_i\}_{i > n}}
    $$
    
    \item Each affine continuous mapping of a non-empty closed convex subset of $E$ into $E$ itself has a fixed point
    
    \item There will \textbf{NOT} exist $\theta > 0$, a sequence $\{z_i\}\subseteq E$ and a bounded sequence $\{g_n\}_{n\in\mathbb{N}} \subseteq X^*$ such that:
    
    \begin{equation}\label{e35}
    g_i(z_n) = 
    \begin{cases}
    \theta,\hspace{1cm} i \leq n\\
    0,\hspace{1cm} i > n
    \end{cases}
    \end{equation}
    
\end{enumerate}
\end{theorem}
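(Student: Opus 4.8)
The plan is to prove that each of (18)--(23) is equivalent to weak compactness of $E$ by wiring them to the already-established conditions of \textbf{Theorem \ref{Theorem 6.3}} and \textbf{Theorem \ref{Theorem 6.4}}, which I will treat as mutually equivalent. I would begin with the purely formal equivalences $(18)\Leftrightarrow(19)\Leftrightarrow(23)$. Here $(18)\Rightarrow(19)$ is immediate, since a bounded bi-orthogonal system in particular has $\{f_i\}$ bounded; and $(19)\Rightarrow(18)$ follows by observing that if a system with $\{f_i\}$ bounded had every partial sum $s_N=\sum_{i\le N}x_i$ inside $E$, then $x_n=s_n-s_{n-1}$ would be bounded (as $E$ is bounded), making the system bounded and contradicting $(19)$. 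The equivalence $(19)\Leftrightarrow(23)$ is the telescoping correspondence $z_N\leftrightarrow s_N=\sum_{i\le N}x_i$, $g_i\leftrightarrow\theta f_i$, $x_i\leftrightarrow z_i-z_{i-1}$: a bounded bi-orthogonal system with all $s_N\in E$ gives $f_i(s_N)=1$ for $i\le N$ and $0$ for $i>N$, the pattern of $(23)$ with $\theta=1$; conversely the exact system of $(23)$ produces, via $x_i=z_i-z_{i-1}$ and $f_i=g_i/\theta$, a bounded bi-orthogonal system whose partial sums are exactly the $z_N\in E$.

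For the forward implications I would read each condition off a known one. If the exact system negating $(23)$ existed, the iterated limits $\lim_i\lim_n g_i(z_n)=\theta$ and $\lim_n\lim_i g_i(z_n)=0$ would both exist and differ, contradicting condition $(5)$ of \textbf{Theorem \ref{Theorem 6.3}}; hence weak compactness gives $(23)$, and then $(19)$ and $(18)$. For $(20)$, writing $K_N=\bigcap_{i\le N}H_i$ gives a decreasing sequence of closed convex sets meeting $E$, so condition $(4)$ yields a point of $E$ in $\bigcap_i H_i$. For $(21)$, since $\operatorname{conv}\{x_i\}_{i>n}\subseteq\operatorname{flat}\{x_i\}_{i>n}$, the union in condition $(8)$ lies inside the union in $(21)$, giving $(8)\Rightarrow(21)$. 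For $(22)$, a closed convex $C\subseteq E$ is weakly compact, an affine norm-continuous self-map of $C$ is weakly continuous (its linear part being weak--weak continuous), and \textbf{Theorem \ref{ST-Theorem}} applied in the weak topology supplies a fixed point.

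The substance is in the backward directions, all of which reduce to the single construction: \emph{if $E$ is not weakly compact, then the exact triangular system negating $(23)$ exists.} I would obtain this by feeding the separated sequence $\{z_i\}\subseteq E$ of \textbf{Theorem \ref{Theorem 6.2}} (with $d[\operatorname{conv}\{z_i\}_{i\le n},\operatorname{conv}\{z_i\}_{i>n}]\ge\sigma$) into the Helly machinery of \textbf{Theorem \ref{Theorem 1.16}}, exactly as in the proof of $1)\Rightarrow 2)$ of \textbf{Theorem \ref{Theorem 3.2}} but relativized to $E$: the missing cluster point of $\{z_i\}$ supplies a functional in $X^{\ast\ast}$ off $Q(E)$ that lets Helly's condition realize the values $g_i(z_n)=\theta$ for $i\le n$ and $0$ for $i>n$ \emph{exactly}, rather than only up to the inequalities furnished by separation. \emph{This upgrade from the inequalities of condition $(9)$ to exact equalities is the main obstacle}, and it is what forces the appearance of $\operatorname{flat}$ in $(21)$: once the values are constant on each tail, every affine combination $v=\sum_{k>n}\alpha_k z_k$ with $\sum\alpha_k=1$ satisfies $g_{n+1}(v)=\theta$, while $g_{n+1}$ annihilates $\operatorname{Span}\{z_i\}_{i\le n}$, so every element of $\operatorname{Span}\{z_i\}_{i\le n}-\operatorname{flat}\{z_i\}_{i>n}$ has norm at least $\theta/\sup_i\|g_i\|$, putting $0$ at positive distance from the union and negating $(21)$.

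Granting the exact system, the remaining negations follow. Telescoping turns it into a bounded bi-orthogonal system with all partial sums in $E$, negating $(19)$ and hence $(18)$. For $(20)$ I would take $H_i=\{x:g_i(x)=\theta\}$, so that $z_N\in\bigcap_{i\le N}H_i\cap E$ gives the finite-intersection property; the delicate point is that $\bigcap_i H_i\cap E$ be empty, which I would secure through the separability reduction of \textbf{Remark \ref{Remark 2.8}} together with the fact that $g_i\to 0$ pointwise on $\overline{\operatorname{Span}\{z_n\}}$ (so no point there meets every $H_i$), augmenting the family with kernels of functionals annihilating $\overline{\operatorname{Span}\{z_n\}}$ to drive any common point into that subspace. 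For $(22)$ I would exhibit a fixed-point-free affine continuous self-map of $\overline{\operatorname{conv}\{z_n\}}$, namely the coordinate shift $z_n\mapsto z_{n+1}$, whose continuity and self-mapping property come from the norm bounds of \textbf{Theorem \ref{Theorem 6.2}} and whose fixed point would manufacture a weak cluster point violating the separation constant $\sigma$. This closes every loop back to weak compactness.
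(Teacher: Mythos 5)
Your overall architecture matches the paper's: the forward implications are read off conditions $1)$, $4)$, $5)$, $8)$ together with the Schauder--Tychonoff theorem, every backward implication is funneled through the exact triangular system of $(23)$, and the formal equivalences $(18)\Leftrightarrow(19)\Leftrightarrow(23)$ together with the negations of $(20)$, $(21)$, $(22)$ \emph{granted} the exact system are essentially what the paper does. The problem is the single step you yourself flag as the main obstacle: producing the exact system $g_i(z_n)=\theta$ for $i\le n$, $=0$ for $i>n$, with $z_n\in E$. Your proposal to run the construction of \textbf{Theorem \ref{Theorem 3.2}} ``relativized to $E$'' does not go through: \textbf{Theorem \ref{Theorem 1.16}} produces a point with prescribed functional values inside a norm ball of radius $M+\epsilon$, and \textbf{Proposition \ref{Proposition 5.2}} depends on being able to dilate the unit ball by a factor $1+\sigma$; neither mechanism places the constructed points inside an arbitrary bounded closed convex set $E$. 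Weak-$\ast$ density of $Q(E)$ in its weak-$\ast$ closure only yields \emph{approximate} values $g_i(e)\approx F(g_i)$, and there is no Helly-type statement that solves finitely many exact linear equations inside $E$.

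The paper closes this gap by a different device, which is the one idea your proposal is missing: it proves $23)\Rightarrow 9)$ by contraposition, starting from the \emph{inequality} system of condition $9)$ (already equivalent to non-weak-compactness by \textbf{Theorem \ref{Theorem 6.3}}) and upgrading $g_n(z_k)>\theta$ to exact equalities by replacing the $z_k$ with convex combinations $\alpha_n z_{k_n}+(1-\alpha_n)z_{k_1}$ chosen so that $g_1$ takes a constant value $\sigma_1\ge\theta$ on the new sequence, iterating over a subsequence of the functionals, and finally rescaling $h_j=\tfrac{\theta}{\sigma_j}g_{K_j}$. This is precisely where the convexity hypothesis on $E$ (which is absent from \textbf{Theorem \ref{Theorem 6.3}} and \textbf{Theorem \ref{Theorem 6.4}}) is used: the convex combinations stay in $E$. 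Without this step, or an equivalent substitute, your backward chain never produces the exact system and hence never reaches a contradiction with weak compactness. A secondary, smaller imprecision: for the negation of $(20)$ the paper does not need the separability reduction you invoke; it simply extends $g_0=\lim_i g_i$ (defined on the subspace where the limit exists) to all of $X$ and adjoins the hyperplane $H_0=\operatorname{Ker}g_0$, which forces $\bigcap_i E\cap H_i=\emptyset$ directly.
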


\begin{proof}$\hspace{1cm}\\$
\begin{itemize}

    \item $5) \Longrightarrow 19):$ Suppose $19)$ is false. Since we have $\{\sum_{i\leq n}x_i\}_{n\in\mathbb{N}} \subseteq E$, if $5)$ is true then:

    $$
    1 = \lim_n \lim_k f_n \left( \sum_{i\leq k}x_i \right) = \lim_k \lim_n f_n \left( \sum_{i\leq k}x_i \right) = 0
    $$
    which is absurd.

    \item $18) \Longrightarrow 19):$ Immediate.
    
    \item $4) \Longrightarrow 20):$ Immediate.

    \item $8) \Longrightarrow 21):$ Since a flat set is clear convex, the conclusion follows immediately.
    
    \item $1)\Longrightarrow 22):$ When $E$ is weakly compact, the conclusion follows by \textbf{Theorem \ref{ST-Theorem}}.
    
    \item $19) \Longrightarrow 23):$ Suppose $23)$ is false. Then let $\{z_n\}_{n\in\mathbb{N}}$, $\{g_i\}_{i\in\mathbb{N}}$ and $\theta>0$ be given by the assumption such that for each $i, n\in\mathbb{N}$:

    $$
    g_i(z_n) = 
    \begin{cases}
    \theta, \hspace{1cm}i\leq n\\
    0, \hspace{1cm} i>n\\
    \end{cases}
    $$
    Then let $x_1 = z_1$ and for each $n\in\mathbb{N} (n>1)$, define $x_n = z_n - z_{n-1}$. Then $\{x_i\}_{i\in\mathbb{N}}$ is bounded and hence $\big\{ (x_i, \dfrac{g_n}{\theta}) \big\}_{i, n\in\mathbb{N}}$ is a bounded bi-orthogonal system such that for each $N\in\mathbb{N}$, $\sum_{i\leq N}x_i = z_N\in E$.

    \item $20) \Longrightarrow 23):$ Suppose $23)$ is false and then let $\{z_n\}\subseteq E$, $\{g_i\}\subseteq X^*$ and $\theta>0$ be given such that (\ref{e35}) holds. Define:

    $$
    V = \left\{ v\in X \,\vert\, \lim_i g_i(v) \textbf{  exists} \right\}
    $$
    Clearly $V$ is a closed subspace and contains $\{z_n\}$. Then there exists $g_0\in X^*$ such that for each $v\in V$, $g_0(v) = \lim_i g_i(v)$. Define $H_0 = \operatorname{Ker}g_0$ and for each $i\in\mathbb{N}$, $H_i = g_i^{-1}\{\theta\}$. Then for each $n\in\mathbb{N}$:

    $$
    z_n \in \bigcap_{0\leq i \leq n}H_i
    $$
    Assume by contradiction that $\bigcap_{i\geq 0}E\cap H_i$ is non-empty and contains $s$ for some $s\in X$. Then $g_n(s) = \theta$ for each $n$ and hence $g_0(s) = \theta$, which contradicts that $s\in H_0$.\\

    \item $21) \Longrightarrow 23):$ Assume $23)$ is false and let $\{z_n\}\subseteq E$, $\{g_i\}\subseteq X^*$ and $\theta>0$ be given by the assumption. Since $\{g_i\}$ is bounded, suppose $\vert\,g_i(x)\,\vert < \theta$ for all $i$ whenever $\|x\|<\epsilon$ for some $\epsilon>0$. Fix an arbitrary $N\in\mathbb{N}$, $z\in \operatorname{Span}\{z_n\}_{n\leq N}$, $u\in \operatorname{flat}\{z_n\}_{n>N}$. Then $g_{N+1}(z-u) = -\theta$ which implies that $\|z-u\| \geq\epsilon$ and hence:

    $$
    \begin{aligned}
    & \hspace{1cm} \left\{ x\in X: \|x\|<\epsilon \right\} \cap \left[ \operatorname{Span}\{z_n\}_{n\leq N} - \operatorname{flat}\{z_n\}_{n>N} \right] = \emptyset \\
    & \implies\, \left\{ x\in X: \|x\| < \frac{\epsilon}{2} \right\} \cap \bigcup_{N\in\mathbb{N}} \overline{ \left[ \operatorname{Span}\{z_n\}_{n\leq N} - \operatorname{flat}\{z_n\}_{n>N} \right]} = \emptyset\\
    & \implies\, \left\{ x\in X: \|x\|<\frac{\epsilon}{2} \right\} \cap \overline{ \bigcup_{N\in\mathbb{N}} \left[\operatorname{Span}\{z_n\}_{n\leq N} \cap \operatorname{flat}\{z_n\}_{n>N} \right]} = \emptyset
    \end{aligned}
    $$
    
    \item $22)\Longrightarrow 23:$ Suppose $23)$ is false and let $\{z_n\}\subseteq E$, $\{g_i\}\subseteq X^*$ and $\theta>0$ be given by the assumption. For each $i\in\mathbb{N}$, define $\alpha_i = \dfrac{1}{\theta}\big[ g_i - g_{i+1} \big]$. Since, for each $z\in \operatorname{conv}\{z_n\}_{n\geq 1}$, $g_1(z)=\theta$ and for each $i\in\mathbb{N}$, $g_i(z) - g_{i+1}(z)\geq 0$, we have for each $x\in \overline{\operatorname{conv}\{z_i\}_{i\geq 1}}$, $g_1(x) = \theta$ and $\alpha_i(x)\geq 0$ for each $i$. Also, for each $z\in \operatorname{conv}\{z_n\}_{n\geq 1}$, $g_1(z) - g_k(z)$ is eventually equal to $\theta$. Therefore:

    $$
    \begin{aligned}
    &\hspace{1cm} \forall\,z\in \operatorname{conv}\{z_n\}_{n\geq 1}, \hspace{0.3cm} \lim_k\sum_{i<k}\alpha_i(z) = \sum_{i\geq 1}\alpha_i(z) = \lim_k \frac{1}{\theta}\big[ g_1(z) - g_k(z) \big] = 1\\
    &\implies\, \forall\, x\in \overline{ \operatorname{conv}\{z_n\}_{n\geq 1} }, \hspace{0.3cm} \sum_{i\geq 1}\alpha_i(x) = 1
    \end{aligned}
    $$
    Fix $x\in \overline{\operatorname{conv}\{z_n\}_{n\geq 1}}$ and suppose $\{x_n\}\subseteq \operatorname{conv}\{z_n\}_{n\geq 1}$ converges to $x$. Then consider $\{g_i(x_n)\}_{i, n\in\mathbb{N}}$. For each $n\in\mathbb{N}$, $\big\{ g_i(z_n) \big\}$ is eventually equal to zero. For each $i\in\mathbb{N}$, $\{ g_{i+1}(z_n) - g_i(z_n) \}_{n\in\mathbb{N}}$ is eventually constant. Hence by \textbf{Proposition \ref{Proposition 5.4}}, we have:

    \begin{equation}\label{e36}
    \forall\,x\in \overline{\operatorname{conv}\{z_i\}_{i\geq 1}}, \hspace{0.3cm} \lim_i g_i(x) = \lim_i\lim_n g_i(x_n) = \lim_n\lim_i g_i(x_n) = 0
    \end{equation}
    For each $N\in\mathbb{N}$ define:

    $$
    w_N = \frac{g_N(x)}{\theta}z_N + \sum_{i<N}\alpha_i(x) z_i
    $$
    Then:

    $$
    \begin{aligned}
    &\hspace{1cm} \left\| w_N - \sum_{i\geq 1}\alpha_i(x) z_i \right\| \leq \left\| \sum_{i\geq N}\alpha_i(x)z_i \right\| + \left\vert\, \frac{g_N(x)}{\theta}\, \right\vert \|z_N\| \overset{N\rightarrow\infty}{\longrightarrow} 0\\
    &\implies\, \lim_N w_N = \sum_{i\geq 1}\alpha_i(x) z_i
    \end{aligned}
    $$
    For each $k, N\in\mathbb{N}$ with $N>k$:
    
    $$
    \begin{aligned}
    &\hspace{1.36cm} g_k(w_N - x) = \frac{g_N(x)}{\theta}g_k(z_N) + \sum_{i<N}\alpha_i(x) g_k(z_i)  - g_k(x)\\
    &\hspace{0.9cm} = \sum_{k\leq i < N}\alpha_i(x)\theta + g_N(x) - g_k(x) = g_k(x) - g_N(x) + g_N(x) - g_k(x) = 0\\
    &\implies\, \lim_N g_k(w_N - x) = g_k \left( \sum_{i\geq 1}\alpha_i(x)z_i \right) - g_k(x) = 0
    \end{aligned}
    $$
    Since $\{g_n\}_{n\in\mathbb{N}}$ separates points in $\overline{ \operatorname{Span}\{z_i\}_{i\in\mathbb{N}} }$, we then can conclude $x = \sum_{i\geq 1}\alpha_i(x) z_i$ and, for $x$ is arbitrarily fixed, we have for each $v\in \overline{\operatorname{conv}\{z_i\}_{i\geq 1}}$, $v = \sum_{i\geq 1}\alpha_i(v) z_i$. Next define:

    $$
    \pi: \overline{\operatorname{conv}\{z_i\}_{i\geq 1}} \rightarrow \overline{\operatorname{conv}\{z_i\}_{i\geq 1}}, \hspace{0.3cm} v = \sum_{i\geq 1}\alpha_i(v) z_i \mapsto \sum_{i\geq 1}\alpha_i(v) z_{i+1}
    $$
    Clearly $\pi$ is continuous and, since each $\alpha_i$ is linear, $\pi$ is also linear. If $u$ is a fixed point of $\pi$, we then have:

    $$
    \begin{aligned}
    &\hspace{0.92cm} \begin{cases}
    \alpha_1(u) = 0\\
    \forall\,i\in\mathbb{N}, \hspace{0.3cm} \alpha_i(u) = \alpha_{i+1}(u)\\
    \end{cases}
    \,\implies \hspace{0.3cm}
    \begin{cases}
    g_1(u) = g_2(u) = \theta\\
    \forall\,i\in\mathbb{N}, \hspace{0.3cm} g_i(u) - g_{i+1}(u) = g_{i+1}(u) - g_{i+2}(u)\\
    \end{cases}\\
    &\implies\, \forall\,i\in\mathbb{N}, \hspace{0.3cm} g_i(u) = \theta
    \end{aligned}
    $$
    which contradicts (\ref{e36}). Hence $\pi$ has no fixed points.

    \item $23)\Longrightarrow 9):$ Suppose $9)$ is false and let $\{z_n\}\subseteq E$, $\{g_i\}\subseteq X^*$ and $\theta>0$ be given by the assumption such that for each $i, n\in\mathbb{N}$:

    $$
    g_i(z_n)
    \begin{cases}
    >\theta, \hspace{1cm} i\geq n\\
    =0, \hspace{1cm} i>n
    \end{cases}
    $$
    Define $\theta_1 = \liminf_n g_1(z_n)$.
    
    \begin{itemize}
    
        \item If $\{g_1(z_n)\}_{n\in\mathbb{N}}$ has a subsequence that is eventually equal to $\theta_1$, let $\{z_n^1\}_{n\in\mathbb{N}}$ be that subsequence and define $\sigma_1 = \theta_1$.

        \item Otherwise, let $\{z_{k_n}\}_{n\in\mathbb{N}}$ be a proper subsequence such that $\lim_n g_1(z_{k_n}) = \theta_1$. Since $g_1(z_{k_n})>\theta$ for each $n$, without losing generality (or restrict to a subsequence), assume $\{g_1(z_{k_n})\}$ is strictly decreasing. Fix $\sigma_1\in \big( g_1(z_{k_2}), g_1(z_{k_1}) \big)$. Then for each $n\in\mathbb{N} (n>1)$, there exists $\alpha_n^1\in(0, 1)$ such that:

        $$
        \alpha_n^1 g_1(z_{k_n}) + (1-\alpha_n^1) g_1(z_{k_1}) = \sigma_1
        $$

        Then define $z_1^1 = z_{k_1}$ and for each $n\in\mathbb{N}$ with $n>1$, define:

        $$
        z_n^1 = \alpha_n^1 z_{k_n} + (1-\alpha_n^1) z_{k_1}
        $$
        
    \end{itemize}

     \noindent
     In either case, we have a subsequence $\{z_n^1\}\subseteq \operatorname{conv}\{z_n\}_{n\geq 1}$ and $\sigma_1\geq\theta$ such that $g_1(z_n^1) = \sigma_1$ for all $n\in\mathbb{N}$. Next find $K_2\in\mathbb{N}$ that is large enough so that $g_{K_2}(z_1^1)=0$. Define $\theta_2 = \liminf_n g_{K_2}(z_n^1)$. Similarly there exists $\sigma_2\geq \theta_2$, $\{z_n^2\}\subseteq \operatorname{conv}\{z_n^1\}$ such that for each $n\in\mathbb{N}$, $g_{K_2}(z_n^2) = \sigma_2$ and $g_1(z_n^2) = \sigma_1$.By induction, for each $j\in\mathbb{N} (j>1)$, there is $\{z_n^j\}_{n\in\mathbb{N}} \subseteq \operatorname{conv}\{z_n^{j-1}\}_{n\in\mathbb{N}}$ and $K_j\in\mathbb{N}$ such that for each $i\in\mathbb{N}$:

     $$
     g_{K_j}(z_n^i)=
     \begin{cases}
     \sigma_j, \hspace{0.85cm}j\leq i\\
     0, \hspace{1cm} j>i
     \end{cases}
     $$
     For each $j\in\mathbb{N}$, define $h_j = \dfrac{\theta}{\sigma_j}g_{K_j}$. Since $\{g_i\}$ is bounded and $\sigma_j\geq\theta$ for each $j$, $\{h_j\}$ is also a bounded sequence. Also $\{z_n^n\}_{j\in\mathbb{N}}$ is contained in $E$ for $E$ is convex. Then for each $j, n\in\mathbb{N}$:

     $$
     h_j(z_n^n)=
     \begin{cases}
     \theta, \hspace{1cm}j\leq n\\
     0, \hspace{1cm}j>n
     \end{cases}
     $$
    
\end{itemize}
\end{proof}

\begin{theorem}[{\cite[Theorem 5]{7}}]\label{Theorem 6.7}
Let $X$ be a Banach space and $E$ a convex closed bounded subset. Then the following theorems are equivalent to each of $1)\sim 23)$ from \textbf{Theorem \ref{Theorem 6.3}, \ref{Theorem 6.4}} and \textbf{Theorem \ref{Theorem 6.6}}.

\begin{enumerate}[label = \arabic*)]\addtocounter{enumi}{23}
    
    \item If two convex closed subsets $A_1, A_2$ of $E$ are disjoint, then there exists $f\in X^*$ such that:
    
    $$
    \sup_{a_1 \in A_1}f(a_1) < \inf_{a_2\in A_2}f(a_2)
    $$
    
    \item If two convex closed subsets $A_1, A_2$ of $E$ are disjoint, then we can find $r > 0$ and $f \in X^*$ such that: 
    
    $$
    \sup_{a_1\in A_1}f(a_1) < r < \inf_{a_2\in A_2}f(a_2)
    $$
    
\end{enumerate}
\end{theorem}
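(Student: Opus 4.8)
The plan is to fold 24) and 25) into the established chain by proving $1)\Rightarrow 24)$, the essentially trivial equivalence $24)\Leftrightarrow 25)$, and a reverse implication $24)\Rightarrow 1)$ by contraposition; since all of $1)\sim 23)$ are already known to be equivalent, landing on weak compactness closes the circle. The conceptual engine is the observation, drawn from \textbf{Corollary \ref{Corollary 1.22}} read in the weak topology of $X$ (whose continuous dual is exactly $X^*$), that two disjoint closed convex sets $A_1,A_2$ can be strictly separated by some $f\in X^*$ if and only if $0\notin\overline{A_2-A_1}$ in norm: strict separation $\sup_{A_1}f<\inf_{A_2}f$ says precisely $\inf_{A_2-A_1}f>0$, i.e. that the origin is strictly separated from the convex set $A_2-A_1$. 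In these terms, 24) asserts that for every disjoint pair the difference set stays off the origin, and 25) is the same statement with a witnessing level $r$ inserted between the two bounds; dropping $r$ gives $25)\Rightarrow 24)$, while any real strictly between $\sup_{A_1}f$ and $\inf_{A_2}f$ furnishes the $r$ in $25)$, so the two are interderivable the instant a separating functional is produced.

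For $1)\Rightarrow 24)$ I would suppose $E$ weakly compact and take $A_1,A_2\subseteq E$ disjoint, closed and convex. Closed convex sets are weakly closed, so $A_1,A_2$ are weakly compact as weakly closed subsets of the weakly compact $E$. Then $A_2-A_1$ is convex and weakly closed (the algebraic difference of a weakly closed set and a weakly compact one is closed in the topological vector space $(X,w)$) and omits $0$ by disjointness. \textbf{Corollary \ref{Corollary 1.22}}, applied in $(X,w)$, strictly separates $0$ from $A_2-A_1$, and after possibly replacing the resulting functional by its negative this reads $\sup_{A_1}f<\inf_{A_2}f$, giving 24) and, by the previous paragraph, 25).

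For the converse I argue the contrapositive: assuming $E$ is not weakly compact I will exhibit one disjoint pair of closed convex subsets of $E$ that cannot be strictly separated. By \textbf{Remark \ref{Remark 2.8}} it suffices to work inside a Banach space, and translating $E$ (which affects neither weak compactness nor the separation property) I may assume $0\in E$. Since $E$ is not weakly compact, \textbf{Theorem \ref{Theorem 6.2}} supplies $\sigma,\delta,\Delta>0$ and a sequence $\{z_i\}\subseteq E$ all of whose convex combinations have norm in $(\delta,\Delta)$ and with $d[\operatorname{conv}\{z_i\}_{i\le n},\operatorname{conv}\{z_i\}_{i>n}]\ge\sigma$ for every $n$; in particular $\{z_i\}$ has no norm-convergent subsequence and $0\notin A_2:=\overline{\operatorname{conv}}\{z_i\}$. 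The target pair is $A_2$ together with a shrunken asymptotic copy $A_1:=\overline{\operatorname{conv}}\{(1-\eta_i)z_i\}$ for a sequence $\eta_i\downarrow 0$ (each $(1-\eta_i)z_i\in\operatorname{conv}\{z_i,0\}\subseteq E$). Since $z_i\in A_2$, $(1-\eta_i)z_i\in A_1$ and $\|z_i-(1-\eta_i)z_i\|=\eta_i\|z_i\|\to 0$, one gets $0\in\overline{A_2-A_1}$, so by the separation criterion no $f\in X^*$ strictly separates $A_1$ from $A_2$.

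The hard part will be the disjointness $A_1\cap A_2=\emptyset$: inseparability is cheap precisely because the two hulls asymptotically touch, but that same touching is exactly what makes disjointness delicate. One cannot hope to separate the generators by their supports — the contrast between $\ell^1$ (where disjoint-support hulls sit at distance $2$) and the summing-basis picture in $c_0$ (where averages of consecutive differences tend to $0$) shows that the mechanism is genuinely norm-geometric. I expect to extract it from the fine structure of \textbf{Theorem \ref{Theorem 6.2}}, namely the uniform lower bound $\|\xi\|\ge\delta$ on all convex combinations, together with a supporting functional for $0$ relative to $A_2$ (via \textbf{Corollary \ref{Corollary 1.22}}) that detects the shrinking factors $\eta_i$; should a uniform argument prove elusive, the \textbf{Rosenthal--Dor Theorem} (\textbf{Theorem \ref{Theorem 4.10}}) splits the situation into a weakly Cauchy regime, where a common bidual limit outside $Q(X)$ forces disjointness, and an $\ell^1$ regime, where coordinate functionals transported through the $\ell^1$-isomorphism separate $A_1$ from $A_2$ while the $\eta_i$ keep them asymptotically close. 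Verifying disjointness in this generality, and checking that the chosen $\eta_i$ simultaneously preserve disjointness and asymptotic contact, is where the substance of the argument sits; the forward separation and $24)\Leftrightarrow 25)$ are routine by comparison.
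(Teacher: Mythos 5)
Your treatment of $1)\Rightarrow 24)$ and of $24)\Leftrightarrow 25)$ is correct and essentially coincides with the paper's: weak compactness of $A_1,A_2$ gives weak compactness (hence weak closedness) of $A_2-A_1$, which misses the origin, and \textbf{Corollary \ref{Corollary 1.22}} in the weak topology produces the separating functional. The problem is the converse. You reduce it to exhibiting, in a non-weakly-compact $E$, a disjoint pair of closed convex sets with $0\in\overline{A_2-A_1}$, and you propose $A_2=\overline{\operatorname{conv}}\{z_i\}$, $A_1=\overline{\operatorname{conv}}\{(1-\eta_i)z_i\}$ with $\eta_i\downarrow 0$ and $\{z_i\}$ taken from \textbf{Theorem \ref{Theorem 6.2}}. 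The asymptotic-contact half is fine, but the disjointness $A_1\cap A_2=\emptyset$ --- which you yourself flag as ``where the substance sits'' --- is genuinely open in your write-up, and it is not a routine verification. The danger is precisely in the closures: a norm limit of combinations $\sum_i\lambda_i^{(n)}(1-\eta_i)z_i$ can have ``total mass'' $1$ if the coefficients $\lambda^{(n)}$ drift to high indices where $1-\eta_i\approx 1$, and \textbf{Theorem \ref{Theorem 6.2}} only gives you the metric data $\|\xi\|\in(\delta,\Delta)$ and the block separation $\sigma$; it does not hand you a functional that is \emph{constant} on $\operatorname{conv}\{z_i\}$, which is what one would use to read off that total mass. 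One can close this (show via such a functional that a common point forces the mass of $\lambda^{(n)}$ to escape to infinity, then use the $\sigma$-separation of blocks to show such a sequence cannot be norm-Cauchy), but that requires starting from the negation of condition $9)$ or $23)$ to obtain the functionals $g_n$ with $g_n(z_k)=\theta$ for $n\le k$ and $0$ for $n>k$, not merely from \textbf{Theorem \ref{Theorem 6.2}}; as written, your sketch offers only speculative routes (Rosenthal--Dor case split, supporting functionals) without carrying any of them out.

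For comparison, the paper proves $25)\Rightarrow 23)$ directly and sidesteps your difficulty by a different construction: negating $23)$ yields $\{z_i\}\subseteq E$ and bounded $\{g_n\}$ with $g_i(z_n)=\theta$ for $i\le n$ and $0$ for $i>n$, and the two sets are built from two systems of convex weights on the pairs $(z_{2i+1},z_{2i})$, namely $\bigl(\tfrac{i}{i+2},\tfrac{2}{i+2}\bigr)$ versus $\bigl(\tfrac{i+1}{i+2},\tfrac{1}{i+2}\bigr)$. The differences $g_{2i}-g_{2i+1}$ and $g_{2i+1}-g_{2i+2}$ read off the individual coefficients, so disjointness is a coefficient computation rather than a mass-escape analysis, while the weights converging to each other as $i\to\infty$ force any separating level $r$ in $25)$ to satisfy $r\le\frac{3}{N+2}\sup_i\|z_i\|\,\|f\|$ for every $N$, hence $r\le 0$. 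If you want to keep your scaling construction, you must (i) source the $z_i$ from the negation of $23)$ so that the constant functional $g_1$ is available, and (ii) supply the mass-escape-versus-Cauchy argument for disjointness; otherwise the backward implication is not proved.
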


\begin{proof}$\hspace{0.44cm}\\$
\begin{enumerate}[label = \alph*)]

    \item $1) \Longrightarrow 24):$ Assume $E$ is weakly compact. Then both $A_1, A_2$ are also weakly compact and hence weakly sequential compact by the equivalence of $1)$ and $10)$. Assume by contradiction that $0\in \overline{A_1-A_2}$. Since $A_1-A_2$ is convex, $0$ is also in the weak closure of $A_1-A_2$. Then there exists two sequences, $\{x_n\}\subseteq A_1$ and $\{y_n\}\subseteq A_2$ such that $x_n-y_n$ converges to $0$ weakly. By weakly sequential compactness, let $a_1$ be a weak cluster point of $\{x_n\}$ and $a_2$ a weak cluster point of $\{y_n\}$. Then we have $a_1=a_2$, which contradicts that $A_1\cap A_2 = \emptyset$.

    \item $24)\Longrightarrow 25):$ Immediate.

    \item $25) \Longrightarrow 23):$ Suppose $23)$ is false. Then similar to the proof to $24)\Longrightarrow 23)$, we will first define the following two disjoint closed convex subsets of $E$ where $\{z_i\}$ is given by the assumption:

    $$
    \begin{aligned}
    & B_1 = \left\{ \sum_{i\in\mathbb{N}} \gamma_i \Big[ \frac{i}{i+2}z_{2i+1} + \frac{2}{i+2}z_{2i}\Big]: \{\gamma_i\}\subseteq [0, 1] \textbf{   and   } \sum_{i\in\mathbb{N}}\gamma_i\in \Big[ \frac{1}{2}, 1\Big] \right\}\\
    & B_2 = \left\{ \sum_{i\in\mathbb{N}} \gamma_i \Big[ \frac{i+1}{i+2}z_{2i+1} + \frac{1}{i+2}z_{2i}\Big]: \{\gamma_i\}\subseteq [0, 1] \textbf{   and   } \sum_{i\in\mathbb{N}}\gamma_i= \Big[ \frac{1}{2}, 1\Big] \right\}
    \end{aligned}
    $$
    Then assume by contradiction that there exists $f\in X^*$ and $r>0$ such that:

    $$
    \sup_{b_1\in B_1}f(b_1) \leq r \leq \inf_{b_2\in B_2}f(b_2)
    $$
    Therefore, for each $N\in\mathbb{N}$, we have:

    $$
    \begin{aligned}
    &\hspace{1cm} \begin{cases}
    \dfrac{1}{2}f\left[ \dfrac{N+1}{N+2}z_{2N+1} + \dfrac{1}{N+2}z_{2N}\right] \geq r\\\\
    f\left[ \dfrac{N}{N+}z_{2N+1} + \dfrac{2}{N+2}z_{2N} \right]\leq r
    \end{cases}
     \hspace{0.2cm}\implies\hspace{0.2cm}
    \begin{cases}
    (N+2)f(z_{2N+1}) + f(z_{2N}) \geq 2(N+2)r\\
    Nf(z_{2N+1}) + 2f(z_{2N}) \leq (N+2)r
    \end{cases}\\
    &\implies\, 2f(z_{2N+1}) - f(z_{2N}) \geq (N+2)r \hspace{0.3cm}\implies\hspace{0.3cm} r\leq \frac{3}{N+2}\sup_{i\in\mathbb{N}}\|z_i\|\|f\| \overset{N\rightarrow\infty}{\longrightarrow} 0
    \end{aligned}
    $$
    Since $N$ is arbitrarily picked, we then conclude that $r\leq 0$, which contradicts our assumption.
    
\end{enumerate}
\end{proof}

\begin{theorem}[{\cite[Theorem 4]{7}}]\label{Theorem 6.8}
In a Banach space $X$, the following statements are equivalent to statements $1)\sim 25)$ from \textbf{Theorem \ref{Theorem 6.3}, \ref{Theorem 6.4}, \ref{Theorem 6.6}, \ref{Theorem 6.7}} with $E$ replaced by $X_{\leq 1}$ the closed unit ball of $X$.

\begin{enumerate}[label = \arabic*)]\addtocounter{enumi}{25}

    \item $X$ is not reflexive
    
    \item For each $\sigma\in(0, 1)$ there exists a sequence $\{z_i\}_{i\geq 1}\subseteq X_{\leq 1}$ such that for each $\xi\in \operatorname{conv}\{z_i\}_{i\geq 1}$, $\sigma < \|\xi\| \leq 1$ and:

    $$
    \inf_n d\big[ \operatorname{conv}\{z_i\}_{i\leq n}, \operatorname{conv}\{z_i\}_{i>n} \big] >\sigma
    $$
    
    \item For each $\sigma\in(0, 1)$ there exists a sequence $\{z_i\}_{i\geq 1}\subseteq X_{\leq 1}$ such that for each $\xi\in \operatorname{conv}\{z_i\}_{i\geq 1}$, $\sigma < \|\xi\| \leq 1$ and:

    $$
    \inf_n d\big[ \operatorname{Span}\{z_i\}_{i\leq n}, \operatorname{conv}\{z_i\}_{i>n} \big] >\sigma
    $$
    
    \item For each $\sigma\in(0, 1)$ there exists a sequence $\{z_i\}_{i\geq 1}\subseteq X_{\leq 1}$ such that for each $\xi\in \operatorname{conv}\{z_i\}_{i\geq 1}$, $\sigma < \|\xi\| \leq 1$ and:

    $$
    \inf_n d\big[ \operatorname{Span}\{z_i\}_{i\leq n}, \operatorname{flat}\{z_i\}_{i>n} \big] >\sigma
    $$
    
    \item For each $\sigma\in(0, 1)$ there exists a sequence $\{z_i\}_{i\geq 1}\subseteq X_{\leq 1}$ such that for each $\xi\in \operatorname{conv}\{z_i\}_{i\geq 1}$, $\sigma < \|\xi\| \leq 1$ and for any $n, p\in\mathbb{N}$ and $\{a_i\}_{i\geq 1}\subseteq  \mathbb{C}$:

    $$
    \left\| \sum_{i\leq n+p}a_iz_i \right\| \geq \frac{\sigma}{4} \left\| \sum_{i\leq n}a_iz_i \right\|
    $$
    
\end{enumerate}
\end{theorem}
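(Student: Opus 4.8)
The plan is to reduce everything to the geometric dichotomy already available. Since $X_{\le 1}$ is a bounded closed convex set, \textbf{Theorem \ref{Theorem 6.3}}, \textbf{Theorem \ref{Theorem 6.4}}, \textbf{Theorem \ref{Theorem 6.6}} and \textbf{Theorem \ref{Theorem 6.7}} all apply with $E=X_{\le 1}$, and by Goldstine's Theorem $X$ is reflexive exactly when $X_{\le 1}$ is weakly compact. Hence statement $26)$ is equivalent to $X_{\le 1}$ failing to be weakly compact, i.e.\ to the failure of each of $1)\sim 25)$ read with $E=X_{\le 1}$, so it suffices to prove that $26),27),28),29),30)$ are mutually equivalent. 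I would close the cycle $26)\Rightarrow 29)\Rightarrow 28)\Rightarrow 27)\Rightarrow 26)$ and then attach $30)$ through the two implications $30)\Rightarrow 26)$ and $26)\Rightarrow 30)$.

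For $26)\Rightarrow 29)$ I would invoke James' characterization \textbf{Theorem \ref{Theorem 3.2}}: given $\sigma\in(0,1)$, fix $\theta\in(\sigma,1)$ and take $\{z_i\}\subseteq X_{\le 1}$ and $\{g_n\}\subseteq X^*_{<1}$ with $g_n(z_i)=\theta$ for $i\ge n$ and $g_n(z_i)=0$ for $i<n$. Since $g_1\equiv\theta$ on every convex combination, each $\xi\in\operatorname{conv}\{z_i\}$ satisfies $\sigma<\theta\le\|\xi\|\le 1$; and since $g_{n+1}$ vanishes on $\operatorname{Span}\{z_i\}_{i\le n}$ and equals $\theta$ on every affine combination in $\operatorname{flat}\{z_i\}_{i>n}$, we get $d[\operatorname{Span}\{z_i\}_{i\le n},\operatorname{flat}\{z_i\}_{i>n}]\ge\theta>\sigma$. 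The implications $29)\Rightarrow 28)\Rightarrow 27)$ are then immediate from the inclusions $\operatorname{conv}\{z_i\}_{i>n}\subseteq\operatorname{flat}\{z_i\}_{i>n}$ and $\operatorname{conv}\{z_i\}_{i\le n}\subseteq\operatorname{Span}\{z_i\}_{i\le n}$, since enlarging either set only decreases the distance, so strict $\sigma$-separation is preserved by the same sequence. For $27)\Rightarrow 26)$ I would use the easy direction of \textbf{Theorem \ref{Theorem 6.2}} on $C=X_{\le 1}$: if $X_{\le 1}$ were weakly compact, the decreasing weakly closed convex sets $\overline{\operatorname{conv}\{z_i\}_{i\ge n}}$ would, by \textbf{Proposition \ref{Proposition 3.1}} and the finite intersection property, share a point $w$, which could be approximated within $\sigma/2$ by non-overlapping convex combinations $\xi\in\operatorname{conv}\{z_i\}_{i\le N}$ and $\gamma\in\operatorname{conv}\{z_i\}_{i>N}$, forcing $\|\xi-\gamma\|<\sigma$ and contradicting $27)$.

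The direction $30)\Rightarrow 26)$ is short: for $u\in\operatorname{conv}\{z_i\}_{i\le n}$ and $v\in\operatorname{conv}\{z_i\}_{i>n}$, the head part of $u-v$ is exactly $u$, so the inequality in $30)$ gives $\|u-v\|\ge\frac{\sigma}{4}\|u\|>\frac{\sigma^2}{4}$, which is precisely the $\tfrac{\sigma^2}{4}$-separation hypothesis of $27)$, and the previous paragraph then yields non-reflexivity. The remaining implication $26)\Rightarrow 30)$ is the crux, because one must produce, for each $\sigma$, a sequence that is not merely $\sigma$-separated but actually \emph{basic} with basis constant at most $4/\sigma$ in the sense of Banach's condition \textbf{Theorem \ref{Theorem 4.14}}. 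The main obstacle is that pure separation as in $27)$–$29)$ does not by itself bound the partial-sum projections $P_n(\sum a_i z_i)=\sum_{i\le n}a_i z_i$ uniformly: two complementary spans may make an arbitrarily small angle while their convex and affine tails stay $\sigma$-apart. The plan is therefore to re-run the James construction underlying \textbf{Theorem \ref{Theorem 3.2}} but, at each stage, to choose the next vector from the infinite-dimensional tail so that it is almost perpendicular to the span of the vectors already selected (a Mazur-type selection), thereby forcing $\sup_n\|P_n\|\le 4/\sigma$ while the separating functionals keep $\sigma<\|\xi\|\le 1$ and the $\sigma$-separation intact. Verifying that basicness and separation can be secured simultaneously, with exactly the stated constant, is the delicate step I expect to absorb most of the effort.
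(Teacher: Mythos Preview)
Your cycle $26)\Rightarrow 29)\Rightarrow 28)\Rightarrow 27)\Rightarrow 26)$ and the implication $30)\Rightarrow 26)$ coincide with the paper's arguments essentially line for line (the paper also invokes \textbf{Theorem \ref{Theorem 3.2}} for $26)\Rightarrow 29)$, the trivial inclusions for $29)\Rightarrow 28)\Rightarrow 27)$, \textbf{Theorem \ref{Theorem 6.2}} for $1)\Longleftrightarrow 27)$, and the same $\|u-v\|\ge\frac{\sigma}{4}\|u\|>\frac{\sigma^2}{4}$ computation for $30)\Rightarrow 26)$).

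The divergence is in $26)\Rightarrow 30)$. The paper does \emph{not} run a Mazur almost-perpendicular selection. Instead it builds, alongside the $z_i$'s and by repeated use of Helly's condition (\textbf{Theorem \ref{Theorem 1.16}}) against a fixed $F\in X^{\ast\ast}\setminus Q(X)$ with $d[F,Q(X)]>\Delta$, a second family of functionals $h_n\in X^\ast$ with $\|h_n\|<2/\Delta$, $F(h_n)=0$, and the \emph{reversed} triangularity $h_n(z_i)=\Delta$ for $i\le n$ and $h_n(z_i)=0$ for $i>n$. The differences $h_n-h_{n-1}$, of norm at most $4/\Delta$, annihilate $\operatorname{Span}\{z_i\}_{i>n}$ and detect the $n$-th coordinate, and from this the paper reads off the inequality $\big\|\sum_{i\le n+p}a_iz_i\big\|\ge\frac{\Delta^2}{4}\big\|\sum_{i\le n}a_iz_i\big\|$; choosing $\Delta^2>\sigma$ gives the constant in $30)$ with no separate perpendicularity argument.

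Your Mazur plan is conceptually reasonable but faces a concrete compatibility issue you should be aware of before committing to it: to enforce both $g_j(z_{n+1})=\theta$ for $j\le n+1$ and $f_k(z_{n+1})=0$ for a norming net $\{f_k\}$ on $\operatorname{Span}\{z_i\}_{i\le n}$ while keeping $\|z_{n+1}\|\le 1$, Helly's condition requires $|\sum_j t_j\theta|\le\|\sum_j t_jg_j+\sum_k s_kf_k\|$ for all scalars. This follows from $\sum_j t_j\theta=F(\sum_j t_jg_j)$ only if you can choose the norming $f_k$ inside $\ker F$ with $\|f_k\|\le 1$, and that is not automatic. The paper's reversed-functional construction sidesteps this entirely and makes the factor $4$ appear transparently as $2\cdot\|h_n\|<4/\Delta$, at the price of a second interleaved Helly iteration rather than a single pass.
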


\begin{proof}$\hspace{0.44cm}\\$

\begin{itemize}

    \item $1)\Longleftrightarrow 26) \Longleftrightarrow 27):$ The equivalence between $27)$ and that $X_{\leq 1}$ is weakly compact is proved in \textbf{Theorem \ref{Theorem 6.2}} and $X_{\leq 1}$ is weakly compact iff $X$ is reflexive.
    
    \item $29)\Longrightarrow 28) \Longrightarrow 27):$ Immediate.

    \item $26)\Longrightarrow 29):$ By \textbf{Theorem \ref{Theorem 3.2}}, for each $\sigma\in(0, 1)$, there exists $\{z_i\}_{i\in \mathbb{N}}\subseteq X_{\leq 1}$ and $\{g_n\}\subseteq X^*_{\leq 1}$ such that for each $n, i\in\mathbb{N}$:

    $$
    g_n(z_i)
    \begin{cases}
    >\sigma, \hspace{1cm}n\leq i\\
    =0, \hspace{1cm}n>i
    \end{cases}
    $$
    Therefore, for each $n\in\mathbb{N}$, given $x\in\operatorname{Span}\{z_i\}_{i\leq n}$ and $y\in \operatorname{flat}\{z_i\}_{i>n}$, we have:

    $$
    \|x-y\| \geq \big\vert\, g_{n+1}(x-y)\,\big\vert > \theta
    $$
    The conclusion follows since $x$ and $y$ are arbitrarily picked.

    \item $30)\Longrightarrow 26):$ By assumption, for an arbitrary $n\in\mathbb{N}$, $x\in \operatorname{conv}\{z_i\}_{i\leq n}$ and $y\in \operatorname{conv}\{z_i\}_{i>n}$, we have:

    $$
    \|x-y\| \geq \frac{\sigma}{4}\|x\| > \frac{\sigma^2}{4}.
    $$
    which implies for each $n\in\mathbb{N}$:

    $$
    d\big[ \operatorname{conv}\{z_i\}_{i\leq n}, \operatorname{conv}\{z_i\}_{i>n} \big] > \frac{\sigma^2}{4}.
    $$
    By \textbf{Theorem \ref{Theorem 3.2}}, $X$ is not reflexive.
    
    \item $26) \Longrightarrow 30):$ Suppose $X$ is not reflexive. Let $Q$ be the canonical mapping from $X$ to $X^{\ast\ast}$. Then fix an arbitrary $\Delta\in (0, 1)$. There exists $F\in X^{\ast\ast}$ such that $\|F\|<1$ and $d\big[ F, Q(X) \big] > \Delta$. Then $\|F\|\in (\Delta, 1)$. If:

    \begin{equation}\label{e38}
    \inf\Big\{ \big\vert\, F(f)\, \big\vert: f\in X^*, \|f\|=1 \Big\}\geq\Delta
    \end{equation}
    then pick an arbitrary $f\in X^*$ with $\|f\|=1$ and define $\Phi = \dfrac{\Delta}{F(f)}f$. If the left hand side in $(\ref{e38})$ is strictly less than $\Delta$, since $\|F\|>\Delta$, there must exist $f\in X^*$ with $\|f\|=1$ such that $F(f)=\Delta$. In general, there exists $\Phi\in X^*$ with $\|\Phi\|\leq 1$ such that $F(\Phi)=\Delta$. Next we will show there exists a sequence $\{z_i\}_{i\in\mathbb{N}}\subseteq X_{\leq 1}$ such that $\Phi(z_i)=\Delta$ for each $i\in\mathbb{N}$ and a sequence $\{h_n\}\subseteq X^*$ such that for each $n\in\mathbb{N}$, $\|h_n\|<\dfrac{2}{\Delta}$, $F(h_n)=0$, and for each $i, n\in\mathbb{N}$:

    $$
    h_n(z_i)=
    \begin{cases}
    \Delta, \hspace{0.88cm}n\geq i\\
    0, \hspace{1cm} n<i
    \end{cases}
    $$
    Since $\|F\|<1$ and $F(\Phi)=\Delta$, if $\|\Phi\|\leq\Delta$, we then have $F\Big(\dfrac{\Phi}{\Delta}\Big) = 1$, which implies that $\|F\|\geq 1$ and contradicts our assumption. Therefore $\|\Phi\|>\Delta$ and there exists $z_1\in X_{\leq 1}$ such that $\Phi(z_1)=\Delta$. Then for each $a_1\in\mathbb{C}$:

    \begin{equation}\label{e39}
    \big\vert\, a_1\Phi(z_1)\, \big\vert = \big\vert\, a_1\Delta\, \big\vert \leq \big\vert\, F(\Phi) + a_1\Phi(z_1)\, \big\vert + \big\vert\, F(\Phi)\, \big\vert \leq  \big\| \Phi\big\| \big\|F + a_1Q(z_1) \big\| +  \big\| F \big\|.
    \end{equation}
    Since $d\big[ F, Q(X) \big]>\Delta$ and $\|F\|\in(\Delta, 1)$, we then have $\Delta\|F\| < d\big[ F, Q(X)\big] \leq \|F+a_1Q(z_1)\|$ and together with (\ref{e39}) we have that for each $a_1\in\mathbb{C}$:

    $$
    \big\vert\, a_1\Delta\,\big\vert \leq \big\| F+a_1Q(z_1) \big\| + \|F\| < \big\| F+a_1Q(z_1) \big\| + \frac{1}{\Delta}\big\| F+a_1Q(z_1) \big\| = \left( 1 + \frac{1}{\Delta}\right) \big\| F+a_1Q(z_1) \big\|
    $$
    Therefore, for every $a_1, a_2\in\mathbb{C}$ with $a_2\neq 0$, we have:

    $$
    \Big\vert\, \frac{a_1}{a_2}\Delta\, \Big\vert \leq \left( 1+\frac{1}{\Delta} \right) \Big\| F+\frac{a_1}{a_2}Q(z_1) \Big\| \hspace{0.3cm}\implies\hspace{0.3cm} \big\vert\, a_1\Delta + a_2\,0 \,\big\vert \leq \left( 1+\frac{1}{\Delta}\right) \Big\| a_2F + a_1Q(z_1) \Big\|
    $$
    Then by \textbf{Theorem \ref{Theorem 1.19}}, there exists $h_1\in X^*$ with $\|h_1\|\leq\dfrac{2}{\Delta}$ such that $F(h_1)=0$ and $h_1(z_1)=\Delta$. Hence for every $a_1, a_2\in\mathbb{C}$:

    \begin{equation}\label{e40}
    \big\vert\, a_1\Delta+a_2\,0 \,\big\vert = \big\vert\, a_1F(\Phi) + a_2F(h_1)\, \big\vert \leq \|F\|\big\| a_1\Phi+ a_2h_1 \big\|
    \end{equation}
    Again by \textbf{Theorem \ref{Theorem 1.19}}, there exists $z_2\in X$ with $\|z_2\|<1$ such that $\Phi(z_2)=\Delta$ and $h_1(z_2)=0$. Next with $z_1, z_2\in X_{\leq 1}$ and $h_1\in X^*$, we will find $h_2$ by the following inequality: for every $a_1, a_2\in\mathbb{C}$:

    $$
    \begin{aligned}
    &\hspace{0.46cm} \big\vert\, a_1\Delta + a_2\Delta \,\big\vert \\
    &\leq \big\vert\, a_1\Phi(z_1) + a_2\Phi(z_2) \,\big\vert \leq \big\vert\, a_1\Phi(z_1) + a_2\Phi(z_2) + F(\Phi) \,\big\vert + \big\vert\, F(\Phi) \,\big\vert\\
    &\leq \Big\| F+ \big( a_1z_1 + a_2z_2 \big)\Big\| + \|F\| \leq  \left( 1+\frac{1}{\Delta} \right) \Big\| F+ \big( a_1z_1 + a_2z_2\big) \Big\| 
    \end{aligned}
    $$
    By \textbf{Theorem \ref{Theorem 1.19}} there exists $h_2\in X^*$ with $\|h_2\|<\dfrac{2}{\Delta}$ such that $F(h_2)=0$ and $h_2(z_1)=h_2(z_2)=\Delta$. Then by (\ref{e40}) we can find $z_3\in X$ with $\|z_3\|<1$ such that $\Phi(z_3)=\Delta$ and $h_1(z_3)=h_2(z_3)=0$. Repeat the process above and eventually we have $\{h_n\}_{n\in\mathbb{N}}$ with $\|h_n\|<\dfrac{2}{\Delta}$ for each $n$, and $\{z_i\}_{i\in\mathbb{N}} \subseteq X_{\leq 1}$ such that $F(h_n)=0$, $\Phi(z_i)=\Delta$ for each $i$ and:

    $$
    h_n(z_i)=
    \begin{cases}
    \Delta, \hspace{0.88cm}n\geq i\\
    0, \hspace{1cm} n<i
    \end{cases}
    $$
    For each $n\in\mathbb{N}$ with $n>1$, define:

    $$
    H_n = \big\{ h_i-h_{i-1}: i\in\{ 1, 2, \cdots, n\} \big\}
    $$
    Fix $n, p\in\mathbb{N}$ with $n>1$ and $z\in \operatorname{Span}\{z_i\}_{i\leq n}$ with $\|z\|=1$. Since $\|z_i\|\leq 1$ for each $i\in\mathbb{N}$, without losing generality, suppose $\vert\, \alpha_n \,\vert \geq 1$. Then:

    \begin{equation}\label{e41}
    \big\vert\, h_n(z) - h_{n-1}(z) \,\big\vert = \vert\, \alpha_n h_n(z_n)\,\vert  = \vert\,\alpha_n\,\vert\Delta \geq \Delta = \Delta\|z\|.
    \end{equation}
    We can then conclude for each $z\in \operatorname{Span}\{z_i\}_{i\leq n}$, (\ref{e41}) holds and hence, for each $z\in \operatorname{Span}\{z_i\}_{i\leq n}$ and $\{\alpha_i\}_{n < i \leq n+p} \subseteq \mathbb{C}$:

    $$
    \left\| z + \sum_{n < i \leq n+p}\alpha_iz_i \right\| \geq \frac{\Delta}{4} \left\vert\, \big(h_n - h_{n-1} \big)\, \left( z + \sum_{n < i\leq n+p}\alpha_iz_i \right) \, \right\vert = \frac{\Delta}{4}\big\vert\, (h_n-h_{n-1})(z)\, \big\vert \geq \frac{\Delta^2}{4}\|z\|
    $$
    Since $\Delta$ is arbitrarily picked, we can assume $\Delta^2$ is strictly greater than the $\sigma$ that is picked from $(0, 1)$ and then the conclusion follows.
    
\end{itemize}

\end{proof}

\section{Characterizations of super-properties}

In this section we will introduce properties stronger than reflexivity and give an overview of why they are stronger and how they can be characterized. As mentioned in \textbf{Introduction}, the \textbf{von Neumann-Jordan constant} (see \cite{20}) is one of the main tools to characterize super-reflexivity and uniformly non-squareness. The characterizations of uniformly non-squareness is purely geometric while the characterizations of super-reflexivity will involve representability of other Banach spaces and definitions of equivalent norms. We can also see from both \textbf{Section 1} and the current section that uniform convexity is the strongest among all other properties that have been covered. However, the existence of an equivalent uniform convex norm is a comparatively weak restriction. 

\subsection{Uniformly non-square Banach space}

\begin{defn}[\cite{20}]

In a Banach space $(X, \|\cdot\|)$, the \textbf{von Neumann-Jordan Constant for $(X, \|\cdot\|)$} is defined as:

$$
\inf\left\{ C>0: \forall\,x, y\in X \,\, (\|x\|^2 + \|y\|^2 > 0), \hspace{0.3cm} \frac{1}{C} \leq \frac{\|x+y\|^2 + \|x-y\|^2}{2\|x\|^2 + 2\|y\|^2} \leq C\right\}
$$
and is denoted by $CJ(X, \|\cdot\|)$. 

\end{defn}

\begin{rem}

In any Banach space $(X, \|\cdot\|)$, for each $x, y\in X$ we have:

$$
\|x+y\|^2 + \|x-y\|^2 \leq 2\|x\|^2 + 2\|y\|^2
$$
When $\|x\|=1$:

$$
\frac{\|x+x\|^2 + \|x-x\|^2}{2\|x\|^2 + 2\|x\|^2} = \frac{1}{2}
$$
Hence, we have $CJ(X, \|\cdot\|) \in [1, 2]$ for any Banach space $(X, \|\cdot\|)$. In particular, when $(X, \|\cdot\|)$ is a Hilbert space, we have $CJ(X, \|\cdot\|) =1$.
    
\end{rem}

\begin{defn}

A normed linear space $(X, \|\cdot\|)$ is said to be \textbf{uniformly non-square} if there exists $\delta>0$ such that for any $x, y\in X_{\leq 1}$ we have $\|x+y\| \leq 2(1-\delta)$ whenever $\|x-y\| > 2(1-\delta)$.

\end{defn}

\begin{theorem}[{\cite[Theorem 1]{12}}]\label{Theorem 7.4}

Fix $p>1$. Given $(X, \|\cdot\|)$ a Banach space, if we use $\ell_p^2(X)$ to denote the vector space $X\times X$ equipped with the following norm:

$$
\big\| (x, y) \big\|_p = \Big( \|x\|^p + \|y\|^p \Big)^{\frac{1}{p}}
$$
then the following statements are equivalent:

\begin{enumerate}[label = \arabic*)]

    \item $X$ is uniformly non-square.

    \item There exists $\epsilon, \delta\in (0, 1)$ such that for any $x, y\in X_{\leq 1}$, if $\|x-y\| > 2(1-\epsilon)$, then:

    \begin{equation}\label{e42}
    \left\| \frac{x+y}{2} \right\|^p \leq (1-\delta)\frac{\|x\|^p + \|y\|^p}{2}
    \end{equation}

    \item There exists $\delta\in(0, 1)$ such that for any $x, y\in X_{\leq 1}$, (\ref{e42}) holds if $\|x-y\| > 2(1-\delta)$,

    \item There exists $\delta\in (0, 2)$ such that for any $x, y\in X$:

    $$
    \left\| \frac{x-y}{2} \right\|^p + \left\| \frac{x+y}{2} \right\|^p \leq (2-\delta)\frac{\|x\|^p + \|y\|^p}{2}
    $$

    \item The following linear function:

    \begin{equation}\label{e43}
    A: X\times X\rightarrow X\times X, \hspace{0.3cm} (x, y) \mapsto (x+y, x-y)
    \end{equation}
    is continuous and $\|A\|<2$ when both $X\times X$ are equipped with $\|\cdot\|_p$.
    
\end{enumerate}
    
\end{theorem}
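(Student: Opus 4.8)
The plan is to treat $(2)\Leftrightarrow(3)$ and $(4)\Leftrightarrow(5)$ as elementary reformulations and to prove the cycle $(1)\Rightarrow(4)\Rightarrow(3)\Rightarrow(1)$, with $(1)\Rightarrow(4)$ as the analytic heart. For $(2)\Leftrightarrow(3)$, note that $(3)$ is exactly $(2)$ with $\epsilon=\delta$, so $(3)\Rightarrow(2)$ is trivial; conversely, given $(2)$ with constants $\epsilon,\delta$, the single constant $\min(\epsilon,\delta)$ witnesses $(3)$, since shrinking the threshold only strengthens the hypothesis and shrinking the factor only weakens the conclusion. For $(4)\Leftrightarrow(5)$, observe that with $\|\cdot\|_p$ on $\ell_p^2(X)$ one has $\|A\|^p=\sup_{(x,y)\neq 0}\frac{\|x+y\|^p+\|x-y\|^p}{\|x\|^p+\|y\|^p}$, and since $\|x+y\|^p+\|x-y\|^p=2^p\big(\big\|\tfrac{x+y}{2}\big\|^p+\big\|\tfrac{x-y}{2}\big\|^p\big)$, condition $(4)$ is literally $\|A\|^p\le 2^{p-1}(2-\delta)$; as $\delta$ ranges over $(0,2)$ this is equivalent to $\|A\|^p<2^p$, i.e. $\|A\|<2$, continuity of $A$ being automatic.

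For the main step $(1)\Rightarrow(4)$, I would first use the homogeneity of $(4)$ to normalize $\|x\|^p+\|y\|^p=2$, so that the target becomes a uniform gap estimate $f:=\|x+y\|^p+\|x-y\|^p\le 2^{p+1}-\eta$ for some fixed $\eta>0$. The crude bounds $\|x\pm y\|\le\|x\|+\|y\|$ together with convexity of $t\mapsto t^p$ give $\|x+y\|^p,\|x-y\|^p\le 2^{p-1}(\|x\|^p+\|y\|^p)=2^p$, hence $f\le 2^{p+1}$ always. The content is that $f$ cannot be \emph{close} to $2^{p+1}$: if $f>2^{p+1}-\eta$ then each summand exceeds $2^p-\eta$, forcing both $\|x+y\|$ and $\|x-y\|$ to be close to $2$. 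Since $\|x\|^p+\|y\|^p=2$ and $\|x\|+\|y\|\ge\max(\|x+y\|,\|x-y\|)$ is then close to its maximal value $2$, strict convexity (a compactness statement for the two reals $\|x\|,\|y\|$ on the arc $\|x\|^p+\|y\|^p=2$) forces $\|x\|,\|y\|$ close to $1$. Passing to the unit vectors $x'=x/\|x\|$, $y'=y/\|y\|$ perturbs the relevant norms by at most $|\,\|x\|-1\,|+|\,\|y\|-1\,|$, so $\|x'+y'\|$ and $\|x'-y'\|$ remain both $>2(1-\delta_1)$; applying $(1)$ (with $y'$ replaced by $-y'$) to the unit pair $x',y'$ then contradicts uniform non-squareness. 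Choosing $\eta$ small enough to drive all of these thresholds below the $\delta_1$ of $(1)$ yields the gap, hence $(4)$.

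Finally I would close the cycle with two short deductions. For $(4)\Rightarrow(3)$: given $x,y\in X_{\le 1}$ with $\|x-y\|>2(1-\delta)$, substitute into $(4)$ and use $\big(\tfrac{\|x-y\|}{2}\big)^p>(1-\delta)^p$ and $m\le 1$ to bound $\big\|\tfrac{x+y}{2}\big\|^p\le(2-\delta_4)m-(1-\delta)^p$; choosing $\delta$ small so that $(1-\delta)^p\ge 1-\delta_4/2$ makes the right side at most $(1-\delta_4/2)m$, and the common constant is obtained by shrinking as in $(2)\Leftrightarrow(3)$. For $(3)\Rightarrow(1)$: on the unit sphere $(3)$ gives $\big\|\tfrac{x+y}{2}\big\|^p\le 1-\delta$, i.e. $\|x+y\|\le 2(1-\delta)^{1/p}$, which is uniform non-squareness of the sphere with constant $\delta_1=1-(1-\delta)^{1/p}$; the ball form $(1)$ follows by the same normalization lemma used above. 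The main obstacle is the quantitative chain in $(1)\Rightarrow(4)$: each qualitative ``a near-square is excluded'' assertion must be made uniform, which amounts to tracking how the three thresholds (namely $f$ near $2^{p+1}$, the norms near $1$, and the normalization perturbation) feed into the single constant $\delta_1$ of $(1)$; the remaining steps are bookkeeping.
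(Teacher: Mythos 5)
Your proposal is correct, and the analytic core is the same as the paper's: in both cases the hard implication is that near-equality in the parallelogram-type inequality forces $\|x+y\|$ and $\|x-y\|$ simultaneously close to $2$ and $\|x\|,\|y\|$ close to $1$, contradicting uniform non-squareness. The organization differs, though. The paper does not route everything through $(4)$: it proves $1)\Rightarrow 2)\Rightarrow 3)\Rightarrow 1)$ and, separately, $1)\Rightarrow 4)\Rightarrow 1)$, so it carries out two parallel contradiction arguments (for $(2)$ and for $(4)$), each time normalizing $\|x_n\|=1$ and showing $\|y_n\|\to 1$ by an explicit analysis of the function $r\mapsto (1+r)^p/(1+r^p)$ on $[0,1]$. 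You instead prove the single cycle $(1)\Rightarrow(4)\Rightarrow(3)\Rightarrow(1)$, normalize by $\|x\|^p+\|y\|^p=2$, and replace the explicit function analysis by a compactness/strict-convexity argument on the arc $\{a^p+b^p=2\}$, then pass to unit vectors with the perturbation bound $|\,\|x\|-1\,|+|\,\|y\|-1\,|$. This buys you one hard implication instead of two, at the cost of the extra (but elementary) deduction $(4)\Rightarrow(3)$, which you handle correctly via $(2-\delta_4)m-(1-\delta)^p\le(1-\delta_4/2)m$ for $m\le 1$. Your treatment of $(4)\Leftrightarrow(5)$ via $\|A\|^p=\sup\frac{\|x+y\|^p+\|x-y\|^p}{\|x\|^p+\|y\|^p}$ and of the constant adjustment $\delta_1=1-(1-\delta)^{1/p}$ in $(3)\Rightarrow(1)$ coincides with the paper's. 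One small imprecision: you speak of $(3)$ ``on the unit sphere,'' but both the definition of uniform non-squareness and condition $(3)$ are already stated on the closed unit ball, so no normalization lemma is needed there --- only the constant adjustment.
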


\begin{proof}$\hspace{1cm}\\$
\begin{itemize}

    \item $1)\Longrightarrow 2)$: Assume by contradiction that $ii)$ is false when $X$ is uniformly non-square. Then for each $n\in\mathbb{N}$, there exists $x_n, y_n\in X_{\leq 1}$ with $\|x_n-y_n\| \geq 2(1-n^{-1})$ such that:

    \begin{equation}\label{e44}
    \left\| \frac{x_n + y_n}{2} \right\|^p > \left(1-\frac{1}{n} \right) \frac{\|x_n\|^p + \|y_n\|^p}{2}
    \end{equation}
    By dividing $\max(\|x_n\|, \|y_n\|)$ in (\ref{e44}), we can assume that $\|x_n\|=1$ and $\|y_n\|\leq 1$. Therefore we have:

    \begin{equation}\label{e45}
    2\left( 1-\frac{1}{n} \right) \leq \|x_n-y_n\| \leq \|x_n\|+\|y_n\| = 1+\|y_n\| \leq 2
    \end{equation}
    and from (\ref{e44}):

    \begin{equation}\label{e46}
    \left(1-\frac{1}{n}\right)\frac{1+\|y_n\|^p}{2}  =\left(1-\frac{1}{n}\right) \frac{\|x_n\|^p + \|y_n\|^p}{2} < \Big\| \frac{x_n+y_n}{2} \Big\|^p \leq \left( \frac{1+\|y_n\|}{2} \right)^p \leq 1
    \end{equation}
    By (\ref{e45}) we have $\lim_n \|y_n\| = 1$. Hence by (\ref{e45}) and (\ref{e46}) we have:

    $$
    \lim_n \left\| \frac{x_n+y_n}{2} \right\| = \lim_n \left\| \frac{x_n-y_n}{2} \right\| = 1
    $$
    Hence, for any $\delta\in(0, 1)$ we have both $\|x_n-y_n\| \geq 2(1-\delta)$ and $\|x_n+y_n\| \geq 2(1-\delta)$ for large enough $n$, which contradicts that $X$ is uniformly non-square.

    \item $2)\Longrightarrow 3):$ Immediate.

    \item $3) \Longrightarrow 1):$ Let $\delta\in(0, 1)$ be given. Then for two arbitrary $x, y\in X$, suppose $\|x-y\|<2(1-\delta)$. By assumption we then have:

    $$
    \left\| \frac{x+y}{2} \right\|^p \leq (1-\delta) \hspace{0.3cm}\implies\hspace{0.3cm} \left\| \frac{x+y}{2} \right\| \leq (1-\delta)^{\frac{1}{p}}
    $$
    Since $p>1$ and $\delta\in(0, 1)$, we have $(1-\delta)^{\frac{1}{p}} \in (1-\delta, 1)$ and hence there exists $\delta_0 \in (\delta, 1)$ such that $1-\delta_0 = (1-\delta)^{\frac{1}{p}}$. Therefore, whenever $\|x-y\| > 2(1-\delta_0)$, we also have $\|x-y\| > 2(1-\delta)$ and then $\|x+y\| \leq 2(1-\delta_0)$. 

    \item $1) \Longrightarrow 4):$ Suppose $4)$ fails. Then similar to the proof of $1)\Longrightarrow 2)$, for each $n\in\mathbb{N}$, there exists $\|x_n\|=1$ and $y_n\in X_{\leq 1}$ such that:

    \begin{equation}\label{e47}
    \left\| \frac{x_n - y_n}{2} \right\|^p + \left\| \frac{x_n + y_n}{2} \right\|^p > \left( 2-\frac{1}{n}\right) \frac{\|x_n\|^p + \|y_n\|^p}{2} = \left( 2-\frac{1}{n}\right) \frac{1 + \|y_n\|^p}{2}
    \end{equation}
    Since $\{\|y_n\|\}_{n\in\mathbb{N}}$ is bounded in $[0, 1]$, without losing generality suppose $\{\|y_n\|\}$ is convergent (or replace it by a convergent subsequence) and suppose $\alpha\in [0, 1]$ is its limit. Then by (\ref{e47}) we have:

    $$
    1+\alpha^p = \lim_n \left(2-\frac{1}{n}\right) \frac{1+\|y_n\|^p}{2} \leq \lim_n 2\left( \frac{1+\|y_n\|}{2} \right)^p = \frac{(1+\alpha)^p}{2^{p-1}}
    $$
    Since for each $r\in[0, 1]$, according to the binomial expansion of $(1+r)^p$, we have $(1+r)^p \geq 1+r^p$ and the function $\dfrac{(1+r)^p}{1+r^p}$ is increasing in $[0, 1]$. Therefore the maximum of $\dfrac{(1+r)^p}{1+r^p}$ in $[0, 1]$ is obtained when $r=1$ and is equal to $2^{p-1}$. Therefore $\alpha=1$ and from (\ref{e47}) we have:

    $$
    \begin{aligned}
    &\hspace{1cm} 2 = \lim_n \left(2-\frac{1}{n}\right)\frac{1+\|y_n\|^p}{2} \leq \lim_n \left\| \frac{x_n-y_n}{2} \right\|^p + \left\| \frac{x_n+y_n}{2} \right\|^p \leq \lim_n 2\left( \frac{1+\|y_n\|}{2}\right)^p =2\\
    &\implies\, \lim_n \left\| \frac{x_n-y_n}{2} \right\|^p + \left\| \frac{x_n+y_n}{2} \right\|^p = 2
    \end{aligned}
    $$
    Since both $\left\{ \left\| \dfrac{x_n-y_n}{2} \right\|^p \right\}$ and $\left\{ \left\| \dfrac{x_n+y_n}{2} \right\|^p \right\}$ are bounded above by $1$, we then can conclude:

    $$
    \lim_n \left\| \frac{x_n - y_n}{2} \right\|^p = \lim_n \left\| \frac{x_n + y_n}{2} \right\|^p = 1
    $$
    or:

    $$
    \lim_n \|x_n-y_n\| = \lim_n \|x_n+y_n\| = 2
    $$
    Hence, for any $\epsilon\in(0, 1)$ we will have both $\|x_n-y_n\|>2(1-\epsilon)$ and $\|x_n+y_n\| > 2(1-\epsilon)$ whenever $n$ is large enough, which implies that $X$ is not uniformly non-square.

    \item $4)\Longrightarrow 1):$ Let $\delta\in(0, 2)$ be given by $4)$ and hence for every $x, y\in X_{\leq 1}$, we have:

    $$
    \left\| \frac{x-y}{2} \right\|^p + \left\| \frac{x+y}{2} \right\|^p \leq 1-\frac{\delta}{2} \hspace{0.3cm}\implies\hspace{0.3cm} \max\left( \left\| \frac{x-y}{2} \right\|^p, \left\| \frac{x+y}{2} \right\|^p \right) \leq 1-\frac{\delta}{2}
    $$
    Since $p>1$ and $\delta\in(0, 2)$, there exists $\delta_0\in \left(0, \dfrac{\delta}{2} \right)$ such that:

    $$
    \left( 1-\frac{\delta}{2}\right)^{\frac{1}{p}} = 1-\delta_0
    $$
    Hence for any $x, y\in X_{\leq 1}$, we have $\max\big( \|x-y\|, \|x+y\| \big) \leq 2(1-\delta_0)$, which implies $X$ is uniformly non-square.

    \item $4)\Longleftrightarrow 5):$ By definition of the operator norm we have:

    $$
    \|A\|^p = \sup_{x,y\in X}\frac{\|A(x, y)\|_p^p}{\|(x, y)\|_p^p}  = \sup_{x, y\in X}\frac{\|x+y\|^p + \|x-y\|^p}{\|x\|^p + \|y\|^p}
    $$
    If $4)$ is true, since $p>1$, then:

    $$
    \|A\| \leq 2^{1-\frac{1}{p}}\left(1-\frac{\delta}{2}\right) < 2\left( 1-\frac{\delta}{2}\right) < 2
    $$
    If $5)$ is true, then there exists $\epsilon\in(0, 2)$ such that $\|A\|=2-\epsilon$ and hence, for any $x, y\in X$:

    $$
    \begin{aligned}
    &\hspace{1cm} \frac{\|x+y\|^p + \|x-y\|^p}{\|x\|^p + \|y\|^p} \leq (2-\epsilon)^p < (2-\epsilon)2^{p-1} \\
    & \implies\, \|x+y\|^p + \|x-y\|^p < 2^p(2-\epsilon) \frac{\|x\|^p + \|y\|^p}{2}\\
    & \implies\, \left\| \frac{x+y}{2} \right\|^p + \left\| \frac{x-y}{2} \right\|^2 < (2-\epsilon)\frac{\|x\|^p + \|y\|^p}{2}
    \end{aligned}
    $$
    
\end{itemize}
\end{proof}

\begin{lem}[{\cite[Lemma 2]{12}}]\label{Lemma 7.5}
Let $A: X\times X\rightarrow X\times X$ be given in (\ref{e43}) and both $X\times X$ are equipped with $\|\cdot\|_2$. Then $\|A\|^2 = 2CJ(X, \|\cdot\|)$

\end{lem}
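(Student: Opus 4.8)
The plan is to specialize the operator-norm computation already carried out in the proof of Theorem \ref{Theorem 7.4} (the case $4)\Longleftrightarrow 5)$) to $p=2$, which gives
\begin{equation*}
\|A\|^2 = \sup_{\|x\|^2 + \|y\|^2 > 0} \frac{\|A(x,y)\|_2^2}{\|(x,y)\|_2^2} = \sup_{\|x\|^2 + \|y\|^2 > 0} \frac{\|x+y\|^2 + \|x-y\|^2}{\|x\|^2 + \|y\|^2},
\end{equation*}
and then to identify this supremum with $2\,CJ(X,\|\cdot\|)$. Writing
\begin{equation*}
R(x,y) = \frac{\|x+y\|^2 + \|x-y\|^2}{2(\|x\|^2 + \|y\|^2)}
\end{equation*}
for the ratio appearing in the definition of the von Neumann--Jordan constant, the displayed supremum is visibly $2\sup R(x,y)$, so it remains only to prove that $CJ(X,\|\cdot\|) = \sup R(x,y)$, where the supremum runs over all $(x,y)$ with $\|x\|^2 + \|y\|^2 > 0$.

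To see this, I would first rewrite the infimum in the definition of $CJ(X,\|\cdot\|)$: a constant $C>0$ satisfies $\tfrac{1}{C}\le R(x,y)\le C$ for all admissible $(x,y)$ precisely when $C\ge \sup R$ and $C\ge 1/\inf R$, so that $CJ(X,\|\cdot\|) = \max\!\left(\sup R,\ 1/\inf R\right)$. The key step is the self-duality of $R$ under the substitution $(x,y)\mapsto (u,v)=(x+y,x-y)$. Since $x=\tfrac12(u+v)$ and $y=\tfrac12(u-v)$, a direct computation gives $\|x+y\|^2+\|x-y\|^2=\|u\|^2+\|v\|^2$ together with $2(\|x\|^2+\|y\|^2)=\tfrac12(\|u+v\|^2+\|u-v\|^2)$, whence $R(x,y)=1/R(u,v)$. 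Because $(x,y)\mapsto (x+y,x-y)$ is a linear bijection of $X\times X\setminus\{0\}$ onto itself (with inverse $(u,v)\mapsto(\tfrac12(u+v),\tfrac12(u-v))$), the set of values $\{R(x,y)\}$ coincides with $\{1/R(x,y)\}$, and therefore $\sup R = 1/\inf R$. Consequently $\max(\sup R, 1/\inf R)=\sup R$, so $CJ(X,\|\cdot\|)=\sup R$, and combining with the first display yields $\|A\|^2 = 2\sup R = 2\,CJ(X,\|\cdot\|)$.

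The argument is essentially bookkeeping, so no step is genuinely difficult; the only point that needs care is the reciprocal identity $R(x,y)=1/R(u,v)$, since it is exactly what collapses the two-sided infimum defining the von Neumann--Jordan constant into the one-sided supremum matching the operator norm. The hard part will therefore be verifying the constants in the substitution precisely --- in particular the factor $\tfrac12$ in $2(\|x\|^2+\|y\|^2)=\tfrac12(\|u+v\|^2+\|u-v\|^2)$ --- so that the relation between $R(x,y)$ and $R(u,v)$ is an exact reciprocal rather than one holding only up to a multiplicative constant, as it is this exactness that forces $\sup R = 1/\inf R$.
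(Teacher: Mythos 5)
Your proposal is correct, and it is in fact more complete than the paper's own proof. Both arguments start the same way: $\|A\|^2=\sup\big(\|x+y\|^2+\|x-y\|^2\big)/\big(\|x\|^2+\|y\|^2\big)=2\sup R$, and the inequality $\|A\|^2\le 2\,CJ(X,\|\cdot\|)$ is immediate from the upper bound in the definition of the von Neumann--Jordan constant. The nontrivial direction is $2\,CJ(X,\|\cdot\|)\le\|A\|^2$, i.e.\ that the \emph{lower} constraint $\tfrac1C\le R$ in the definition never forces $CJ$ above $\sup R$. The paper disposes of this by assuming $2\,CJ>\|A\|^2$ and producing a pair $(x_0,y_0)$ with $R(x_0,y_0)>CJ$, but no such pair can exist (by definition $R\le CJ$ everywhere), and the displayed chain of inequalities is internally inconsistent rather than a derived contradiction; the genuine issue --- that a priori $CJ=\max(\sup R,\,1/\inf R)$ could exceed $\sup R$ --- is never addressed. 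Your reciprocal identity $R(x,y)=1/R(x+y,\,x-y)$, together with the observation that $(x,y)\mapsto(x+y,x-y)$ is a bijection of $X\times X\setminus\{0\}$, is exactly the missing step: it gives $1/\inf R=\sup R$ and hence $CJ=\sup R=\tfrac12\|A\|^2$. The constants in your substitution check out ($u+v=2x$, $u-v=2y$, so $\|u+v\|^2+\|u-v\|^2=4(\|x\|^2+\|y\|^2)$), so the identity is an exact reciprocal as you require. Your route is the one that should be recorded.
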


\begin{proof}

By definition of $\|A\|$ and $CJ(X, \|\cdot\|)$ we have:

$$
\sup_{\substack{x, y\in X \\ \|x\|^2+\|y\|^2>0}} \frac{\|x+y\|^2 + \|x-y\|^2}{\|x\|^2 + \|y\|^2} = \|A\|^2 = 2^{\frac{2}{t}} \leq 2CJ(X, \|\cdot\|)
$$
If $2CJ(X, \|\cdot\|) > \|A\|^2$, then there exists $x_0, y_0\in X$ such that:

$$
\frac{1}{2}\|A\|^2 \geq \frac{\|x_0 + y_0\|^2 + \|x_0 - y_0\|^2}{2\|x_0\|^2 + 2\|y_0\|^2} > CJ(X, \|\cdot\|) > \frac{1}{2}\|A\|^2
$$
which is absurd. Hence $\|A\|^2 = 2CJ(X, \|\cdot\|)$.
    
\end{proof}

\begin{theorem}[{\cite[Theorem 2]{12}}]\label{Theorem 7.6}

For a Banach space $(X, \|\cdot\|)$, $CJ(X, \|\cdot\|)<2$ iff $X$ is uniformly non-square.
    
\end{theorem}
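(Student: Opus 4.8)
The plan is to obtain Theorem \ref{Theorem 7.6} as an immediate consequence of the two preceding results, specialized to the exponent $p = 2$. By \textbf{Theorem \ref{Theorem 7.4}} applied with $p = 2$, the space $X$ is uniformly non-square if and only if its statement 5) holds, i.e. the linear operator $A:(x,y)\mapsto(x+y,x-y)$ is bounded with $\|A\| < 2$ when both copies of $X\times X$ carry the norm $\|\cdot\|_2$. This already reduces the characterization of uniform non-squareness entirely to a statement about the operator norm of $A$ on $\ell_2^2(X)$, so no further geometric argument is needed.

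First I would invoke \textbf{Lemma \ref{Lemma 7.5}}, which computes precisely this operator norm in terms of the von Neumann--Jordan constant, namely $\|A\|^2 = 2\,CJ(X,\|\cdot\|)$. Combining the two facts, $X$ is uniformly non-square iff $\|A\| < 2$, iff $\|A\|^2 < 4$, iff $2\,CJ(X,\|\cdot\|) < 4$, iff $CJ(X,\|\cdot\|) < 2$. Each equivalence in this chain is purely a matter of squaring (a strictly increasing operation on the nonnegative reals) and dividing by $2$, so the argument closes without any additional estimate.

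There is essentially no analytic obstacle remaining, since all the geometric content has already been absorbed into \textbf{Theorem \ref{Theorem 7.4}} (the compactness and limiting argument producing the near-square pairs $x_n,y_n$) and into \textbf{Lemma \ref{Lemma 7.5}} (the supremum identity). The only points worth verifying are bookkeeping: that $p = 2$ is an admissible choice in \textbf{Theorem \ref{Theorem 7.4}}, which it is because that theorem is stated for every fixed $p > 1$, and that the norm appearing in statement 5) of \textbf{Theorem \ref{Theorem 7.4}} agrees with the norm $\|\cdot\|_2$ on $X\times X$ used in \textbf{Lemma \ref{Lemma 7.5}}. Thus the proof amounts to citing the equivalence $1)\Longleftrightarrow 5)$ of \textbf{Theorem \ref{Theorem 7.4}} with $p=2$, together with \textbf{Lemma \ref{Lemma 7.5}}, and performing the elementary rewriting above.
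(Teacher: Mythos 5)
Your proposal is correct and matches the paper's own proof essentially verbatim: both reduce the statement to the equivalence $1)\Longleftrightarrow 5)$ of \textbf{Theorem \ref{Theorem 7.4}} with $p=2$ together with the identity $\|A\|^2 = 2\,CJ(X,\|\cdot\|)$ from \textbf{Lemma \ref{Lemma 7.5}}, and then conclude by elementary arithmetic. (Incidentally, your statement of the identity is the correct one from the lemma; the paper's proof of \textbf{Theorem \ref{Theorem 7.6}} misquotes it as $2\|A\|^2 = CJ(X,\|\cdot\|)$, though the conclusion is unaffected.)
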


\begin{proof}

Let $A:X\times X\rightarrow X\times X$ be define by (\ref{e43}). By \textbf{Theorem \ref{Theorem 7.4}}, $X$ is uniformly non-square iff $\|A\|<2$ and by \textbf{Lemma \ref{Lemma 7.5}}, we have $2\|A\|^2 = CJ(X, \|\cdot\|)$ and hence $\|A\|<2$ iff $CJ(X, \|\cdot\|) < 2$.
    
\end{proof}

\subsection{Uniformly convexifiable \texorpdfstring{$\,\Longrightarrow\,$}{ implies } no finite-tree property}

\begin{defn}[\cite{5}]

In a Banach space $X$, given an $\epsilon > 0$, we call an ordered pair $(x_1, x_2)$ a $(1, \epsilon)$-\textbf{part of a tree} if $\|x_1-x_2\| \geq \epsilon$. For each $n\in\mathbb{N}$, given a $2^{n+1}$-tuple, $(x_1, x_2, \cdots, x_{2^{n+1}})$,  we call $(x_1,\cdots x_{2^{n+1}})$ an $(n+1, \epsilon)$-part of a tree if for each $1 \leq j \leq 2^n$, $\|x_{2j-1} - x_{2j}\| \geq\epsilon$ and the following $2^n$-tuple:

$$
\left( \frac{x_1+x_2}{2}, \frac{x_3+x_4}{2}, \cdots, \frac{x_{2^{n+1}-1}+x_{2^{n+1}}}{2} \right)
$$ 
is an $(n, \epsilon)$-part of a tree. $X$ is said to have the \textbf{finite tree property} if there is an $\epsilon> 0$ such that for each $n\in\mathbb{N}$ and each $\delta \in (0, 1)$, there is an $(n, \epsilon)$-part of a tree where the norm of each element in that $(n, \epsilon)$-part of a tree is bounded above by $1+\delta$.

\end{defn}

\begin{defn}

Let $X$ be a Banach space and $x \in X$. Given $x_1, x_2, x\in X$, the ordered pair $(x_1, x_2)$ is a $(1, \epsilon)$ \textbf{partition of} $x$ if $x_1+x_2 = x, \|x_1\|=\|x_2\|$ and $\left\| \dfrac{x_1}{\|x_1\|} - \dfrac{x_2}{\|x_2\|} \right\| \geq \epsilon$. For each $n\in\mathbb{N}$ with $n>1$, a $2^{n+1}$-tuple $(y_1, y_2, \cdots, y_{2^{n+1}})$ is an $(n+1, \epsilon)$-\textbf{partition of} $x$ if for each $1\leq j \leq 2^n$, $\|y_{2j-1}\| = \|y_{2j}\|$ and $\left\| \dfrac{y_{2j-1}}{\|y_{2j-1}\|} - \dfrac{y_{2j}}{\|y_{2j}\|} \right\| \geq \epsilon$, and the following $2^n$-tuple:

$$
\left( y_1+y_2, y_3+y_4, \cdots, y_{2^{n+1}-1} + y_{2^{n+1}} \right)
$$
is an $(n, \epsilon)$-partition of $x$. If $(x_1, x_2, \cdots, x_{2^n})$ is an $(n, \epsilon)$-partition of $x$, then for each $k\in\mathbb{N}$ with $k<n$, the $k$-\textbf{part of} $(x_1, \cdots, x_{2^n})$ is the following $2^k$-tuple:

$$
\left( \sum_{1\leq i \leq 2^{n-k}} x_i, \sum_{2^{n-k} < i \leq 2^{n-k+1}} x_i, \cdots, \sum_{2^n-2^{n-k} < i \leq 2^n}x_i \right)
$$
which is also a $(k, \epsilon)$-partition of $x$.

\end{defn}

\begin{defn}
A normed linear space $(X, \|\cdot\|_X)$ is \textbf{finitely representable} in another normed linear space $(Y, \|\cdot\|_Y)$ iff for each finite dimensional subspace $X_n$ of $X$ and for each $\lambda > 1$, there is an isomorphism $T_n: X_n \rightarrow Y$ such that for each $x \in X_n$, we have:
$$
\frac{1}{\lambda}\|x\|_X \leq \|T_n(x)\|_Y \leq \lambda\|x\|_X
$$
The statement is equivalent to the following: for each finite dimensional subspace $X_n$ of $X$ and each $\epsilon > 0$, there is an linear isomorphism $T_n: X_n \rightarrow Y$ such that for each $x \in X_n$, we have:
$$
(1-\epsilon)\|x\|_X \leq \|T_n x\|_Y \leq (1+\epsilon)\|x\|_X
$$
\end{defn}

\begin{defn}
A normed linear space $(X, \|\cdot\|_X)$ is \textbf{crudely finitely  representatble} in a normed linear space $(Y, \|\cdot\|_Y)$ iff there is $\lambda > 1$ such that for each finite dimensional subspace $X_n$ of $X$, there is an linear isomorphism $T_n: X_n \rightarrow Y$ such that:
$$
\frac{1}{\lambda}\|x\|_X \leq \|T_n x\|_Y \leq \lambda\|x\|_X
$$
\end{defn}

\begin{defn}
A Banach space $(X, \|\cdot\|_X)$ is \textbf{super-reflexive} if any Banach space $(Y, \|\cdot\|_Y)$ that is finitely representatble in $X$ is also reflexive.
\end{defn}

\begin{prop}[{\cite[Theorem 1]{21}}]\label{Proposition 7.12}
A Banach space $Y$ is super-reflexive iff no non-reflexive Banach space can be crudely finitely representable in $Y$
\end{prop}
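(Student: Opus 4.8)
The plan is to prove the two implications separately; the implication from the crude‑representability hypothesis to super‑reflexivity is routine, while the converse carries all the content. Throughout I argue both directions by contraposition, since the statements are phrased as nonexistence assertions.

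For the easy implication (if no non‑reflexive Banach space is crudely finitely representable in $Y$, then $Y$ is super‑reflexive) I would contrapose the definition of super‑reflexivity. Suppose $Y$ is not super‑reflexive, so there is a non‑reflexive Banach space $X$ that is finitely representable in $Y$. Fixing any single $\lambda>1$, the definition of finite representability supplies, for every finite‑dimensional subspace $X_n\subseteq X$, a linear isomorphism $T_n\colon X_n\to Y$ with $\tfrac1\lambda\|x\|\le\|T_nx\|\le\lambda\|x\|$; but this is verbatim the statement that $X$ is crudely finitely representable in $Y$ with constant $\lambda$. Thus a non‑reflexive space is crudely finitely representable in $Y$, which contradicts the hypothesis. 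In short, finite representability trivially entails crude finite representability, so ruling out the latter for non‑reflexive spaces is the stronger condition.

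The hard implication (super‑reflexivity of $Y$ forbids crude finite representability of any non‑reflexive space) is where the real work lies, and I would again contrapose: assuming a non‑reflexive $X$ is crudely finitely representable in $Y$ with constant $\lambda$, I will manufacture a genuinely (almost isometrically) finitely representable non‑reflexive space, contradicting super‑reflexivity. The first step transports James's configuration from $X$ into $Y$. Since $X$ is non‑reflexive, Theorem \ref{Theorem 3.2} furnishes some $\theta\in(0,1)$ with $\{x_i\}\subseteq X_{\le1}$ and $\{g_m\}\subseteq X^*_{\le1}$ satisfying $g_m(x_i)=\theta$ for $i\ge m$ and $g_m(x_i)=0$ for $i<m$. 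For each $n$ put $E_n=\operatorname{Span}\{x_1,\dots,x_n\}$ and let $T_n\colon E_n\to Y$ be the distortion‑$\le\lambda$ embedding from crude finite representability. Setting $y_i^{(n)}=\tfrac1\lambda T_nx_i$ for $i\le n$ (and $y_i^{(n)}=0$ for $n<i$), and extending $\tfrac1\lambda\,g_m\circ T_n^{-1}$ from $T_nE_n$ to all of $Y$ by Hahn--Banach to obtain $h_m^{(n)}\in Y^*_{\le1}$, one checks $\|y_i^{(n)}\|\le1$, $\|h_m^{(n)}\|\le1$, and $h_m^{(n)}(y_i^{(n)})=\theta/\lambda^2$ for $i\ge m$ while vanishing for $i<m$. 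Hence for every $n$ the space $Y$ carries a length‑$n$ James configuration with the fixed constant $\theta'=\theta/\lambda^2\in(0,1)$. The second step assembles these into one infinite configuration: fixing a free ultrafilter $\mathcal U$ on $\mathbb N$ and passing to the ultrapower $Y_{\mathcal U}$, each bounded sequence $(y_i^{(n)})_n$ yields a vector $y_i\in(Y_{\mathcal U})_{\le1}$ and each $(h_m^{(n)})_n$ yields, via the canonical isometric inclusion $(Y^*)_{\mathcal U}\hookrightarrow(Y_{\mathcal U})^*$, a functional $h_m\in(Y_{\mathcal U})^*_{\le1}$ with $h_m(y_i)=\lim_{\mathcal U}h_m^{(n)}(y_i^{(n)})=\theta'$ for $i\ge m$ and $0$ for $i<m$. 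By Theorem \ref{Theorem 3.2} (its role of $\theta$ now played by $\theta'$) the space $Y_{\mathcal U}$ is non‑reflexive, and since every ultrapower of $Y$ is finitely representable in $Y$, this contradicts super‑reflexivity.

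The main obstacle is that the excerpt never develops the ultrapower machinery, so the assembly step must be built from scratch: I would have to introduce $Y_{\mathcal U}=\ell_\infty(Y)/N_{\mathcal U}$ with $N_{\mathcal U}$ the vectors vanishing along $\mathcal U$, verify that finite‑dimensional subspaces of $Y_{\mathcal U}$ are $(1+\varepsilon)$‑realized inside $Y$ (so $Y_{\mathcal U}$ is finitely representable in $Y$), and establish the isometric embedding $(Y^*)_{\mathcal U}\hookrightarrow(Y_{\mathcal U})^*$ that lets the transported functionals act on ultrapower vectors. An ultrafilter‑free alternative is a direct spreading/diagonal construction: define a formal James space $Z$ on a basis $(e_i)$, norm it by the supremum over the finite configurations already transported into $Y$, and check by hand that each finite piece of $Z$ embeds into $Y$ with distortion tending to $1$; this is more elementary but the norm estimates are fiddlier. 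In either route the single delicate point is the duality: passing to a limit of the \emph{vectors} is easy, but the \emph{functionals} embed only isometrically (not onto) into the dual of the limit space, and the whole argument hinges on the James functionals surviving the limiting process.
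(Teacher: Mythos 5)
Your argument is correct in outline, but it takes a genuinely different route from the paper, and the route you flag as the ``alternative'' is in fact the one the paper follows. The paper does not use ultrapowers at all: starting from the separation property $\inf_n d\big[\operatorname{conv}\{x_i\}_{i\le n},\operatorname{conv}\{x_i\}_{i>n}\big]>\delta_0$ (Theorem \ref{Theorem 6.8}) rather than the bi-orthogonal James system of Theorem \ref{Theorem 3.2}, it transports the finite pieces into $Y$ via the maps $T_n$ exactly as you do, and then builds the limit space by hand: it introduces formal symbols $\xi_i$, norms $\operatorname{Span}\{\xi_i\}$ by $\big\|\sum a_i\xi_i\big\|_V=\limsup_M\big\|\sum a_i\,y^{2^M}_{i+2^{M-1}}\big\|_Y$ along a diagonal subsequence, and verifies finite representability of the completion $\overline V$ in $Y$ through an $\epsilon$-net argument on finite-dimensional unit balls combined with the Uniform Boundedness Theorem applied to the evaluation projections $P_M$; non-reflexivity of $\overline V$ then follows because the $\xi_i$ inherit the convex-hull separation. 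Your ultrapower version buys a cleaner limit object (the vectors $y_i$ and functionals $h_m$ exist immediately, with $h_m(y_i)=\theta/\lambda^2$ for $i\ge m$ and $0$ otherwise, so non-reflexivity of $Y_{\mathcal U}$ drops out of condition 5) of Theorem \ref{Theorem 3.2} without even normalizing the $h_m$ to have norm strictly below $1$), at the cost of two standard but nontrivial facts the paper never establishes: that $Y_{\mathcal U}$ is finitely representable in $Y$, and that $(Y^*)_{\mathcal U}$ embeds isometrically into $(Y_{\mathcal U})^*$. You correctly identify these as the load-bearing gaps; if you supply them (both are routine $\epsilon$-net/ultralimit verifications), your proof is complete, and your treatment of the easy direction and of the Hahn--Banach extension giving $\|h_m^{(n)}\|\le 1$ is exactly right.
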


\begin{proof}

Suppose $Y$ is not super-reflexive and hence there will be a non-reflexive Banach space $X$ that is finitely representable in $Y$. Obviously, if $X$ is finitely representable in $Y$, it will be crudely finitely representable in $Y$.\\

\noindent
Now suppose a non-reflexive Banach space $X$ is crudely representable in $Y$. By \textbf{30)} in \textbf{Theorem \ref{Theorem 6.8}}, we can find $\delta_0 > 0$ and a linearly independent sequence $\{x_n\}\subseteq X_{\leq 1}$ such that:

$$
\inf_{n\in\mathbb{N}} d \big[ \operatorname{conv}\{x_i\}_{i \leq n}, \operatorname{conv}\{x_i\}_{i > n} \big] > \delta_0
$$
Define $X_n = \operatorname{Span}\{x_i\}_{i \leq n}$ and then there will be an isomorphism $T_n: X_n \rightarrow Y$ such that for each $v \in X_n$:

\begin{equation}\label{e48}
\frac{1}{\lambda}\|v\|_X \leq \|T_n v\|_Y \leq \lambda\|v\|_X
\end{equation}
Define $y_k^n = \frac{1}{\lambda}T_n(x_k)$ for each $k \leq n$ and hence $\|y_k^n\| \leq 1$. For each $n, k\in\mathbb{N}$ with $k<n$, we have:
    
$$
\begin{aligned}
&\hspace{0.46cm} d \big[ \operatorname{conv}\{y_i^n\}_{i \leq k}, \operatorname{conv}\{y_i^n\}_{k < i \leq n} \big] \\
&\geq \frac{1}{\lambda^2}d \big[ \operatorname{conv}\{x_i\}_{i \leq k}, \operatorname{conv}\{x_i\}_{k < i \leq n} \big]\\
&\geq \frac{1}{\lambda^2} d \big[ \operatorname{conv}\{x_i\}_{i \leq k}, \operatorname{conv}\{x_i\}_{i>k} \big]\\
&\geq \frac{\delta_0}{\lambda^2}
\end{aligned}
$$
Since the inequality above holds for all $n>k$, we then can conclude:

\begin{equation}\label{e49}
d \big[ \operatorname{conv}\{y_i^n\}_{i \leq k}, \operatorname{conv}\{y_i^n\}_{i > k} \big] \geq \frac{\delta_0}{\lambda^2}
\end{equation}
Then by using the subsequence: 

$$
\{y_k^{2^n}\}_{2^{n-1} < k \leq 2^n, n\in\mathbb{N}} \subseteq \{y_k^n\}_{k \leq n, n\in\mathbb{N}}
$$
we are going to construct a Banach space that is finitely representable in $Y$ but non-reflexive. Let $\{\xi_i\}_{i \in\mathbb{N}}$ be a sequence of symbols and in $V = \operatorname{Span}\{\xi_i\}_{i\in\mathbb{N}}$, define:

$$
\left\| \sum_{k \leq n}a_i \xi_i \right\|_V = \limsup_M \left\| \sum_{i \leq n}a_i y_{i+2^{M-1}}^{2^M} \right\|_Y
$$
Clearly $(V, \|\cdot\|_V)$ is a normed linear space. Let $\overline{V}$ be the completion of $V$ with respect to the norm topology so that $(\overline{V}, \|\cdot\|_V)$ is a Banach space. Define $V_n = \operatorname{Span}\{\xi_j\}_{j \leq n}$. Fix $u = \sum_{k \leq n}a_i \xi_k\in V_n$ and suppose $\|u\|_V=1$. Fix an arbitrary $\epsilon\in(0, 1)$. We then can find $M(\epsilon, u) \in\mathbb{N}$ such that $M(\epsilon, u) > \log_2 n$ and for each $M\geq M(\epsilon, u)$:

\begin{equation}\label{e50}
1-\epsilon \leq \left\| \sum_{k \leq n}a_i y_{k+2^{M-1}}^{2^M} \right\|_Y \leq 1+\epsilon
\end{equation}
Fix an arbitrary $n\in\mathbb{N}$. Let $(V_n)_{\leq 1}$ be the closed unit ball of $V_n$ and suppose $\{u_j\}_{j \leq m}$ is a $\dfrac{\epsilon}{4}$-net of $(V_n)_{\leq 1}$. For each $j\leq m$, put $M_j = M\left( \dfrac{\epsilon}{4}, u_j \right)$. For each $N, k\in\mathbb{N}$, define:

$$
P_N(\xi_k)=
\begin{cases}
y_{k+2^{N-1}}^{2^N}, \hspace{1cm} k\leq 2^{N-1}\\
0, \hspace{2.04cm} k>2^{N-1}
\end{cases}
$$
and linearly extend $P_N$ to the entire $V$. Since the range of $P_N$ is finite-dimensional, $P_N$ is bounded and can be continuously extended to $\overline{V}$. For each $y\in V$, we have:

$$
\limsup_M \|P_M(y)\|_Y = \|y\|_V \hspace{0.5cm} \Longrightarrow \hspace{0.5cm} \sup_M \|P_M(y)\|<\infty
$$
Given $y\in\overline{V}$, suppose $\{y_n\}_{n\in\mathbb{N}}\subseteq V$ is a sequence that converges to $y$ in $\|\cdot\|_V$. Observe that:

$$
\limsup_n \|y_n\|_V = \liminf_n \|y_n\|_V = \lim_n \|y_n\|_V = \|y\|_V
$$
Fix $N(\epsilon)\in\mathbb{N}$ such that:

$$
\|y\|_V-\epsilon < \inf_{n\geq N(\epsilon)}\|y_n\|_V \leq \sup_{n\geq N(\epsilon)}\|y_n\|_V < \|y\|_V+\epsilon
$$
Fix an arbitrary $n\in\mathbb{N}$ with $n\geq N(\epsilon)$, and find $M(n)\in\mathbb{N}$ such that:

$$
\begin{aligned}
& \hspace{1cm} \|y_n\|_V-\epsilon < \sup_{M\geq M(n)} \|P_M(y_n)\|_Y < \|y_n\|_V+\epsilon \\
&\implies\, \|y\|_V-2\epsilon < \sup_{M\geq M(n)} \|P_M(y_n)\|_Y < \|y\|+2\epsilon \\
\end{aligned}
$$
which implies:

\begin{equation}\label{e51}
\begin{aligned}
& \hspace{1cm} \|y\|_V - 2\epsilon < \sup_{n\geq N(\epsilon)} \sup_{M\geq M\big( N(\epsilon) \big)} \|P_M(y_n)\|_Y < \|y\|_V + 2\epsilon \\
&\implies\, \|y\|_V -2\epsilon < \sup_{M\geq M\big( N(\epsilon) \big)} \sup_{n\geq N(\epsilon)} \|P_M(y_n)\|_Y < \|y\|_V + 2\epsilon 
\end{aligned}
\end{equation}
Since for each $M\in\mathbb{N}$, we have $\lim_n P_M(y_n)=P_M(y)$, from (\ref{e51}) we have:

$$
\begin{aligned}
&\hspace{1cm} \|y\|_V -2\epsilon < \sup_{M\geq M\big( N(\epsilon) \big)} \|P_M(y)\|_Y < \|y\|_V+2\epsilon\\ 
&\implies\, \sup_M \|P_M(y)\|_Y < \infty
\end{aligned}
$$
Since $y\in \overline{V}$ is arbitrarily picked, by \textbf{Uniform Boundedness Theorem}, we then can conclude $R = \sup_M \|P_M\| < \infty$. Now fix an arbitrary $y\in (V_n)_{\leq 1}$. Recall that $\{u_j\}_{j\leq m}$ is an $\dfrac{\epsilon}{4}$-net of $(V_n)_{\leq 1}$. Suppose $\|y-u_i\|_V < \dfrac{\epsilon}{4}$ for some $i\leq m$. Then we have:

\begin{equation}\label{e52}
\|P_{M_i}(y)\|_Y \leq \|P_{M_i}(y-u_i)\|_Y+\|P_{M_i}(u_i)\|_Y < \|y-u_i\|_V \|P_{M_i}\| + \left( 1+\frac{\epsilon}{4} \right)\|u_i\|_V < \frac{\epsilon}{4}R + \left(1+\frac{\epsilon}{4}\right)
\end{equation}
and by (\ref{e50}):

\begin{equation}\label{e53}
\begin{aligned}
\|P_{M_i}(y)\|_Y 
&\geq \|P_{M_i}(u_i)\|_Y - \frac{\epsilon}{4}\\
&\geq \left( 1-\frac{\epsilon}{4} \right)\|u_i\|_V-\|P_{M_i}(u_i-y)\|_Y\\ 
&\geq \left( 1-\frac{\epsilon}{4} \right)\|u_i\|_V-\frac{\epsilon}{4}R \\
&\geq \left( 1-\frac{\epsilon}{4} \right) \|y\|_V - \left( 1-\frac{\epsilon}{4} \right)\|u_i-y\|_V-\frac{\epsilon}{4}R \\
&> \left( 1-\frac{\epsilon}{4} \right) \|y\|_V - \left( 1-\frac{\epsilon}{4} \right) \frac{\epsilon}{4}R - \frac{\epsilon}{4}R \\
&\geq \left( 1-\frac{\epsilon}{4} \right) \|y\|_V - \frac{\epsilon}{2}R
\end{aligned}
\end{equation}
According to (\ref{e50}), both (\ref{e52}) and (\ref{e53}) also hold when $M_i$ is replaced by any $M\geq M_i$. Put $M_0 = \max_{i\leq m}M_i$. Since, for each $\delta\in(0, 1)$, there exists $\epsilon\in(0, \delta)$ such that $R\epsilon < \delta$, together with (\ref{e51}) and (\ref{e52}), for all $M\geq M_0$ and each $y\in V_n$ with $\|y\|=1$, we have:

$$
\left( 1 - \frac{\delta}{4} \right) - \frac{\delta}{2} < \|P_M(y)\|_Y < \frac{\delta}{4} + \left( 1+\frac{\delta}{4} \right)
$$
Therefore, for any $y \in V_n$ and any $M\geq M_0$:

$$
\begin{aligned}
&\hspace{1cm}1-\delta \leq \left\| P_M\left( \frac{y}{\|y\|_V} \right) \right\|_Y \leq 1+\delta\\
&\implies\,(1-\delta) \|y\|_V \leq \|P_M(y)\|_Y \leq (1+\delta)\|y\|_V
\end{aligned}
$$
Now we can conclude that, for each $n\in\mathbb{N}$, $\delta\in(0, 1)$, there exists $M(n)\in\mathbb{N}$ such that for any $M\in\mathbb{N}$ with $M\geq M(n)$ and any $y\in V_n$:

\begin{equation}\label{e54}
(1-\delta)\|y\|_V \leq \|P_M(y)\|_Y \leq (1+\delta)\|y\|_V
\end{equation}
Suppose $U_n\subseteq \overline{V}$ is an $n$-dimensional subspace $(n \geq 1)$ and has basis $\{u^i\}_{i\leq n}$. Fix $u=\sum_{i\leq n} \lambda_i u^i$ with $\|u\|=1$ and $\delta\in(0, 1)$. Find $\epsilon\in(0,\delta)$ such that $R\epsilon<\delta$. For each $i\leq n$, find $u_i\in V_{m_i}$ such that:

$$
\Big\| \sum_{i\leq n}\lambda_i \big( u^i - u_i\big) \Big\| < \epsilon
$$
Then for each $m_i$, find $M_i$ such that for each $M\geq M_i$, (\ref{e54}) holds for the given $\delta$ and each vector in $V_{m_i}$. Put $M' = \max_{i\leq n}M_i$. We then have $\sum_{i\leq n}\lambda_i u_i\in V_{M'}$ and hence for each $M\geq M'$ we have:

$$
\begin{aligned}
&\hspace{0.95cm} \left\| P_M\left( \sum_{i\leq n}\lambda_i u^i\right\| \right\|_Y \leq \left\| P_M\left( \sum_{i\leq n}\lambda_i u_i \right) \right\|_Y + \left\| P_M \left( \sum_{i\leq n}u^i - u_i \right) \right\|_Y \\
&\implies\, \left\| P_M\left( \sum_{i\leq n}\lambda_i u^i\right) \right\|_Y \leq (1+\delta) \left\|\sum_{i\leq n}\lambda_i u_i\right\|_V + R\left\| \sum_{i\leq n}\lambda_i (u^i - u_i) \right\|_Y \\
& \implies\, \left\| P_M\left( \sum_{i\leq n}\lambda_i u^i\right) \right\|_Y \leq (1+\delta) \left\|\sum_{i\leq n}\lambda_i u^i \right\|_V + (1+\delta) \left\| \sum_{i\leq n}\lambda_i (u^i - u_i) \right\|_Y + \delta\\
& \implies\, \left\| P_M\left( \sum_{i\leq n}\lambda_i u^i\right) \right\|_Y \leq (1+\delta) + (1+\delta)\epsilon + \delta \leq 1+4\delta
\end{aligned}
$$
Similarly, we can also show:

$$
\left\| P_M\left( \sum_{i\leq n}\lambda_i u^i\right) \right\|_Y \geq 1-4\delta
$$
Since $u= \sum_{i\leq n}\lambda_i u^i$ is arbitrarily picked, we can then conclude for each $y\in U_n$:
 
$$
\begin{aligned}
&\hspace{1cm} 1-4\delta \leq \left\| P_{M'}\left( \frac{y}{\|y\|_V} \right) \right\|_Y \leq 1+4\delta\\
&\implies\, (1-4\delta)\|y\|_V \leq \|P_{M'}(y)\|_Y \leq (1+4\delta)\|y\|_V
\end{aligned}
$$
As a result, $(\overline{V}, \|\cdot\|_V)$ is finitely representable in $Y$. Now fix arbitrary $k, n\in\mathbb{N}$ with $k<n$. Then there exists $M(k)\in\mathbb{N}$ such that (\ref{e53}) holds for all $y\in V_k$ whenever $M\geq M(k)$. Fix $N\geq M(k)$ and $N>\log_2 n$. Then together with (\ref{e49}) we have:

\begin{equation}\label{e55}
\begin{aligned}
& \hspace{0.46cm} d \big[ \operatorname{conv}\{\xi_i\}_{i \leq k}, \operatorname{conv}\{\xi_i\}_{k < i \leq n} \big]\\
& \geq \frac{1}{R} d \big[ P_N(\operatorname{conv}\{\xi_i\}_{i \leq k}), P_N(\operatorname{conv}\{\xi_i\}_{k < i \leq n}) \big]\\
& = \frac{1}{R} d\big[ \operatorname{conv}\{y_{i+2^{N-1}}^{2^N}\}_{i \leq k}, \operatorname{conv}\{y_{i+2^{N-1}}^{2^N}\}_{k < i \leq n} \big]\\
& \geq d \big[ \operatorname{conv}\{y_i^{2^N}\}_{i \leq k+2^{N-1}}, \operatorname{conv}\{y_i^{2^N}\}_{k+2^{N-1} < i \leq 2^N} \big]\\
& \geq \frac{1}{R} \frac{1}{\lambda^2}d \big[ \operatorname{conv}\{x_i\}_{i \leq k+2^{N-1}}, \operatorname{conv}\{x_i\}_{k+2^{N-1} < i \leq 2^N} \big]\\
& \geq \frac{\delta}{R\lambda^2}
\end{aligned}
\end{equation}
Since (\ref{e55}) holds for all $n>k$, we then can conclude:

\begin{equation}\label{e56}
d\big[ \operatorname{conv}\{\xi_i\}_{i\leq k}, \operatorname{conv}\{\xi_i\}_{i>k} \big] \geq \frac{\delta}{R\lambda^2}
\end{equation}
and since the $k$ in (\ref{e56}) is arbitrarily fixed, we then have:

$$
\inf_{k\in\mathbb{N}} d\big[ \operatorname{conv}\{\xi_i\}_{i\leq k}, \operatorname{conv}\{\xi_i\}_{i>k} \big] \geq \frac{\delta}{R\lambda^2}>0
$$
Since $\|\xi_i\|_V\leq 1$ for each $i\in\mathbb{N}$, by \textbf{Theorem \ref{Theorem 6.8}}, \textbf{(30)}, we can conclude $\overline{V}$ is not reflexive and hence $Y$ is not super-reflexive.

\end{proof}

\begin{prop}[Milman–Pettis theorem]\label{Proposition 7.13}

A uniformly convex (see \textbf{Definition \ref{Definition 1.1}}) Banach space $X$ is reflexive
    
\end{prop}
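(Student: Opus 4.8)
The quickest route is to observe that the machinery of Section~1 already chains together: by \textbf{Theorem \ref{Theorem 1.4}} a uniformly convex Banach space has the Banach--Saks property, and by \textbf{Corollary \ref{Corollary 1.7}} any space with the Banach--Saks property is reflexive. So one \emph{could} simply invoke these two results. However, I would prefer to give the self-contained classical Milman--Pettis argument, which uses only \textbf{Definition \ref{Definition 1.1}} and \textbf{Goldstine's Theorem} \cite{22}. The plan is to show that the canonical isometry $Q\colon X\to X^{\ast\ast}$ is surjective. Since $Q$ is an isometry and $X$ is complete, $Q(X)$ is norm-closed in $X^{\ast\ast}$; hence it suffices, for an arbitrary $F\in X^{\ast\ast}$ with $\|F\|=1$ (the case $F=0$ and the scaling to general $F$ being trivial), to approximate $F$ in norm by elements of $Q(X)$ to conclude $F\in Q(X)$.

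First I would fix $\epsilon\in(0,1)$ and let $\delta\in(0,1)$ be the modulus supplied by uniform convexity for this $\epsilon$. Choose $f\in X^{\ast}$ with $\|f\|=1$ and $F(f)>1-\tfrac{\delta}{4}$, and set $C=\{x\in X_{\leq 1}:f(x)>1-\tfrac{\delta}{2}\}$. The decisive estimate is that $\operatorname{diam}(C)\leq\epsilon$: if $x,y\in C$ then $f(x+y)>2-\delta$, so $\|x+y\|\geq f(x+y)>2-\delta$, and the contrapositive of \textbf{Definition \ref{Definition 1.1}} (the implication $\|x+y\|\geq 2-\delta\implies\|x-y\|\leq\epsilon$ for elements of the closed unit ball) forces $\|x-y\|\leq\epsilon$. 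Getting the constants to line up so that uniform convexity applies is the one place where care is needed, but it is routine.

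Next I would show $F\in\overline{Q(C)}^{\,w\ast}$. Given any basic weak-$\ast$ neighborhood $U$ of $F$, intersect it with $\{G\in X^{\ast\ast}:|(G-F)(f)|<\tfrac{\delta}{4}\}$ to form a smaller neighborhood $U'$; by \textbf{Goldstine's Theorem} $Q(X_{\leq 1})$ is weak-$\ast$ dense in $X^{\ast\ast}_{\leq 1}$, so some $x\in X_{\leq 1}$ has $Q(x)\in U'$, whence $f(x)>F(f)-\tfrac{\delta}{4}>1-\tfrac{\delta}{2}$, i.e. $x\in C$, and $Q(x)\in U$. Thus $Q(C)$ meets every weak-$\ast$ neighborhood of $F$. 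The main obstacle, and the step that makes the proof work, is converting this weak-$\ast$ information back into a norm bound: fixing any $x_0\in C$, the diameter estimate gives $Q(C)\subseteq\{G:\|G-Q(x_0)\|\leq\epsilon\}$, and since closed norm-balls in the bidual are weak-$\ast$ closed (Banach--Alaoglu), this ball contains the whole weak-$\ast$ closure of $Q(C)$, hence contains $F$. Therefore $\|F-Q(x_0)\|\leq\epsilon$.

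Finally, letting $\epsilon\to 0$ produces a sequence in $Q(X)$ converging in norm to $F$, so $F\in\overline{Q(X)}=Q(X)$. As $F$ was an arbitrary element of $X^{\ast\ast}$, the map $Q$ is surjective and $X$ is reflexive. The only genuinely delicate point is the weak-$\ast$-to-norm passage in the previous paragraph, which hinges on the weak-$\ast$ closedness of norm-balls; everything else is bookkeeping with the uniform-convexity constants.
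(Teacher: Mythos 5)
Your proposal is correct and follows essentially the same route as the paper's proof: the same slice $C$ of the unit ball cut out by a norming functional, Goldstine's theorem to place $F$ in the weak-$\ast$ closure of $Q(C)$, and uniform convexity to bound $\operatorname{diam}(C)$ by $\epsilon$. The only cosmetic difference is that you justify the weak-$\ast$-to-norm passage via weak-$\ast$ closedness of norm balls, where the paper reaches the same point by a separation argument (\textbf{Corollary \ref{Corollary 1.22}} together with \textbf{Theorem \ref{Theorem 1.25}}) and then tacitly uses the same closedness fact for the diameter of the weak-$\ast$ closure; your version makes that step explicit.
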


\begin{proof}

Pick $z\in X^{\ast\ast}$ with $\|z\|= 1$ and fix an arbitrary $\epsilon\in(0, 1)$. Let $\delta$ be given by uniform convexity. Pick $f\in X^*$ with $\|f\|\leq 1$ such that $\big\vert\, z(f) - \|z\|\, \big\vert < \dfrac{\delta}{2}$. Define:

$$
C = \left\{ x\in X_{\leq 1}: \vert\, f(x)-1\,\vert <\frac{\delta}{2}  \right\}
$$
Let $q:X\rightarrow X^{\ast\ast}$ be the canonical mapping and we claim that $z\in C_1$, the weak-$\ast$ closure of $q(C)$. Suppose $z\notin C_1$. Since $X^*$ is a total subspace of $X^{\ast\ast\ast}$, by \textbf{Theorem \ref{Theorem 1.25}}, the space of linear functionals that are defined on $X^{\ast\ast}$ and continuous with respect to weak-$\ast$ topology is $X^*$. Then by \textbf{Corollary \ref{Corollary 1.22}}, there exists $g\in X^*$ such that:

$$
z(g) < \inf\big\{ x(g): x\in C_1\big\}
$$
Meanwhile, by {\cite[Goldstine's Theorem]{22}}, since $q(X_{\leq1})$ is weak-$\ast$ dense in $X^{\ast\ast}_{\leq 1}$, there exists a net $\{x_{\lambda}\}\subseteq X_{\leq 1}$ such that $f(x_{\lambda}) \rightarrow z(f)$ for all $f\in X^*$. Since $\big\vert\, z(f)-1\, \big\vert<\dfrac{\delta}{2}$, there exists $\lambda_0$ such that $x_{\lambda}\in C$ for all $\lambda\geq \lambda_0$. Hence we have:

$$
z(g) < \inf_{\lambda\geq \lambda_0} g(x_{\lambda}) \leq \liminf_{\lambda} g(x_{\lambda}) = \lim_{\lambda} g(x_{\lambda}) = z(g)
$$
which is absurd. Therefore $z\in C_1$. Since $C$ is convex, for every $x_1, x_2\in C$ we have:

$$
\left\| \frac{x_1 + x_2}{2} \right\| \geq \left\vert\, f\left( \frac{x_1 + x_2}{2} \right) \,\right\vert > 1-\frac{\delta}{2} \hspace{0.3cm}\implies\hspace{0.3cm} \|x_1 + x_2\|>2-\delta
$$
By uniform convexity, we have $\|x_1 - x_2\|<\epsilon$. Since $x_1, x_2$ are arbitrarily picked, the diameter of $C_1$ is less than or equal to $\epsilon$. Hence there exists $x\in C$ such that $\|x-z\|<\epsilon$. Since $\epsilon$ is arbitrarily picked from $(0, 1)$, we can now conclude $z\in q(X_{\leq 1})$. Since $z$ is also arbitrarily picked, $X$ is reflexive.
    
\end{proof}

\begin{prop}\label{Proposition 7.14}
If a Banach space $(X, \|\cdot\|_X)$ is uniformly convex, then it would be super-reflexive.
\end{prop}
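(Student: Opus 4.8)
The plan is to show that uniform convexity passes to every Banach space that is finitely representable in $X$, and then to invoke the Milman--Pettis theorem (\textbf{Proposition \ref{Proposition 7.13}}). Since super-reflexivity of $X$ means precisely that every Banach space finitely representable in $X$ is reflexive, establishing reflexivity of each such space suffices. The key conceptual point is that uniform convexity, as formulated in \textbf{Definition \ref{Definition 1.1}}, is a condition on only two vectors at a time, hence an intrinsically finite-dimensional (indeed two-dimensional) property, and finite representability approximates exactly such local geometry.

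First I would fix a Banach space $Y$ that is finitely representable in $X$ and verify that $Y$ is itself uniformly convex. Fix $\epsilon\in(0,1)$ and let $\delta'\in(0,1)$ be the modulus of uniform convexity of $X$ associated to the value $\epsilon/2$ via \textbf{Definition \ref{Definition 1.1}}. Choose $\lambda>1$ close enough to $1$ that both $\epsilon/\lambda^2>\epsilon/2$ (that is, $\lambda<\sqrt{2}$) and $\lambda^2(2-\delta')<2-\delta'/2$ hold; crucially this choice depends only on $\epsilon$ and $\delta'$, not on any particular vectors. Now given $y_1,y_2\in Y_{\leq 1}$ with $\|y_1-y_2\|_Y>\epsilon$, apply finite representability to the (at most two-dimensional) subspace $\operatorname{Span}\{y_1,y_2\}$ to obtain a linear isomorphism $T$ onto its image in $X$ satisfying $\tfrac{1}{\lambda}\|v\|_Y\leq\|Tv\|_X\leq\lambda\|v\|_Y$ for all $v$ in that subspace.

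Next I would normalize and push the estimate through $X$. The vectors $\lambda^{-1}Ty_1$ and $\lambda^{-1}Ty_2$ lie in $X_{\leq 1}$, and
$$
\left\|\lambda^{-1}Ty_1-\lambda^{-1}Ty_2\right\|_X\geq \lambda^{-2}\|y_1-y_2\|_Y>\frac{\epsilon}{\lambda^2}>\frac{\epsilon}{2}.
$$
Uniform convexity of $X$ then gives $\|\lambda^{-1}T(y_1+y_2)\|_X<2-\delta'$, whence
$$
\|y_1+y_2\|_Y\leq\lambda\,\|T(y_1+y_2)\|_X<\lambda^2(2-\delta')<2-\frac{\delta'}{2}.
$$
Setting $\delta=\delta'/2$ produces a modulus for $Y$ depending only on $\epsilon$, so $Y$ is uniformly convex (and only this upper estimate is needed to apply the Milman--Pettis argument). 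By \textbf{Proposition \ref{Proposition 7.13}}, $Y$ is reflexive, and since $Y$ was an arbitrary space finitely representable in $X$, we conclude that $X$ is super-reflexive.

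The arithmetic above is routine; the one step demanding care, which I regard as the main (if mild) obstacle, is ensuring that the distortion constant $\lambda$ is prescribed \emph{before} the pair $y_1,y_2$ and hence before the two-dimensional subspace is chosen, so that the resulting modulus $\delta$ is genuinely uniform over all admissible pairs. This is exactly what the definition of finite representability allows, since there $\lambda>1$ (equivalently, the tolerance $\epsilon$) may be fixed in advance for every finite-dimensional subspace; were the order of quantifiers reversed, the transferred modulus could degenerate and the conclusion would fail.
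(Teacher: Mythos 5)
Your proof is correct and rests on the same two ingredients as the paper's: the Milman--Pettis theorem (\textbf{Proposition \ref{Proposition 7.13}}) and the transfer of the two-vector geometry of uniform convexity through the near-isometries supplied by finite representability. The only difference is organizational --- the paper argues contrapositively (via \textbf{Proposition \ref{Proposition 7.12}}, pulling a witness pair for the failure of uniform convexity from a non-reflexive finitely representable $Y$ back into $X$), whereas you push the modulus of convexity forward from $X$ into an arbitrary finitely representable $Y$; the underlying computation, including the point you flag about fixing $\lambda$ before the pair $y_1, y_2$ is chosen, is the same.
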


\begin{proof}

Assume $X$ is not super-reflexive and then by \textbf{Proposition \ref{Proposition 7.12}}, there exists a non-reflexive Banach space $(Y, \|\cdot\|_Y)$ that is finitely representable in $X$. Since $Y$ is not reflexive, by \textbf{Proposition \ref{Proposition 7.13}}, it would not be uniformly convex. Hence in $Y$, there exists $\epsilon > 0$ such that for each $\delta \in (0, 1)$, there exists $y_1, y_2 \in Y_{\leq 1}$ such that $\|y_1-y_2\|_Y > \epsilon$ and $\|y_1+y_2\| \geq 2-\delta$. Fix an arbitrary $\delta\in(0, \epsilon)$ such that $2\delta\in (0, 1)$. Find $y_1, y_2\in Y_{\leq 1}$ such that $\|y_1-y_2\|_Y>\epsilon$ and $\|y_1+y_2\|_Y \geq 2-\delta$. Define $V = \operatorname{Span}\{y_1, y_2\}$. Since $Y$ is finitely representable in $X$, there exists $T: V\rightarrow X$ such that for each $v\in V$:

$$
(1-\delta) \|v\|_Y \leq \|Tv\|_X \leq (1+\delta) \|v\|_Y
$$
Hence we have:

$$
\big\| T(y_1+y_2) \big\| \geq (1-\delta)(2-\delta), \hspace{2cm} \big\| T(y_1-y_2) \big\| >\epsilon(1-\delta)
$$
Without losing generality, assume $\delta$ is small enough such that:

$$
\frac{1-\delta}{1+\delta}(2-\delta) \geq 2-2\delta, \hspace{2cm} \frac{1-\delta}{1+\delta}\epsilon > \frac{\epsilon}{2}
$$
which implies that for any $\delta\in(0, 1)$:

$$
\left\| \frac{1}{1+\delta}Ty_1 + \frac{1}{1+\delta}Ty_2\right\| \geq 2-2\delta \hspace{2cm} \left\| \frac{1}{1+\delta}Ty_1 - \frac{1}{1+\delta}Ty_2 \right\| < \frac{\epsilon}{2}
$$
Since $\epsilon$ is fixed and $\delta$ is arbitrarily picked (and hence $2\delta$ is also arbitrarily picked from $(0, 1)$), we then can conclude $X$ is not uniformly convex. 

\end{proof}

\begin{lem}[\cite{23}]\label{Lemma 7.15}

In $X$ a LCTVS, given $K\subseteq X$ a separable weakly compact convex subset and $p$ a pseudo-norm given by the topology of $X$, for each $\epsilon\in (0, 1)$ there exists a closed convex subset $C\subseteq X$ such that $K\backslash C\neq \emptyset$ and $\big\vert\, p(x-y)\,\big\vert \leq \epsilon$ for all $x, y\in K\backslash C$.
    
\end{lem}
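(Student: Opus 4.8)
The plan is to argue by contradiction against the weak compactness of $K$. Fix $\epsilon\in(0,1)$; the case $\sup_{x,y\in K}p(x-y)\le\epsilon$ is trivial (take for $C$ a single point of $K$), so assume $K$ has $p$-diameter $>\epsilon$ and suppose that no closed convex $C$ with $K\setminus C\ne\emptyset$ satisfies $\operatorname{diam}_p(K\setminus C)\le\epsilon$; equivalently, every such complement contains two points at $p$-distance $>\epsilon$. The basic tool I would set up first is that $p$-distance converts into linear separation by $p$-\emph{dominated} functionals: since $p$ is a continuous seminorm, Hahn--Banach applied to $p$ gives, for any $u$ with $p(u)>0$, a functional $g$ with $|g|\le p$ (hence continuous) and $g(u)=p(u)$; and if a point $w$ satisfies $\inf_{h\in H}p(w-h)>2c$ for a compact finite-dimensional convex $H$, then separating $w$ from the convex set $H+\{p\le 2c\}$ by Corollary \ref{Corollary 1.22} yields a $p$-dominated $g$ with $g(w)>\sup_H g+2c$ --- the separating functional must be $p$-dominated because $\{p\le 2c\}$ contains the kernel of $p$, on which $g$ is therefore bounded and hence zero.

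Using this, I would build inductively a sequence $(x_n)\subseteq K$, $p$-dominated functionals $(g_n)$, and nested nonempty closed convex reservoirs $K=R_0\supseteq R_1\supseteq\cdots$ with $x_n\in R_{n-1}$, all later points drawn from $R_n$, and, writing $H_n=\overline{\operatorname{conv}}\{x_1,\dots,x_n\}$ and $c=\epsilon/4$, the property $R_n\subseteq\{z:g_n(z)\ge\sup_{H_n}g_n+c\}$. At the inductive step I locate (this is the crux, see below) a point $w\in R_{n-1}$ with $\inf_{h\in H_n}p(w-h)>2c$, manufacture $g_n$ as in the previous paragraph, and put $R_n=R_{n-1}\cap\{z:g_n(z)\ge\sup_{H_n}g_n+c\}$. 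Because $g_n\le p$, the functional gap forces $p(z-h)\ge g_n(z-h)\ge c$ for all $z\in R_n$ and $h\in H_n$; hence for every split $N$ the head $\operatorname{conv}\{x_i\}_{i\le N}\subseteq H_N$ and the tail $\operatorname{conv}\{x_i\}_{i>N}\subseteq R_N$ are $p$-separated by the constant $c$.

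With such a sequence in hand, weak compactness is contradicted at once. By the characterization in Theorem \ref{Theorem 6.3} (applicable through Remark \ref{Remark 2.8}), weak compactness of $K$ forces $0\in\overline{\bigcup_n\big[\operatorname{conv}\{x_i\}_{i\le n}-\operatorname{conv}\{x_i\}_{i>n}\big]}$, which is its condition 7. But the separation just established places every difference occurring in that union inside the closed set $\{u:p(u)\ge c\}$, whose closure still misses $0$; equivalently, the triple $(x_n),(g_n),\theta=c$ realizes exactly the configuration that condition 9 of the same theorem forbids in a weakly compact set. Either way $K$ fails to be weakly compact, contradicting the hypothesis, and the lemma follows.

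The hard part will be the inductive step --- guaranteeing that the construction never stalls, i.e. that at each stage a point $w\in R_{n-1}$ genuinely lies $p$-far from the current finite-dimensional head $H_n$. This is precisely the dentability content, and it is where the hypothesis must be exploited over \emph{all} closed convex sets, not merely half-spaces, and where separability of $K$ is expected to enter: if no such far $w$ existed then $K$ would be $p$-absorbed by a bounded fattening of the compact set $H_n$, and one must convert this ``almost finite-dimensionality'' into an honest small-$p$-diameter slice, contradicting the failure hypothesis directly. I anticipate handling this by a secondary induction in the spirit of the proof of Theorem \ref{Theorem 6.2} --- using continuity of $z\mapsto\inf_{h\in H_n}p(z-h)$ together with compactness of the hulls $H_n$ to pass, via a finite subcover, from the pointwise $p$-separation supplied by the hypothesis to a uniform, stage-independent gap --- with separability providing metrizability of the weak topology on $K$ so that the requisite cluster points and reservoirs can be extracted sequentially.
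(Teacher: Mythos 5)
The paper offers no proof of this lemma (it is imported from \cite{23} without argument), so there is nothing internal to compare against; judged on its own terms, your proposal is a proof \emph{strategy} rather than a proof, and the gap sits exactly at the point you flag yourself. The outer frame is fine: manufacturing $p$-dominated separating functionals via Hahn--Banach (your observation that boundedness of the separating functional on the balanced set $\{p\le 2c\}$ forces $p$-domination is correct after renormalizing), and the endgame --- a sequence whose initial and terminal convex hulls are uniformly $p$-separated contradicts conditions $7)$/$9)$ of \textbf{Theorem \ref{Theorem 6.3}} --- is sound modulo the routine reduction of \textbf{Remark \ref{Remark 2.8}}. But the inductive step, ``find $w\in R_{n-1}$ with $\inf_{h\in H_n}p(w-h)>2c$,'' is not a technical detail to be discharged later: it \emph{is} the lemma. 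If it could be carried out, the whole statement would follow; conversely, the lemma (dentability of separable weakly compact convex sets) is a genuinely nontrivial theorem whose known proofs run through the metrizability of the weak topology on a separable weakly compact set, a Baire-category/Namioka argument producing points of weak-to-$p$ continuity, and extreme points --- none of which appear in your outline.

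Moreover, the sketched repair of the inductive step does not work as described. Your contradiction hypothesis is about sets of the form $K\setminus C$, but the reservoirs $R_{n-1}=K\cap\bigcap_{j<n}\{g_j\ge\alpha_j\}$ are proper closed convex subsets of $K$. If the construction stalls, you do get $R_{n-1}\subseteq H_n+\{p\le 2c\}$ and hence a cover of $R_{n-1}$ by finitely many convex sets of small $p$-diameter; but the small piece you would like to exhibit is then of the form $R_{n-1}\setminus C'$, not $K\setminus C$. To upgrade it you would need a closed convex $C$ that swallows both $K\setminus R_{n-1}$ and the discarded part of $R_{n-1}$ while avoiding the retained piece, and this is impossible in general: $\operatorname{conv}\big(K\setminus R_{n-1}\big)$ can already be all of $K$. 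Equivalently, failure of $\epsilon$-dentability of $K$ does not pass to closed convex subsets of $K$, so the ``secondary induction in the spirit of \textbf{Theorem \ref{Theorem 6.2}}'' cannot be run inside the shrinking reservoirs. Until the inductive step is replaced by an argument that genuinely exploits weak compactness (Baire category on the metrizable weak topology of $K$, or an extreme-point/slice localization lemma), the proof is incomplete at its essential point.
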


\begin{theorem}[{\cite[Theorem 2]{10}}]\label{Theorem 7.16}

A Banach space $B$ that has the finite-tree property is not reflexive.
    
\end{theorem}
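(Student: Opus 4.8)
The plan is to argue by contraposition: I assume $B$ is reflexive and show that the finite tree property fails for every $\epsilon>0$. Since any $(n,\epsilon)$-part of a tree is contained in the closed linear span of its finitely many entries, and a closed subspace of a reflexive space is again reflexive, I may pass to a separable reflexive subspace $Y$ in which the candidate trees live. Then for any $\delta\in(0,1)$ the set $K=(1+\delta)\,Y_{\leq 1}$ is separable, convex, norm-closed, and — by reflexivity — weakly compact, which is exactly the hypothesis of \textbf{Lemma \ref{Lemma 7.15}} taken with $p=\|\cdot\|$.

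First I would use \textbf{Lemma \ref{Lemma 7.15}} repeatedly to ``peel'' $K$ into slices of small diameter. Fixing the candidate $\epsilon$, apply the lemma with parameter $\epsilon/2$ to obtain a closed convex $C_0$ with $S_0:=K\setminus C_0\neq\emptyset$ and $\operatorname{diam}(S_0)\leq \epsilon/2$, set $K_1=K\cap C_0$ (again separable, convex, weakly compact), and iterate: define a decreasing transfinite family $\{K_\alpha\}$ by $K_{\alpha+1}=K_\alpha\cap C_\alpha$ and $K_\lambda=\bigcap_{\beta<\lambda}K_\beta$ at limit stages. Because each $K_\alpha$ is weakly compact and the family is strictly decreasing while nonempty, the finite-intersection characterization of weak compactness in \textbf{Theorem \ref{Theorem 1.19}} forces the peeling to stop at some countable ordinal $\alpha_0$ with $K_{\alpha_0}=\emptyset$, so that $K$ is exhausted by slices $S_\alpha$ of diameter $\leq \epsilon/2$.

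The heart of the argument is to play an $\epsilon$-tree against this peeling. To each entry $x$ assign the ordinal rank $\rho(x)=\alpha$ with $x\in S_\alpha$. If $(x_1,x_2)$ are siblings then $\|x_1-x_2\|\geq\epsilon>\operatorname{diam}(S_\alpha)$, so no single slice can contain both; hence sibling branching always splits the ordinal rank. I would then run an induction on tree height: any $(n,\epsilon)$-tree lying in $K_\alpha$ forces, along each branching, a subtree of height $n-1$ living strictly higher in the ordinal scale, so that trees of height exceeding the order type $\alpha_0$ cannot be placed in $K$ at all. Since the finite tree property would supply $(n,\epsilon)$-trees of \emph{every} height inside $K=(1+\delta)Y_{\leq1}$, this is the contradiction that yields non-reflexivity.

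The main obstacle is the quantitative matching in that last step. The finite tree property only provides, for each height $n$ separately, a tree with entries of norm $\leq 1+\delta$ for an arbitrary $\delta$, so I expect the delicate work to be twofold: first, organizing the slicing so that the admissible tree height is capped uniformly in $\delta$; and second, letting $\delta\downarrow 0$ and using weak compactness of $Y_{\leq1}$ to assemble the separate finite trees into one configuration inside the limiting ball that the peeling forbids. Controlling how the ordinal rank must drop along all $2^n$ branches simultaneously — not merely along a single branch — is where reflexivity, through the termination of the weakly compact peeling furnished by \textbf{Lemma \ref{Lemma 7.15}} together with \textbf{Theorem \ref{Theorem 1.19}}, is genuinely used, and it is the step I would expect to require the most care.
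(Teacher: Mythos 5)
There is a genuine gap, and it sits exactly where you flagged it. Your transfinite peeling of $K=(1+\delta)Y_{\leq 1}$ into slices of diameter at most $\epsilon/2$ terminates at some countable but in general \emph{infinite} ordinal $\alpha_0$, and a tree of finite height $n$ only forces the ordinal rank to increase strictly $n$ times along a branch. That is perfectly consistent with $\alpha_0=\omega$ or larger, so the existence of $(n,\epsilon)$-trees for every finite $n$ yields no contradiction: the rank argument rules out a bounded \emph{infinite} dyadic $\epsilon$-tree, not arbitrarily tall finite ones. The repair you propose --- letting $\delta\downarrow 0$ and using weak compactness of $Y_{\leq 1}$ to assemble the separate finite trees into one limiting configuration --- cannot work: the sibling separation $\|x'-x''\|\geq\epsilon$ is not weakly closed, and the natural limit of a sequence of finite trees of growing height lives in an ultrapower of $X$, not in $X$ itself. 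This is not a fixable technicality: a reflexive space that is not super-reflexive, for instance $\big(\sum_n\oplus\,\ell_1^n\big)_{\ell_2}$, does have the finite tree property (compare \textbf{Corollary \ref{Corollary 7.25}}, which together with the present theorem would force every reflexive space to be super-reflexive), so the implication ``finite tree property implies non-reflexive'' fails as stated and no argument along these lines can close the gap.

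For comparison, the paper's own proof takes a different and much shorter route: it applies \textbf{Lemma \ref{Lemma 7.15}} once to $K_N=\operatorname{conv}(F_N)$, the convex hull of a \emph{single} $(N,\epsilon)$-tree, and plays one level of branching against a slice of diameter below $\epsilon/2$. But $K_N$ is the convex hull of a finite set, hence norm-compact and therefore weakly compact in \emph{every} Banach space; reflexivity is never actually used, and if that argument were sound it would show that no Banach space has the finite tree property, which is false (e.g.\ $c_0$ and $L_1$ have it). So both proofs founder on the same obstruction. What your peeling argument does correctly prove is the neighbouring statement: a reflexive space contains no bounded \emph{infinite} $\epsilon$-tree, since the closed convex hull of such a tree is a weakly compact convex set in which every slice $K\setminus C$ contains a node together with one of its two children and hence has diameter at least $\epsilon/2$, contradicting \textbf{Lemma \ref{Lemma 7.15}} (with termination of the peeling supplied by \textbf{Theorem \ref{Theorem 1.19}}). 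For a true statement about \emph{finite} trees the conclusion must be strengthened on the other side: the finite tree property obstructs uniform convexifiability and super-reflexivity (\textbf{Theorem \ref{Theorem 7.17}}, \textbf{Corollary \ref{Corollary 7.25}}), not reflexivity.
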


\begin{proof}

Assume by contradiction that $B$ has the finite-tree property and is reflexive. Fix the $\epsilon>0$ given by the finite-tree property, an arbitrary $\delta\in (0, \epsilon)$, and $N\in\mathbb{N}$ with $N>1$. Let $F_N = \left( x^N_i: 1\leq i \leq 2^{N+1} \right)$ be the $(N, \epsilon)$-part tree given by the finite-tree property. Define:

$$
K_N = \operatorname{conv}(F_N), \hspace{1cm} F'_{N-1} = \left( \frac{x^N_{2i+1} + x^N_{2i+2}}{2}: 0\leq i \leq 2^N-1 \right), \hspace{1cm} K'_{N-1} = \operatorname{conv}\big( F'_{N-1} \big)
$$
and by definition, $F'_{N-1}$ is a $(N-1, \epsilon)$-part tree. Since $B$ is assumed to be reflexive, both $K_N$, $K'_{N-1}$ are weakly compact. According to \textbf{Lemma \ref{Lemma 7.15}}, there exists $C$ a closed convex set such that $K_N \backslash C\neq\emptyset$ and: 

$$
\begin{aligned}
& \hspace{0.9cm} \operatorname{diam}\big( K_N \backslash C \big) = \sup\big\{ \|x-y\|: x, y\in K'_{N_1}\backslash C \big\} < \dfrac{\epsilon}{2} \\
& \implies \operatorname{diam}\big( K'_{N-1} \backslash C \big) \leq \operatorname{diam}\big( K_N \backslash C \big) < \frac{\epsilon}{2}
\end{aligned}
$$
If $F'_{N-1}\subseteq C$, $K'_{N-1}\subseteq C$. In this case, for any $0\leq i \leq 2^N-1$:

$$
\frac{\epsilon}{2} > \operatorname{diam}\big( K_N \backslash C \big) \geq \left\| x^N_{2i+1} - \frac{x^N_{2i+1} + x^N_{2i+2}}{2} \right\| = \frac{1}{2}\big\| x^N_{2i+1} - x^N_{2i+2} \big\| \geq \frac{\epsilon}{2}
$$
Clearly we must have $F'_{N-1}\backslash C \neq\emptyset$. Suppose that for some $0\leq i \leq 2^N-1$:

$$
x'_i = \frac{x^N_{2i+1} + x^N_{2i+2}}{2} \notin C
$$
Then we either have $x^N_{2i+1}\notin C$ or $x^N_{2i+2}\notin C$. If both $x^N_{2i+1}$ and $x^N_{2i+2}\in K_N \backslash C$, we will then have:

$$
\frac{\epsilon}{2} > \operatorname{diam}\big( K_N\backslash C \big) \geq \big\| x^N_{2i+1} - x^N_{2i+2} \big\| \geq \epsilon
$$
which is absurd. Therefore, both $x^N_{2i+1}$ and $x^N_{2i+2}$ are in $C$, which implies that $x'_i\in C$ and contradicts that $x'_i\notin C$. Therefore $B$ cannot be reflexive.

\end{proof}

\begin{theorem}\label{Theorem 7.17}

If a Banach space $(B,\|\cdot\|)$ can be given a uniform convex norm that is equivalent to $\|\cdot\|$, then $B$ will not have the finite-tree property.
    
\end{theorem}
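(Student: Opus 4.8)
The plan is to route the argument through super-reflexivity, reusing exactly the machinery assembled just before this statement rather than re-examining trees by hand. Write $\|\cdot\|'$ for the equivalent uniformly convex norm furnished by hypothesis, say with $a\|x\|' \le \|x\| \le b\|x\|'$ for fixed constants $0 < a \le b$. By \textbf{Proposition \ref{Proposition 7.14}} the space $(B,\|\cdot\|')$ is super-reflexive. I would then argue by contradiction: assume $(B,\|\cdot\|)$ does have the finite-tree property and produce a non-reflexive space crudely finitely representable in the super-reflexive space $(B,\|\cdot\|')$, which \textbf{Proposition \ref{Proposition 7.12}} forbids.

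The steps, in order, are as follows. First, since $(B,\|\cdot\|)$ is assumed to have the finite-tree property, \textbf{Theorem \ref{Theorem 7.16}} shows immediately that $(B,\|\cdot\|)$ is \emph{not} reflexive. Second, I would observe that $(B,\|\cdot\|)$ is crudely finitely representable in $(B,\|\cdot\|')$: for every finite-dimensional subspace the inclusion map is an isomorphism whose distortion is bounded by the fixed constant $\lambda = \max(b, 1/a, 1)$, directly from the norm equivalence, so no subspace-dependent construction is needed. Thus $(B,\|\cdot\|)$ is a non-reflexive Banach space crudely finitely representable in the super-reflexive space $(B,\|\cdot\|')$, contradicting the characterization of super-reflexivity in \textbf{Proposition \ref{Proposition 7.12}}. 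Hence the assumption fails and $B$ cannot have the finite-tree property.

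There is no genuinely hard step here; the substantive work was already discharged in \textbf{Proposition \ref{Proposition 7.14}} (uniform convexity $\Rightarrow$ super-reflexivity) and \textbf{Theorem \ref{Theorem 7.16}} (finite-tree property $\Rightarrow$ non-reflexivity), so the main point to get right is the bookkeeping: that crude finite representability of $B$ in its own equivalent-normed copy is automatic via the identity, and that \textbf{Proposition \ref{Proposition 7.12}} is invoked in the correct direction (super-reflexivity excludes \emph{non-reflexive} crude representants). An alternative, fully self-contained route avoids super-reflexivity altogether: first check that the finite-tree property is invariant under passage to an equivalent norm (a linear rescaling of a given $(n,\epsilon)$-part of a tree, which commutes with the midpoint operations, yields an $(n,\epsilon')$-part with $\epsilon'$ independent of the depth $n$), reducing to the case where $\|\cdot\|$ itself is uniformly convex; then iterate the modulus from \textbf{Proposition \ref{Proposition 1.3}}, noting that each averaging level of a tree contracts the ambient norm by a fixed factor $1-\delta^\ast$ with $\delta^\ast = \delta(\epsilon/4)>0$ fixed, so that the apex of a deep enough tree has norm below $\epsilon/2$ and can no longer support the separation $\ge \epsilon$, contradicting \textbf{Theorem \ref{Theorem 7.16}} (or simply Milman--Pettis, \textbf{Proposition \ref{Proposition 7.13}}). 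The only delicate point on this second route is keeping the contraction factor uniform in the depth, which is precisely why the modulus must be evaluated at the fixed value $\epsilon/4$ and why the tree entries remain inside a ball of fixed radius.
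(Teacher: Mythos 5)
Your main argument is correct, and it lands on the same two pillars as the paper, but with an unnecessary detour. The paper's proof is just \textbf{Proposition \ref{Proposition 7.13}} (Milman--Pettis) plus \textbf{Theorem \ref{Theorem 7.16}}: the equivalent uniformly convex norm makes $B$ reflexive, reflexivity is invariant under equivalent norms, and a reflexive space cannot have the finite-tree property. Your route instead goes through super-reflexivity: \textbf{Proposition \ref{Proposition 7.14}}, crude finite representability of $(B,\|\cdot\|)$ in $(B,\|\cdot\|')$ via the identity, and \textbf{Proposition \ref{Proposition 7.12}}. This is valid (the identity with $\lambda=\max(b,1/a)$ does witness crude representability, and you invoke \textbf{Proposition \ref{Proposition 7.12}} in the right direction), but it is strictly heavier machinery, since \textbf{Proposition \ref{Proposition 7.14}} is itself deduced from \textbf{Proposition \ref{Proposition 7.13}} and \textbf{Proposition \ref{Proposition 7.12}}; what it buys is nothing over the paper's one-liner. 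Your sketched second route is the genuinely different one, and arguably the better proof: the finite-tree property is invariant under equivalent norms (rescale the tree, which commutes with the midpoint operation), and in a uniformly convex space each averaging level contracts the bound on the entries by a uniform factor $1-\delta^\ast$ obtained from \textbf{Proposition \ref{Proposition 1.3}} evaluated at a fixed ratio, so a sufficiently deep $(n,\epsilon)$-tree forces two vectors of norm less than $\epsilon/2$ to be $\epsilon$-separated. That argument is self-contained and bypasses reflexivity altogether; the only wording to fix is that the resulting contradiction is with the existence of the deep tree itself, not with \textbf{Theorem \ref{Theorem 7.16}}, which plays no role on that route.
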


\begin{proof}

The conclusion follows immediately by \textbf{Proposition \ref{Proposition 7.13}} and \textbf{Theorem \ref{Theorem 7.16}}.
    
\end{proof}

\subsection{No finite tree property \texorpdfstring{$\,\Longrightarrow\,$}{implies} uniformly convexifiable}

After showing that a Banach space being super-reflexive is equivalent to being crudely representable, one can expect that, as in the proof of \textbf{Proposition \ref{Proposition 7.12}}, the proof of the direction in this section will also require defining an equivalent norm, and that is where we can see the \textbf{finite-tree property} again plays an important role. All results in this subsection are originally from \cite{5}.

\begin{lem}\label{Lemma 7.16}

If a Banach space $X$ does not have finite tree properties, then for every $\epsilon > 0$, we can find $\delta \in (0, \epsilon)$ so that for each $x\in X$ and each $(m,\epsilon)$-partition of $x$, $(x_1, x_2, \cdots, x_{2^m})$:

$$
\sum_{j \leq 2^m}\|x_j\|\geq(1+\delta)\|x\|
$$

\end{lem}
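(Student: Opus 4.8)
The plan is to argue by contraposition: assume the desired conclusion fails and deduce that $X$ has the finite tree property. Fix the $\epsilon>0$ for which the conclusion fails; then for every $\delta\in(0,\epsilon)$ there are a nonzero $x$ and an $(m,\epsilon)$-partition $(x_1,\dots,x_{2^m})$ of $x$ with $\sum_{j\le 2^m}\|x_j\|<(1+\delta)\|x\|$. After rescaling I may take $\|x\|=1$, so the leaves of the partition sum to a unit vector with total norm exceeding $1$ by at most $\delta$; I will call such a partition $\delta$-\emph{efficient}. The whole strategy is to turn a $\delta$-efficient partition into an $(n,\epsilon')$-part of a tree all of whose (leaf and iterated-midpoint) entries have norm $\le 1+\delta'$, with $\delta'\to 0$ as $\delta\to 0$, which contradicts the failure of the finite tree property.

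The first step is to record the composition behaviour of partitions that will let me reach every height. Given an $(m,\epsilon)$-partition of $x$, its $k$-part is a $(k,\epsilon)$-partition of $x$ whose entries are the block-sums $s_i=\sum_{\text{block }i}x_j$, and each block is itself an $(m-k,\epsilon)$-partition of the corresponding $s_i$; conversely, splicing a partition of each leaf into that leaf produces a partition of larger height. Because the triangle inequality gives $\sum_i\|s_i\|\le\sum_j\|x_j\|$, efficiency is inherited by $k$-parts, and by splicing I can manufacture $\delta$-efficient partitions of any prescribed height $n$. This reduces the problem to the following: from a $\delta$-efficient $(n,\epsilon)$-partition, build an $(n,\epsilon')$-part of a tree with small norms.

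The conversion is the core of the argument and rests on the sibling conditions built into a partition. At any internal node the two children $c_1,c_2$ satisfy $\|c_1\|=\|c_2\|=:t$ and $\|c_1/t-c_2/t\|\ge\epsilon$, while the partition's parent is their sum $c_1+c_2$; the corresponding \emph{tree} node should instead be the midpoint $\tfrac12(c_1+c_2)$. Efficiency forces $\|c_1+c_2\|$ to be close to $2t$, so the midpoint $\tfrac12(c_1+c_2)$ has norm close to $t=\|c_i\|$: summing wastes almost no norm, hence passing to midpoints almost preserves the common sibling norm. I would therefore rescale the configuration level by level so that each level carries norm essentially $1$; the sibling difference $\|c_1-c_2\|\ge\epsilon t$ then becomes $\ge\epsilon'$ after rescaling, supplying the branching needed for an $(n,\epsilon')$-part, while the near-preservation of norm under midpoints keeps every iterated midpoint within $1+\delta'$.

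The main obstacle is precisely the simultaneity of these norm controls across all $n$ levels at once: the midpoint (averaging) structure couples a parent to its two children, so the level-wise rescalings cannot be chosen independently, and one must propagate the small per-node norm loss (governed by the global efficiency deficit $\delta$) through all $n$ midpoint stages without it accumulating beyond $\delta'$. Quantitatively this means showing that a total sum-excess of $\delta$ distributes into a per-level efficiency that is still $o(1)$ as $\delta\to 0$, uniformly in the height $n$; carrying out this propagation, and checking that the rescaled leaves genuinely satisfy every sibling and iterated-midpoint requirement in the definition of an $(n,\epsilon')$-part of a tree, is where the real work lies. Once the tree is produced with norms $\le 1+\delta'$ and branching $\ge\epsilon'$, the failure of the finite tree property is contradicted and the lemma follows.
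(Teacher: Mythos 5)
Your high-level correspondence between partitions and parts of trees is the right mechanism and is the same one the paper uses, but the proposal has two genuine gaps. First, the conversion you defer as ``where the real work lies'' is actually the easy part, and the level-by-level rescaling with propagating errors that you envisage is unnecessary: since siblings in a partition have equal norms and sum to their parent, every entry of the $k$-part of an $(m,\epsilon)$-partition of a unit vector has norm at least $2^{-k}$, so multiplying the $k$-part by the single factor $2^k$ already yields a $(k,\epsilon)$-part of a tree with the \emph{same} $\epsilon$: the sibling condition $\bigl\| s_{2j-1}/t - s_{2j}/t \bigr\| \ge \epsilon$ together with $t \ge 2^{-k}$ gives $\bigl\| 2^k s_{2j-1} - 2^k s_{2j} \bigr\| \ge \epsilon$, and the iterated midpoints are exactly $2^{k-1}$ times the entries of the $(k-1)$-part. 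No degradation to some $\epsilon' < \epsilon$ and no accumulation of per-level losses occur, so the ``main obstacle'' you identify dissolves once this normalization is observed.

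Second, and more seriously, the contrapositive route requires $\delta$-efficient partitions of \emph{every} height $n$, because the finite tree property demands trees of every height; the negation of the lemma only hands you an efficient partition of some uncontrolled height $m(\delta)$. Passing to $k$-parts handles $m(\delta) \ge n$, but your ``splicing'' for $m(\delta) < n$ does not work: replacing a leaf $y$ by a genuine $(1,\epsilon)$-partition $(y_1, y_2)$ with $\|y_1\| = \|y_2\|$ and normalized difference at least $\epsilon$ gives $\|y_1\| + \|y_2\| = 2\|y_1\| \ge \|y\|$ with no control from above, so the total sum of norms can jump well past $(1+\delta)\|x\|$ and efficiency is destroyed. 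In the extreme case where all the efficient partitions supplied by the hypothesis have height $1$, your argument yields only that the norm fails to be uniformly convex, which is far weaker than the finite tree property; this shows the height-extension step cannot be routine. The paper avoids the issue by arguing directly rather than by contraposition: for the given $\epsilon$, failure of the finite tree property fixes one height $N$ and one $\delta$ such that every $(N,\epsilon)$-part of a tree contains an element of norm greater than $1+\delta$; an arbitrary $(m,\epsilon)$-partition is then adjusted to height exactly $N$ (taking its $N$-part when $m \ge N$, duplicating entries when $m < N$), with no efficiency requirement placed on the adjustment, and the guaranteed large tree element is what bounds the sum of the leaf norms from below.
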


\begin{proof}
Fix $\|x\|=1$ and $(x_1, x_2, \cdots, x_{2^m})$ a $(m, \epsilon)$-partition of $x$. Consider the following $1$-part of the fixed $(m, \epsilon)$-partition of $x$:

$$
\big( x_1+x_2+\cdots+x_{2^{m-1}}, x_{2^{m-1}+1}+x_{2^{m-1}+2}+\cdots+x_{2^m} \big)
$$
By definition, we have $\|x_1+x_2+\cdots+x_{2^{m-1}}\|=\|x_{2^{m-1}+1}+x_{2^{m-1}+2}+\cdots+x_{2^m}\|$ and:

$$
\begin{aligned}
&\hspace{1.02cm} \big\| x_1+x_2+\cdots+x_{2^{m-1}} \big\| =  \big\| x_{2^{m-1}+1} + x_{2^{m-1}+2}+\cdots+x_{2^m} \big\| \geq \frac{1}{2}\\
&\implies\, \big\| (x_1+x_2+\cdots+x_{2^{m-1}})-(x_{2^{m-1}+1} + x_{2^{m-1}+2} + \cdots+x_{2^m}) \big\| \geq \frac{\epsilon}{2}\\
&\implies\, \big\| 2(x_1+x_2+\cdots+x_{2^{m-1}})-2(x_{2^{m-1}+1} + x_{2^{m-1}+2}+\cdots+x_{2^m}) \big\| \geq\epsilon
\end{aligned}
$$
Hence we obtain a $(1, \epsilon)$-part of a tree from the fixed $(m, \epsilon)$-partition of $x$. By induction, for all $k\leq m$, we can obtain a $(k, \epsilon)$-part of a tree by multiplying $2^k$ to each vector in the $k$-part of a given $(m, \epsilon)$-partition.\\

\noindent
Now suppose $X$ does not have the finite tree property. Fix $x\in X$ with $\|x\|=1$. Then for any $\epsilon\in(0, 1)$ there exists $N\in\mathbb{N}$ and $\delta\in(0, \epsilon)$ such that any $(N, \epsilon)$-part of a tree contains an element that has norm strictly greater than $1+\delta$. Next fix $\big( x_1, \cdots, x_{2^m} \big)$ an arbitrary $(m, \epsilon)$-partition of $x$. 

\begin{itemize}

    \item If $m=N$, then by the previous remark $\left( 2^Nx_1, 2^Nx_2, \cdots, 2^N x_{2^N} \right)$ is a $(N, \epsilon)$-part of a tree. By assumption, there exists $1\leq j \leq 2^N$ such that $\left\| 2^Nx_j \right\| \geq 1+\delta$. Since $\big( x_1, \cdots, x_{2^N} \big)$ is an $(N,\epsilon)$-partition of $x$, we have $\|x_i\|=\|x_j\|$ for all $1\leq i \leq 2^N$. Hence:

    $$
    \sum_{1\leq i \leq 2^N} \|x_i\| = 2^N\|x_j\| \geq 1+\delta = \big( 1+\delta \big) \|x\|
    $$

    \item If $m>N$, by definition the following is a $(N, \epsilon)$-partition of $x$:

    $$
    \left( \sum_{1\leq i \leq 2^{m-N}} x_i,\, \sum_{2^{m-N} < i \leq 2^{m-N+1}} x_i,\, \cdots,\, \sum_{2^m - 2^{m-N} < i \leq 2^m}x_i \right)
    $$

    By the previous remark, the following is a $(N, \epsilon)$-part of a tree:

    $$
    \left( 2^N\sum_{1\leq i \leq 2^{m-N}} x_i,\, 2^N\sum_{2^{m-N} < i \leq 2^{m-N+1}} x_i,\, \cdots,\, 2^N\sum_{2^m - 2^{m-N} < i \leq 2^m}x_i \right)
    $$

    Then by our assumption, there exists $1\leq j\leq 2^N$ such that:
    
    $$
    \left\| 2^N \sum_{(j-1)2^{m-N} < i \leq j2^{m-N}} x_i\right\| \geq 1+\delta
    $$

    and hence:

    $$
    \sum_{1\leq i \leq 2^m} \|x_i\| \geq \sum_{1\leq j \leq 2^N} \left\| \sum_{(j-1)2^{m-N} < i \leq j2^{m-N}} x_i\right\| = 2^N \left \| \sum_{(j-1)2^{m-N} < i \leq j2^{m-N}} x_i\right\| \geq 1+\delta
    $$

    \item If $m<N$, observe that for any $1\leq i < 2^m$ we have:

    $$
    \left\| \frac{x_i \,\slash\, 2}{\left\| x_i\,\slash\, 2\right\|} - \frac{x_{i+1} \,\slash\, 2}{\left\| x_{i+1} \,\slash\, 2\right\|} \right\| = \left\| \frac{x_i}{\|x_i\|} - \frac{x_{i+1}}{\|x_{i+1}\|} \right\| \geq \epsilon
    $$

    Hence the following $2^{m+1}$-tuple is a $(m+1, \epsilon)$-partition of $x$:

    \begin{equation}\label{e57}
    \left( \frac{x_1}{2},\, \frac{x_2}{2},\, \frac{x_1}{2},\, \frac{x_2}{2},\, \cdots,\, \frac{x_i}{2}, \frac{x_{i+1}}{2},\, \frac{x_i}{2},\, \frac{x_{i+1}}{2},\, \cdots,\, \frac{x_{2^m-1}}{2},\, \frac{x_{2^m}}{2},\, \frac{x_{2^m-1}}{2},\, \frac{x_{2^m}}{2} \right)
    \end{equation}

    Similarly, we can induce a $(m+2, \epsilon)$-partition of $x$ based on the $(m+1, \epsilon)$-partition in (\ref{e57}). Then by induction, suppose $(y_i)_{1\leq i \leq 2^N}$ is the $(N, \epsilon)$-partition of $x$ and is induced by the fixed $(m, \epsilon)$-partition of $x$ in the way of (\ref{e57}). Hence we have:
    
    $$
    \sum_{1\leq i \leq 2^m}\|x_i\| = \sum_{1\leq j\leq 2^N}\|y_j\|
    $$
    
    By assumption, there exists $1\leq j \leq 2^N$ such that $\|y_j\|\geq 1+\delta$. This immediately gives:

    $$
    \sum_{1\leq i \leq 2^m}\|x_i\| = \sum_{1\leq j \leq 2^N} \|y_j\| \geq 1+\delta = (1+\delta)\|x\|
    $$
    
\end{itemize}

\end{proof}

\begin{defn}

In a Banach space $X$, a real-valued function $f$ is said to be an \textit{ecart} if $f$ is non-negative, $f^{-1} \big(\{0\} \big) = \{0\}$, and for all $x\in X$ and $\alpha\in \mathbb{R}$, $f(\alpha x) = \vert\, \alpha\,\vert f(x)$.

\end{defn}

\begin{lem}\label{Lemma 7.18}

Suppose $X$ is a Banach space without finite tree property. Then for any $n\in\mathbb{N}$ and $\epsilon\in(0, 1)$, there exists an \textit{ecart} $f_n$ and $\delta_n > 0$ such that for all $x, y\in X$:

\begin{enumerate}[label = (\roman*)]

    \item $\left( 1 - \dfrac{\delta}{2^n} \right) \|x\| < f_n(x) < \left( 1 - \dfrac{1}{3} \dfrac{\delta}{2^n} \right)\|x\|$.

    \item $f_n (x+y) \leq f_n(x) + f_n(y)$
    
    \item if $\|x\|=\|y\|=1$ and $\|x-y\|\geq \dfrac{\epsilon}{2^n}$, then $f_n(x+y) < f_n(x) + f_n(y)-\delta_n$.
    
\end{enumerate}
where $\delta\in (0, \epsilon)$ is given by \textbf{Lemma \ref{Lemma 7.16}}.

\end{lem}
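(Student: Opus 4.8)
The plan is to obtain each $f_n$ as a small \emph{convexification} of the given norm, using the partition inequality of \textbf{Lemma \ref{Lemma 7.16}} as the only structural input. Fix $\epsilon\in(0,1)$ and let $\delta\in(0,\epsilon)$ be the constant produced by \textbf{Lemma \ref{Lemma 7.16}}, so that every $(m,\epsilon)$-partition $(x_1,\dots,x_{2^m})$ of any $x$ satisfies $\sum_{j\le 2^m}\|x_j\|\ge(1+\delta)\|x\|$. Writing $t_n=\delta/2^n$ and $\eta_n=\epsilon/2^n$, I would define $f_n$ by an infimum that credits this gain only a $2^{-n}$ fraction of the way, namely as the Minkowski functional of the convex body obtained by inflating the unit ball $B$ by a factor controlled by $t_n$ while rounding its flat directions down to scale $\eta_n$; concretely this can be written as an infimum of weighted sums $\sum_j\|x_j\|$ over decompositions $x=\sum_j x_j$ into pieces that are mutually $\eta_n$-separated after normalization, discounted by a factor tied to $(1+\delta)^{-1}$. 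The exponent $2^n$ records simultaneously that the perturbation has size of order $2^{-n}$ (keeping $f_n$ close to $\|\cdot\|$) and that the convexity it buys is only visible at the correspondingly finer scale $\eta_n$.

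Because $f_n$ is an infimum of quantities that are positively homogeneous in $x$, homogeneity $f_n(\alpha x)=|\alpha|f_n(x)$ is automatic, and the subadditivity (ii) follows formally: given near-optimal decompositions of $x$ and of $y$, their union is an admissible decomposition of $x+y$, so $f_n(x+y)\le f_n(x)+f_n(y)$. The upper estimate in (i) is the easy half — testing the definition against the (slightly discounted) trivial decomposition of $x$ gives $f_n(x)<(1-\tfrac13 t_n)\|x\|$ once the discount is calibrated. The lower estimate in (i), together with $f_n^{-1}(\{0\})=\{0\}$ so that $f_n$ is a genuine ecart, is exactly where \textbf{Lemma \ref{Lemma 7.16}} enters: any admissible decomposition into $\eta_n$-separated pieces is, after refinement to scale $\epsilon$ as in the $m<N$ step of the proof of that lemma, governed by the $(1+\delta)$ partition bound, which prevents the discounted sum from dropping below $(1-t_n)\|x\|$.

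The substance of the lemma is (iii). Given $\|x\|=\|y\|=1$ with $\|x-y\|\ge\eta_n$, the ordered pair $(x,y)$ is, up to normalization, a one-level part of a tree at scale $\eta_n$; splitting $x+y$ through this pair and invoking the $(1+\delta)$ gain of \textbf{Lemma \ref{Lemma 7.16}} forces the discounted reconstruction of $x+y$ to be strictly cheaper than the sum of the individual costs, producing a fixed defect $f_n(x)+f_n(y)-f_n(x+y)>\delta_n$ with $\delta_n>0$ depending only on $\delta$, $\epsilon$ and $n$, not on $x,y$; the uniformity of $\delta_n$ is inherited from the uniformity of the partition inequality over all $x$. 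The main obstacle is the quantitative balancing against (i): the convexification must be strong enough that the strict gain in (iii) survives as a positive $\delta_n$, yet mild enough that the deficit $\|x\|-f_n(x)$ stays trapped in the narrow window $\big(\tfrac13 t_n\|x\|,\,t_n\|x\|\big)$ demanded by (i). Making these two requirements compatible is what fixes the precise weights in the definition of $f_n$ and the precise value of $\delta_n$, and verifying that a single calibration of the discount achieves both at once is the delicate point of the argument.
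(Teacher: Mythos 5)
Your architecture matches the paper's: $f_n$ is an infimum, over tree-partitions of $x$ of bounded depth, of $\sum_i\|x_i\|$ times a discount factor; the trivial one-piece partition gives the upper bound in (i), \textbf{Lemma \ref{Lemma 7.16}} gives the lower bound, concatenation of partitions gives (ii), and the pair $(x,y)$ seeds the defect in (iii). But two load-bearing devices are missing or misattributed in your sketch. First, a single discount ``tied to $(1+\delta)^{-1}$'' cannot produce (iii): if every admissible decomposition is discounted by the same factor, then the union of near-optimal decompositions of $x$ and $y$ gives $f_n(x+y)\le f_n(x)+f_n(y)$ with \emph{no strict gain}. The paper's discount is graded by the depth $m$ of the partition, namely $\bigl(1+\tfrac{\delta}{2}\sum_{0\le j\le m}4^{-j}\bigr)^{-1}$, and the infimum is restricted to depths $k\le n$; the defect $\delta_n$ in (iii) is exactly the jump in this discount from depth $k$ to depth $k+1$, which is bounded below by a quantity of order $\delta\,4^{-(n+1)}$ only because of the depth cap. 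Contrary to your attribution, \textbf{Lemma \ref{Lemma 7.16}} plays no role in producing the defect in (iii) — it makes partitions of $x+y$ \emph{expensive}, not cheap — and is used solely for the lower estimate in (i), i.e.\ to guarantee the graded discount never drags $f_n$ below $(1-\delta)\|x\|$.

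Second, your subadditivity step ``their union is an admissible decomposition of $x+y$'' fails as stated for the dyadic partitions that \textbf{Lemma \ref{Lemma 7.16}} actually controls: a $2^k$-tuple partitioning $x$ and a $2^l$-tuple partitioning $y$ with $k\ne l$ do not concatenate into a $2^{k+1}$-tuple. The paper pads with $2^k-2^l$ cancelling pairs $\alpha x,-\alpha x$ and chooses $\alpha$ so small that the extra cost $\alpha(2^k-2^l)\|x\|$ is absorbed by the improved discount one level down; in (iii) the analogous repair is to replace the deeper of the two partitions by its $k$-part before concatenating. If instead you allow arbitrary (non-dyadic) separated decompositions to make the union admissible, you lose the direct applicability of \textbf{Lemma \ref{Lemma 7.16}} for the lower bound in (i) and must supply a separate refinement argument. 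You correctly flag the calibration of the weights as the delicate point, but these two structural choices (depth-graded discount with a cap at $n$, and the padding trick) are what make the calibration possible, and they need to be stated for the proof to close.
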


\begin{proof}

Fix an arbitrary $n\in\mathbb{N}$ and $\epsilon\in (0, 1)$. Let $\delta\in (0, \epsilon)$ be given by \textbf{Lemma \ref{Lemma 7.16}}. Since $\epsilon\in (0, 1)$ can be arbitrarily small and $\delta$ is always strictly smaller than $\epsilon$, it suffices to find an \textit{ecart} $f_n$ and $\delta_n>0$ such that $(1-\delta) \|x\| < f_n(x) < \left( 1-\dfrac{\delta}{3} \right)\|x\|$ for all $x\in X$ and, whenever $\|x\| = \|y\| = 1$ and $\|x-y\|\geq \epsilon$, $f_n(x+y) < f_n(x) + f_n(y) - \delta_n$. Then, by the same method, we can replace $\epsilon$ and $\delta$ by $\dfrac{\epsilon}{2^n}$ and $\dfrac{\delta}{2^n}$.

\begin{enumerate}[label = (\roman*)]

	\item For each $x\in X$ and $k\in\mathbb{N} \cup \{0\}$, let $\mathcal{P}(x, k, \epsilon)$ be the family of all $(k, \epsilon)$-partition of $x$ (i.e. $\mathcal{P}(x, k, \epsilon)$ is a family of $2^k$-tuple of points from $X$). Then define a function $f: X\rightarrow \mathbb{R}$ as the following: for each $x\in X$:

	$$
	f_n(x) = \inf\left\{\sum_{1\leq i \leq 2^m} \|x_i\| \left( 1+ \frac{\delta}{2} \sum_{0\leq j \leq m}\frac{1}{4^j} \right)^{-1}: \big(x_1, \cdots, x_{2^m} \big)\in \bigcup_{0\leq k \leq n} \mathcal{P}(x, k, \epsilon) \right\}
	$$
	Clearly $f$ is an \textit{ecart}. For each $x\in X$, the $1$-tuple $(x)$ is viewed as the $(0,\epsilon)$-partition of $x$. Since $\delta\in(0, 1)$, we have $\left(1+\dfrac{\delta}{2} \right)\, \left(1- \dfrac{\delta}{3} \right) = 1+\dfrac{1}{6}( \delta-\delta^2) > 1$. Hence:

	$$
	f(x)\leq \frac{\|x\|}{1+\dfrac{\delta}{2}} < \left( 1 - \dfrac{\delta}{3} \right)\|x\| 
	$$
	Meanwhile, since $\delta\in(0, 1)$, clearly we have $(1+\delta) > (1-\delta)\, \left(1+ \frac{2}{3}\delta\right)$. Then by \textbf{Lemma \ref{Lemma 7.16}} we have that for all $x\in X$:

	$$
	f(x) \geq \frac{(1+\delta)\|x\|}{1+ \dfrac{2\delta}{3}} > (1-\delta)\|x\|
	$$
	
	\item Fix $k, l\in\mathbb{N} \cup \{0\}$, and $\big(x_1, \cdots, x_{2^k} \big)\in \mathcal{P}(x, k, \epsilon)$, $\big( y_1, \cdots, y_{2^l} \big) \in \mathcal{P}(y, l, \epsilon)$. Without losing generality, assume $k\geq l$. Then there exists $\alpha\in (0, 1)$ such that:

    \begin{equation}\label{e58}
    \alpha \big( 2^k-2^l \big) \left( 1+\frac{\delta}{2} \sum_{0\leq j \leq k+1} \frac{1}{4^j} \right) < \frac{1}{4^{k+1}} \frac{\delta(1+\delta)}{2}
    \end{equation}
    Define the following $2^{k+1}$-tuple:

    $$
    \left( \underbracket{x_1,\, x_2,\, \cdots,\, x_{2^k}}_{1\, \sim\, 2^k} ,\, \underbracket{y_1,\, y_2,\, \cdots,\, y_{2^l}}_{2^k+1\, \sim\, 2^k+2^l} ,\, \underbracket{\alpha x,\, -\alpha x,\, \alpha x,\, -\alpha x,\, \cdots,\, \alpha x,\, -\alpha x}_{2^k+2^l+1\, \sim\, 2^{k+1}} \right)
    $$
    and clearly the $2^{k+1}$-tuple above is a $(k+1, \epsilon)$-partition of $x+y$. Therefore by (\ref{e58}) and \textbf{Lemma \ref{Lemma 7.16}}, we have:

    $$
    \begin{aligned}
    & \hspace{0.85cm} \left( \sum_{1\leq i \leq 2^k} \|x_i\| \right) \left( 1+\frac{\delta}{2} \sum_{0\leq j \leq k} \frac{1}{4^j} \right)^{-1} + \left( \sum_{1\leq i \leq 2^l} \|y_i\| \right) \left( 1+\frac{\delta}{2} \sum_{0 \leq j \leq l} \frac{1}{4^j} \right)^{-1}\\
    & \hspace{0.5cm} - \left( \sum_{1\leq i \leq 2^k}\|x_i\| + \sum_{1\leq i \leq 2^l} \|y_i\| + \alpha \big(2^k-2^l \big)\|x\| \right) \left(1+ \frac{\delta}{2} \sum_{0\leq j \leq k+1} \frac{1}{4^j} \right)^{-1}\\
    & > \left( \left( \frac{\delta}{2} \frac{1}{4^{k+1}} \sum_{1\leq i \leq 2^k}\|x_i\| \right) - \alpha \left( 1+\frac{\delta}{2} \sum_{0\leq j \leq k} \frac{1}{4^j} \right) (2^k-2^l) \|x\| \right)\left( 1+\frac{\delta}{2} \sum_{0\leq j \leq k+1} \frac{1}{4^j} \right)^{-2}\\
    & \geq \|x\| \left( \frac{1}{4^{k+1}} \frac{\delta(1+\delta)}{2} - \alpha \left( 1+\frac{\delta}{2} \sum_{0\leq j \leq k} \frac{1}{4^j} \right) (2^k-2^l) \right)\left( 1+\frac{\delta}{2} \sum_{0\leq j \leq k+1} \frac{1}{4^j} \right)^{-2} > 0
    \end{aligned}
    $$
    Since $\big(x_1, \cdots, x_{2^k} \big)\in \mathcal{P}(x, k, \epsilon)$ and $\big( y_1, \cdots, y_{2^l} \big)\in \mathcal{P}(y, l, \epsilon)$ are arbitrarily picked, we then can conclude that $f_n (x+y) \leq f_n(x) + f_n(y)$.
	
	\item Now fix $\|x\|=\|y\|=1$ and $\|x-y\|\geq\epsilon$. Then we can see $(x, y)$ is a $(1, \epsilon)$-partition of $x+y$. Fix an arbitrary $\gamma\in \left(0, \dfrac{1}{8^n} \right)$. Then by definition of $f$, there exists $k, l\in \{0, 1, \cdots, n\}$, and $\big( x_1, \cdots, x_{2^k} \big) \in \mathcal{P}(x, k, \epsilon)$, $\big(y_1, \cdots, y_{2^l} \big) \in \mathcal{P}(y, l, \epsilon)$ such that:

	\begin{equation}\label{e59}
	f(x) + \frac{\gamma}{2} > \sum_{1\leq i \leq 2^k} \|x_i\| \left( 1+ \frac{\delta}{2} \sum_{0\leq j \leq k}\frac{1}{4^j} \right)^{-1},
	\hspace{1cm}
	f(y) + \frac{\gamma}{2} > \sum_{1\leq i \leq 2^l} \|y_i\| \left( 1+ \frac{\delta}{2} \sum_{0 \leq j \leq l}\frac{1}{4^j} \right)^{-1}
	\end{equation}
	Without losing generality, suppose $l\geq k$. Then let $\big(w_1, \cdots, w_{2^k} \big)$ be the $k$-part of $(y_1, \cdots, y_{2^l} \big)$. Then by (\ref{e59}) we have:

	\begin{equation}\label{e60}
	f(y) + \frac{\gamma}{2} \geq \sum_{1\leq i \leq 2^k} \|w_i\| \left( 1+ \frac{\delta}{2} \sum_{0 \leq j \leq l}\frac{1}{4^j} \right)^{-1} \geq \sum_{1\leq i \leq 2^k} \|w_i\| \left( 1+ \frac{\delta}{2}\left( \frac{1}{3}\, \frac{1}{4^k} + \sum_{0 \leq j \leq k}\frac{1}{4^j} \right) \right)^{-1}
	\end{equation}
	Observe that $(x_1, x_2, \cdots, x_{2^k}, w_1, w_2, \cdots, w_{2^k})$ is a $(k+1, \epsilon)$-partition of $x+y$. Hence together with (\ref{e59}) and (\ref{e60}), we have:

	\begin{equation}\label{e61}
	\begin{aligned}
	f(x)+f(y)+\gamma - f(x+y)
	& \geq \left( \sum_{1\leq i \leq 2^k} \|x_i\| \right)\, \left( \left( 1+\frac{\delta}{2} \sum_{0 \leq j \leq k} \frac{1}{4^j} \right)^{-1} - \left( 1+\frac{\delta}{2} \sum_{0 \leq j \leq k+1} \frac{1}{4^j} \right)^{-1} \right)\\
	& + \left( \sum_{1\leq i \leq 2^k} \|w_i\| \right)\, \left( \left( 1+ \frac{\delta}{2}\left( \frac{1}{3} \frac{1}{4^k} + \sum_{0 \leq j \leq k} \frac{1}{4^j} \right) \right)^{-1} - \left( 1+ \frac{\delta}{2} \sum_{0 \leq j \leq k+1} \frac{1}{4^j} \right)^{-1} \right)\\
	& \geq \frac{\dfrac{\delta}{2} \dfrac{1}{4^{k+1}}}{\left( 1+\dfrac{\delta}{6} \right)^2} + \frac{\dfrac{\delta}{2} \dfrac{1}{4^{k+1}} - \dfrac{\delta}{6} \dfrac{1}{4^k}}{\left( 1+\dfrac{\delta}{6} \right)^2}\\
	& > \frac{\dfrac{\delta}{2} \dfrac{1}{4^{k+1}}}{\left( 1+\dfrac{\delta}{6} \right)^2} - \frac{\dfrac{\delta}{2} \dfrac{1}{4^{k+1}} - \dfrac{\delta}{6} \dfrac{1}{4^k}}{\left( 1+\dfrac{\delta}{6} \right)^2} = \frac{1}{4^k}\frac{\delta}{6} \left(1+\frac{\delta}{6} \right)^{-2}
	\end{aligned}
	\end{equation}
	Since $\gamma\in \left(0, \dfrac{1}{8^n} \right)$, we have:

	\begin{equation}\label{e62}
	\frac{1}{4^k}\frac{\delta}{6} \left( 1+\frac{\delta}{6} \right)^{-2} - \gamma > \underbracket{\frac{1}{4^n}\left( \frac{\delta}{6} \left(1+\frac{\delta}{6} \right)^{-2} - \frac{1}{2^n} \right)}_{\delta_n}
	\end{equation}
	and $\delta_n$ is defined only depending on the fixed integer $n$. Together with (\ref{e61}) and (\ref{e62}) we can now conclude $f(x+y) > f(x) + f(y) - \delta_n$ for all $x,y\in X$ with $\|x\|=\|y\|=1$ and $\|x-y\|\geq \epsilon$.

\end{enumerate}

\end{proof}

\begin{lem}\label{Lemma 7.19}

Let  $(X, \|\cdot\|)$ be a Banach space without finite tree property. Fix an arbitrary $\epsilon\in (0, 1)$ and $n\in\mathbb{N}$. Let $\delta\in (0, \epsilon)$ be given by \textbf{Lemma \ref{Lemma 7.16}}. Let $f_n$ be the \textit{ecart} that is given by \textbf{Lemma \ref{Lemma 7.18}} and corresponds to $\epsilon$. Let $\delta_n>0$ be given by \textbf{Lemma \ref{Lemma 7.18}} and correspond to $\epsilon$. Then there exists a norm $\|\, \cdot\,\|_{n, \epsilon}$ on $X$ such that for all $x, y\in X$:

\begin{enumerate}[label = (\roman*)]

    \item $\left( 1 - \dfrac{\delta}{2^n} \right) \|x\| < \|x\|_{n, \epsilon} < \left( 1-\dfrac{1}{3} \dfrac{\delta}{2^n} \right)\|x\|$
    
    \item if $\|x\|=\|y\|=1$ and $\|x-y\|\geq \dfrac{3}{2^n} \epsilon$, then $\|x+y\|_{n,\epsilon} \leq \|x\|_{n,\epsilon} + \|y\|_{n,\epsilon} - \delta_n$
    
\end{enumerate}

\end{lem}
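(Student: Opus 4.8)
The plan is to manufacture $\|\cdot\|_{n,\epsilon}$ by \emph{symmetrizing} the ecart $f_n$ over the circle of unimodular scalars, so that the result is a genuine norm (absolutely homogeneous for every scalar, not merely for real ones) while still inheriting the two-sided comparison and the strict-improvement estimate from \textbf{Lemma \ref{Lemma 7.18}}. Concretely, I would set
\[
\|x\|_{n,\epsilon} \;=\; \frac{1}{2\pi}\int_0^{2\pi} f_n\!\left(e^{i\theta}x\right)\,d\theta,
\]
which in the real-scalar case degenerates to $f_n$ itself, since $f_n$ is already $\mathbb{R}$-homogeneous and subadditive, i.e. already a norm. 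Before anything else I would record that $f_n$ is Lipschitz: from \textbf{Lemma \ref{Lemma 7.18}}(i),(ii) one gets $|f_n(x)-f_n(y)|\le f_n(x-y)\le \|x-y\|$, so $\theta\mapsto f_n(e^{i\theta}x)$ is continuous and the integral is well defined and finite.

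First I would check the norm axioms. Subadditivity is immediate from \textbf{Lemma \ref{Lemma 7.18}}(ii) and monotonicity of the integral. For absolute homogeneity in a complex scalar $\mu$, I would write $\mu=|\mu|e^{i\varphi}$ and combine the rotation invariance of Lebesgue measure on $[0,2\pi)$ with the real homogeneity $f_n(|\mu|z)=|\mu|f_n(z)$ to obtain $\|\mu x\|_{n,\epsilon}=|\mu|\,\|x\|_{n,\epsilon}$. Positive-definiteness and finiteness drop out of part (i), proved next, so that $\|x\|_{n,\epsilon}>0$ for $x\ne 0$.

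Next, for (i) I would integrate the pointwise bound of \textbf{Lemma \ref{Lemma 7.18}}(i) applied to $e^{i\theta}x$, using that the original norm is unimodular-invariant, $\|e^{i\theta}x\|=\|x\|$. This gives $(1-\delta/2^n)\|x\|\le\|x\|_{n,\epsilon}\le(1-\tfrac13\delta/2^n)\|x\|$; for $x\ne 0$ the inequalities are strict, because a continuous integrand lying strictly below (resp. above) a constant on all of $[0,2\pi)$ integrates to a strict inequality, upgrading the bounds to the required strict form.

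The heart of the argument is (ii), and here the symmetrization is exactly what makes the estimate survive. Fix $x,y$ with $\|x\|=\|y\|=1$ and $\|x-y\|\ge 3\epsilon/2^n$. The decisive observation is that \emph{every} rotate satisfies the hypotheses of \textbf{Lemma \ref{Lemma 7.18}}(iii): $\|e^{i\theta}x\|=\|e^{i\theta}y\|=1$ and $\|e^{i\theta}x-e^{i\theta}y\|=\|x-y\|\ge 3\epsilon/2^n\ge \epsilon/2^n$. Hence for each $\theta$ one has $f_n(e^{i\theta}(x+y))< f_n(e^{i\theta}x)+f_n(e^{i\theta}y)-\delta_n$, and integrating over $\theta$ yields $\|x+y\|_{n,\epsilon}\le\|x\|_{n,\epsilon}+\|y\|_{n,\epsilon}-\delta_n$. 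I expect this to be the main obstacle in spirit: one must be sure that passing from the real-homogeneous $f_n$ to a complex norm does not wash out the strict gain $\delta_n$. Averaging over the full circle, rather than over a finite set of phases, is precisely what preserves the gain uniformly in $\theta$; the generous separation $3\epsilon/2^n$, strictly larger than the threshold $\epsilon/2^n$ of \textbf{Lemma \ref{Lemma 7.18}}(iii), leaves room to absorb any loss and confirms there is no difficulty. In the purely real case the construction collapses to $\|\cdot\|_{n,\epsilon}=f_n$ and the conclusion is immediate.
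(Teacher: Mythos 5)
Your proof is correct as a derivation of the lemma from the \emph{stated} conclusions of \textbf{Lemma \ref{Lemma 7.18}}, but it takes a genuinely different route from the paper. The paper defines $\|x\|_{n,\epsilon}$ as an infimum over chains $(x_0,\dots,x_m)\in\mathcal{F}(m,x)$ of the sums $\sum f_n(x_{i+1}-x_i)$, observes (using subadditivity) that this infimum collapses to $f_n(x)$ itself, and then proves (ii) by splicing a chain for $x$ and a chain for $y$ through intermediate points $x_1$, $y_{m-1}$ with $\|x_1\|=\|y_{m-1}\|=1$ and $\|x_1-y_{m-1}\|\geq\epsilon$ — this is exactly where the factor $3$ in the hypothesis $\|x-y\|\geq\tfrac{3}{2^n}\epsilon$ is consumed, via $3\epsilon\leq 2\epsilon+\|x_1-y_{m-1}\|$ — followed by an induction on the chain length. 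You bypass all of this by noting that the hypothesis of (ii) already implies the hypothesis of \textbf{Lemma \ref{Lemma 7.18}}(iii) outright, so that $f_n$ (or its unimodular average in the complex case) already is the desired norm; your rotation-averaging correctly repairs the only real defect of an \textit{ecart} relative to a norm, namely that absolute homogeneity is guaranteed only for real scalars, and each of your steps (Lipschitz bound $|f_n(x)-f_n(y)|\leq f_n(x-y)$, integration of the strict pointwise bounds in (i), and integration of the uniform gap $\delta_n$ in (iii) over all rotates) checks out. What your shortcut buys is brevity; what it costs is robustness: it leans on the full literal strength of \textbf{Lemma \ref{Lemma 7.18}}(ii) and (iii) as stated (unrestricted subadditivity, and the gap inequality for \emph{every} pair of unit vectors separated by $\epsilon/2^n$). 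In the original source the \textit{ecart} is only approximately subadditive and the gap estimate is only available for the particular configurations produced by the chain construction, which is why the paper retains the $\mathcal{F}(m,x)$ machinery and the generous $3\epsilon$ threshold; you should be aware that your argument would not survive that weakening, even though it is a valid proof of the statement from the lemmas as this paper states them.
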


\begin{proof}

Similar to the proof of \textbf{Lemma \ref{Lemma 7.18}}, since $\epsilon$ can be arbitrarily small and so is $\delta$, it suffices to find a new norm $\|\cdot\|_{n, \epsilon}$ such that $(1-\delta) \|x\| < \|x\|_{n, \epsilon} < \left( 1 - \dfrac{\delta}{3} \right) \|x\|$ and, whenever $\|x\|=\|y\|=1$ and $\|x-y\|\geq 3\epsilon$, $\|x+y\|_{n, \epsilon} \leq \|x\|_{n, \epsilon} + \|y\|_{n, \epsilon} - \delta_n$. Then by the same method, we can replace $\epsilon$ and $\delta$ by $\dfrac{\epsilon}{2^n}$ and $\dfrac{\delta}{2^n}$.\\

\noindent
For each $x\in X$ and for each $n\in\mathbb{N}$ with $n\geq 2$, with the fixed $\epsilon\in (0, 1)$, define:

$$
\mathcal{F}(n, x) = \left\{ \big( x_0, x_1, \cdots, x_n \big)\in X^{n+1}: x_0=0 ,\, x_n = x,\, \|x_{n-1}\| = \|x\|,\, \|x_{n-1} - x\|< \epsilon \right\}
$$
and:

$$
\|x\|_{n,\epsilon} = \inf_{\substack{n\in \mathbb{N} \\  n\geq 2}} \left\{ \sum_{0\leq i < n}f_n(x_{i+1} - x_i): \big(x_0, x_1, \cdots, x_n \big)\in \mathcal{F}(n, x) \right\}
$$
Fix an arbitrary $x\in X$. Since $(0, x, x)\in \mathcal{F}(2, x)$, $\|x\|_{\epsilon}\leq f_n(x)$. Meanwhile, for each $n\in\mathbb{N}$ and $\big(x_0,\, x_1,\, \cdots,\, x_n \big)\in \mathcal{F}(n, x)$, by \textbf{Lemma \ref{Lemma 7.18}}:

$$
f_n(x) = f_n \left( \sum_{0\leq i < n}(x_{i+1} - x_i) \right) \leq \sum_{0\leq i < n} f_n(x_{i+1} - x_i)
$$
we then have $f_n(x) = \|x\|_{n,\epsilon}$. By \textbf{Lemma \ref{Lemma 7.18}} and that $f_n$ is an \textit{ecart}, $\|\cdot\|_{n,\epsilon}$ defines a norm on $X$ and for each $x\in X$, $(1-\delta)\|x\| < \|x\|_{n,\epsilon} < \left(1-\dfrac{\delta}{3} \right)\|x\|$. Next fix $x, y\in X$ with $\|x\|= \|y\|=1$ and $\|x-y\|\geq 3\epsilon$. Fix an arbitrary $(x_0, x_1, x)\in \mathcal{F}(2, x)$ and, for an arbitrary $m\in\mathbb{N}$ with $m\geq 2$, fix $\big( y_0,\, y_1,\, \cdots,\, y_m \big)\in \mathcal{F}(m, y)$. Then we have:

$$
3\epsilon\leq \|x-y\| \leq \|x-x_1\| + \|x_1 - y_{m-1}\| + \|y-y_{m-1}\| < 2\epsilon + \|x_1 - y_{m-1}\|
$$
and hence $\|x_1-y_{m-1}\|\geq \epsilon$. By definition we have $\|x_1\| = \|x\| = \|y\| = \|y_{m-1}\|$, and hence by \textbf{Lemma \ref{Lemma 7.18}} we have:

\begin{equation}\label{e63}
\begin{aligned}
f(x+y_{m-1}) + f(y-y_{m-1})
& \leq f(x-x_1) + f(x_1+y_{m-1}) + f(y - y_{m-1})\\
& \leq f(x_1) + f(x-x_1) + f\left( \sum_{0\leq i < m-1} y_{i+1} - y_i \right) + f(y-y_{m-1}) -\delta_n\\
& \leq f(x_1) + f(x-x_1) + \sum_{0\leq i < m} f(y_{i+1} - y_i) - \delta_n
\end{aligned}
\end{equation}
Since $\big( 0, x+y_{m-1}, x+y \big)\in \mathcal{F}(2, x+y)$, and $m\in\mathbb{N}\, (m\geq 2)$, $\big( y_0,\, y_1,\, \cdots,\, y_m \big)\in \mathcal{F}(m, y)$ are arbitrarily picked, by (\ref{e63}) we can conclude:

\begin{equation}\label{e64}
\|x+y\|_{n,\epsilon} \leq f(x_1) + f(x-x_1) + \|y\|_{n,\epsilon} - \delta_n
\end{equation}
Therefore (\ref{e64}) proves the base case for $\mathcal{F}(k, x)$ when $k=2$. Assume the $k=l$ case for some $l \in \mathbb{N}$ and $l\geq 2$. Then for $l+1$, fix an arbitrary $\big( x_0,\, x_1,\, \cdots,\, x_{n+1} \big) \in \mathcal{F}(l+1, x)$. We then have $\big( x_0,\, x_2,\, \cdots,\, x_{l+1} \big) \in \mathcal{F}(l, x)$ and by our assumption and \textbf{Lemma \ref{Lemma 7.18}}:

$$
\begin{aligned}
\|x+y\|_{n,\epsilon}
& \leq f(x_2) + \sum_{2\leq i < l+1} f(x_{i+1} - x_i) + \|y\|_{n,\epsilon} - \delta_n\\
& \leq f(x_1) + f(x_2-x_1) + \sum_{2\leq i < l+1} f(x_{i+1} - x_i) + \|y\|_{n,\epsilon} - \delta_n\\
& \leq \sum_{0\leq i < l+1} f(x_{i+1} - x_i) + \|y\|_{n,\epsilon} - \delta_n
\end{aligned}
$$
We can now conclude that $\|x+y\|_{n,\epsilon} \leq \|x\|_{n,\epsilon} + \|y\|_{n,\epsilon} - \delta_n$.

\end{proof}

\begin{lem}\label{Lemma 7.20}

Let $X$ be a normed linear space endowed with two equivalent norms, $\|\cdot\|_1$ and $\|\cdot\|_2$, such that for each $x\in X$, $\|x\|_2 \leq \|x\|_1 \leq 2\|x\|_2$. Suppose for each $x, y\in X$ and $\epsilon\in (0, 1)$, whenever $\|x\|_1 = \|y\|_1 = 1$ and $\|x-y\|_1 \geq \epsilon$, there exists $\delta(\epsilon)\in (0, 1)$ such that $\|x+y\|_2 \leq \|x\|_2 + \|y\|_2 - \delta(\epsilon)$. Then $\|\cdot\|_2$ is uniformly convex.

\end{lem}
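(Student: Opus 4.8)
The plan is to verify the sphere form of uniform convexity for $\|\cdot\|_2$, which is the content of \textbf{Definition \ref{Definition 1.1}}: given $\epsilon\in(0,1)$ I will produce a $\delta\in(0,1)$ so that $\|u\|_2=\|v\|_2=1$ and $\|u-v\|_2\ge\epsilon$ force $\|u+v\|_2\le 2-\delta$. The first move is to bring $u,v$ into the form required by the hypothesis, which speaks about $\|\cdot\|_1$-unit vectors. Setting $a=\|u\|_1$ and $b=\|v\|_1$, the comparison $\|\cdot\|_2\le\|\cdot\|_1\le 2\|\cdot\|_2$ gives $a,b\in[1,2]$, and the normalized vectors $x=u/a$, $y=v/b$ satisfy $\|x\|_1=\|y\|_1=1$ together with $\|x\|_2=1/a$ and $\|y\|_2=1/b$.

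The difficulty is that the hypothesis only fires when the \emph{normalized} vectors are $\|\cdot\|_1$-separated, whereas I only know that $u,v$ are $\|\cdot\|_2$-separated, and rescaling could a priori destroy separation. I will show it does not, by arguing the contrapositive: if $\|x-y\|_1<\epsilon'$ then $\|u-v\|_2$ must be small. Indeed $\|x-y\|_2\le\|x-y\|_1<\epsilon'$, so the reverse triangle inequality gives $\big|\,1/a-1/b\,\big|=\big|\,\|x\|_2-\|y\|_2\,\big|\le\|x-y\|_2<\epsilon'$, whence $|a-b|=ab\,\big|\,1/a-1/b\,\big|<4\epsilon'$ (using $ab\le 4$). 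Writing $u-v=a(x-y)+(a-b)y$ and using $a\le 2$ and $\|y\|_2\le 1$, I obtain $\|u-v\|_2\le a\|x-y\|_2+|a-b|\,\|y\|_2<6\epsilon'$. Choosing $\epsilon'=\epsilon/6$ therefore shows that $\|u-v\|_2\ge\epsilon$ forces $\|x-y\|_1\ge\epsilon/6$.

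With the separation $\|x-y\|_1\ge\epsilon/6$ in hand, the hypothesis applied at threshold $\epsilon/6$ yields $\|x+y\|_2\le\|x\|_2+\|y\|_2-\delta(\epsilon/6)=1/a+1/b-\delta(\epsilon/6)$. To transfer this back to $u+v$, assume without loss of generality $a\le b$ and decompose $u+v=a(x+y)+(b-a)y$; then
$$\|u+v\|_2\le a\|x+y\|_2+(b-a)\|y\|_2\le a\Bigl(\tfrac1a+\tfrac1b-\delta(\epsilon/6)\Bigr)+\tfrac{b-a}{b}=2-a\,\delta(\epsilon/6).$$
Since $a\ge 1$, this gives $\|u+v\|_2\le 2-\delta(\epsilon/6)$, so $\delta:=\delta(\epsilon/6)$ is the desired modulus. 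The main obstacle is precisely the separation-transfer step of the second paragraph: everything hinges on the two-sided comparison $\|\cdot\|_2\le\|\cdot\|_1\le 2\|\cdot\|_2$ to guarantee that $\|\cdot\|_1$-normalization neither inflates norms beyond the factor $2$ nor collapses the $\|\cdot\|_2$-gap, so that a genuine $\|\cdot\|_1$-separation survives and the hypothesis becomes applicable; once that bridge is built, the remaining estimates are routine triangle-inequality bookkeeping.
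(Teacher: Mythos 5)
Your proof is correct, and it shares the paper's overall skeleton: normalize $u,v$ to $\|\cdot\|_1$-unit vectors, transfer the separation to the normalized pair, invoke the hypothesis, and recombine. Where you genuinely diverge is the separation-transfer step, which is the heart of the lemma. The paper reaches it through its display (\ref{e65}), the claim that $\|\cdot\|_1$-unit vectors with $\|x-y\|_1\ge\epsilon$ satisfy $\inf_{\lambda\in(0,1)}\|\lambda x-y\|_1\ge\epsilon$, proved by a reverse-triangle computation and then applied to the rescaled pair $(\alpha x',\beta y')$; that claim is delicate (as an exact statement it can fail --- in the Euclidean plane the infimum equals $\epsilon\sqrt{1-\epsilon^2/4}<\epsilon$ when $\|x-y\|=\epsilon$), and its application to a pair in which one vector is not $\|\cdot\|_1$-normalized is not literally licensed by its statement. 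Your contrapositive route --- controlling $\vert\,\|u\|_1-\|v\|_1\,\vert$ by $\|x-y\|_2$ via the reverse triangle inequality for $\|\cdot\|_2$ and writing $u-v=a(x-y)+(a-b)y$ --- sidesteps this entirely and yields an explicit threshold ($\epsilon\mapsto\epsilon/6$ versus the paper's $\epsilon/2$); the slightly worse constant is immaterial. Your recombination $u+v=a(x+y)+(b-a)y$ and the paper's $x'+y'=(\alpha x'+\beta y')+\big((1-\alpha)x'+(1-\beta)y'\big)$ are interchangeable. The only cosmetic caveat, shared with the paper's own proof, is that you verify the unit-sphere form of uniform convexity rather than the closed-ball form of \textbf{Definition \ref{Definition 1.1}}; the two are equivalent by a standard normalization argument, so nothing is lost.
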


\begin{proof}

Fix an arbitrary $\epsilon\in (0, 1)$. Pick $x, y\in X$ with $\|x\|=\|y\|=1$ and $\|x-y\|_1 \geq \epsilon$. Then for any $\lambda \in (0, 1)$:

$$
\|\lambda x-y\|_1 \geq \big\vert\, \|\lambda x + (1-\lambda)x\|_1 - \|(1-\lambda)x - y\|_1 \,\big\vert \geq \big\vert\, \|x\|_1 - \|x-y\|_1 - \lambda\,\big\vert = \big\vert\, 1-\lambda - \|x-y\|_1 \,\big\vert
$$
Since both $x, y\in X$ are arbitrarily picked, we then can conclude:

\begin{equation}\label{e65}
\forall\,x, y\in X, \hspace{0.8cm} \big( \|x\|_1 = \|y\|_1 = 1,\, \|x-y\|_1\geq \epsilon \big)
\hspace{0.3cm} \Longrightarrow \hspace{0.3cm}
\inf_{\lambda \in (0, 1)} \|\lambda x-y\|_1 \geq \epsilon
\end{equation}
Next pick $x', y'\in X$  with $\|x'\|_2 = \|y'\|_2 = 1$ and $\|x' - y'\|_2 \geq \epsilon$. Then there exists $\alpha, \beta\in \left[ \dfrac{1}{2}, 1\right]$ such that $\alpha \|x'\|_1 = \beta \|y'\|_1 = 1$. Without losing generality, suppose $\beta \geq \alpha$. Since $\|x' - y'\|_1\geq \|x' - y'\|_2 \geq \epsilon$, by (\ref{e65}) we have:

$$
2\big\| \alpha x' - \beta y' \big\|_2 \geq \big\| \alpha x' - \beta y' \big\|_1 \geq \inf_{\lambda\in (0, 1)} \big\| \lambda\alpha x' - \beta y' \big\|_1 \geq \inf_{\lambda\in (0, 1)} \big\| \lambda x' - \beta y' \big\|_1\geq \epsilon
\hspace{0.4cm} \Longrightarrow \hspace{0.4cm}
\big\| \alpha x' - \beta y' \big\|_2 \geq \frac{\epsilon}{2}
$$
and hence $\big\| \alpha x' - \beta y' \big\|_1 \geq \dfrac{\epsilon}{2}$. By our assumption on $\delta$, we then have:

$$
\begin{aligned}
& \hspace{0.96cm} \big\| \alpha x' - \beta y' \big\|_2 \leq \alpha\|x'\|_2 + \beta\|y'\|_2 = \delta\left( \frac{\epsilon}{2} \right)\\
& \implies \|x' + y'\|_2 \leq \big\| \alpha x' + \beta y' \big\|_2 + \big\| (1-\alpha)x' + (1-\beta) y' \big\|_2 \leq 2 - \delta\left( \frac{\epsilon}{2}\right)
\end{aligned}
$$
Since $\epsilon$ is arbitrarily picked, we then can conclude $\|\cdot\|_2$ is uniformly convex.

\end{proof}

\begin{theorem}\label{Theorem 7.21}

A Banach space $(X, \|\cdot\|)$ without finite tree property can be given a uniformly convex norm that is equivalent to $\|\cdot\|$.
    
\end{theorem}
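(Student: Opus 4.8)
The plan is to combine the three preceding lemmas (Lemma \ref{Lemma 7.16}, Lemma \ref{Lemma 7.18}, Lemma \ref{Lemma 7.19}, and Lemma \ref{Lemma 7.20}) into a single equivalent norm via a weighted averaging over the scale parameter $n$. Lemma \ref{Lemma 7.19} already produces, for each $n\in\mathbb{N}$ and the fixed $\epsilon\in(0,1)$, a norm $\|\cdot\|_{n,\epsilon}$ that is pinched between $\bigl(1-\tfrac{\delta}{2^n}\bigr)\|\cdot\|$ and $\bigl(1-\tfrac{1}{3}\tfrac{\delta}{2^n}\bigr)\|\cdot\|$ and that has a fixed convexity defect $\delta_n$ whenever two unit vectors are separated by at least $\tfrac{3}{2^n}\epsilon$. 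The obstacle these norms individually face is that each single $\|\cdot\|_{n,\epsilon}$ only ``sees'' separations on the scale $\tfrac{3}{2^n}\epsilon$, so no one of them is uniformly convex across all $\epsilon$. The remedy is to superpose all scales at once.

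First I would define the candidate norm by a convergent series such as
$$
\vertiii{x}^2 = \sum_{n\geq 1} \frac{1}{2^n}\,\|x\|_{n,\epsilon}^2,
$$
or the analogous $\ell^2$-type combination, for a fixed $\epsilon\in(0,1)$. The square (rather than the norm itself) is used so that the strict inequality in the parallelogram-type estimate of Lemma \ref{Lemma 7.19}(ii) converts into genuine uniform convexity of the aggregate; this is the standard device (as in the construction of uniformly convex norms from a sequence of seminorms with defects). I would first check that $\vertiii{\cdot}$ is well-defined and equivalent to $\|\cdot\|$: the two-sided bounds in Lemma \ref{Lemma 7.19}(i) give $\tfrac{1}{2}\|x\| \le \|x\|_{n,\epsilon}\le \|x\|$ for every $n$ (using $\delta<1$), so the series is dominated by $\sum 2^{-n}\|x\|^2 = \|x\|^2$ and bounded below by a fixed multiple of $\|x\|^2$, giving equivalence.

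The heart of the argument is uniform convexity of $\vertiii{\cdot}$. I would fix $\varepsilon\in(0,1)$ and take $x,y$ with $\vertiii{x}=\vertiii{y}=1$ and $\vertiii{x-y}\ge\varepsilon$; by equivalence this forces $\|x-y\|$ to be bounded below by some fixed constant, hence $\|x-y\|\ge \tfrac{3}{2^{n_0}}\epsilon$ for some fixed index $n_0=n_0(\varepsilon)$. After renormalizing $x,y$ to have $\|x\|=\|y\|=1$ (harmless up to controlled constants because $\|\cdot\|$ and $\vertiii{\cdot}$ are equivalent), Lemma \ref{Lemma 7.19}(ii) applied at the single scale $n_0$ yields $\|x+y\|_{n_0,\epsilon}\le \|x\|_{n_0,\epsilon}+\|y\|_{n_0,\epsilon}-\delta_{n_0}$, i.e. a strict deficit in one term of the series. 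For every other index $n$ the triangle inequality $\|x+y\|_{n,\epsilon}\le \|x\|_{n,\epsilon}+\|y\|_{n,\epsilon}$ holds. Feeding these into the squared sum and using the elementary inequality that for reals $a,b$ with $a+b\le 2$ and one coordinate dropping by a fixed amount $\delta_{n_0}$ one has $\tfrac12(a^2+b^2)$ exceeding $(\tfrac{a+b}{2})^2$ by a fixed positive quantity, I would extract a uniform bound $\vertiii{\tfrac{x+y}{2}}\le 1-\delta'(\varepsilon)$ with $\delta'(\varepsilon)>0$ depending only on $\varepsilon$.

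An alternative and cleaner route, which I would present if the direct squared-sum bookkeeping becomes unwieldy, is to invoke Lemma \ref{Lemma 7.20} directly: set $\|\cdot\|_1=\|\cdot\|$ (after rescaling so that $\|x\|_2\le\|x\|_1\le 2\|x\|_2$ holds, which the bounds of Lemma \ref{Lemma 7.19}(i) guarantee) and $\|\cdot\|_2$ equal to a suitable single-scale or aggregated norm, then verify the hypothesis of Lemma \ref{Lemma 7.20} that unit-separated vectors produce a fixed subadditivity defect. The main obstacle I anticipate is the scale-matching step: ensuring that a lower bound on $\vertiii{x-y}$ (the aggregate separation) translates into separation at a \emph{single fixed} scale $n_0$ with $\delta_{n_0}$ bounded away from $0$, rather than a defect that degrades as $\varepsilon\to 0$. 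This is controlled precisely because $\varepsilon$ is fixed before choosing $n_0$, so $\delta_{n_0}=\delta_{n_0(\varepsilon)}$ is a fixed positive constant for that $\varepsilon$; the uniform convexity modulus is then $\delta'(\varepsilon)$, exactly as required by Definition \ref{Definition 1.1}. Once uniform convexity of the equivalent norm is established, the theorem is proved.
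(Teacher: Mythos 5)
Your proposal is correct and follows essentially the same route as the paper: the paper defines $\|x\|_u = \sum_{n\in\mathbb{N}} 2^{-n}\|x\|_{n,\epsilon}$ (the linear rather than squared aggregate), uses the scale-matching observation that $\|x-y\|\geq 3\epsilon/2^k$ triggers the defect $\delta_i$ of \textbf{Lemma \ref{Lemma 7.19}}(ii) for every $i\geq k$, and concludes via \textbf{Lemma \ref{Lemma 7.20}} exactly as in your ``alternative and cleaner route.'' Your primary squared-sum variant would also work but adds unnecessary bookkeeping, since the subadditivity defect already passes directly to the linear sum.
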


\begin{proof}

Fix an arbitrary $\epsilon\in \left(0, \dfrac{1}{2} \right)$. Let $\delta\in (0,\epsilon)$ be given by \textbf{Lemma \ref{Lemma 7.16}}. For each $n\in\mathbb{N}$, let $\|\cdot\|_{n, \epsilon}$ be the norm given by \textbf{Lemma \ref{Lemma 7.19}} and $\delta_n$ be given by \textbf{Lemma \ref{Lemma 7.19}}. Then for each $x\in X$, define:

$$
\|x\|_u = \sum_{n\in \mathbb{N}} \frac{1}{2^n} \|x\|_{n, \epsilon}
$$
By \textbf{Lemma \ref{Lemma 7.19}}, clearly $\|\cdot\|_u$ is a norm, and for each $x\in X$:

\begin{equation}\label{e66}
\begin{aligned}
& \hspace{1cm} (1-\epsilon)\|x\| = \|x\| \sum_{n\in \mathbb{N}} \frac{1}{2^n}(1-\epsilon) < \|x\|\sum_{n\in \mathbb{N}} \frac{1}{2^n} \left( 1 - \frac{\delta}{2^n} \right) < \|x\|_u < \|x\| \sum_{n\in \mathbb{N}} \frac{1}{2^n} \left( 1 - \frac{1}{3} \frac{\delta}{2^n} \right) < \|x\| \\
& \implies\, \|x\|_u < \|x\| < \frac{1}{1-\epsilon}\|x\|_u < 2\|x\|_u
\end{aligned}
\end{equation}
We will prove that $\|\cdot\|_u$ is uniformly convex using \textbf{Lemma \ref{Lemma 7.20}}. Fix an arbitrary $k\in \mathbb{N}$. Pick $x, y\in X$ with $\|x\| = \|y\| = 1$ and suppose $\|x-y\| \geq \dfrac{3\epsilon}{2^k}$. Then for all $n\in\mathbb{N}$ with $n\geq k$, $\|x-y\| \geq \dfrac{3\epsilon}{2^k}$ and hence by \textbf{Lemma \ref{Lemma 7.19}}:

\begin{equation}\label{e67}
\begin{aligned}
\|x+y\|_u
& \leq \sum_{1\leq i < k}\frac{1}{2^i}\big( \|x\|_{i, \epsilon} + \|y\|_{i, \epsilon} \big) + \sum_{i\geq k} \frac{1}{2^i} \|x+y\|_{i, \epsilon}\\
& \leq \sum_{1\leq i < k} \frac{1}{2^i} \big( \|x\|_{i, \epsilon} + \|y\|_{i, \epsilon} \big) + \sum_{i\geq k} \frac{1}{2^i} \big( \|x\|_{i, \epsilon} + \|y\|_{i, \epsilon} - \delta_i \big)\\
& = \|x\|_u + \|y\|_u - \sum_{i\geq k} \frac{1}{2^i} \delta_i
\end{aligned}
\end{equation}
Therefore, for any $x, y\in X$ with $\|x\|=\|y\|=1$ and $\|x-y\|\geq \lambda$ for some $\lambda\in (0, 1)$, there exists $N\in\mathbb{N}$ such that $\lambda > \dfrac{\epsilon}{2^N}$. By (\ref{e67}), we have $\|x+y\|_u \leq \|x\|_u + \|y\|_u - \sum_{i\geq N} \dfrac{1}{2^N}$, and together with (\ref{e66}) and \textbf{Lemma \ref{Lemma 7.20}}, $\|\cdot\|_u$ is uniformly convex.

\end{proof}

\begin{cor}\label{Corollary 7.24}

A Banach space $\big( X, \|\cdot\| \big)$ can be given a uniformly convex norm that is equivalent to $\|\cdot\|$ if and only if $X$ does not have the finite-tree property.
    
\end{cor}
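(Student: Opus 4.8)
The plan is to read this biconditional as the conjunction of the two implications that have already been established as separate results, so that the proof reduces to invoking them in the two directions. No new machinery is needed: the corollary is formally immediate once both halves are in place.

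For the direction $(\Longrightarrow)$, I would suppose that $X$ carries an equivalent uniformly convex norm and cite \textbf{Theorem \ref{Theorem 7.17}}. Unwinding what that rests on: an equivalent uniformly convex norm renders $X$ reflexive by \textbf{Proposition \ref{Proposition 7.13}} (Milman--Pettis), and reflexivity is unchanged under passage to an equivalent norm; meanwhile \textbf{Theorem \ref{Theorem 7.16}} shows that any space carrying the finite-tree property is non-reflexive. As these two conclusions contradict one another, $X$ cannot have the finite-tree property. This half is routine once \textbf{Proposition \ref{Proposition 7.13}} and \textbf{Theorem \ref{Theorem 7.16}} are available.

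For the direction $(\Longleftarrow)$, I would suppose that $X$ lacks the finite-tree property and appeal to \textbf{Theorem \ref{Theorem 7.21}}, which is exactly the constructive statement that such an $X$ admits a uniformly convex equivalent renorming. This is the substantive half and is where all the difficulty sits: starting from the quantitative failure of the tree property recorded in \textbf{Lemma \ref{Lemma 7.16}}, one builds for each scale an \emph{ecart} $f_n$ (\textbf{Lemma \ref{Lemma 7.18}}), upgrades each $f_n$ to a genuine norm $\|\cdot\|_{n,\epsilon}$ through a path-infimum (\textbf{Lemma \ref{Lemma 7.19}}), and averages these norms with weights $2^{-n}$, the uniform convexity of the average then following from \textbf{Lemma \ref{Lemma 7.20}}. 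The real obstacle --- of the development as a whole, not of the corollary itself --- is this construction, in particular checking simultaneously that the averaged norm stays equivalent to $\|\cdot\|$ and that its additivity defect is controlled uniformly across all scales. Granting \textbf{Theorem \ref{Theorem 7.17}} and \textbf{Theorem \ref{Theorem 7.21}}, the corollary then follows in one line: the former supplies $(\Longrightarrow)$ and the latter $(\Longleftarrow)$.
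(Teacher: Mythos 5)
Your proposal is correct and matches the paper's proof exactly: the paper likewise obtains the forward direction from \textbf{Theorem \ref{Theorem 7.17}} and the reverse direction from \textbf{Theorem \ref{Theorem 7.21}}, treating the corollary as an immediate consequence of those two results. The additional unwinding you give of what each theorem rests on is accurate but not needed beyond the two citations.
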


\begin{proof}

The conclusion follows immediately by \textbf{Theorem \ref{Theorem 7.21}} and \textbf{Theorem \ref{Theorem 7.17}}.
    
\end{proof}

\begin{cor}\label{Corollary 7.25}
A Banach space $(X, \|\cdot\|)$ is super-reflexive iff $X$ does not have the finite-tree property.
\end{cor}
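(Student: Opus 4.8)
The plan is to prove the two implications separately, leaning almost entirely on the results already assembled in this subsection so that no new estimate is required. For the direction that the absence of the finite-tree property implies super-reflexivity, I would first invoke \textbf{Corollary \ref{Corollary 7.24}} to produce a norm $\|\cdot\|_u$ equivalent to $\|\cdot\|$ with respect to which $X$ is uniformly convex. By \textbf{Proposition \ref{Proposition 7.14}} the space $(X, \|\cdot\|_u)$ is then super-reflexive, and the only remaining work is to transfer super-reflexivity back to the original norm.

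For that transfer I would use \textbf{Proposition \ref{Proposition 7.12}}, which recasts super-reflexivity of $(X, \|\cdot\|_u)$ as the assertion that no non-reflexive Banach space is crudely finitely representable in $(X, \|\cdot\|_u)$. The key observation is that crude finite representability \emph{into} $X$ is insensitive to which of two equivalent norms we place on $X$: if $c_1\|x\| \le \|x\|_u \le c_2\|x\|$ and some $Z$ admits, for each finite-dimensional $Z_n$, an isomorphism $T_n$ with $\lambda^{-1}\|z\|_Z \le \|T_n z\| \le \lambda\|z\|_Z$, then the very same maps satisfy $(\lambda/c_1)^{-1}\|z\|_Z \le \|T_n z\|_u \le (\lambda c_2)\|z\|_Z$, so $Z$ is crudely finitely representable in $(X, \|\cdot\|_u)$ with a possibly larger constant, and conversely. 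Hence no non-reflexive space is crudely finitely representable in $(X, \|\cdot\|)$ either, and a second application of \textbf{Proposition \ref{Proposition 7.12}} yields that $(X, \|\cdot\|)$ is super-reflexive.

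The converse direction is immediate. If $X$ is super-reflexive, then taking the target space to be $X$ itself and the inclusion maps of its finite-dimensional subspaces (which are isometries and so satisfy the defining inequalities for every $\lambda > 1$) shows that $X$ is finitely representable in itself; by the definition of super-reflexivity this forces $X$ to be reflexive. Since \textbf{Theorem \ref{Theorem 7.16}} states that a Banach space with the finite-tree property cannot be reflexive, its contrapositive gives that a reflexive $X$, and hence our super-reflexive $X$, does not have the finite-tree property.

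I expect the only genuinely delicate point to be the renorming-invariance step in the first direction: one must phrase crude finite representability carefully enough that the equivalence constants $c_1, c_2$ can be absorbed into the representability constant $\lambda$ in both directions, so that \textbf{Proposition \ref{Proposition 7.12}} can be applied to both $(X,\|\cdot\|_u)$ and $(X,\|\cdot\|)$. Everything else reduces to direct citations of \textbf{Corollary \ref{Corollary 7.24}}, \textbf{Proposition \ref{Proposition 7.14}}, \textbf{Proposition \ref{Proposition 7.12}}, and \textbf{Theorem \ref{Theorem 7.16}}.
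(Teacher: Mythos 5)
Your proof is correct and follows essentially the same route as the paper: the forward direction combines the renorming theorem (Theorem \ref{Theorem 7.21} / Corollary \ref{Corollary 7.24}) with Proposition \ref{Proposition 7.14}, and the converse uses finite representability of $X$ in itself plus the contrapositive of Theorem \ref{Theorem 7.16}. The one difference is that you spell out the renorming-invariance of crude finite representability needed to transfer super-reflexivity from $(X,\|\cdot\|_u)$ back to $(X,\|\cdot\|)$, a step the paper's proof leaves implicit; your argument for it is correct.
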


\begin{proof}

By \textbf{Theorem \ref{Theorem 7.21}} and \textbf{Proposition \ref{Proposition 7.14}}, when $X$ does not possess the finite-tree property, it is super-reflexive. When $X$ is super-reflexive, clearly $X$ can be crudely finitely representable in $X$ itself and hence $X$ is reflexive. Then the conclusion follows by \textbf{Theorem \ref{Theorem 7.17}}.

\end{proof}

\subsection{Characterizations by von Neumann constant}

\begin{defn}

Given a Banach space $(X, \|\cdot\|_X)$, define:

$$
\tilde{CJ}(X) = \inf\big\{ CJ(X, \|\cdot\|): \|\cdot\|\,\text{  is equivalent to  }\,\|\cdot\|_X \big\}
$$
    
\end{defn}

\begin{theorem}[{\cite[Theorem 2]{24}}]\label{Theorem 7.27}

If a Banach space $X$ is uniform convex, then $CJ(X)<2$.
    
\end{theorem}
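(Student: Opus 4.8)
The plan is to reduce the statement to the equivalence already established in Theorem \ref{Theorem 7.6}, which says that $CJ(X,\|\cdot\|)<2$ holds precisely when $X$ is uniformly non-square. Since uniform convexity sits at the top of the hierarchy of geometric properties discussed in the paper, the only substantive content is the implication ``uniformly convex $\Longrightarrow$ uniformly non-square''; once that is in hand, Theorem \ref{Theorem 7.6} closes the argument at once (interpreting $CJ(X)$ as $CJ(X,\|\cdot\|)$ for the given norm).

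First I would unwind the two definitions and align their normalizations, which are stated differently. Uniform convexity (Definition \ref{Definition 1.1}) is phrased for $x,y$ in the closed unit ball: $\|x-y\|>\epsilon$ forces $\|x+y\|<2-\delta$. Uniform non-squareness (Definition \ref{Definition 1.1} is not it; rather the definition in Section 7.1) asks instead for a single $\delta>0$ with the implication $\|x-y\|>2(1-\delta)\Longrightarrow\|x+y\|\leq 2(1-\delta)$ on the unit ball. To bridge them, I would fix one value $\epsilon_0\in(0,1)$ and let $\delta_0\in(0,1)$ be the modulus that uniform convexity supplies for $\epsilon_0$.

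Then I would set $\delta'=\min\{\delta_0/2,\ 1-\epsilon_0/2\}$, a number in $(0,1)$. This choice is engineered so that, for $x,y\in X_{\leq 1}$ with $\|x-y\|>2(1-\delta')$, two facts hold simultaneously: since $\delta'\leq 1-\epsilon_0/2$ we get $\|x-y\|>2(1-\delta')\geq\epsilon_0$, so uniform convexity applies and yields $\|x+y\|<2-\delta_0$; and since $\delta'\leq\delta_0/2$ we have $2-\delta_0\leq 2-2\delta'=2(1-\delta')$, so $\|x+y\|\leq 2(1-\delta')$. This is exactly uniform non-squareness with modulus $\delta'$, and with $X$ uniformly non-square Theorem \ref{Theorem 7.6} gives $CJ(X,\|\cdot\|)<2$.

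I do not expect a genuine obstacle. The only point that demands care is the bookkeeping between the two normalizations (the threshold $\epsilon$ in one definition versus $2(1-\delta)$ in the other), so that a single constant $\delta'$ serves both as the hypothesis threshold and as the conclusion bound; the crux is simply checking that the minimum above is strictly positive and strictly below $1$, which is automatic because both $\delta_0$ and $\epsilon_0$ lie in $(0,1)$.
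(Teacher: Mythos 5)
Your proposal is correct, but it takes a genuinely different route from the paper. The paper proves the bound directly from the definition of the von Neumann--Jordan constant: it normalizes $\|x\|^2+\|y\|^2=1$, splits into the cases $\|x-y\|\geq\epsilon$ (where the homogeneous form of uniform convexity, Proposition \ref{Proposition 1.3}, forces $\|x+y\|<2(1-\delta)\max(\|x\|,\|y\|)$ and hence a definite loss in the numerator) and $\|x-y\|<\epsilon$ (where the term $\|x-y\|^2$ is itself at most $\epsilon^2$), and concludes $CJ(X,\|\cdot\|)\leq 1+\max\bigl(1-\delta,\ \epsilon^2/2\bigr)<2$. You instead reduce to Theorem \ref{Theorem 7.6} via the implication ``uniformly convex $\Longrightarrow$ uniformly non-square,'' and your normalization bookkeeping for that implication is sound: with $\delta'=\min\{\delta_0/2,\,1-\epsilon_0/2\}$ the hypothesis $\|x-y\|>2(1-\delta')$ does imply $\|x-y\|>\epsilon_0$, and $2-\delta_0\leq 2(1-\delta')$ closes the loop (in fact $\delta'$ always equals $\delta_0/2$, since $\delta_0/2<1/2<1-\epsilon_0/2$, so the minimum is redundant but harmless). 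There is no circularity, as Theorem \ref{Theorem 7.6} is established earlier via Theorem \ref{Theorem 7.4} and Lemma \ref{Lemma 7.5} without reference to the present statement. What each approach buys: yours is shorter, reuses machinery already in place, and makes explicit the arrow ``uniformly convex $\Rightarrow$ uniformly non-square'' from the diagram in the introduction; the paper's direct computation is self-contained and yields an explicit quantitative upper bound on $CJ(X,\|\cdot\|)$ in terms of the modulus of convexity, which the reduction through Theorem \ref{Theorem 7.6} does not provide.
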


\begin{proof}

Pick two different non-zero $x, y\in X$ such that $\|x\|^2 + \|y\|^2=1$. Fix an arbitrary $\epsilon \in (0, 1)$. If $\|x-y\|\geq \epsilon$, then according to \textbf{Proposition \ref{Proposition 1.3}}, there exists $\delta\in (0, 1)$ such that:

$$
\|x+y\| < 2(1-\delta)\max\big( \|x\|, \|y\| \big)
$$
which implies:

\begin{equation}\label{e68}
\begin{aligned}
& \hspace{0.95cm} \left\| \frac{x+y}{2} \right\|^2 + \left\| \frac{x-y}{2} \right\|^2 \leq \frac{(1-\delta)\max\big( \|x\|, \|y\| \big)}{2} + \frac{\|x\|^2 + \|y\|^2}{2} \leq 1-\frac{\delta}{2}\\
& \implies\, \|x+y\|^2 + \|x-y\|^2 \leq 2(2-\delta) 
\end{aligned}
\end{equation}
If $\|x-y\| < \epsilon$, we will have:

\begin{equation}\label{e69}
\|x+y\|^2 + \|x-y\|^2 \leq 2\big( \|x\|^2 + \|y\|^2 \big) + \epsilon^2 \leq 2\left( 1+\frac{\epsilon^2}{2} \right)
\end{equation}
Combining (\ref{e68}) and (\ref{e69}) gives us:

$$
\frac{\|x+y\|^2 + \|x-y\|^2}{2\big( \|x\|^2 + \|y\|^2 \big)} \leq 1+\max\left( 1-\delta, \frac{\epsilon^2}{2} \right) < 2
$$
which implies that $CJ(X, \|\cdot\|)<2$
    
\end{proof}

\begin{defn}

In a Banach space $X$, given $(g_j)_{j\in \mathbb{N}} \subseteq X^*_{\leq 1}$, for each $n\in \mathbb{N}$, define:

$$
S\big( \{p_i\}_{1\leq i \leq 2n}; (g_j)_{j\in \mathbb{N}} \big) = \left\{x\in X:\forall\,1\leq i \leq 2n,\, \forall\,k\in \big\{p_{2i-1}+1, \cdots, p_{2i} \big\}, \hspace{0.2cm} \frac{3}{4} \leq (-1)^{i-1}g_k(x) \leq 1 \right\}
$$
where $(p_i)_{1\leq i \leq 2n}$ is a strictly increasing sequence of integers. For each $n\in \mathbb{N}$, define:

$$
K\big( n, (g_j)_{j\in \mathbb{N}} \big) = \liminf_{p_1\rightarrow \infty}\,\cdots\, \liminf_{p_{2n} \rightarrow \infty}\inf\left\{ \|z\|: z\in S\big( \{p_i\}_{1\leq i \leq 2n}; (g_j)_{j\in \mathbb{N}} \big) \right\}
$$
and:

$$
K_n(X) = \inf\left\{ K\big( n, (g_j)_{j\in \mathbb{N}} \big): (g_j)_{j\in \mathbb{N}} \subseteq X^*_{\leq 1} \right\}
$$

\end{defn}

\begin{lem}[{\cite[Lemma III.$\S$I.2]{25}}]\label{Lemma 7.30}

Given a Banach space $X$, we have $K_n(X)\leq 2n$ for all $n\in\mathbb{N}$ if $X$ is not reflexive.
    
\end{lem}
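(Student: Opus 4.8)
The plan is to exhibit, for one well-chosen sequence of functionals $(g_j)_{j\in\mathbb{N}}\subseteq X^*_{\le 1}$, an explicit element of each set $S(\{p_i\};(g_j))$ whose norm is bounded by a quantity close to $2n-1$, and to do so uniformly in the breakpoints $p_1<\cdots<p_{2n}$. Since $K_n(X)$ is an infimum over all choices of $(g_j)$, while $K(n,(g_j))$ is an iterated $\liminf$ (as the $p_i$ tend to infinity) of the infimal norm over $S$, a single construction that works for all admissible breakpoints will force the desired upper bound. Throughout I read the defining condition with $n$ alternating blocks $B_i=\{p_{2i-1}+1,\dots,p_{2i}\}$ for $1\le i\le n$ (the natural reading consistent with the $2n$ breakpoints $p_1<\cdots<p_{2n}$).

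First I would invoke the James characterization of non-reflexivity, namely statement $2)$ of \textbf{Theorem \ref{Theorem 3.2}}: fixing $\theta\in(0,1)$, there exist $\{x_i\}\subseteq X_{\le 1}$ and $\{g_j\}\subseteq X^*_{<1}\subseteq X^*_{\le 1}$ with $g_j(x_i)=\theta$ when $i\ge j$ and $g_j(x_i)=0$ when $i<j$. Taking these $\{g_j\}$ as the fixed sequence, the key observation is that for any finite combination $z=\sum_q c_q x_q$ one has $g_k(z)=\theta\sum_{q\ge k}c_q$; that is, $g_k(z)/\theta$ equals the tail sum of the coefficients, a step function of $k$ whose jumps occur only at indices carrying a nonzero coefficient.

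The heart of the argument is a telescoping construction. For fixed $p_1<\cdots<p_{2n}$ I would pick indices $r_1<\cdots<r_n$ with each $r_i$ in the gap just after $B_i$ (so $p_{2i}<r_i\le p_{2i+1}$ for $i<n$, and $r_n>p_{2n}$); these gaps are nonempty because the $p_i$ are strictly increasing. Setting $u_i=(-1)^{i-1}/\theta$, $c_{r_i}=u_i-u_{i+1}$ (with $u_{n+1}=0$), and $z=\sum_{i=1}^n c_{r_i}x_{r_i}$, one checks that for $k\in B_i$ exactly the indices $r_j$ with $j\ge i$ satisfy $r_j\ge k$, so the tail sum telescopes to $\sum_{j\ge i}c_{r_j}=u_i=(-1)^{i-1}/\theta$, giving $g_k(z)=(-1)^{i-1}$ and hence $(-1)^{i-1}g_k(z)=1\in[\tfrac34,1]$; thus $z\in S(\{p_i\};(g_j))$. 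Since $c_{r_i}=\tfrac{2}{\theta}(-1)^{i-1}$ for $i<n$ and $c_{r_n}=\tfrac{1}{\theta}(-1)^{n-1}$, we get $\|z\|\le\sum_i|c_{r_i}|\le\tfrac{1}{\theta}\big(2(n-1)+1\big)=\tfrac{2n-1}{\theta}$.

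Finally, because this bound holds for every admissible tuple $p_1<\cdots<p_{2n}$, the inner infimum over $S$ is at most $(2n-1)/\theta$ for all large breakpoints, so the iterated $\liminf$ defining $K(n,(g_j))$ is at most $(2n-1)/\theta$, whence $K_n(X)\le(2n-1)/\theta$. Letting $\theta\uparrow 1$ yields $K_n(X)\le 2n-1\le 2n$. The only genuine obstacle is the index bookkeeping: confirming that each chosen $r_i$ lies strictly between consecutive blocks so that, for $k\in B_i$, the condition $r_j\ge k$ is equivalent to $j\ge i$, and verifying that the differences $c_{r_i}=u_i-u_{i+1}$ telescope to the alternating tail value $(-1)^{i-1}/\theta$; once this is pinned down, the norm estimate and the passage through the $\liminf$ and the two infima are routine.
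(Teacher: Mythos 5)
Your proof is correct, and it follows the same basic strategy as the paper's: invoke statement 2) of \textbf{Theorem \ref{Theorem 3.2}} to obtain the James system $g_j(x_i)=\theta$ for $i\ge j$ and $g_j(x_i)=0$ for $i<j$, and then exhibit an explicit element of $S\big(\{p_i\};(g_j)\big)$ whose norm is bounded by a fixed multiple of $n$, uniformly in the breakpoints. The difference is in the witness. The paper takes $w_n=\sum_{j\le n}(-1)^{j-1}\big(x_{p_{2j-1}}+x_{p_{2j}}\big)$, supported on the breakpoints themselves, and asserts that $g_k\big(x_{p_{2j-1}}+x_{p_{2j}}\big)=0$ for all $j\ne i$ when $k$ lies in the $i$-th block; but this fails for $j>i$, where both terms evaluate to $\theta$, so the alternating sum gives $g_k(w_n)=(-1)^{i-1}\theta$ only when $n-i$ is even and the paper's verification as written has a sign gap. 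Your telescoping choice --- coefficients $c_{r_i}=u_i-u_{i+1}$ with $u_i=(-1)^{i-1}/\theta$ placed on indices $r_i$ chosen in the gaps between blocks --- computes the tail sums $\theta\sum_{q\ge k}c_q$ exactly, lands $(-1)^{i-1}g_k(z)$ precisely at $1\in[\tfrac34,1]$ regardless of parity, does not require $\theta>\tfrac34$, and yields the marginally sharper bound $K_n(X)\le(2n-1)/\theta\to 2n-1$. The only point you must (and do) make explicit is that the membership condition in the definition of $S$ has to be read with $n$ alternating blocks rather than $2n$, which is also what the paper's proof implicitly does.
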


\begin{proof}

When $X$ is not reflexive, fix $\theta\in \left( \dfrac{3}{4}, 1\right)$ and let $(g_n)_{n\in\mathbb{N}} \subseteq X^*_{\leq 1}$, $(x_i)_{i\in \mathbb{N}}\subseteq X_{\leq 1}$ be given by \textbf{Theorem \ref{Theorem 3.2}}. Fix $n\in\mathbb{N}$ and $(p_i)_{i\leq 2n}$ a strictly increasing sequence of integers. Define:

$$
w_n = \sum_{j\leq n} (-1)^{j-1} \big( x_{p_{2j-1}} + x_{p_{2j}} \big)
$$
Therefore for all $1\leq i\leq n$ and $k\in \big\{ p_{2i-1}+1, p_{2i-1}+2, \cdots, p_{2i} \big\}$. According to \textbf{Theorem \ref{Theorem 3.2}}, for all $1\leq j \leq 2n$ we have:

$$
f_k\big( x_{p_{2j-1}} + x_{p_{2j}} \big) = 
\begin{cases}
0, \hspace{1cm}j\neq i\\
\theta, \hspace{1cm}j=i
\end{cases}
\hspace{0.5cm} \Longrightarrow \hspace{0.5cm}
f_k(w_n) = (-1)^{i-1}\theta
$$
which implies $w_n\in S\big( (p_i)_{1\leq i \leq 2n}; (f_j)_{j\in \mathbb{N}} \big)$. Clearly $\|w_n\|\leq 2n$ and hence $K_n(X)\leq 2n$.
    
\end{proof}

\begin{prop}\cite{25}\label{Proposition 7.31}
If a Banach space $X$ is uniformly non-square, then it is reflexive.
\end{prop}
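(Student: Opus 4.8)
The plan is to argue by contraposition: assuming $X$ is \emph{not} reflexive, I would produce, for every $\epsilon \in (0,1)$, a pair $u, v \in X_{\le 1}$ with both $\|u+v\|$ and $\|u-v\|$ exceeding $2(1-\epsilon)$, which is precisely the negation of uniform non-squareness. Equivalently, by \textbf{Theorem \ref{Theorem 7.6}} it suffices to show that a non-reflexive space must satisfy $CJ(X,\|\cdot\|) = 2$, since $CJ(X,\|\cdot\|)<2$ is exactly the criterion for uniform non-squareness. So the whole proposition reduces to the statement ``non-reflexive $\Longrightarrow$ the unit ball contains nearly-square pairs.''

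To manufacture such pairs I would feed non-reflexivity into the machinery of this subsection. On the one hand, \textbf{Theorem \ref{Theorem 3.2}} supplies the James system $\{x_i\}\subseteq X_{\le 1}$ and $\{g_n\}\subseteq X^*_{<1}$ with $g_n(x_i)=\theta$ for $i\ge n$ and $0$ for $i<n$, where $\theta$ may be taken as close to $1$ as desired. On the other hand, \textbf{Lemma \ref{Lemma 7.30}} shows that non-reflexivity forces $K_n(X)\le 2n$ for all $n$; unwinding the definition of $K_n(X)$, this produces, for each $n$, a vector $z_n$ of norm essentially $2n$ lying in some $S(\{p_i\};(g_j))$, i.e.\ a vector on which a signed family of functionals alternates in sign from block to block while staying in $[3/4,1]$ in absolute value. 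These alternating near-extremal vectors are built from signed block sums of the $x_i$, and they are the raw material from which $u$ and $v$ must be extracted.

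The heart of the argument, and the step I expect to be the main obstacle, is converting such an alternating near-extremal configuration into a genuinely square pair while respecting $\|u\|,\|v\|\le 1$. The difficulty is asymmetry: the crude ``first half versus second half'' split of the $x_i$ makes the aligned sum $u+v$ long (its norm is pushed up to $\approx 2n\theta$ by a single functional $g_k$), but leaves the difference $u-v$ of order only $\theta n$, so the resulting ratio $\tfrac{\|u+v\|^2+\|u-v\|^2}{2\|u\|^2+2\|v\|^2}$ saturates well below $2$. To drive \emph{both} $\|u+v\|$ and $\|u-v\|$ to $2$ one must instead combine the blocks in a balanced, dyadic (tree-like) fashion so that the aligned and the fully alternating combinations are simultaneously near-maximal; certifying that the \emph{difference} is also near-maximal is exactly what the alternating block structure of $S(\{p_i\};(g_j))$ and the bound $K_n(X)\le 2n$ are designed to bookkeep. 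Once this balanced construction is in place, I would let $\theta\uparrow 1$ and $n\to\infty$, verify that the multiplicative loss incurred by the split tends to $1$, and conclude that the Jordan--von Neumann ratio tends to $2$, so $CJ(X,\|\cdot\|)=2$; by \textbf{Theorem \ref{Theorem 7.6}} this contradicts uniform non-squareness, completing the contrapositive.
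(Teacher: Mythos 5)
Your framing (contraposition, reduce to producing nearly-square pairs in the unit ball of a non-reflexive space, feed in \textbf{Lemma \ref{Lemma 7.30}}) matches the paper's, but the step you yourself flag as ``the main obstacle'' is exactly the content of the proof, and your sketch of how to overcome it does not close the gap. The paper's mechanism has two ingredients that your proposal never supplies. First, a pigeonhole on the sequence $K_n(X)$: since $\tfrac34\le K_n(X)\le 2n$ and $K_n(X)$ is non-decreasing, the ratios $K_n(X)/K_{n-1}(X)$ cannot stay bounded away from $1$, so one can choose $m$ with $\tfrac{K_{m-1}(X)-\epsilon}{K_m(X)+2\epsilon}>1-\delta$. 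This is what lets \emph{both} the sum and the difference be certified as nearly maximal after normalization; without it, knowing only $K_n(X)\le 2n$ gives no lower bound on $\|u-v\|$ relative to $\|u\|$ and $\|v\|$. Second, the interleaving trick: one takes two interlaced block sequences $(p_i)$ and $(q_i)$ and vectors $u\in S((p_i);(g_j))$, $v\in S((q_i);(g_j))$; on the overlaps of corresponding blocks $u$ and $v$ carry the \emph{same} signs, so $\tfrac{u+v}{2}$ lies in an $S$-set with $m$ alternations and has norm at least $K_m(X)-\epsilon$, while on suitably shifted overlaps they carry \emph{opposite} signs, so $\tfrac{u-v}{2}$ lies in an $S$-set with only $m-1$ alternations and has norm at least $K_{m-1}(X)-\epsilon$. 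Dividing by $K_m(X)+2\epsilon$ then produces the nearly-square pair.

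Your proposed substitutes --- a ``balanced, dyadic (tree-like)'' recombination of the blocks, followed by letting $\theta\uparrow 1$ and $n\to\infty$ --- do not obviously work and are not what the bound $K_n(X)\le 2n$ ``bookkeeps''. Taking $\theta$ close to $1$ in the system of \textbf{Theorem \ref{Theorem 3.2}} only controls the values $g_n(x_i)$, not the norms of signed block sums, and a dyadic tree construction is really the route to the finite-tree property and super-reflexivity (Section 7.2--7.3), not to this proposition. As written, the proposal identifies the right raw material but leaves the decisive step as an acknowledged obstacle rather than an argument.
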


\begin{proof}

Assume that $X$ is not reflexive. Then according to \textbf{Lemma \ref{Lemma 7.30}}, we have $K_n(X)\leq 2n$ for all $n\in \mathbb{N}$. Clearly $K_n(X)$ increases as $n$ increases. For any $n\in \mathbb{N}$ and $(g_j)_{j\in \mathbb{N}}\subseteq X^*_{\leq 1}$, $\{p_i\}_{i\leq 2n}\subseteq \mathbb{N}$ that is strictly increasing, if $x\in S\big( \{p_i\}_{i\leq 2n}, (g_j)_{j\in \mathbb{N}} \big)$, then clearly $\|x\|\geq \dfrac{3}{4}$ and hence $K_n(X) \geq \dfrac{3}{4}$ for all $n\in \mathbb{N}$.\\

\noindent
Fix $\delta\in (0, 1)$ and $r\in (1-\delta, 1)$. Therefore, for each $n\in \mathbb{N}$:

\begin{equation}\label{e70}
\begin{aligned}
& \hspace{1cm} \forall\, \epsilon\in \left(0, \frac{3}{4}\frac{1-r}{2r+1} \right), \hspace{0.3cm} K_n(X) \geq \frac{3}{4} > \frac{(2r+1)\epsilon}{1-r} \\
& \implies\, \forall\, \epsilon\in \left(0, \frac{3}{4}\frac{1-r}{2r+1} \right), \hspace{0.3cm} \frac{K_n(X)-\epsilon}{K_n(X) + 2\epsilon} > r
\end{aligned}
\end{equation}
From $K_n(X)\leq 2n$ for all $n\in \mathbb{N}$, we can see the increasing speed of $K_n(X)$ is at most linear. Also, from (\ref{e70}), we have:

$$
\forall\,n\in \mathbb{N},\,\forall\, \epsilon\in \left(0, \frac{3}{4}\frac{1-r}{2r+1} \right), \hspace{0.3cm} \frac{K_n(X)-\epsilon}{K_{n-1}(X) - \epsilon} = \frac{K_n(X)-\epsilon}{K_n(X) + 2\epsilon} \frac{K_n(X)+2\epsilon}{K_{n-1}(X)-\epsilon} > r
$$
which implies that $\liminf_n \dfrac{K_n(X)}{K_{n-1}(X)} = 1$. Therefore, for any $M\in\mathbb{N}$, there exists $m\geq M$ such that $\dfrac{K_{m-1}(X)-\epsilon}{K_m(X) + 2\epsilon} > 1-\delta$. Fix such an $m\in\mathbb{N}$ and let $(g_j)_{j\in \mathbb{N}} \subseteq X^*_{\leq 1}$ be such that:

$$
K\big( m, (g_j)_{j\in \mathbb{N}} \big) < K_m(X) + \epsilon
$$
Fix two strictly increasing sequence of integers $(p_i)_{i\leq 2m}$, $(q_j)_{j\leq 2m}$ such that for any $1\leq i < m$, we have $p_{2i} < p_{2i+1} < q_{2i} < q_{2i+1} < p_{2i+2} < p_{2i+3}$ and $p_1<q_1<p_2$, $q_{2m-1} < p_{2m} < q_{2m}$. Define:

$$
\begin{aligned}
& \forall\,1\leq i \leq 2m, \hspace{0.5cm} \tilde{p}_i = 
\begin{cases}
q_i, \hspace{1.02cm} 2\nmid i\\
p_i, \hspace{1cm} 2\mid i
\end{cases} \\
& \forall\,1\leq i \leq 2m-2, \hspace{0.5cm} \tilde{q}_i = 
\begin{cases}
p_{i+2}, \hspace{0.63cm} 2\nmid i\\
q_i, \hspace{1cm} 2\mid i
\end{cases}
\end{aligned}
$$
Furthermore, by definition of $K_n\big( n, (g_j)_{j\in \mathbb{N}} \big)$, we can assume $(p_i)_{i\leq 2m}$ and $(q_j)_{j\leq 2m}$ satisfies:

\begin{equation}\label{e71}
\begin{aligned}
&\inf\left\{ \|z\|: z\in S\big( (p_i)_{i\leq 2m}; (g_j)_{j\in \mathbb{N}} \big) \right\} < K\big(m, (g_j)_{j\in \mathbb{N}} + \epsilon\\
&\inf\left\{ \|z\|: z\in S\big( (q_j)_{j\leq 2m}; (g_j)_{j\in \mathbb{N}} \big) \right\} < K\big(m, (g_j)_{j\in \mathbb{N}} + \epsilon\\
&\inf\left\{ \|z\|: z\in S\big( (\tilde{p}_i)_{i\leq 2m}; (g_j)_{j\in \mathbb{N}} \big) \right\} > K\big(m, (g_j)_{j\in \mathbb{N}} - \epsilon\\
&\inf\left\{ \|z\|: z\in S\big( (\tilde{q}_i)_{i\leq 2m}; (g_j)_{j\in \mathbb{N}} \big) \right\} > K\big(m-1, (g_j)_{j\in \mathbb{N}} - \epsilon\\
\end{aligned}
\end{equation}
We can then find $u\in S\big( (p_i)_{i\leq 2m}; (g_j)_{j\in \mathbb{N}} \big)$ and $v\in S\big( (q_i)_{i\leq 2m}; (g_j)_{j\in \mathbb{N}} \big)$ such that:

\begin{equation}\label{e72}
\max\big( \|u\|, \|v\| \big) < K\big(m, (g_j)_{j\in \mathbb{N}} \big) + \epsilon
\end{equation}
and for all $1\leq i \leq m$:

$$
\begin{cases}
\dfrac{3}{4} \leq (-1)^{i-1} g_k(u) \leq 1, \hspace{1cm}p_{2i-1} < k \leq p_{2i}\\\\
\dfrac{3}{4} \leq (-1)^{i-1} g_k(v) \leq 1, \hspace{1cm}q_{2i-1} < k \leq q_{2i}
\end{cases}
$$
Since for each $1\leq i \leq m$ we have $q_{2i-1} < p_{2i} < q_{2i}$, therefore for all $1\leq i \leq m$:

$$
\forall\,q_{2i-1} < k \leq p_{2i}, \hspace{0.3cm} \dfrac{3}{4} \leq (-1)^{i-1} g_k\left( \frac{u+v}{2} \right) \leq 1
\hspace{0.5cm} \Longrightarrow \hspace{0.5cm}
\frac{v+u}{2} \in S\big( (\tilde{p}_i)_{i\leq 2m}; (g_j)_{j\in \mathbb{N}} \big)
$$
which, according to (\ref{e71}), implies:

$$
\left\| \frac{v+u}{2} \right\| \geq K\big( m, (g_j)_{j\in \mathbb{N}} \big) - \epsilon
$$
Similarly, we have:

$$
\frac{v-u}{2}\in S\big( (\tilde{q}_i)_{i\leq 2m}; (g_j)_{j\in \mathbb{N}} \big)
\hspace{0.5cm} \Longrightarrow \hspace{0.5cm}
\left\| \frac{v-u}{2} \right\|\geq K\big(m, (g_j)_{j\in \mathbb{N}} \big) - \epsilon
$$
Next put:

$$
x = \frac{u}{K_m(X)+2\epsilon}, \hspace{1cm} y = \frac{v}{K_m(X)+2\epsilon}
$$
By (\ref{e72}) we have both $\|x\|<1$ and $\|y\|<1$. Meanwhile:

$$
\begin{aligned}
& \left\| \frac{x+y}{2} \right\| = \frac{1}{K_m(X)+2\epsilon} \left\| \frac{u+v}{2} \right\| \geq \frac{K_m(X)-\epsilon}{K_m(X) + 2\epsilon} > 1-\delta\\
& \left\| \frac{x-y}{2} \right\| = \frac{1}{K_m(X)+2\epsilon} \left\| \frac{u-v}{2} \right\| \geq \frac{K_{m-1}(X)-\epsilon}{K_m(X) + 2\epsilon} > 1-\delta\
\end{aligned}
$$
Therefore, $X$ is not uniformly non-square.
    
\end{proof}

\begin{theorem}

A Banach space $(X,\|\cdot\|_X)$ is super-reflexive if and only if $\tilde{CJ}(X)<2$.
    
\end{theorem}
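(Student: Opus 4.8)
The plan is to prove each implication by chaining the characterizations already established, the only genuinely new ingredient being that the von Neumann--Jordan constant cannot increase under finite representability.

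For the direction that $X$ super-reflexive $\implies \tilde{CJ}(X)<2$, I would proceed by bookkeeping over earlier results. By \textbf{Corollary \ref{Corollary 7.25}} super-reflexivity is equivalent to the absence of the finite-tree property, and by \textbf{Corollary \ref{Corollary 7.24}} the latter is equivalent to $X$ admitting an equivalent uniformly convex norm $\|\cdot\|'$. Applying \textbf{Theorem \ref{Theorem 7.27}} to $(X,\|\cdot\|')$ gives $CJ(X,\|\cdot\|')<2$. Since $\|\cdot\|'$ is one of the norms competing in the infimum defining $\tilde{CJ}(X)$, we conclude $\tilde{CJ}(X)\le CJ(X,\|\cdot\|')<2$.

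For the converse, $\tilde{CJ}(X)<2 \implies X$ super-reflexive, I would first unwind the infimum: $\tilde{CJ}(X)<2$ forces the existence of an equivalent norm $\|\cdot\|'$ with $CJ(X,\|\cdot\|')<2$, whence $(X,\|\cdot\|')$ is uniformly non-square by \textbf{Theorem \ref{Theorem 7.6}}. The crux is then to upgrade uniform non-squareness, which by \textbf{Proposition \ref{Proposition 7.31}} only delivers reflexivity, to full super-reflexivity. To this end I would prove the key lemma that if a Banach space $Y$ is finitely representable in a Banach space $Z$, then $CJ(Y)\le CJ(Z)$: since $CJ$ is computed as a supremum over pairs of the two-vector quantity $\frac{2\|x\|^2+2\|y\|^2}{\|x+y\|^2+\|x-y\|^2}$, for any $x,y\in Y$ and any $\lambda>1$ one transports the pair into $Z$ by a $\lambda$-isomorphism defined on $\mathrm{span}\{x,y\}$, which distorts this quantity by at most a factor $\lambda^{4}$; letting $\lambda\to 1$ yields the pairwise bound, and taking the supremum gives $CJ(Y)\le CJ(Z)$. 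Consequently every $Y$ finitely representable in $(X,\|\cdot\|')$ satisfies $CJ(Y)\le CJ(X,\|\cdot\|')<2$, so $Y$ is uniformly non-square by \textbf{Theorem \ref{Theorem 7.6}} and hence reflexive by \textbf{Proposition \ref{Proposition 7.31}}. Thus $(X,\|\cdot\|')$ is super-reflexive. It then remains to transfer super-reflexivity back to $(X,\|\cdot\|_X)$, for which I would invoke \textbf{Proposition \ref{Proposition 7.12}}: super-reflexivity is equivalent to the non-existence of a non-reflexive space that is \emph{crudely} finitely representable, and crude finite representability is insensitive to passing between equivalent norms (it only rescales the distortion constant by the bounded factor comparing $\|\cdot\|'$ and $\|\cdot\|_X$), so $(X,\|\cdot\|_X)$ is super-reflexive as well.

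I expect the main obstacle to be the $CJ$-monotonicity lemma, specifically verifying that the two-vector ratio is squeezed within $\lambda^{\pm 4}$ under a $\lambda$-isomorphism and that the passage $\lambda\to 1$ is legitimate pairwise, together with the care needed to argue that super-reflexivity, unlike a fixed numerical value of $CJ(X,\|\cdot\|)$, is a genuine isomorphic invariant, so that the favorable equivalent norm produced by $\tilde{CJ}(X)<2$ may legitimately be used in place of the original norm $\|\cdot\|_X$.
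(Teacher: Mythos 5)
Your forward direction coincides with the paper's: \textbf{Corollary \ref{Corollary 7.25}}, \textbf{Corollary \ref{Corollary 7.24}}, \textbf{Theorem \ref{Theorem 7.27}}, and the fact that the equivalent uniformly convex norm competes in the infimum defining $\tilde{CJ}(X)$. Your converse shares the paper's skeleton (equivalent norm with $CJ<2$, uniform non-squareness, transfer to representable spaces, \textbf{Proposition \ref{Proposition 7.31}}) but differs in the transfer step, and the difference is substantive. The paper takes a non-reflexive $Y$ \emph{crudely} finitely representable in $(X,\|\cdot\|_X)$ and attempts to push the uniform non-squareness inequality through a single isomorphism with the distortion $\lambda$ held fixed; as written, that implication runs the definition of uniform non-squareness backwards and does not control $\|x\pm y\|_Y$ from above, and in any case a fixed-$\delta$ non-squareness estimate does not survive a fixed-$\lambda$ distortion unless $\lambda$ is close to $1$. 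You instead prove that $CJ$ is monotone under genuine finite representability by transporting each pair through a $\lambda$-isomorphism of its two-dimensional span and letting $\lambda\to 1$ (the $\lambda^{\pm 4}$ bound on the two-vector ratio is correct, and the limit is legitimate because the definition of finite representability supplies isomorphisms for every $\lambda>1$ on each fixed finite-dimensional subspace), then convert $CJ(Y)<2$ back to uniform non-squareness via \textbf{Theorem \ref{Theorem 7.6}} and to reflexivity via \textbf{Proposition \ref{Proposition 7.31}}. Your route costs one extra, easy lemma but is airtight where the paper's transfer is not, and your closing step, passing super-reflexivity from $(X,\|\cdot\|')$ back to $(X,\|\cdot\|_X)$ through \textbf{Proposition \ref{Proposition 7.12}} and the invariance of crude representability under equivalent norms, makes explicit a transfer the paper leaves implicit.
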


\begin{proof}

When $X$ is super-reflexive, according to \textbf{Corollary \ref{Corollary 7.24}}, \textbf{Corollary \ref{Corollary 7.25}} and \textbf{Theorem \ref{Theorem 7.27}} we immediately have $\tilde{CJ}(X)<2$. Conversely, given $\tilde{CJ}(X)<2$, there exists a norm $\|\cdot\|$ that is equivalent to $\|\cdot\|_X$ such that $CJ(X, \|\cdot\|)<2$, which implies that $(X, \|\cdot\|)$ is uniformly non-square according to \textbf{Theorem \ref{Theorem 7.4}}. Next suppose $(Y, \|\cdot\|_Y)$ is a Banach space that can be crudely representable in $(X, \|\cdot\|_X)$. Then clearly $(Y, \|\cdot\|_Y)$ can be crudely representable in $(X, \|\cdot\|)$. Then there exists $\lambda\in (1, \infty)$ such that for any two $x, y \in Y_{\leq 1}$, there exists a linear mapping $T: \operatorname{Span}\big( \{x, y\} \big) \rightarrow X$ such that:

\begin{equation}\label{e73}
\begin{aligned}
& \frac{1}{\lambda} \|x\pm y\|_Y \leq \|T(x\pm y)\| \leq \lambda\|x\pm y\|_Y\\
& \frac{1}{\lambda} \|x\|_Y \leq \|Tx\| \leq \lambda\|x\|_Y\\
& \frac{1}{\lambda} \|x\|_Y \leq \|Ty\| \leq \lambda\|y\|_Y
\end{aligned}
\end{equation}
Let $\delta\in (0, 1)$ be given by the uniform non-squareness of $(X, \|\cdot\|)$ and together with (\ref{e73}) we have:

$$
\begin{aligned}
\left\| T\left( \frac{x}{\lambda} \right) \pm T\left( \frac{y}{\lambda} \right) \right\| \leq 2(1-\delta)
& \,\implies\, \left\| T\left( \frac{x}{\lambda} \right) \mp T\left( \frac{y}{\lambda} \right) \right\| > 2(1-\delta)\\
& \,\implies\, \|x\mp y\|_Y \geq \left\| T\left( \frac{x}{\lambda} \right) \mp T\left( \frac{y}{\lambda} \right) \right\| > 2(1-\delta)
\end{aligned}
$$
which implies that $(Y, \|\cdot\|_Y)$ is also uniformly non-square. Therefore, together with \textbf{Proposition \ref{Proposition 7.31}} and \textbf{Proposition \ref{Proposition 7.12}}, we can then conclude $X$ is super-reflexive.
    
\end{proof}

\newpage

\bibliographystyle{amsalpha}
\bibliography{Reference_List_2}

\end{document}